\newcommand{\Z}{\ensuremath{\mathbb{Z}}}
\newcommand{\Q}{\ensuremath{\mathbb{Q}}}
\newcommand{\R}{\ensuremath{\mathbb{R}}}
\newcommand{\C}{\ensuremath{\mathbb{C}}}
\newcommand{\F}{\ensuremath{\mathbb{F}}}
\newcommand{\A}{\ensuremath{\mathbb{A}}}
\newcommand{\Aut}{\ensuremath{\mathrm{Aut}}}
\newcommand{\Tr}{\ensuremath{\mathrm{tr}\,}}
\newcommand{\dd}{\ensuremath{\,\mathrm{d}}}
\newcommand{\angles}[1]{\ensuremath{\langle #1 \rangle}}
\newcommand{\mes}{\ensuremath{\mathrm{mes}}}
\newcommand{\identity}{\ensuremath{\mathrm{id}}}
\newcommand{\Hom}{\ensuremath{\mathrm{Hom}}}
\newcommand{\End}{\ensuremath{\mathrm{End}}}
\newcommand{\rightiso}{\ensuremath{\stackrel{\sim}{\rightarrow}}}
\newcommand{\Ker}{\ensuremath{\mathrm{Ker}\,}}
\newcommand{\Coker}{\ensuremath{\mathrm{Coker}\,}}
\newcommand{\Ad}{\ensuremath{\mathrm{Ad}\,}}
\newcommand{\ad}{\ensuremath{\mathrm{ad}\,}}
\newcommand{\Gm}{\ensuremath{\mathbb{G}_\mathrm{m}}}
\newcommand{\Supp}{\ensuremath{\mathrm{Supp}}}
\newcommand{\SL}{\ensuremath{\mathrm{SL}}}
\newcommand{\Exp}{\ensuremath{\mathcal{E}\mathrm{xp}}}
\theoremstyle{plain}
\newtheorem{proposition}{Proposition}[subsection]
\newtheorem{lemma}[proposition]{Lemme}
\newtheorem{theorem}[proposition]{Théorème}
\newtheorem{corollary}[proposition]{Corollaire}
\theoremstyle{definition}
\newtheorem{definition}[proposition]{Définition}
\newtheorem{definition-theorem}[proposition]{Définition-Théorème}
\newtheorem{definition-proposition}[proposition]{Définition-Proposition}
\newtheorem{hypothesis}[proposition]{Hypothèse}
\newtheorem{remark}[proposition]{Remarque}
\newcommand{\bmu}{\ensuremath{\bbmu}}
\newcommand{\bomega}{{\ensuremath{\boldsymbol{\omega}}}}
\newcommand{\noyau}{\ensuremath{\boldsymbol{\varepsilon}}} 
\newcommand{\rev}{\ensuremath{\mathbf{p}}} 
\newcommand{\asp}{\ensuremath{\dashrule[.7ex]{2 2 2 2}{.4}}} 
\renewcommand{\Re}{\ensuremath{\mathrm{Re}\xspace}}
\renewcommand{\Im}{\ensuremath{\mathrm{Im}\xspace}}
\title{La formule des traces pour les revêtements de groupes réductifs connexes. II. \\ Analyse harmonique locale}
\author{Wen-Wei Li}
\date{}
\begin{document}

\maketitle


\begin{abstract}
  On établit des résultats de l'analyse harmonique locale nécessaires pour la formule des traces invariante d'Arthur pour les revêtements de groupes réductifs connexes. Plus précisément, on démontre pour les revêtements locaux (1) la formule de Plancherel et des préparatifs reliés, (2) la normalisation des opérateurs d'entrelacement soumise aux conditions d'Arthur, (3) le comportement local de caractères de représentations admissibles dans le cas non archimédien, et (4) la partie spécifique de la formule des traces locale invariante. Comme un sous-produit de la démonstration de la formule des traces locale invariante, on obtient aussi la densité de caractères tempérés pour les revêtements.
\end{abstract}

\begin{flushleft}
  \small MSC classification (2010): \textbf{11F72}, 11F70.
\end{flushleft}

\tableofcontents

\section{Introduction}
Cet article fait suite à \cite{Li10a} qui vise à établir une formule des traces invariante à la Arthur \cite{Ar88-TF2} pour une grande classe de revêtements des groupes réductifs connexes. Afin d'arriver à la formule des traces invariante, Arthur a besoin des résultats profonds d'analyse harmonique locale. Tandis qu'ils sont admis par certains mathématiciens, il s'avère que les modifications nécessaires ne sont pas toujours triviales. Notre objet est donc d'établir, ou plutôt de justifier, de tels résultats. Plus précisément, nous visons à établir
\begin{itemize}
  \item la formule de Plancherel \cite{Wa03};
  \item la normalisation des opérateurs d'entrelacement \cite{Ar89-IOR1}, avec formules explicites dans le cas archimédien;
  \item la régularité et le développement local de caractères dans le cas non archimédien \cite{HC99};
  \item le Théorème de Paley-Wiener invariant pour les fonctions de Schwartz-Harish-Chandra \cite{Ar94-PW}, qui est un ingrédient crucial pour mettre la formule des trace locale en une forme invariante;
  \item la formule des traces locale invariante \cite{Ar91}.
  \item le théorème à la Kazhdan de la densité de caractères tempérés dans le cas non archimédien \cite{Ka86}.
\end{itemize}

Le Graal de cet article est la formule des traces locale invariante sous la forme présentée dans \cite[Proposition 6.1]{Ar03}:
$$ I_\text{disc}(f) = \sum_{M \in \mathcal{L}(M_0)} |W^M_0| |W^G_0|^{-1} (-1)^{\dim A_M/A_G} \int_{\Gamma_{G-\text{reg,ell}}(M(F))^\text{bon}} I_{\tilde{M}}(\gamma, f) \dd\gamma $$
pour tout $f \in \mathcal{C}_{\asp}(\tilde{G} \times \tilde{G})$, où
\begin{itemize}
  \item $\mathcal{C}_{\asp}(\tilde{G} \times \tilde{G})$ est la ``composante anti-spécifique'' de l'espace des fonctions de Schwartz-Harish-Chandra sur $\tilde{G} \times \tilde{G}$;
  \item $I_\text{disc}(\cdot)$ est la ``partie discrète'' du côté spectral spécifique de la formule des traces locale non invariante;
  \item $I_{\tilde{M}}(\gamma, \cdot)$ est la distribution invariante fabriquée des intégrales orbitales pondérées et des caractères pondérés.  C'est l'intégrale orbitale usuelle lorsque $M=G$. Cette distribution est reliée aux distributions apparaissant dans la formule des traces globale par une formule de scindage (le Lemme \ref{prop:I-scindage}).
\end{itemize}
Voir \S\ref{sec:formule-traces} pour les détails. Il peut paraître curieux car la formule des traces locale n'est pas un ingrédient dans la preuve originelle d'Arthur de la formule des traces globale. Néanmoins, Arthur fait usage d'un argument global pour compléter son argument de récurrence (voir \cite[\S 5]{Ar88-TF2}), ce qui est problématique sur les revêtements car il faudrait une propriété d'approximation faible des bons éléments. Un cas particulier du résultat d'Arthur est la densité de caractères tempérés due à Kazhdan \cite{Ka86} qui est indispensable dans toute application de la formule des traces. Donc il faut les établir à tout prix. Ici la formule des traces locale en fournit une approche contournée mais purement locale, cf. \cite[Corollary 5.3]{Ar93}. D'ailleurs, la formule des traces locale interviendra dans toute étude sérieuse d'analyse harmonique locale, e.g. la théorie de représentations tempérées elliptiques; elle est également utilisée dans le travail de Mezo \cite[\S 3]{MZ01} sur la 
correspondance métaplectique.

Vu les travaux de Harish-Chandra, il est tentant de penser que les théories archimédiennes s'adaptent aux revêtements sans peine. Bien au contraire! Par exemple, le théorème de Paley-Wiener invariant pour les fonctions de Schwartz-Harish-Chandra fait l'usage de la théorie de $K$-types minimaux de Vogan, notamment la multiplicité $1$, ce qui dépend des hypothèses d'algébricité ou de connexité du groupe de Lie en question.

C'est inévitable d'admettre les généralisations aux revêtements de certains résultats standards d'analyse harmonique dans cet article, à savoir:
\begin{itemize}
  \item la théorie de décomposition de Bernstein, notamment le lemme géométrique \cite{BZ77};
  \item la classification des représentations tempérées en termes des représentations de carré intégrable modulo le centre dans le cas non archimédien \cite{Wa03};
  \item la classification de Langlands des représentations admissibles dans le cas non archimédien \cite{Kon03};
  \item la théorie de $R$-groupes, cf. \cite{Si78}.
\end{itemize}
On donnera des justifications pour le lemme géométrique dans \S\ref{sec:representations}.

\paragraph{Organisation de cet article}
Dans le \S 2, nous recueillons des définitions de base pour l'analyse harmonique locale non archimédienne pour les revêtements et nous établissons la théorie de Harish-Chandra pour les revêtements: fonctions de Schwartz-Harish-Chandra, représentations tempérées, opérateurs d'entrelacement, fonction $c$ et $\mu$, la formule de Plancherel. Les preuves sont plus ou moins identiques au cas des groupes réductifs connexes et nous adoptons les notations de \cite{Wa03}; le but de cette section est plutôt de fixer les notations et les choix de mesures. Une exception: il faut choisir un sous-groupe ouvert d'indice fini $A_M(F)^\dagger$ de $A_M(F)$, où $M$ est un sous-groupe de Lévi semi-standard de $G$ (voir la Proposition \ref{prop:dagger}). Afin de compenser cette ambiguïté, les intégrales concernant $A_M(F)^\dagger$ sont multipliées par le facteur $\iota_M$ défini dans \eqref{eqn:iota}.

Dans le \S 3, nous étudions la normalisation des opérateurs d'entrelacement satisfaisant aux conditions d'Arthur, cf. \cite[\S 2]{Ar89-IOR1} et \cite[\S 2]{Ar94}. Nous en donnons des formules explicites similaires à celles d'Arthur dans le cas archimédien. Pour le cas non archimédien nous reprenons l'argument dans \cite[Lecture 15]{CLL} à quelques corrections près. Enfin, il faut aussi considérer le cas des revêtements non ramifiées et nous le faisons à l'aide de la théorie de séries principales non ramifiés spécifiques. Pour les revêtements archimédiens à deux feuillets, par exemple ceux provenant des $\mathbf{K}_2$-extensions de Brylinski-Deligne \cite{BD01}, nos facteurs normalisants sont liés à ceux d'Arthurs pour les $\R$-groupes réductifs connexes qui s'expriment en termes de fonctions $L$ archimédiennes.

Dans le \S 4, nous reprenons \cite{HC99}. La méthode de descente semi-simple nous ramène à l'algèbre de Lie, où le revêtement disparaît mais un caractère intervient. Le même phénomène paraît aussi dans \cite{Li10a}. On peut se débarrasser du caractère en travaillant systématiquement avec le module croisé $[G_\text{SC} \times Z_{G^\circ}^\circ \to G]$. Remarquons que l'intégrabilité locale des caractères admissibles irréductibles pour les revêtements a déjà apparu comme une hypothèse dans des travaux sur la correspondance de Howe; c'est un peu surprenant qu'une preuve n'est jamais entamée auparavant.

Il paraît que notre méthode permettrait de montrer le théorème de régularité de Harish-Chandra au niveau du groupe, ainsi que le développement local, dans le cadre tordu par un caractère $\bomega$. C'est la situation rencontrée en l'endoscopie tordue. Nous ne poursuivons pas ce problème dans cet article.

Dans le \S 5, nous établissons la formule des traces locale d'Arthur. Les arguments sont presque identiques à ceux d'Arthur et nous n'en donnons que des esquisses; les intégrales d'Eisentein sont évitées dans notre récit. Comme le formalisme d'Arthur évolue, la tâche est plutôt de faire la mise à jour. Notre référence est \cite[\S 6]{Ar03}; en particulier, les fonctions test sont dans l'espace de Schwartz-Harish-Chandra et les distributions sont rendues invariantes à l'aide du Théorème de Paley-Wiener invariant pour de telles fonctions, dont la démonstration dans le cas archimédien est une variante de celle de \cite{Ar94-PW}. Nous démontrons le ``Théorème 0'' de Kazhdan dans le Théorème \ref{prop:KZ0}. Cette section est quelque peu formelle à l'exception du Théorème de Paley-Wiener invariant, qui nécessite des constructions combinatoires techniques.

Selon Arthur, les caractères pondérés peuvent être non normalisés (ceux dans la formule des traces locale non invariante), normalisés relativement à un choix de facteurs normalisants, ou canoniquement normalisés à l'aide de fonctions $\mu$. Dans l'étude de la formule des traces locale invariante, il nous faut utiliser et comparer tous les trois. Néanmoins, dans les énoncés finaux les caractères pondérés canoniquement normalisés sont toujours préférés.

Récapitulons le rapport entre les résultats principaux comme suit.
$$\xymatrix{
  & \txt{La formule de Plancherel} \ar@{-|>}[ld] \ar@{-|>}[rd] \ar@{-|>}[dd] & \\
  \txt{Normalisation des \\ opérateurs d'entrelacement} \ar@{-|>}[rd] \ar@{-|>}'[r] [rr] & & \txt{Théorème de \\ Paley-Wiener invariant} \ar@{-|>}[ld] \\
  & \txt{La formule des traces \\ locale invariante} \ar@{-|>}[d] & \\
  \txt{Régularité des \\ caractères} \ar@{-|>}[r] & \txt{Densité des\\caractères tempérés} &
}$$

Les corps locaux dans \S\S 4-5 sont supposés de caractéristique nulle.

\paragraph{Notations générales} Sauf mention expresse du contraire, les notations seront celles de \cite{Li10a}, qui sont compatibles avec celles d'Arthur. Si $V$ est un espace vectoriel, son dual est noté $V^\vee$ et l'accouplement entre eux est désigné par $\angles{\check{v}, v}$ ou $\check{v}(v)$.

\paragraph{Remerciements}
Je remercie très vivement J.-L. Waldpsurger pour ses remarques sur le manuscrit, ainsi que T. Ikeda, P. McNamara et P. Mezo pour des conversations utiles. Je remercie aussi le referee pour ses conseils très pertinents.

\section{La formule de Plancherel}\label{sec:Plancherel}
\subsection{Définitions de base}\label{sec:def-base}
On vise à établir la formule de Plancherel pour les revêtements locaux. Le cas archimédien est déjà traité dans les travaux de Harish-Chandra \cite{HC76}. On se limite au cas où $F$ est un corps local non archimédien. Notons $q$ le cardinal du corps résiduel de $F$. Nous suivrons l'approche de Waldspurger \cite{Wa03} en mettant l'accent sur les énoncés et les changements nécessaires pour adapter ses preuves aux revêtements.

Soient $m \in \Z$, $m \geq 1$ et $G$ un $F$-groupe réductif connexe. Considérons un revêtement à $m$ feuillets
$$ 1 \to \bmu_m \to \tilde{G} \stackrel{\rev}{\to} G(F) \to 1 . $$

Nous adoptons les conventions de \cite{Li10a}: les objets associés au revêtement sont affectés de $\sim$, e.g. $\tilde{P}, \tilde{x}$; les symboles sans $\sim$ désignent leurs images par $\rev$. Une décomposition de Lévi d'un parabolique $P$ s'écrit toujours de la forme $P=MU$. On construira des objets associés à $\tilde{G}$ ainsi qu'à ses sous-groupes de Lévi; on fera référence au groupe en question en affectant les notations d'exposant lorsqu'il convient de le faire.

Fixons un sous-groupe de Lévi minimal $M_0$ de $G$, un sous-groupe compact maximal spécial $K$ de $G(F)$ en bonne position relativement à $M_0$, et $P_0 \in \mathcal{P}(M_0)$. On définit ainsi les sous-groupes de Lévi ou paraboliques standards et semi-standards. La proposition suivante est aussi valable pour le cas $F$ archimédien.

\begin{proposition}\label{prop:dagger}\index[iFT2]{$A_M(F)^\dagger$}
   Soient $F$ un corps local, $\rev: \tilde{G} \to G(F)$ un revêtement à $m$ feuillets. Il existe une famille de sous-groupes $A_M(F)^\dagger$ de $A_M(F)$, où $M$ parcourt les sous-groupes de Lévi de $G$, telle que
  \begin{enumerate}
    \item $A_M(F)^\dagger$ est ouvert et fermé dans $A_M(F)$ d'indice fini;
    \item $\widetilde{A_M}^\dagger := \rev^{-1}(A_M(F)^\dagger)$ est central dans $\tilde{M}$;
    \item si $M_1, M_2$ sont des sous-groupes de Lévi et $y M_1 y^{-1} = M_2$ où $y \in G(F)$, alors $y A_{M_1}(F)^\dagger y^{-1} = A_{M_2}(F)^\dagger$;
    \item soit $L$ un Lévi contenant $M$, alors $A_L(F)^\dagger \subset A_M(F)^\dagger$;
    \item on pose\index[iFT2]{$\mathfrak{a}_{M,F}, \tilde{\mathfrak{a}}_{M,F}, \tilde{\mathfrak{a}}_{M,F}^\dagger$}
      \begin{align*}
        \mathfrak{a}_{M,F} & := H_M(M(F)), \\
        \tilde{\mathfrak{a}}_{M,F} & := H_M(A_M(F)), \\
        \tilde{\mathfrak{a}}_{M,F}^\dagger & := H_M(A_M(F)^\dagger),
      \end{align*}
      alors $\tilde{\mathfrak{a}}_{L,F}^\dagger = \tilde{\mathfrak{a}}_{M,F}^\dagger \cap \mathfrak{a}_L$ pour tout $L \supset M$.
  \end{enumerate}
\end{proposition}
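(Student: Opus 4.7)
Je propose de définir uniformément $A_M(F)^\dagger := A_M(F)^m$, le sous-groupe des puissances $m$-ièmes (où $m$ est le degré du revêtement). Cette définition est intrinsèque et fonctorielle en $M$, ce qui rendra les propriétés (3) et (4) presque immédiates. Pour (1): $A_M$ est un $F$-tore déployé (composante déployée maximale du centre de $M$), donc $A_M(F) \cong (F^\times)^{\dim A_M}$; et $F^\times/(F^\times)^m$ est fini pour tout corps local $F$, ce qui fait de $A_M(F)^m$ un sous-groupe ouvert, fermé et d'indice fini. La propriété (3) découle de $y A_{M_1} y^{-1} = A_{M_2}$ et de la commutation entre élévation à la puissance $m$ et conjugaison. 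Pour (4): $Z(L)$ est contenu dans tout tore maximal de $L$, donc dans un tore maximal de $M \subset L$, et $Z(L)$ centralise $M$; ainsi $Z(L) \subset Z(M)$, $A_L \subset A_M$, et enfin $A_L(F)^m \subset A_M(F)^m$.

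Le cœur du plan est la propriété (2). J'utiliserais le fait qu'une extension centrale $1 \to A \to \tilde{H} \to H \to 1$ par un groupe abélien $A$ induit, via le commutateur dans $\tilde{H}$, une application $c$ définie sur $\Omega := \{(h_1, h_2) \in H \times H : [h_1, h_2] = 1\}$ et à valeurs dans $A$; et que $c$ est bimultiplicative en chaque argument dès que l'autre est fixé. Cette bimultiplicativité repose sur l'identité $[\tilde a \tilde b, \tilde c] = \tilde a [\tilde b, \tilde c] \tilde a^{-1} \cdot [\tilde a, \tilde c]$ et son analogue pour le second argument, combinée avec la centralité des valeurs de $c$ dans $A$. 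Appliquant cela à $\tilde{M} \to M(F)$, on obtient une application bimultiplicative $c: A_M(F) \times M(F) \to \bmu_m$ (puisque $A_M(F)$ commute à $M(F)$). Comme $\bmu_m$ est d'exposant divisant $m$, $c(z^m, \cdot) = c(z, \cdot)^m \equiv 1$ pour tout $z \in A_M(F)$, donc $\rev^{-1}(A_M(F)^m)$ est central dans $\tilde{M}$.

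Pour la propriété (5), j'observerais que $H_M$ restreinte à $A_L(F)$ coïncide avec $H_L$ via l'inclusion canonique $\mathfrak{a}_L \hookrightarrow \mathfrak{a}_M$ induite par $A_L \subset A_M$. Il s'ensuit $\tilde{\mathfrak{a}}_{M,F}^\dagger = m\tilde{\mathfrak{a}}_{M,F}$ et $\tilde{\mathfrak{a}}_{L,F}^\dagger = m\tilde{\mathfrak{a}}_{L,F}$. L'inclusion $m\tilde{\mathfrak{a}}_{L,F} \subset m\tilde{\mathfrak{a}}_{M,F} \cap \mathfrak{a}_L$ est évidente; réciproquement, si $X = mY$ avec $X \in \mathfrak{a}_L$ et $Y \in \tilde{\mathfrak{a}}_{M,F}$, alors $Y = X/m$ appartient au sous-espace $\R$-linéaire $\mathfrak{a}_L \subset \mathfrak{a}_M$, et la décomposition à isogénie près $A_M \sim A_L \cdot A_M^L$ entraîne $\tilde{\mathfrak{a}}_{M,F} \cap \mathfrak{a}_L = \tilde{\mathfrak{a}}_{L,F}$, d'où $Y \in \tilde{\mathfrak{a}}_{L,F}$.

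La principale subtilité anticipée est la justification rigoureuse de la bimultiplicativité du commutateur dans la propriété (2); tout le reste découle directement de la structure des tores déployés et de la fonctorialité du centre.
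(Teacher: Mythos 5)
Your proposal coincides with the paper's own construction: the paper likewise sets $A_M(F)^\dagger := A_M(F)^m$, notes $\tilde{\mathfrak{a}}_{M,F}^\dagger = m\,\tilde{\mathfrak{a}}_{M,F}$ and deduces (5) from $\tilde{\mathfrak{a}}_{L,F} = \tilde{\mathfrak{a}}_{M,F}\cap\mathfrak{a}_L$, dismissing the remaining points as trivial. You merely spell out the bimultiplicative commutator argument that the paper leaves implicit, so this is the same proof with more detail.
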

\begin{proof}
  Indiquons une construction possible: posons $A_M(F)^\dagger := A_M(F)^m$ pour tout Lévi $M$. Alors on a $\tilde{\mathfrak{a}}_{M,F}^\dagger = m \cdot \tilde{\mathfrak{a}}_{M,F}$. Soit $L \supset M$, la propriété $\tilde{\mathfrak{a}}_{L,F}^\dagger = \tilde{\mathfrak{a}}_{M,F}^\dagger \cap \mathfrak{a}_L$ résulte du fait que $\tilde{\mathfrak{a}}_{L,F} = \tilde{\mathfrak{a}}_{M,F} \cap \mathfrak{a}_L$. Les autres propriétés sont triviales.
\end{proof}

En fait, on a déjà utilisé une version moins précise de ces sous-groupes dans \cite{Li10a}. Fixons une telle famille désormais. Lorsque $M = M_0$, on utilise les notations $A_0(F)$, $A_0(F)^\dagger$ et $\widetilde{A_0}^\dagger$.

Rappelons que dans \cite{Li10a} on a défini l'application de Harish-Chandra\index[iFT2]{$H_{\tilde{G}}$} $H_{\tilde{G}}: \tilde{G} \to \mathfrak{a}_G$ par $H_{\tilde{G}} = H_G \circ \rev$ et $\tilde{G}^1 := \Ker(H_{\tilde{G}}) = \rev^{-1}(G(F)^1)$\index[iFT2]{$\tilde{G}^1$}. Elle diffère de celle de \cite{Wa03} par un signe, mais peu importe. Rappelons que, lorsque $F$ est non archimédien, on choisit la mesure de $A_G(F)$ de sorte que $\mes(A_G(F) \cap \Ker H_G) = 1$. On munit $A_G(F)^\dagger$ de la mesure induite de $A_G(F)$.

Soit $M$ un Lévi de $G$. Étant donné $A_M(F)^\dagger$, on définit\index[iFT2]{$\iota_M$}
\begin{gather}\label{eqn:iota}
  \iota_M := [A_M(F):A_M(F)^\dagger]^{-1},
\end{gather}
alors pour toute fonction $\varphi \in C_c^\infty(G(F))$ invariante par $A_M(F)^\dagger$, l'intégrale
$$\iota_M \int_{A_M(F)^\dagger \backslash G(F)} \varphi(x) \dd x $$
ne change pas si l'on remplace $A_M(F)^\dagger$ par un sous-groupe d'indice fini $A_M(F)^\ddagger$. Si $\varphi$ est invariante par $A_M(F)$, alors cette l'intégrale est égale à $\int_{A_M(F) \backslash G(F)} \varphi(x) \dd x$.

On pose\index[iFT2]{$X(\tilde{G}), \Im X(\tilde{G})$}
$$ X(\tilde{G}) := \Hom(\tilde{G}/\tilde{G}^1, \C^\times). $$
Selon nos définitions, $X(\tilde{G})$ s'identifie à $X(G)$. Il est muni d'une structure de variété algébrique complexe, isomorphe à $(\C^\times)^{\dim_{\R} \mathfrak{a}_G}$, qui se déduit de l'homomorphisme surjectif
$$ \mathfrak{a}_{G,\C}^* \to X(G) $$
qui envoie $\chi \otimes s \in \mathfrak{a}_{G,\C}^*$ sur $|\chi(\cdot)|^s$; rappelons que $\mathfrak{a}_{G,\C}^* = X^*(G) \otimes_\Z \C$ où $X^*(G) := \Hom(G,\Gm)$. Son noyau est de la forme $\frac{2\pi i}{\log q} L$ où $L$ est un réseau dans $X^*(G) \otimes_\Z \Q$. Cela permet de définir la partie réelle $\Re(\chi)$ pour tout $\chi \in X(G)$. Notons $\Im X(G) := \{\chi \in X(G) : \Re(\chi)=0 \}$. On définit ainsi $\Im X(\tilde{G}) = \Im X(G)$.

Soit $L$ un sous-groupe de $\mathfrak{a}_G$, on pose $L^\vee := \Hom(L, 2\pi i \Z) \subset i\mathfrak{a}_G^*$\index[iFT2]{$L^\vee$}. Alors
$$ \Im X(\tilde{G}) = i\mathfrak{a}_G^* / \mathfrak{a}_{G,F}^\vee . $$
C'est un tore compact comme $F$ est non archimédien. Introduisons des mesures de Haar et des constantes comme suit.
\begin{itemize}
  \item On munit $\Im X(\tilde{G})$ de la mesure de Haar de sorte que l'application quotient
    $$ i\mathfrak{a}_G^* / \mathfrak{a}_{G,F}^\vee \to i\mathfrak{a}_G^* / \tilde{\mathfrak{a}}_{G,F}^\vee $$
    préserve localement les mesures et que $\mes(i\mathfrak{a}_G^* / \tilde{\mathfrak{a}}_{G,F}^\vee)=1$. Ceci est compatible avec la convention dans \cite[\S 2.5]{Li10a}.
  \item Pour tout $M \in \mathcal{L}(M_0)$, choisissons la mesure de Haar sur $M(F)$ telle que $\mes(K \cap M(F))=1$. De tels choix sont invariants par $W^G_0$.
  \item On munit $\tilde{M}$ de la mesure de Haar telle que $\mes(\rev^{-1}(E))=\mes(E)$ pour tout $E \subset M(F)$ mesurable. La même convention s'applique aux groupes $\tilde{K}$, $\tilde{K} \cap \tilde{M}$ et $\widetilde{A_M}^\dagger$.
  \item Soit $P=MU \in \mathcal{F}(M_0)$, on munit $U(F)$ de la mesure de Haar telle que $\mes(U(F) \cap K)=1$. On note $\bar{P}=M\bar{U}$ l'opposé de $P$. Rappelons aussi que Harish-Chandra a défini la constante positive\index[iFT2]{$\gamma(G\arrowvert M)$}
    $$ \gamma(P) := \int_{\bar{U}(F)} \delta_P(m(\bar{u})) \dd \bar{u} $$
    où $\bar{u}= u(\bar{u}) m(\bar{u}) k(\bar{u})$ selon la décomposition d'Iwasawa $G(F)=U(F)M(F)K$. Elle est proportionnelle à la mesure de Haar sur $\bar{U}(F)$, qui est déjà choisie. En fait, elle ne dépend que de $G$ et $M$ (cf. \cite[p.240]{Wa03}), donc c'est loisible de la noter $\gamma(G|M)$. On a la formule d'intégrale pour toute $f \in C_c(\tilde{G})$
    \begin{gather}\label{eqn:UMU}
      \gamma(G|M) \int_{\tilde{G}} f(\tilde{x}) \dd \tilde{x} = \int_{U(F)} \int_{\tilde{M}} \int_{\bar{U}(F)} \delta_P(m)^{-1} f(u\tilde{m}\bar{u}) \dd u \dd\tilde{m} \dd \bar{u} .
    \end{gather}
\end{itemize}

On définit le sous-ensemble $\widetilde{M_0}^+$ (resp. $\widetilde{A_0}^{\dagger,+}$) comme l'image réciproque de
$$ \overline{\mathfrak{a}_0^+} := \{ H \in \mathfrak{a}_0 :  \forall \alpha \in \Delta_0, \; \angles{\alpha, H} \geq 0 \} $$
par $H_{\tilde{M_0}}$ (resp. $H_{\tilde{M_0}}$ restreint à $\widetilde{A_0}^\dagger$); ici on a changé les notations de \cite{Wa03} afin d'alléger les symboles. On en déduit la décomposition suivante.

\begin{proposition}\label{prop:KAK}
  Il existe un sous-ensemble fini $\tilde{\Gamma}$ de $\tilde{G}$ tel que
  $$ \tilde{G} = \bigsqcup_{\tilde{\gamma} \in \tilde{\Gamma}} \tilde{\gamma} \tilde{K} \widetilde{M_0}^+ \tilde{K} .$$
\end{proposition}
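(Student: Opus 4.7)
La stratégie est de ramener l'énoncé à la décomposition de Cartan classique pour $G(F)$ et de la relever à $\tilde G$ en exploitant la centralité de $\bmu_m$.

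Premièrement, on rappellerait la décomposition de type $KAK$ pour le groupe réductif $p$-adique $G(F)$, conséquence de la théorie de Bruhat--Tits et déjà présente dans \cite{Wa03} : il existe un sous-ensemble fini $\Gamma \subset G(F)$ tel que
$$ G(F) = \bigsqcup_{\gamma \in \Gamma} \gamma K M_0(F)^+ K, $$
où $M_0(F)^+ := H_{M_0}^{-1}(\overline{\mathfrak{a}_0^+})$. Nos conventions donnent alors $\tilde K = \rev^{-1}(K)$ et, puisque $H_{\widetilde{M_0}} = H_{M_0} \circ \rev$, aussi $\widetilde{M_0}^+ = \rev^{-1}(M_0(F)^+)$, ce qui prépare la remontée.

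La seconde étape est élémentaire. Pour chaque $\gamma \in \Gamma$ on choisirait un relèvement $\tilde\gamma \in \rev^{-1}(\gamma)$ arbitraire et on poserait $\tilde\Gamma := \{\tilde\gamma : \gamma \in \Gamma\}$. L'ingrédient technique est l'égalité ensembliste $\rev^{-1}(AB) = \rev^{-1}(A)\,\rev^{-1}(B)$ valable pour tous $A, B \subset G(F)$ : l'inclusion directe est tautologique, et pour la réciproque, si $\rev(\tilde x)=ab$, alors $\tilde x = \zeta \tilde a \tilde b = (\zeta\tilde a)\tilde b$ pour des relèvements $\tilde a, \tilde b$ et $\zeta \in \bmu_m$, et $\zeta\tilde a$ relève encore $a$. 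En itérant, il vient
$$ \tilde\gamma \tilde K \widetilde{M_0}^+ \tilde K = \rev^{-1}\bigl( \gamma K M_0(F)^+ K \bigr). $$

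Enfin, la disjonction des pièces en bas entraîne celle de leurs préimages, et leur union couvre $\tilde G = \rev^{-1}(G(F))$, d'où la décomposition cherchée. Le choix du relèvement $\tilde\gamma$ est sans importance, puisque $\bmu_m \subset \tilde K$ (car $e \in K$) implique $\zeta \tilde\gamma \tilde K = \tilde\gamma\tilde K$ pour tout $\zeta \in \bmu_m$. Aucune difficulté technique sérieuse ne s'annonce : l'ingrédient de fond est la décomposition classique pour $G(F)$, rappelée dans \cite{Wa03}; la remontée au revêtement est purement formelle grâce à la centralité du noyau $\bmu_m$.
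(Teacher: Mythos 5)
Your approach is exactly the paper's: cite the Cartan decomposition $G(F)=\bigsqcup_{\gamma\in\Gamma}\gamma K \overline{M_0^+}K$ from \cite{Wa03} (the paper's proof is a one-liner doing the same) and then lift by taking $\rev$-preimages, using centrality of $\bmu_m$ and $\bmu_m\subset\tilde K$ to see that $\rev^{-1}(\gamma K \overline{M_0^+} K)=\tilde\gamma\tilde K\widetilde{M_0}^+\tilde K$ independently of the chosen lift $\tilde\gamma$. Your write-up is merely more explicit than the paper's terse proof about the set-theoretic step $\rev^{-1}(AB)=\rev^{-1}(A)\rev^{-1}(B)$ and about why disjointness is preserved, but the argument is the same.
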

\begin{proof}
  Cela résulte de la décomposition correspondante $G(F) = K \overline{M_0^+} K$, où $\overline{M_0^+}$ est l'image réciproque de $\overline{\mathfrak{a}_0^+}$ par $H_{M_0}$.
\end{proof}

Enfin, on peut prendre une fonction hauteur $\|\cdot\|_G: G(F) \to \{r \in \R : r \geq 1\}$ adaptée à $K$ comme dans \cite[p.242]{Wa03} et on pose $\|\cdot\|_{\tilde{G}} := \|\rev(\cdot)\|_G$. Ceci joue le rôle de la fonction $\|\cdot\|$ dans \cite{Wa03} qui intervient dans diverses majorations. Il convient aussi de considérer la fonction $\sigma(\tilde{x}) = \sigma(x) := \sup \{1, \log\|x\|\}$.

\subsection{Représentations}\label{sec:representations}
Soit $P=MU$ un sous-groupe parabolique de $G$. Le scindage unipotent \cite[\S 2.2]{Li10a} donne une décomposition canonique $\tilde{P} = \tilde{M}U(F)$. Cela permet de définir le foncteur d'induction parabolique normalisée\index[iFT2]{$\mathcal{I}_{\tilde{P}}$}
$$ \mathcal{I}_{\tilde{P}}(\cdot) = \mathrm{Ind}^{\tilde{G}}_{\tilde{P}}(\delta_P^{1/2} \otimes \cdot) $$
où $\delta_P$ signifie la fonction module de $P(F)$ composée avec $\rev$, ce qui est aussi la fonction module de $\tilde{P}$.

De même, on sait définir le foncteur de Jacquet normalisé: soit $(\pi, V)$ une représentation lisse de $\tilde{G}$, on note $j_{\tilde{P}}: V \to V_{\tilde{P}}$ le quotient maximal sur lequel $U(F)$ opère trivialement. On obtient la représentation lisse $(\pi_{\tilde{P}}, V_{\tilde{P}})$ de $\tilde{M}$ en posant\index[iFT2]{$\pi_{\tilde{P}}$}
$$ \pi_{\tilde{P}}(\tilde{m}) j_{\tilde{P}}(v) = \delta_P(m)^{-1/2} j_{\tilde{P}}(\pi(\tilde{m})v), \qquad v \in V . $$

En outre, le groupe $X(\tilde{G})$ opère sur l'ensemble des classes représentations lisses (resp. admissibles, unitaires) par produit tensoriel. On peut ainsi développer une théorie de décomposition de Bernstein comme le cas de groupes réductifs connexes. Ces foncteurs ont des propriétés algébriques comme le cas de groupes réductifs connexes, par exemple le lemme géométrique de Bernstein-Zelevinsky, l'admissibilité uniforme, la réciprocité de Frobenius et le second théorème d'adjonction, pour en citer quelques uns. Observons en passant que toutes ces opérations préservent l'équivariance sous $\bmu_m$.

Au lieu de reproduire toutes les démonstrations des assertions ci-dessus, c'est peut-être plus raisonnable de les admettre comme les hypothèses. Donnons toutefois une esquisse.
\begin{itemize}
  \item Le groupe topologique $\tilde{G}$ est un $l$-groupe au sens de \cite[1.1]{BZ76}. Il suffit que $\tilde{G}$ soit un $l$-espace; cela résulte du fait que $G(F)$ est un $l$-espace.
  \item De même, $\tilde{G}$ est dénombrable à l'infini (c'est-à-dire il est une réunion dénombrable de sous-ensembles compacts) d'après la propriété correspondante de $G(F)$.
  \item Pour le lemme géométrique, on doit vérifier les conditions (1)-(4) et $(\bigstar)$ dans \cite[5.1]{BZ77}. Pour cela, on utilise la décomposition de Lévi sur le revêtement $\tilde{P}=\tilde{M}U(F)$ et ses conséquence suivantes: l'action adjointe de $\tilde{M}$ sur $U(F)$ se factorise par $\rev: \tilde{M} \to M(F)$, et l'action du groupe de Weyl $W^G_0$ sur les sous-groupes $\tilde{P}, \tilde{M}$, $U(F) \subset \tilde{G}$ se déduit de celle sur $P, M, U \subset G$.
\end{itemize}
Des cas spéciaux du lemme géométrique pour revêtements se trouvent dans \cite[\S I.2]{KP84} et \cite{HM10}.

Passons maintenant à l'aspect ``analytique''. Le résultat suivant est important pour l'étude de coefficients matriciels de modules de Jacquet, cf. \cite[I.2]{Wa03}. Faute d'avoir une référence convenable, on en donnera aussi une démonstration.

\begin{proposition}
  Soit $V$ un sous-espace de dimension finie de $C^\infty(\widetilde{A_G}^\dagger)$ stable par la représentation régulière à droite de $\widetilde{A_G}^\dagger$, notée $\rho$. Alors il existe un sous-ensemble fini $\mathcal{X}$ de $\Hom(\widetilde{A_G}^\dagger, \C^\times)$ et un entier $d \geq 0$, tels que pour tout $\chi \in \mathcal{X}$ et tout $f \in V$, il existe un polynôme $P_{\chi,f}$ sur $\mathfrak{a}_G$, $\deg P_{\chi,f} \leq d$, tel que
  $$ f(\tilde{a}) = \sum_{\chi \in \mathcal{X}} \chi(\tilde{a}) P_{\chi,f}(H_G(a)), \qquad \tilde{a} \in \widetilde{A_G}^\dagger. $$

  Si l'on prend $\mathcal{X}$ minimal vérifiant les propriétés ci-dessus pour tout $f$, alors pour tout $\chi \in \mathcal{X}$, les fonctions $\tilde{a} \mapsto \chi(\tilde{a})$ et $\tilde{a} \mapsto \chi(\tilde{a}) P_{\chi,f}(H_M(a))$ sont dans $V$.
\end{proposition}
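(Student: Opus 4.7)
Le plan est de ramener la question à l'analyse d'une représentation de dimension finie d'un groupe abélien de type fini. D'abord, observons que $\widetilde{A_G}^\dagger$ est commutatif d'après la Proposition \ref{prop:dagger}. Posons $K_0 := \widetilde{A_G}^\dagger \cap \rev^{-1}(\Ker H_G)$, qui est un sous-groupe ouvert compact de $\widetilde{A_G}^\dagger$. L'application $H_{\tilde{G}}$ induit un isomorphisme $\widetilde{A_G}^\dagger/K_0 \rightiso \tilde{\mathfrak{a}}_{G,F}^\dagger$, où le membre de droite est un $\Z$-module libre de rang $n := \dim_\R \mathfrak{a}_G$. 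Puisque $V$ est de dimension finie et lisse, l'action de $K_0$ sur $V$ se factorise par un quotient fini. Ce groupe étant abélien compact, $V$ se décompose en somme directe de composantes isotypiques $V = \bigoplus_\psi V_\psi$, chacune stable sous $\widetilde{A_G}^\dagger$ entier par commutativité.

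Je choisirai ensuite des relèvements $\tilde{a}_1, \ldots, \tilde{a}_n \in \widetilde{A_G}^\dagger$ d'une $\Z$-base de $\tilde{\mathfrak{a}}_{G,F}^\dagger$. Les opérateurs $\rho(\tilde{a}_i)$ commutent deux à deux sur $V_\psi$ et sont inversibles, donc admettent une décomposition multiplicative de Jordan simultanée $\rho(\tilde{a}_i) = S_i (1+N_i)$ avec $S_i$ semisimples, $N_i$ nilpotents, tous commutant. On décompose alors $V_\psi$ en sous-espaces caractéristiques communs $V_{\psi, \alpha}$ indexés par $\alpha = (\alpha_1, \ldots, \alpha_n) \in (\C^\times)^n$ sur lesquels $S_i$ agit comme la multiplication par $\alpha_i$. À chaque couple $(\psi, \alpha)$ on associe le caractère $\chi: \widetilde{A_G}^\dagger \to \C^\times$ défini par $\chi(\tilde{a}_1^{m_1} \cdots \tilde{a}_n^{m_n} k_0) = \psi(k_0) \alpha_1^{m_1} \cdots \alpha_n^{m_n}$; l'unicité de la décomposition de Jordan assure que $\chi$ est bien défini sur $\widetilde{A_G}^\dagger$, et il est continu puisque les $m_i$ sont localement constants en $\tilde{a}$. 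Pour $f \in V_{\psi,\alpha}$ et $\tilde{a}$ comme ci-dessus, la formule
$$ f(\tilde{a}) = (\rho(\tilde{a}) f)(1) = \chi(\tilde{a}) \cdot \bigl[(1+N_1)^{m_1} \cdots (1+N_n)^{m_n} f\bigr](1) $$
exprime $f(\tilde{a})$ comme produit de $\chi(\tilde{a})$ par un polynôme en $(m_1, \ldots, m_n)$ de degré total majoré par $n(\dim V -1)$, puisque chaque $N_i$ est nilpotent d'indice $\leq \dim V$. Les $m_i$ étant les coordonnées de $H_G(a)$ dans la base choisie, on obtient l'expression souhaitée avec une borne uniforme $d$.

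Pour la dernière assertion, je remarquerai que la décomposition $V = \bigoplus_\chi V^\chi$ en sous-espaces caractéristiques généralisés sous $\widetilde{A_G}^\dagger$ est canonique, donc chaque projection $f \mapsto f_\chi$ est un endomorphisme de $V$; ainsi la fonction $\tilde{a} \mapsto \chi(\tilde{a}) P_{\chi,f}(H_G(a))$ appartient à $V$. Pour $\chi \in \mathcal{X}$ minimal, $V^\chi \neq 0$; les opérateurs $\rho(\tilde{a}_i) \chi(\tilde{a}_i)^{-1} - \identity$ étant nilpotents sur $V^\chi$, l'intersection de leurs noyaux est non nulle et se compose des vrais vecteurs propres pour $\chi$, soit $\C \cdot \chi$. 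Ainsi $\chi \in V$. L'étape la plus délicate me paraît être la vérification que $\chi$ est un homomorphisme bien défini sur $\widetilde{A_G}^\dagger$ tout entier, c'est-à-dire que $\psi(k_0) \alpha^m = 1$ dès que $\tilde{a}_1^{m_1} \cdots \tilde{a}_n^{m_n} k_0 = 1$; elle repose sur le fait que la partie semisimple du membre de gauche doit être l'identité, ce qui découle de l'unicité de la décomposition de Jordan appliquée sur chaque $V_{\psi,\alpha}$.
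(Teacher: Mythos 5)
Your proof is correct and follows essentially the same route as the paper: decompose $V$ into simultaneous generalized eigenspaces for the commuting operators $\rho(\tilde{a})$, then establish polynomiality of the unipotent part after descending to the lattice $\widetilde{A_G}^\dagger/(\widetilde{A_G}^\dagger\cap\tilde{G}^1)$. You are more explicit where the paper is terse — multiplicative Jordan decomposition of the $\rho(\tilde{a}_i)$ and the identity $(1+N_i)^{m_i}=\sum_j\binom{m_i}{j}N_i^j$ — whereas the paper reduces to $\chi=1$ by twisting and simply invokes the polynomiality of solutions of linear recurrences on the lattice. One small remark: the well-definedness of $\chi$ in your last paragraph is automatic and does not really rest on uniqueness of the Jordan decomposition; since the $\tilde{a}_i$ split the exact sequence $1\to K_0\to\widetilde{A_G}^\dagger\to\tilde{\mathfrak{a}}_{G,F}^\dagger\to 1$, the writing $\tilde{a}=\tilde{a}_1^{m_1}\cdots\tilde{a}_n^{m_n}k_0$ is genuinely unique (apply $H_G$ to recover the $m_i$), so the relation $\tilde{a}_1^{m_1}\cdots\tilde{a}_n^{m_n}k_0=1$ forces all $m_i=0$ and $k_0=1$ at once.
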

\begin{proof}
  Prenons $d = \dim V$. La finitude de $\dim V$ et la commutativité de $\widetilde{A_G}^\dagger$ donnent une décomposition finie $V = \bigoplus_{\chi \in \mathcal{X}} V_\chi$, stable par $\rho$, où
  $$ V_\chi = \bigcap_{\tilde{a} \in \widetilde{A_G}^\dagger} \Ker(\rho(\tilde{a}) - \chi(\tilde{a}))^d . $$

  On se ramène ainsi au cas où $\mathcal{X}=\{\chi\}$ est un singleton. En multipliant les éléments de $V_\chi$ par $\tilde{a} \mapsto \chi(\tilde{a})^{-1}$, on peut supposer de plus que $\chi=1$. Soit $f \in V_1$. Comme $\rho(\tilde{a})$ agit sur $V_1$ comme une matrice unipotente pour tout $\tilde{a}$, la fonction $f$ se factorise par le sous-groupe compact $\widetilde{A_G}^\dagger \cap \tilde{G}^1$. Alors l'équation
  $$ (\rho(\tilde{a}) - \identity)^d f = 0, \qquad \tilde{a} \in \widetilde{A_G}^\dagger $$
  devient un système de suites récurrentes linéaires sur le réseau $\tilde{\mathfrak{a}}_{G,F} \subset \mathfrak{a}_G$, et l'assertion en découle.
\end{proof}

\begin{definition}\index[iFT2]{$V_\chi$}\index[iFT2]{$\Exp(\pi)$}
  Soient $(\pi,V)$ une représentation admissible de $\tilde{G}$ et $\chi \in \Hom(\widetilde{A_G}^\dagger, \C^\times)$. Définissons
  $$ V_\chi = \left\{v \in V : \exists d, \; v \in \bigcap_{\tilde{a} \in \widetilde{A_G}^\dagger} \Ker(\pi(\tilde{a}) - \chi(\tilde{a}))^d  \right\}. $$

  On a alors $V = \bigoplus_\chi V_\chi$. Définissons l'ensemble des exposants de $(\pi,V)$ par
  $$ \Exp(\pi) := \{ \chi \in \Hom(\widetilde{A_G}^\dagger, \C^\times) : V_\chi \neq 0 \}. $$
\end{definition}

\begin{remark}
  Soit $\chi \in \Exp(\pi)$. On peut vérifier aisément que $\Re\chi$ est un objet ``infinitesimal'':  il ne change pas si l'on remplace $A_G(F)^\dagger$ par un sous-groupe ouvert d'indice fini $A_G(F)^\ddagger$ et remplace $\chi$ par sa restriction à $\widetilde{A_G}^\ddagger$.
\end{remark}

Posons $C_\text{lisse}(\tilde{G})$ l'espace de fonctions sur $\tilde{G}$ bi-invariantes par un sous-groupe ouvert et compact. Il y a deux représentations $\rho$, $\lambda$ de $\tilde{G}$ sur cet espace, définies par
\begin{align*}
  (\rho(\tilde{x})f)(\tilde{y}) & = f(\tilde{y}\tilde{x}), \\
  (\lambda(\tilde{x})f)(\tilde{y}) & = f(\tilde{x}^{-1}\tilde{y}).
\end{align*}

Soit $(\pi,V)$ une représentation admissible de $\tilde{G}$. Notons $(\check{\pi},\check{V})$ sa contragrédiente. Soient $v \in V$, $\check{v} \in \check{V}$. On définit le coefficient matriciel comme l'élément de $C_\text{lisse}(\tilde{G})$
$$ \tilde{x} \mapsto \angles{\pi(\tilde{x})v, \check{v}}. $$

Notons $\mathcal{A}(\pi)$ la sous-représentation de $C_\text{lisse}(\tilde{G})$ (par $\rho$ et $\lambda$) engendrée par tous les coefficients de $\pi$, et posons
$$ \mathcal{A}(\tilde{G}) = \sum_\pi \mathcal{A}(\pi) = \bigcup_\pi \mathcal{A}(\pi). $$

Soit $P=MU$ un sous-groupe parabolique de $G$. On dispose de l'application terme constant le long de $\tilde{P}$, cf. \cite[I.6.2]{Wa03}
\begin{align*}
  \mathcal{A}(\tilde{G}) & \to \mathcal{A}(\tilde{M}), \\
  f  & \mapsto f_{\tilde{P}}.
\end{align*}
La fonction $f_{\tilde{P}}$ est caractérisée par la propriété suivante. Pour tout $\tilde{m} \in \tilde{M}$, il existe $\epsilon > 0$ tel que, si $\tilde{a} \in \widetilde{A_M}^\dagger$ et $|\alpha(a)|_F < \epsilon$ pour tout $\alpha \in \Sigma_P$, alors
\begin{gather}\label{eqn:terme-const}
  (\delta_P^{-1/2} f)(\tilde{m}\tilde{a}) = f_{\tilde{P}}(\tilde{m}\tilde{a}).
\end{gather}

La démonstration est basée sur un théorème de Casselman \cite[I.4.1]{Wa03}, la preuve-là s'adapte aux revêtements à l'aide de la  Proposition \ref{prop:KAK}.

\subsection{Fonctions de Schwartz-Harish-Chandra}
Introduisons la fonction $\Xi^G$ de Harish-Chandra sur $G(F)$. Rappelons que $\Xi^G$ est définie comme un coefficient matriciel de l'induction parabolique normalisée de la représentation triviale de $\tilde{P}_0$. On pose simplement\index[iFT2]{$\Xi^{\tilde{G}}$}
$$ \Xi^{\tilde{G}} := \Xi^G \circ \rev. $$

Cette fonction vérifie des majorations liées à $\|\cdot\|$ et $\sigma$, cf. \cite[II]{Wa03}. De même, les résultats d'intégrabilité de \cite[II]{Wa03} demeurent valables pour $\tilde{G}$, ce qui justifie toutes les intégrales que l'on considérera.

Soit $P \in \mathcal{F}(M_0)$. On note
$$ {}^+ \mathfrak{a}^{G *}_P := \sum_{\alpha \in \Delta_P} \R_{>0} \cdot \alpha $$
et on note ${}^+ \bar{\mathfrak{a}}^{G *}_P$ son adhérence.

\begin{proposition}[Cf. {\cite[III.1.1]{Wa03}}]
  Soit $(\pi,V)$ une représentation admissible de $\tilde{G}$ admettant un caractère central unitaire. Les conditions suivantes sont équivalentes.
  \begin{enumerate}
    \item Les valeurs absolues des coefficients de $\pi$ sont de carrés intégrables sur $\tilde{G}/\widetilde{A_G}^\dagger$.
    \item Pour tout parabolique semi-standard $P=MU$ et tout $\chi \in \Exp(\pi_{\tilde{P}})$, on a $\Re\chi \in {}^+ \mathfrak{a}^{G *}_P$.
    \item Pour tout parabolique standard propre maximal $P=MU$ et tout $\chi \in \Exp(\pi_{\tilde{P}})$, on a $\Re\chi \in {}^+ \mathfrak{a}^{G *}_P$.
  \end{enumerate}

  Supposons vérifiées ces conditions et soit $f \in \mathcal{A}(\pi)$. Pour tout $r \in \R$, il existe $c > 0$ tel que
  \begin{gather}\label{eqn:eq-w}
    |f(\tilde{x})| \leq c \Xi(x) (1+\sigma(x))^{-r}, \qquad \tilde{x} \in \tilde{G}.
  \end{gather}
\end{proposition}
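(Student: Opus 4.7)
The plan is to adapt the Casselman criterion to the covering setting, following \cite[III.1.1]{Wa03}. All the structural ingredients that Waldspurger uses—the decomposition into generalized eigenspaces $V=\bigoplus_\chi V_\chi$ under $\widetilde{A_G}^\dagger$, Casselman's asymptotic formula \eqref{eqn:terme-const}, the KAK decomposition (Proposition \ref{prop:KAK}), and the integration formula \eqref{eqn:UMU}—have just been established for covers, so the heart of the argument transfers essentially verbatim. The only genuine bookkeeping concerns the use of the subgroups $A_M(F)^\dagger$ in place of $A_M(F)$, controlled by Proposition \ref{prop:dagger}(5), which ensures that restriction of exponents behaves well under chains $P\subset Q$.

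The implication (2)$\Rightarrow$(3) is trivial. For (3)$\Rightarrow$(2), I would argue in two steps. First, by $W^G_0$-conjugation (which permutes the $A_M(F)^\dagger$ compatibly by Proposition \ref{prop:dagger}(3)) reduce any semi-standard $P$ to a standard one. Next, use transitivity of the Jacquet functor: if $P=MU\subset Q=LV$ is standard, then any $\chi\in\Exp(\pi_{\tilde P})$ restricts to an exponent of $(\pi_{\tilde Q})_{\tilde P\cap\tilde L}$ along $P\cap L$, and conversely the restriction $\chi|_{\widetilde{A_L}^\dagger}$ lies in $\Exp(\pi_{\tilde Q})$. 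Combined with the standard fact that ${}^+\mathfrak{a}^{G*}_P$ is exactly the set of real functionals whose image in $(\mathfrak{a}^G_Q)^*$ lies in ${}^+\mathfrak{a}^{G*}_Q$ for every maximal standard $Q\supset P$, the hypothesis (3) at every maximal $Q$ forces (2) at $P$.

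For (1)$\Rightarrow$(2): fix $P=MU$ standard, $f=\angles{\pi(\cdot)v,\check v}$, and $\chi\in\Exp(\pi_{\tilde P})$. Casselman's formula \eqref{eqn:terme-const} says that on $\tilde m\tilde a$ with $\tilde a$ sufficiently contracting in the chamber of $P$, one has $f(\tilde m\tilde a)=\delta_P(m)^{1/2}f_{\tilde P}(\tilde m\tilde a)$, and $f_{\tilde P}$ decomposes as $\sum_{\chi'}\chi'(\tilde a)P_{\chi',f}(H_M(a))$. Plugging this expansion into the decomposition \eqref{eqn:UMU} of $\int_{\widetilde{A_G}^\dagger\backslash\tilde G}|f|^2$, and noting that $\delta_P(m)^{1/2}$ times the Jacobian $\delta_P(m)^{-1}$ produces the factor $\delta_P(m)^{-1/2}$, one obtains for each $\chi'$ an integral over the cone in $\mathfrak{a}_M$. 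Square-integrability forces the coefficient $\Re\chi'+\rho_P$ (or rather, the combination controlling the exponential growth on the chamber) to be strictly inside the positive cone: this yields $\Re\chi\in{}^+\mathfrak{a}^{G*}_P$.

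For (2)$\Rightarrow$(1) together with the weak inequality \eqref{eqn:eq-w}: use Proposition \ref{prop:KAK} to reduce the estimate on $f$ to $\widetilde{M_0}^+$. On this cone, iterate the asymptotic expansion along $P_0$; the hypothesis (2) applied to all standard parabolics gives strict negativity of $\Re\chi+\rho_{P_0}$ on the chamber, producing, after regrouping with the polynomial factors $P_{\chi,v}$, the estimate $|f(\tilde x)|\leq c\,\Xi^{\tilde G}(x)(1+\sigma(x))^{-r}$ for every $r$. Square-integrability on $\widetilde{A_G}^\dagger\backslash\tilde G$ then follows from the known integrability of $\Xi^G(1+\sigma)^{-r}$ modulo $A_G(F)$ (hence modulo $A_G(F)^\dagger$ up to the finite factor $\iota_G^{-1}$).

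The main obstacle is the careful bookkeeping around $\widetilde{A_M}^\dagger$: one must verify that the implication (3)$\Rightarrow$(2) is insensitive to the choice of finite-index subgroup in Proposition \ref{prop:dagger} (so that $\Re\chi$, an infinitesimal object by the preceding remark, is the intrinsic quantity being tested), and that the asymptotic expansion \eqref{eqn:terme-const}, which is stated for $\tilde a\in\widetilde{A_M}^\dagger$, controls $f$ on enough of $\tilde M$ to drive both directions of the argument. Apart from this, the proof is essentially a transcription of Waldspurger's.
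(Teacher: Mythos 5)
The paper gives no proof of this proposition at all: it simply states it with the citation to Waldspurger III.1.1, relying on the blanket remark at the start of \S\ref{sec:Plancherel} that the proofs are ``plus ou moins identiques'' to the connected-group case once the structural ingredients (KAK decomposition, Casselman asymptotics, $\dagger$-subgroups, etc.) have been set up. Your sketch fleshes out exactly this transfer, with the correct ingredients and in the order one would find in \cite[III.1]{Wa03}, including the right care around the use of $A_M(F)^\dagger$ and the infinitesimal nature of $\Re\chi$; so it is consistent with the paper's (implicit) approach. One small caveat: in your step (1)$\Rightarrow$(2) the integral you actually need to make diverge is set up with the Cartan decomposition (Proposition \ref{prop:KAK}) rather than with \eqref{eqn:UMU}, which is an Iwasawa-type formula — but that is a detail of transcription, not of substance.
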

On appelle une telle représentation de carré intégrable modulo le centre, ou bien $L^2$ modulo le centre. Si $(\pi, V)$ est de carré intégrable modulo le centre, on définit son degré formel comme la constante positive $d(\pi)$ telle que\index[iFT2]{$d(\pi)$}
$$ \iota_G \int_{\widetilde{A_G}^\dagger \backslash \tilde{G}} \angles{\pi(\tilde{x})v, \check{v}} \angles{v', \pi^\vee(\tilde{x}) \check{v}'} \dd\tilde{x} = d(\pi)^{-1} \angles{v, \check{v}'} \angles{v', \check{v}}, \quad v, v' \in V, \; \check{v}, \check{v}' \in V^\vee . $$
Remarquons que $d(\pi)$ dépend de la mesure de Haar sur $\tilde{G}$ mais pas du choix de $A_G(F)^\dagger$.

Définissons des versions dites faibles de $C_\text{lisse}(\tilde{G})$ et $\mathcal{A}(\tilde{G})$ comme suit.\index[iFT2]{$C^w_\text{lisse}(\tilde{G})$}\index[iFT2]{$\mathcal{A}^w(\tilde{G})$}
\begin{align*}
  C^w_\text{lisse}(\tilde{G}) & := \{ f \in C_\text{lisse}(\tilde{G}) : \exists c, r \text{ tels que \eqref{eqn:eq-w} est vérifié }  \}, \\
  \mathcal{A}^w(\tilde{G}) & := \mathcal{A}(\tilde{G}) \cap C^w_\text{lisse}(\tilde{G}).
\end{align*}

Soit $(\pi,V)$ une représentation admissible de $\tilde{G}$. On dit que $(\pi,V)$ est tempérée si $\mathcal{A}(\pi) \subset \mathcal{A}^w(\tilde{G})$. Les représentations de carré intégrable modulo le centre sont tempérées. Un fait important est
$$ \mathcal{A}^w(\tilde{G}) = \bigcup_{\pi: \text{tempérée}} \mathcal{A}(\pi) = \sum_{\pi: \text{tempérée}} \mathcal{A}(\pi). $$

\begin{proposition}
  Soit $(\pi,V)$ une représentation admissible de $\tilde{G}$. Les conditions suivantes sont équivalentes.
  \begin{enumerate}
    \item $(\pi,V)$ est tempérée.
    \item Pour tout parabolique semi-standard $P=MU$ et tout $\chi \in \Exp(\pi_{\tilde{P}})$, on a $\Re\chi \in {}^+ \bar{\mathfrak{a}}^{G *}_P$.
    \item Pour tout parabolique standard propre maximal $P=MU$ et tout $\chi \in \Exp(\pi_{\tilde{P}})$, on a $\Re\chi \in {}^+ \bar{\mathfrak{a}}^{G *}_P$.
  \end{enumerate}
\end{proposition}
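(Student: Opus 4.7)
My plan is to mimic the proof of the analogous characterization of square-integrable-modulo-centre representations, with the weak cone $\overline{{}^+\mathfrak{a}^{G*}_P}$ in place of the open cone ${}^+\mathfrak{a}^{G*}_P$ and with $\mathcal{A}^w(\tilde{G})$ in place of $L^2$-coefficients. The three crucial inputs are already justified for covers in the excerpt: the Cartan decomposition (Proposition \ref{prop:KAK}), the constant-term theorem with its explicit domain of validity \eqref{eqn:terme-const}, and the majorations of $\Xi^{\tilde{G}} = \Xi^G \circ \rev$ transported from the reductive case via $\rev$. The equivalence (2)$\iff$(3) is the usual combinatorial step: (2)$\Rightarrow$(3) is immediate, and (3)$\Rightarrow$(2) follows from the transitivity $(\pi_{\tilde{P}_1})_{\widetilde{P_2 \cap M_1}} = \pi_{\tilde{P}_2}$ applied to a chain of standard parabolics containing a given $P_2$ inside a maximal $P_1$, together with the $W_0^G$-equivariance of the constructions to pass from standard to semi-standard.

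For (1)$\Rightarrow$(2), let $P = MU$ be semi-standard, fix $f \in \mathcal{A}(\pi)$ and $\tilde{m} \in \tilde{M}$. Applying the constant-term theorem \eqref{eqn:terme-const} and then decomposing $f_{\tilde{P}}(\tilde{m}\,\cdot)$ along the exponent decomposition of a finite-dimensional $\widetilde{A_M}^\dagger$-stable subspace of $\mathcal{A}(\pi_{\tilde{P}})$, I get
\[ \delta_P^{-1/2}(a) f(\tilde{m}\tilde{a}) = \sum_{\chi} \chi(\tilde{a}) P_{\chi,f,\tilde{m}}(H_M(a)) \]
for $\tilde{a} \in \widetilde{A_M}^\dagger$ with $|\alpha(a)|_F$ sufficiently small for all $\alpha \in \Sigma_P$. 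Because $(\pi,V)$ is tempered, $f \in \mathcal{A}^w(\tilde{G})$, so $|f(\tilde{m}\tilde{a})| \ll \Xi^{\tilde{G}}(m a)(1+\sigma(ma))^{-r}$ for every $r$, and standard estimates on $\Xi^{\tilde{G}}$ (pulled back from $\Xi^G$) yield $\Xi^{\tilde{G}}(a) \ll \delta_P(a)^{1/2}(1+\sigma(a))^N$ for $\tilde{a}$ in the chamber. Combined, one obtains $|\chi(\tilde a)| \ll (1+\sigma(a))^{N'}$ on this subcone; since each nonzero exponent grows like $e^{\langle \Re\chi, H_M(a)\rangle}$, deep translation by $H_M(a)$ inside $-\bar{\mathfrak{a}}_P^+$ forces $\langle \Re\chi, H_M(a)\rangle \leq \text{polynomial in } \sigma$, which is exactly $\Re\chi \in \overline{{}^+\mathfrak{a}^{G*}_P}$. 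I expect no new difficulty here beyond the bookkeeping forced by $A_M(F)^\dagger$: since $\Re\chi$ is an ``infinitesimal'' invariant (see the Remark), the cone condition on $\widetilde{A_M}^\dagger$ is independent of the choice of finite-index subgroup.

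For the converse (2)$\Rightarrow$(1), I use Proposition \ref{prop:KAK} to reduce to bounding coefficients on $\tilde{K}\widetilde{M_0}^+\tilde{K}$, and then use the constant-term formula once more: for $\tilde{a} \in \widetilde{A_0}^{\dagger,+}$ deep enough in the chamber, $f(\tilde{k}\tilde{a}\tilde{k}')$ equals $\delta_{P_0}^{1/2}(a) f_{\tilde{P}_0}(\tilde{k}\tilde{a}\tilde{k}')$, and the expansion of $f_{\tilde{P}_0}$ in exponents $\chi$ with $\Re\chi \in \overline{{}^+\mathfrak{a}^{G*}_{P_0}}$ gives, after multiplication by $\delta_{P_0}^{1/2}$, exactly the bound $|f(\tilde x)| \leq c\,\Xi^{\tilde G}(x)(1+\sigma(x))^{N}$; polynomial gains in $(1+\sigma)^{-r}$ for every $r$ come from a standard localisation argument using elements $\tilde{z} \in Z(\tilde{M}_0)$ acting by a character on the appropriate generalized eigenspace, as in Waldspurger. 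The main obstacle I anticipate is controlling the remainder terms on the boundary of the chamber, where \eqref{eqn:terme-const} fails; Waldspurger's method of patching by finitely many translates carries over verbatim once one observes that the shift by the central cover $\widetilde{A_0}^\dagger \subset \tilde{M}_0$ is harmless because $\widetilde{A_0}^\dagger$ is central in $\tilde{M}_0$ by Proposition \ref{prop:dagger}, and the factor $\iota_M$ of \eqref{eqn:iota} only affects normalisations of integrals, not the qualitative estimates.
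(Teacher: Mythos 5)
Your proposal follows exactly the route the paper has in mind: the paper states this proposition without proof, referring the reader to the corresponding statement in Waldspurger (III.2.2 there), and the expectation is that one transports his argument using the three ingredients you identify — the Cartan decomposition (Proposition \ref{prop:KAK}), the constant-term theorem with its explicit domain of validity \eqref{eqn:terme-const}, and the majorations of $\Xi^{\tilde{G}} = \Xi^G \circ \rev$ inherited from the linear group. Your sketches of (2)$\iff$(3), (1)$\Rightarrow$(2), and (2)$\Rightarrow$(1) match that programme.

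One small slip in the final paragraph: after obtaining $|f(\tilde{x})| \leq c\,\Xi^{\tilde{G}}(x)(1+\sigma(x))^{N}$ you add that ``polynomial gains in $(1+\sigma)^{-r}$ for every $r$ come from a standard localisation argument.'' For temperedness you neither need nor get such gains: the defining estimate for $\mathcal{A}^w(\tilde{G})$ only asks for the bound with \emph{some} $r$ (possibly negative, i.e., polynomial growth), and for a tempered representation that is not square-integrable modulo the centre the coefficients genuinely do not decay faster than every polynomial. The decay ``for every $r$'' is the stronger estimate \eqref{eqn:eq-w} characterizing the $L^2$-modulo-centre case (open cone); here the closed cone only yields the weak inequality. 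So you should simply stop at the bound $|f| \leq c\,\Xi^{\tilde{G}}(1+\sigma)^{N}$, which already gives (1); the extra clause is both unnecessary and (in general) false.
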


En résumé, nous avons défini les ensembles
$$ \Pi_2(\tilde{M}) \subset \Pi_\text{temp}(\tilde{M}) \subset \Pi_\text{unit}(\tilde{M}) \subset \Pi(\tilde{M}) $$
qui désignent les ensembles de classes d'équivalences de représentations de carré intégrable modulo le centre, tempérées, unitaires et admissibles irréductibles de $\tilde{M}$, respectivement. On peut aussi considérer l'équivariance sous $\bmu_m$ et introduire les sous-ensembles $\Pi_{2,-}(\tilde{M})$, $\Pi_\text{temp,-}(\tilde{M})$, etc.

Soient $r \in \R$ et $f \in C_\text{lisse}(\tilde{G})$. On définit\index[iFT2]{$\nu_r$}
$$ \nu_r(f) := \sup_{\tilde{x} \in \tilde{G}} \left( |f(\tilde{x})| \Xi(x)^{-1} (1+\sigma(x))^r \right) . $$

Soit $H \subset \tilde{G}$ un sous-groupe ouvert compact. Notons $\mathcal{C}_H$ l'espace des fonctions $f$ bi-invariantes par $H$ telles que $\nu_r(f)$ est fini pour tout $r \in \R$. Les semi-normes $\nu_r$ définissent une topologie sur $\mathcal{C}_H$. On pose $\mathcal{C}(\tilde{G}) = \bigcup_H \mathcal{C}_H$, muni de la topologie $\varinjlim$. C'est une algèbre pour le produit de convolution, qui est séparément continu. Les éléments de $\mathcal{C}(\tilde{G})$ s'appellent aussi les fonctions de Schwartz-Harish-Chandra. L'application $\tilde{G} \times \tilde{G} \times \mathcal{C}(\tilde{G}) \to \mathcal{C}(\tilde{G})$ définie par $(\tilde{x},\tilde{y},f) \mapsto \rho(\tilde{x})\lambda(\tilde{y})f$ est continue.\index[iFT2]{$\mathcal{C}(\tilde{G})$}

Si $(\pi,V) \in \Pi_\text{temp}(\tilde{G})$ et $f \in \mathcal{C}(\tilde{G})$, on peut définir l'opérateur $\pi(f)$ de sorte que
$$ \angles{\pi(f)v, \check{v}} = \int_{\tilde{G}} f(\tilde{x}) \angles{\pi(\tilde{x})v, \check{v}} \dd \tilde{x}, \qquad v \in V, \check{v} \in \check{V}. $$
Alors $f \mapsto \pi(f)$ est un homomorphisme d'algèbres. Pour tout $f$, l'opérateur $\pi(f)$ est de rang fini et on peut définir la fonctionnelle linéaire continue $f \mapsto \Theta_\pi(f) := \Tr \pi(f)$, i.e. le caractère de $\pi$.

Citons deux variantes faibles des constructions précédentes. Soit $(\pi,V) \in \Pi_\text{temp}(\tilde{G})$. Soit $P=MU \in \mathcal{F}(M_0)$, alors le module de Jacquet admet une décomposition canonique.
$$ (\pi_{\tilde{P}}, V_{\tilde{P}}) = (\pi_{\tilde{P}}^w, V_{\tilde{P}}^w) \oplus (\pi_{\tilde{P}}^+, V_{\tilde{P}}^+) $$
où $V_{\tilde{P}}^w$ se constitue des $V_\chi$ avec $\Re \chi = 0$, et $V_{\tilde{P}}^+$ se constitue du reste. Alors $(\pi_{\tilde{P}}^w, V_{\tilde{P}}^w)$ est tempérée, cf. \cite[III.3.1]{Wa03}. La réciprocité de Frobenius demeure valable dans la catégorie des représentations tempérées admissibles si l'on remplace le foncteur $\pi \mapsto \pi_{\tilde{P}}$ par $\pi \mapsto \pi_{\tilde{P}}^w$.

On dispose de l'application terme constant faible le long de $\tilde{P}$, analogue de \eqref{eqn:terme-const}
\begin{align*}
  \mathcal{A}^w(\tilde{G}) & \to \mathcal{A}^w(\tilde{M}), \\
  f  & \mapsto f^w_{\tilde{P}}.
\end{align*}
La fonction $f^w_{\tilde{P}}$ est caractérisée par la propriété suivante. Pour tout $\tilde{m} \in \tilde{M}$, on a
\begin{gather}\label{eqn:terme-const-faible}
  \lim_{\tilde{a} \stackrel{P}{\to} \infty} (\delta_P^{-1/2} f)(\tilde{m}\tilde{a}) - f^w_{\tilde{P}}(\tilde{m}\tilde{a}) = 0.
\end{gather}
où la limite signifie que $\tilde{a} \in \widetilde{A_M}^\dagger$ et $-H_M(a)$ tend vers l'infini dans un cône ouvert strictement contenu dans celui déterminé par $\Delta_P$, cf \cite[III.5]{Wa03}.

On définit la fonction $f_{\tilde{P}}^{w,\text{Ind}}: \tilde{G} \times \tilde{G} \to \mathcal{A}^w(\tilde{M})$ en posant\index[iFT2]{$f_{\tilde{P}}^w, f_{\tilde{P}}^{w,\text{Ind}}$}
\begin{gather}\label{eqn:wInd}
  f_{\tilde{P}}^{w,\text{Ind}}(\tilde{x},\tilde{y}) = (\rho(\tilde{x})\lambda(\tilde{y})f)^w_{\tilde{P}}.
\end{gather}

\subsection{Opérateurs d'entrelacement}\label{sec:op-entrelacement}
Le tore complexe $X(\tilde{G})$ opère sur $\Pi(\tilde{G})$ par $(\chi,\omega) \mapsto \omega \otimes \chi$, où $\chi \in X(\tilde{G})$ et $\omega \in \Pi(\tilde{G})$. Si $\chi$ est l'image de $\lambda \in \mathfrak{a}_{G,\C}^*$, on écrit aussi $\pi_\lambda := \pi \otimes \chi$\index[iFT2]{$\pi \otimes \chi, \pi_\lambda$}. L'action induite du tore compact $\Im X(\tilde{G})$ préserve $\Pi_2(\tilde{G})$. Pour toute orbite $\mathcal{O}$ sous $\Im X(\tilde{G})$, on peut choisir un point base $\omega \in \mathcal{O}$ et munir $\mathcal{O}$ de la structure de variété $C^\infty$ par l'application
\begin{align*}
  \Im X(\tilde{G})/\text{Stab}_{\Im X(\tilde{G})}(\omega) & \rightiso \mathcal{O}. \\
  \chi & \mapsto \omega \otimes \chi.
\end{align*}

Le stabilisateur $\text{Stab}_{\Im X(\tilde{G})}(\omega)$ est fini. De façon analogue, pour l'action de $X(\tilde{G})$ on définit l'orbite $\mathcal{O}_\C$ qui est une variété algébrique complexe. Ainsi, on peut parler de fonctions $C^\infty$, régulières ou rationnelles sur $\mathcal{O}_\C$. Ces notions ne dépendent pas du choix du point base $\omega$.

Soient $M \in \mathcal{L}(M_0)$, $P=MU$ et $P'=MU'$ dans $\mathcal{P}(M)$. Soient $(\pi,V)$ une représentation admissible de $\tilde{M}$, $f \in \mathcal{I}_{\tilde{P}} V$, $\tilde{x} \in \tilde{G}$. On dit que l'intégrale
$$ (J_{\tilde{P}'|\tilde{P}}(\pi)f)(\tilde{x}) = \int_{(U \cap U')(F) \backslash U'(F)} f(u'\tilde{x}) \dd u' $$
est absolument convergente, égale à $v \in V$, si pour tout $\check{v} \in \check{V}$ l'intégrale
$$ \int_{(U \cap U')(F) \backslash U'(F)} \angles{f(u'\tilde{x}), \check{v}} \dd u' $$
est absolument convergente et égale à $\angles{v,\check{v}}$. Si $(J_{\tilde{P}'|\tilde{P}}(\pi)f)(\tilde{x})$ est absolument convergent pour tous $f$ et $\tilde{x}$, on dit que $J_{\tilde{P}'|\tilde{P}}(\pi)$ est défini par des intégrales convergentes, et il définit un opérateur d'entrelacement $\mathcal{I}_{\tilde{P}}(\pi) \to \mathcal{I}_{\tilde{P}'}(\pi)$.

On peut identifier $\mathcal{I}_{\tilde{P}}(V)$ à l'induction $\text{Ind}^{\tilde{K}}_{\tilde{K} \cap \tilde{P}}(V)$ par restriction sur $\tilde{K}$. Lorsque $\pi$ varie dans $\mathcal{O}_\C$, cet espace ne change pas. En introduisant les $\C[\tilde{M}/\tilde{M}^1]$-familles de représentations (cf. \cite[I.5]{Wa03}), on peut parler des familles d'opérateurs $\mathcal{I}_{\tilde{P}}(\pi) \to \mathcal{I}_{\tilde{P}'}(\pi)$ régulières ou rationnelles, où $\pi \in \mathcal{O}_\C$, voir \cite[IV.1]{Wa03}.

Soient $P \in \mathcal{P}(M)$ et $\alpha \in \Sigma_P$, la coracine $\alpha^\vee \in \mathfrak{a}_M$ peut être définie en choisissant un parabolique minimal $P_0 \subset P$ et en restreignant $\alpha^\vee \in \Delta_0^\vee$ à $\mathfrak{a}_M$. On voit qu'elle est indépendante de $P_0$.

Nous renvoyons le lecteur à \cite[IV.1]{Wa03} pour les résultats suivants dans le cas des groupes réductifs connexes.

\begin{theorem}\label{prop:J-convergence}\index[iFT2]{$J_{\tilde{P}'\arrowvert\tilde{P}}(\pi)$}
  Soit $(\pi,V)$ une représentation admissible de longueur finie de $\tilde{M}$. Alors il existe $R \in \R$ tel que si $\angles{\Re(\chi), \alpha^\vee} > R$ pour tout $\alpha \in \Sigma_{P} \cap \Sigma_{\bar{P}'}$, alors $J_{\tilde{P}'|\tilde{P}}(\pi \otimes \chi)$ est défini par des intégrales convergentes. L'opérateur $J_{\tilde{P}'|\tilde{P}}(\pi \otimes \chi)$ défini pour de tels $\pi \otimes \chi$ se prolonge en un opérateur rationnel sur $\mathcal{O}_\C$.

  Si $(\pi,V)$ est tempérée, alors $J_{\tilde{P}'|\tilde{P}}(\pi \otimes \chi)$  est défini par des intégrales convergentes pourvu que $\angles{\Re\chi, \alpha^\vee} > 0$ pour tout $\alpha \in \Sigma_P^\text{red} \cap \Sigma_{\bar{P}'}^\text{red}$.
\end{theorem}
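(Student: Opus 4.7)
Plan de preuve. L'idée est d'adapter l'argument de \cite[IV.1]{Wa03} au cadre des revêtements, la remarque cruciale étant que l'intégrale sur $U'(F)$ ne voit pas le revêtement : le scindage unipotent \cite[\S 2.2]{Li10a} donne une copie canonique de $U'(F)$ dans $\tilde{G}$, donc les manipulations sur les sous-groupes unipotents (décomposition d'Iwasawa, changements de variables) sont formellement identiques à celles du cas réductif connexe. Seules les estimées portant sur les coefficients matriciels de $\pi$ sont à vérifier dans le cadre des revêtements, et c'est là que la Proposition \ref{prop:dagger} et l'analyse d'exposants développée dans \S\ref{sec:representations} interviennent.

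Pour la convergence, je commence par une récurrence sur la distance entre $P$ et $P'$ dans $\mathcal{P}(M)$, en exploitant la composition formelle $J_{\tilde{P}''|\tilde{P}} = J_{\tilde{P}''|\tilde{P}'} \circ J_{\tilde{P}'|\tilde{P}}$ (à justifier dans un domaine commun de convergence par un changement de variable sur les sous-groupes unipotents), pour me ramener au cas où $|\Sigma_P^{\text{red}} \cap \Sigma_{\bar{P}'}^{\text{red}}|=1$, ce qui fait intervenir un Lévi $L$ de corang $1$ sur $M$. J'utilise ensuite la décomposition d'Iwasawa $\tilde{G} = U'(F)\tilde{L}\tilde{K}$ pour réécrire $f(u'\tilde{x})$ en termes de $\pi(\tilde{l}(u')) f(\tilde{k}(u')\tilde{x})$ et d'un facteur $\delta_{P'}^{1/2}(l(u'))$. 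Le comportement de $\angles{f(u'\tilde{x}),\check{v}}$ devient alors celui d'un coefficient matriciel de $\pi\otimes\chi$ évalué sur $\tilde{L}$. En découpant $\tilde{L}$ le long de $\widetilde{A_L}^\dagger$, j'obtiens d'après l'analyse des exposants des combinaisons finies de la forme $\chi'(\tilde{a}) P_{\chi'}(H_L(a))$ avec $\chi' \in \Exp(\pi\otimes\chi|_{\widetilde{A_L}^\dagger})$. L'hypothèse $\angles{\Re\chi,\alpha^\vee} > R$ sur les coracines pertinentes assure, pour $R$ assez grand ne dépendant que de $\Exp(\pi)$, que chaque terme est intégrable sur $U'(F)$.

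Pour le prolongement rationnel, j'invoque le formalisme des $\C[\tilde{M}/\tilde{M}^1]$-familles : l'identification $\mathcal{I}_{\tilde{P}}(V) = \Ind^{\tilde{K}}_{\tilde{K}\cap\tilde{P}}(V)$ par restriction à $\tilde{K}$ ne dépend pas de $\pi \in \mathcal{O}_\C$, et la théorie adaptée de \cite[I.5]{Wa03} fournit la notion d'opérateur rationnel. Le prolongement s'effectue via la réduction au cas adjacent, où la structure $\SL_2$-adique à l'intérieur de $L$ permet un calcul explicite de l'intégrale : le revêtement au-dessus de $L$ induit au plus un twist par un caractère de $\bmu_m$, ce qui affecte seulement le caractère central sans créer de nouveaux pôles.

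L'obstacle principal est la bonne gestion du sous-groupe $A_M(F)^\dagger$ : il faut vérifier que la constante $R$ et le lieu polaire dans $\mathcal{O}_\C$ ne dépendent que de $\pi$, pas du choix de cette famille. Ceci repose sur le fait, signalé dans la remarque suivant la définition de $\Exp$, que $\Re\chi$ est un invariant infinitésimal indépendant du choix. Pour le raffinement tempéré, j'utilise la caractérisation spectrale des représentations tempérées établie ci-dessus : les exposants de $\pi_{\tilde{P}\cap\tilde{L}}$ vérifient $\Re\chi \in {}^+\bar{\mathfrak{a}}_{P\cap L}^{L*}$, et la même estimée montre alors la convergence absolue dès que $\angles{\Re\chi,\alpha^\vee} > 0$ sur chaque coracine réduite de $\Sigma_P^{\text{red}} \cap \Sigma_{\bar{P}'}^{\text{red}}$.
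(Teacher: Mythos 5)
Ta stratégie générale --- réduction au cas de distance un via la composition, estimée par la décomposition d'Iwasawa et l'analyse des exposants, prolongement rationnel via les $\C[\tilde{M}/\tilde{M}^1]$-familles --- est bien celle qu'implique la référence que fait l'article à \cite[IV.1]{Wa03}, et tes deux points d'appui (scindage unipotent, théorie des exposants sur $\widetilde{A}^\dagger$) sont effectivement les ingrédients à vérifier sur le revêtement. Sur le fond, donc, tu es aligné avec l'approche implicite du texte.

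Il y a cependant une confusion dans l'étape centrale de l'estimée qui n'est pas qu'un lapsus de notation. Après réduction au Lévi $L = M_\alpha$ de corang un sur $M$, l'intégration se fait sur $\bar{U}_\alpha(F) \subset L(F)$, et la décomposition d'Iwasawa pertinente est celle \emph{relative à $P \cap L$ à l'intérieur de $L$}, c'est-à-dire $\tilde{L} = U_\alpha(F)\,\tilde{M}\,(\tilde{K} \cap \tilde{L})$. Le facteur de module est $\delta_{P\cap L}^{1/2}(m(\bar{u}))$, et c'est $\pi(\tilde{m}(\bar{u}))$ qu'il faut estimer, $\pi$ étant une représentation de $\tilde{M}$; les exposants sont donc pris le long de $\widetilde{A_M}^\dagger$, pas $\widetilde{A_L}^\dagger$. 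Ta formule $\tilde{G} = U'(F)\tilde{L}\tilde{K}$ avec $\pi(\tilde{l}(u'))$ et $\delta_{P'}^{1/2}$ n'est pas cohérente: $U'L$ n'est pas un parabolique (les deux paraboliques ont le même Lévi $M \subsetneq L$), $\pi$ n'est pas une représentation de $\tilde{L}$, et $f \in \mathcal{I}_{\tilde{P}}$ se transforme par $\delta_P^{1/2}$, pas $\delta_{P'}^{1/2}$. Une fois la décomposition corrigée, la domination à la Casselman de $\delta_{P\cap L}(m(\bar{u}))^{1/2}\langle\pi(\tilde{m}(\bar{u}))\cdot,\cdot\rangle$ par les exposants de $\pi_{\overline{P\cap L}}$ donne la borne de convergence, puis la comparaison avec ${}^+\bar{\mathfrak{a}}^{L*}_{P\cap L}$ donne le raffinement tempéré --- exactement comme tu le décris ensuite.

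Enfin, l'argument de rationalité ``le revêtement au-dessus de $L$ induit au plus un twist par un caractère de $\bmu_m$'' ne capture pas le mécanisme: la rationalité vient, comme chez Waldspurger, de l'écriture de l'intégrale sur une réunion croissante de compacts, de la structure de type différence finie sur le module de Jacquet, et de la rationalité des fonctions $\chi \mapsto (\pi\otimes\chi)(\tilde{a})$ sur $\mathcal{O}_\C$ --- points pour lesquels le lemme géométrique et la Remarque \ref{rem:dep-entrelaceur} (variables $q^{-\langle\lambda,\check\alpha\rangle}$) sont les vrais garants dans le cadre des revêtements.
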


Posons $d(P',P) := |\Sigma_{P'}^\text{red} \cap \Sigma_{\bar{P}}^\text{red}|$.

\begin{proposition}\label{prop:prop-entrelacement}
  Soit $(\pi,V)$ une représentation admissible de longueur finie de $\tilde{M}$. Alors
  \begin{enumerate}
    \item $J_{\tilde{P}'|\tilde{P}}(\pi)^\vee = J_{\tilde{P}|\tilde{P}'}(\check{\pi})$, où $\vee$ signifie l'opérateur dual;
    \item $J_{\tilde{P}'|\tilde{P}}(\pi) = J_{\tilde{P}'|\tilde{P}''}(\pi) J_{\tilde{P}''|\tilde{P}}(\pi)$ si $P'' \in \mathcal{P}(M)$ et $d(P',P)=d(P',P'')+d(P'',P)$;
    \item soit $P''=M'' U'' \in \mathcal{F}(M_0)$ contenant $P$ et $P'$, on fait les identifications
    \begin{align*}
      \mathcal{I}_{\tilde{P}}(\pi) & = \mathcal{I}_{\tilde{P}''} \mathcal{I}^{\tilde{M}''}_{\tilde{P} \cap \tilde{M}''} (\pi), \\
      \mathcal{I}_{\tilde{P}'}(\pi) & = \mathcal{I}_{\tilde{P}''} \mathcal{I}^{\tilde{M}''}_{\tilde{P}' \cap \tilde{M}''} (\pi),
    \end{align*}
    alors $J^{\tilde{G}}_{\tilde{P}'|\tilde{P}}(\pi)$ est l'opérateur déduit de $J^{\tilde{M}''}_{\tilde{P}' \cap \tilde{M}''|\tilde{P} \cap \tilde{M}''}(\pi)$ par le foncteur $\mathcal{I}_{\tilde{P}''}(\cdot)$;
    \item soient $w \in W^G_0$ et $\tilde{w} \in \tilde{K}$ un représentant, alors
      $$ J_{w\tilde{P}'|w\tilde{P}}(\tilde{w}\pi) = A(\tilde{w}) J_{\tilde{P}'|\tilde{P}}(\pi) A(\tilde{w})^{-1} $$
      où $A(\tilde{w}): \mathcal{I}_{\tilde{P}}(\pi) \rightiso \mathcal{I}_{w\tilde{P}}(\tilde{w}\pi)$ (ou avec $P'$ au lieu de $P$) est la translation $\varphi(\cdot) \mapsto \varphi(\tilde{w}^{-1} \cdot)$.\index[iFT2]{$\tilde{w}$}
  \end{enumerate}
\end{proposition}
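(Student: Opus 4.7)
Le plan est de ramener chaque assertion au domaine ouvert de $\mathcal{O}_\C$ sur lequel les intégrales définissantes $J_{\tilde{P}'|\tilde{P}}(\pi\otimes\chi)$ convergent absolument --- non vide d'après le Théorème \ref{prop:J-convergence} --- puis d'étendre les identités à $\mathcal{O}_\C$ tout entier par prolongement rationnel en $\chi$. Sur ce domaine, tous les calculs se ramèneront à des manipulations d'intégrales sur les radicaux unipotents de $G$, sans interférence du revêtement, car le scindage unipotent canonique $\tilde{P} = \tilde{M}\,U(F)$ de \cite[\S 2.2]{Li10a} identifie $U(F)$ à son image réciproque dans $\tilde{G}$ et les mesures de Haar sur les $U(F)$ sont fixées au \S\ref{sec:def-base} en aval du revêtement.

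Pour (1), on développera l'accouplement canonique $\angles{\varphi, \check{\varphi}}_{\mathcal{I}_{\tilde{P}}} = \int_{\tilde{K}} \angles{\varphi(\tilde{k}), \check{\varphi}(\tilde{k})} \dd\tilde{k}$ et on appliquera Fubini à l'intégrale sur $(U\cap U')(F)\backslash U'(F)$; le calcul devient alors identique à celui du cas des groupes réductifs connexes, cf. \cite[IV.1]{Wa03}. Pour (2), la condition $d(P',P)=d(P',P'')+d(P'',P)$ équivaut à la partition $\Sigma_{P}^\text{red} \cap \Sigma_{\bar{P}'}^\text{red} = (\Sigma_{P}^\text{red} \cap \Sigma_{\bar{P}''}^\text{red}) \sqcup (\Sigma_{P''}^\text{red} \cap \Sigma_{\bar{P}'}^\text{red})$; on en déduit une factorisation de $U'(F)/(U\cap U')(F)$ en produit d'unipotents, d'où la formule de composition par Fubini sur le domaine de convergence simultanée.

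Pour (3), l'induction par étages fournit l'identification canonique $\mathcal{I}_{\tilde{P}}(\pi) = \mathcal{I}_{\tilde{P}''}\,\mathcal{I}^{\tilde{M}''}_{\tilde{P}\cap\tilde{M}''}(\pi)$, et la décomposition $U' = (U' \cap M'')U''$ (conséquence de $P,P' \subset P''$) factorise $(U\cap U')(F)\backslash U'(F)$ en l'unipotent $(U\cap M'' \cap U')(F) \backslash (U'\cap M'')(F)$ qui définit $J^{\tilde{M}''}_{\tilde{P}'\cap\tilde{M}''|\tilde{P}\cap\tilde{M}''}(\pi)$; la fonctorialité du scindage unipotent canonique vis-à-vis des sous-revêtements de Lévi assurera que cette factorisation se transporte aux espaces induits. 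Pour (4), le changement de variables $u' \mapsto \tilde{w} u' \tilde{w}^{-1}$ dans l'intégrale définissante --- licite puisque $\Ad(\tilde{w})$ agit sur $U(F)\subset\tilde{G}$ via l'action de $w$ sur $G(F)$, encore par fonctorialité du scindage --- livrera l'identité, et l'opérateur $A(\tilde{w})$ encode précisément l'entrelacement entre la $\tilde{G}$-action sur $\mathcal{I}_{\tilde{P}}(\pi)$ et sa conjuguée.

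Aucune des quatre assertions ne présente d'obstacle sérieux: toutes sont essentiellement formelles, et les adaptations au cadre des revêtements sont absorbées par le scindage unipotent canonique et par les relèvements choisis dans $\tilde{K}$. Le seul point méritant vérification est la préservation des mesures de Haar fixées au \S\ref{sec:def-base} par les changements de variables --- notamment que $\Ad(\tilde{w})$ préserve la mesure sur $U(F)$ ---, ce qui est automatique puisque tout est mesuré en aval du revêtement.
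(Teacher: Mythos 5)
Your proposal is correct and is exactly the approach the paper has in mind: the paper gives no proof but refers to \cite[IV.1]{Wa03} for the connected-group case and, in the preceding discussion in \S\ref{sec:representations}, points out that the canonical unipotent splitting $\tilde{P} = \tilde{M}U(F)$ makes those arguments (convergence, Fubini on unipotent quotients, rational continuation in $\chi$, change of variables under $\tilde{K}$-conjugation) go through unchanged on the covering. You have correctly spelled out those four standard computations and identified the key point that the integrals only involve $U(F)$, on which $\rev$ is split and the Haar measures are fixed downstream of the covering.
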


Supposons maintenant que $\pi \in \Pi_2(\tilde{M})$, son orbite sous $X(\tilde{M})$ est notée $\mathcal{O}_\C$. On dit qu'un élément dans $\mathcal{O}_\C$ est $\tilde{G}$-régulier si son stabilisateur dans $W^G(M)$ est trivial. De tel éléments forment un ouvert de Zariski dense dans $\mathcal{O}_\C$.

Montrons qu'il existe un ouvert dense de Zariski dans $\mathcal{O}_\C$ tel que pour tout $\pi'$ dedans, $\mathcal{I}_{\tilde{P}}(\pi')$ est irréductible pour tout $P \in \mathcal{P}(M)$. En effet, il suffit de montrer que $\Hom(\mathcal{I}_{\tilde{P}}(\pi), \mathcal{I}_{\tilde{P}}(\pi))=\C$ lorsque $\pi$ est $G$-régulier, car $\mathcal{I}_{\tilde{P}}(\pi)$ est unitaire de longueur finie. Or cela résulte immédiatement de la réciprocité de Frobenius et du lemme géométrique de Bernstein-Zelevinsky.

Grâce au Théorème \ref{prop:J-convergence} et Proposition \ref{prop:prop-entrelacement}, on montre (cf. \cite[IV.3]{Wa03}) qu'il existe une fonction rationnelle $j$ sur $\mathcal{O}_\C$ telle que
$$ J_{\tilde{P}|\tilde{\bar{P}}}(\pi') J_{\tilde{\bar{P}}|\tilde{P}}(\pi') = j(\pi'), \qquad \pi' \in \mathcal{O}_\C, \; P \in \mathcal{P}(M). $$

Notons $\Sigma_M^\text{red}$ l'ensemble des racines réduites de $A_M$. À chaque $\alpha \in \Sigma_M^\text{red}$ est associé un sous-groupe de Lévi $M_\alpha$ contenant $M$ tel que $\Sigma_M^{M_\alpha, \text{red}} = \{\alpha\}$. Notons $j_\alpha$ la fonction rationnelle définie en remplaçant $\tilde{G}$ par $\tilde{M}_\alpha$. On a alors
$$ j = \prod_{\alpha \in \Sigma_M^\text{red}/\pm} j_\alpha , $$
où $\alpha$ parcourt $\Sigma_M^\text{red}$ à signe près. Définissons la fonction $\mu$ sur $\mathcal{O}_\C$ par
\begin{gather}\label{eqn:HC-mu}
  \mu := j^{-1} .
\end{gather}\index[iFT2]{$\mu(\pi)$}

Pour $\alpha \in \Sigma_M^\text{red}$, on note $\mu_\alpha$ la fonction définie en remplaçant $\tilde{G}$ par $\tilde{M}_\alpha$.

\begin{proposition}[Cf. {\cite[IV.3]{Wa03}}]
  La fonction $\mu$ est rationnelle sur $\mathcal{O}_\C$. Elle est régulière et réelle non négative sur $\mathcal{O}$. On a
  $$ \mu = \prod_{\alpha \in \Sigma_M^\text{red}/\pm} \mu_\alpha  .$$

  De plus, $\mu$ est invariante par $W^G_0$ et par passage à la contragrédiente $\pi \mapsto \check{\pi}$.
\end{proposition}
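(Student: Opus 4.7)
Le plan est de suivre de près le traitement de Waldspurger \cite[IV.3]{Wa03}, la structure de revêtement n'introduisant aucune difficulté nouvelle dès lors que les propriétés des opérateurs d'entrelacement établies dans le Théorème \ref{prop:J-convergence} et la Proposition \ref{prop:prop-entrelacement} sont en main. La rationalité de $\mu$ proviendra directement de celle de $j$ (énoncée ci-dessus) par $\mu = j^{-1}$, et la factorisation $\mu = \prod_{\alpha/\pm} \mu_\alpha$ s'obtiendra en prenant les inverses dans la formule $j = \prod_{\alpha/\pm} j_\alpha$ déjà énoncée.

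Pour les invariances, on partira de l'identité définissant $j$, à savoir $J_{\tilde{P}|\tilde{\bar{P}}}(\pi) J_{\tilde{\bar{P}}|\tilde{P}}(\pi) = j(\pi) \cdot \identity$. La propriété (1) de la Proposition \ref{prop:prop-entrelacement}, combinée avec le fait que $J_{\tilde{P}|\tilde{\bar{P}}}(\pi)^\vee = J_{\tilde{\bar{P}}|\tilde{P}}(\check{\pi})$, donnera $j(\check{\pi}) = j(\pi)$ par passage aux contragrédientes ; la propriété (4), par conjugaison via $A(\tilde{w})$, fournira $j(\tilde{w}\pi) = j(\pi)$ puisque $j$ est scalaire. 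Les invariances correspondantes de $\mu = j^{-1}$ en résulteront.

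Le cœur du plan est la positivité et la régularité de $\mu$ sur l'orbite compacte $\mathcal{O}$. Pour la positivité, on utilisera que pour $\pi \in \mathcal{O}$, la structure unitaire sur $\pi$ induit une forme hermitienne $\tilde{G}$-invariante sur $\mathcal{I}_{\tilde{P}}(\pi) \cong \text{Ind}^{\tilde{K}}_{\tilde{K} \cap \tilde{P}}(\pi|_{\tilde{K} \cap \tilde{P}})$ ; un calcul direct sur l'intégrale définissant $J$ — après prolongement rationnel depuis la région de convergence fournie par la seconde partie du Théorème \ref{prop:J-convergence} — donnera la relation d'adjonction $J_{\tilde{P}|\tilde{\bar{P}}}(\pi)^* = J_{\tilde{\bar{P}}|\tilde{P}}(\pi)$, d'où $j(\pi) = J_{\tilde{\bar{P}}|\tilde{P}}(\pi)^* J_{\tilde{\bar{P}}|\tilde{P}}(\pi)$ est un scalaire auto-adjoint positif, donc un réel positif ou nul. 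Pour la régularité (absence de pôle) sur $\mathcal{O}$, on se ramènera via la formule produit au cas de rang un $M = M_\alpha$, et l'on analysera explicitement le facteur $\mu_\alpha$.

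Le principal obstacle sera cette analyse en rang un : il s'agira de vérifier que, pour un revêtement $\tilde{M}_\alpha$ d'un groupe de rang semi-simple un, le comportement qualitatif de $\mu_\alpha$ — à savoir la localisation de ses pôles et zéros sur des hyperplans complexes disjoints de $\Im X(\tilde{M}_\alpha)$ — reproduit fidèlement celui du cas réductif connexe. Cela devrait résulter du scindage unipotent canonique \cite[\S 2.2]{Li10a}, qui identifie la restriction de $\tilde{G}$ aux radicaux unipotents à l'analogue dans le groupe réductif, de sorte que l'intégrale de rang un définissant $J_\alpha$ s'exprime formellement comme dans le cas connexe ; mais ce point nécessite un examen explicite, et c'est là que se concentre la part technique du plan.
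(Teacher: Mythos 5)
Le papier ne donne aucune démonstration pour cette proposition : il la présente comme une citation de \cite[IV.3]{Wa03}, suivant la convention explicitée plus haut dans la section (``les preuves sont plus ou moins identiques au cas des groupes réductifs connexes''). Votre proposition reconstruit donc ce que serait cette démonstration adaptée aux revêtements, et la stratégie que vous décrivez est bien celle de Waldspurger.

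Les étapes que vous détaillez sont toutes correctes. La rationalité de $\mu$ découle mécaniquement de $\mu = j^{-1}$ et de la rationalité de $j$, et la formule produit $\mu = \prod_{\alpha/\pm}\mu_\alpha$ est l'inverse de celle pour $j$ : rien à ajouter. Pour l'invariance par contragrédiente, votre argument via la propriété (1) de la Proposition \ref{prop:prop-entrelacement} est le bon : en dualisant $J_{\tilde{P}|\tilde{\bar{P}}}(\pi) J_{\tilde{\bar{P}}|\tilde{P}}(\pi) = j(\pi)\cdot\identity$ et en utilisant $(AB)^\vee = B^\vee A^\vee$ on obtient $j(\check{\pi}) = j(\pi)$. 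Pour l'invariance par $W^G_0$, la propriété (4) et le fait que $j$ soit scalaire (donc insensible à la conjugaison par $A(\tilde{w})$) donnent directement le résultat, combiné avec l'indépendance de $j$ en $P\in\mathcal{P}(M)$. Pour la positivité, la relation d'adjonction $J_{\tilde{\bar{P}}|\tilde{P}}(\pi_\lambda)^* = J_{\tilde{P}|\tilde{\bar{P}}}(\pi_{-\bar{\lambda}})$, évaluée sur $\lambda\in i\mathfrak{a}_M^*$ où $-\bar\lambda=\lambda$, donne $j(\pi) = J_{\tilde{\bar{P}}|\tilde{P}}(\pi)^* J_{\tilde{\bar{P}}|\tilde{P}}(\pi) \ge 0$ là où les opérateurs sont holomorphes, ce qui est exactement l'argument de Waldspurger.

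Vous identifiez correctement le cœur technique : la régularité de $\mu$ sur $\mathcal{O}$, que l'on ramène au rang un par la formule produit. Vous reconnaissez honnêtement que l'analyse de $\mu_\alpha$ en rang un pour les revêtements $\tilde{M}_\alpha$ doit être vérifiée. Cette vérification n'est toutefois pas une lacune supplémentaire par rapport au papier : le papier l'omet aussi, et l'ingrédient que vous mentionnez — le scindage unipotent $\tilde{P} = \tilde{M}U(F)$, qui rend l'intégrale définissant $J_{\tilde{\bar{P}}|\tilde{P}}$ formellement identique au cas réductif connexe — est précisément ce qui permet, avec la Proposition \ref{prop:KAK} et les résultats d'intégrabilité de \cite[II]{Wa03} établis pour les revêtements au début de la section, de transposer mot à mot l'analyse de rang un de Waldspurger. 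Votre plan est donc correct et prend essentiellement la même route que celle sous-entendue par la citation du papier.
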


\begin{remark}\label{rem:dep-entrelaceur}\index[iFT2]{$\check{\alpha}$}
  Soit $\alpha \in \Delta_P$. On note $r_\alpha$ le plus petit rationnel positif tel que $r_\alpha \cdot \alpha^\vee \in \mathfrak{a}_{G,F}$. On note
  \begin{gather}\label{eqn:alpha-check}
    \check{\alpha} := r_\alpha \alpha^\vee.
  \end{gather}

  Supposons choisi un point base $\pi$ dans $\mathcal{O}_\C$. On pose $P' := \bar{P}$, alors les fonctions $\lambda \mapsto J_{\tilde{P}'|\tilde{P}}(\pi_\lambda)$ et $\lambda \mapsto \mu(\pi_\lambda)$, où $\lambda \in \mathfrak{a}_{M,\C}^*$, sont rationnelles en les variables $\{ q^{-\angles{\lambda, \check{\alpha}}} :\alpha \in \Delta_P \}$. Cette propriété est implicite dans la preuve de la rationalité des opérateurs d'entrelacement \cite[p.278]{Wa03}.
\end{remark}

\subsection{Coefficients d'induites et la fonction \texorpdfstring{$c$}{c}}
Fixons toujours $M \in \mathcal{L}(M_0)$. Soient $P \in \mathcal{P}(M)$, $(\omega, E) \in \mathcal{O}$. Posons\index[iFT2]{$L(\omega,\tilde{P})$}
\begin{align*}
  L(\omega,\tilde{P}) & = \mathcal{I}^{\tilde{G} \times \tilde{G}}_{\tilde{P} \times \tilde{P}}(E \boxtimes \check{E}) \\
   & = \mathcal{I}_{\tilde{P}}(E) \boxtimes \mathcal{I}_{\tilde{P}}(E)^\vee \hookrightarrow \End(\mathcal{I}_{\tilde{P}}E).
\end{align*}
Ici $\boxtimes$ désigne le produit tensoriel extérieur de représentations, par exemple $E \boxtimes \check{E}$ est l'espace vectoriel $E \otimes \check{E}$ considéré comme une représentation de $\tilde{M} \times \tilde{M}$, et ainsi de suite. Le revêtement $\tilde{G} \times \tilde{G} \to G(F)\times G(F)$ n'appartient pas rigoureusement à notre classe car son noyau n'est pas cyclique, mais peu importe.

Pour $v \otimes \check{v} \in L(\omega,\tilde{P})$, on peut définir le coefficient
$$ E^{\tilde{G}}_{\tilde{P}}(v \otimes \check{v}): \tilde{x} \mapsto \angles{\omega(\tilde{x})v, \check{v}}. $$
Ceci induit une application linéaire $E^{\tilde{G}}_{\tilde{P}}: L(\omega, \tilde{P}) \to C_\text{lisse}(\tilde{G})$. Plus généralement, soient $P'=M'U' \in \mathcal{F}(M_0)$ tel que $M' \supset M$ et $P \in \mathcal{P}^{M'}(M)$, posons\index[iFT2]{$E^{\tilde{P}'}_{\tilde{P}}$}
$$ E^{\tilde{P}'}_{\tilde{P}}: \mathcal{I}^{\tilde{G} \times \tilde{G}}_{\tilde{P}' \times \tilde{\bar{P}}'} L^{\tilde{M}'}(\omega,\tilde{P}) \to \mathcal{I}^{\tilde{G} \times \tilde{G}}_{\tilde{P}' \times \tilde{\bar{P}}'} C_\text{lisse}(\tilde{M}') $$
l'application qui se déduit de $E^{\tilde{M}'}_{\tilde{P}}$ par fonctorialité.

Soient $M, M' \in \mathcal{L}(M_0)$. Posons\index[iFT2]{$\mathcal{W}(M'\arrowvert G\arrowvert M), \mathcal{W}(M\arrowvert G), W(M'\arrowvert G\arrowvert M)$}
\begin{align*}
  \mathcal{W}(M'|G|M) & := \{ w \in W^G_0 : w M \subset M' \}, \\
  \mathcal{W}(M|G) & := \mathcal{W}(M|G|M),\\
  W(M'|G|M) & := W^{M'}_0 \backslash \mathcal{W}(M'|G|M).
\end{align*}

Ces ensembles sont éventuellement vides. Ils opèrent sur des paraboliques par conjugaison. Dans ce qui suit il convient de fixer des représentants dans $\tilde{K}$ de ces ensembles. Néanmoins, les résultats ultérieurs seront indépendants des choix. Soient $s \in \tilde{K}$ un représentant d'un élément dans $W^G_0$ et $\omega$ une représentation de $\tilde{M}$, on note $s\omega$ la représentation de $s\tilde{M}$ obtenue par transport de structure. Elle est indépendante du choix du représentant à isomorphisme près.

Pour $P \in \mathcal{P}(M)$, $s \in \mathcal{W}(M'|G|M)$, définissons
\begin{align*}
  P_s & := (M' \cap sP) U' , \\
  P_s^\diamond & := (M' \cap sP) \bar{U}'
\end{align*}

Notons $\lambda(s): \mathcal{I}_{\tilde{P}}(\omega) \mapsto \mathcal{I}_{s\tilde{P}}(s\omega)$ la translation à gauche par $s$ en se rappelant que $\tilde{I}^{\tilde{G}}_{\tilde{P}}(\omega)$ est un espace de fonctions sur $\tilde{G}$. Vu les identifications
\begin{align*}
  \mathcal{I}_{\tilde{P}'} \mathcal{I}^{\tilde{M}'}_{\tilde{M}' \cap s\tilde{P}} & = \mathcal{I}_{\tilde{P}_s}, \\
  \mathcal{I}_{\tilde{\bar{P}}'} \mathcal{I}^{\tilde{M}'}_{\tilde{M}' \cap s\tilde{P}} & = \mathcal{I}_{\tilde{P}_s^\diamond},
\end{align*}
on définit\index[iFT2]{$c_{\tilde{P}'\arrowvert\tilde{P}}(s,\omega)$}
\begin{gather*}
  c_{\tilde{P}'|\tilde{P}}(s,\omega): L(\omega,\tilde{P}) \to \mathcal{I}^{\tilde{G} \times \tilde{G}}_{\tilde{P}' \times \tilde{\bar{P}}'} L^{\tilde{M}'}(s\omega, \tilde{M}' \cap s\tilde{P}), \\
  v \otimes \check{v} \mapsto \gamma(G|M')^{-1} \left( J_{\tilde{P}_s|s\tilde{P}}(s\omega)\lambda(s) v \otimes J_{\tilde{P}_s^\diamond|s\tilde{P}}(s\check{\omega})\lambda(s) \check{v} \right) .
\end{gather*}

\begin{theorem}
  Fixons $P \in \mathcal{P}(M)$, $P' \in \mathcal{P}(M')$ et $\mathcal{O}$ une $\Im X(\tilde{M})$-orbite contenant une représentation de carré intégrable modulo le centre. Soient $\omega \in \mathcal{O}$ un élément $\tilde{G}$-régulier et $\psi \in L(\omega, \tilde{P})$, alors
  $$ (E^{\tilde{G}}_{\tilde{P}}\psi)^{\mathrm{Ind},w}_{\tilde{P}'} = \sum_{s \in W(M'|G|M)} E^{\tilde{P}'}_{\tilde{M} \cap s\tilde{P}}(c_{\tilde{P}'|\tilde{P}}(s,\omega)\psi). $$
\end{theorem}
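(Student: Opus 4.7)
L'approche consiste à adapter l'argument de \cite[V.2]{Wa03} au cadre des revêtements. Par la caractérisation \eqref{eqn:terme-const-faible} du terme constant faible, il suffit de montrer que pour $\tilde{m}' \in \tilde{M}'$ fixé et $\tilde{a} \in \widetilde{A_{M'}}^\dagger$ tendant vers l'infini dans un cône strictement contenu dans celui déterminé par $\Delta_{P'}$, la différence entre $\delta_{P'}(a)^{-1/2}(E^{\tilde{G}}_{\tilde{P}}\psi)(\tilde{m}'\tilde{a})$ et $\sum_s E^{\tilde{P}'}_{\tilde{M}' \cap s\tilde{P}}(c_{\tilde{P}'|\tilde{P}}(s,\omega)\psi)(\tilde{m}'\tilde{a})$ tend vers zéro.

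Pour $\psi = v \otimes \check{v}$, je partirais de l'expression $E^{\tilde{G}}_{\tilde{P}}\psi(\tilde{x}) = \angles{\mathcal{I}_{\tilde{P}}(\omega)(\tilde{x})v, \check{v}}$ mise sous forme intégrale, en identifiant la contragrédiente de $\mathcal{I}_{\tilde{P}}(\omega)$ à $\mathcal{I}_{\tilde{\bar{P}}}(\check{\omega})$ par restriction à $\tilde{K}$. La formule d'intégration \eqref{eqn:UMU} relative à $M'$ introduit le facteur $\gamma(G|M')^{-1}$. La décomposition de Bruhat $G(F) = \bigsqcup_s P'(F) s P(F)$, indexée par $W^{M'}_0 \backslash W^G_0$ avec représentants dans $\tilde{K}$, la décompose en une somme finie. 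Le scindage unipotent $\tilde{P} = \tilde{M}U(F)$ et la factorisation via $\rev$ de l'action adjointe rappelés en \S\ref{sec:representations} assurent que toute la combinatoire reste celle du cas non revêtu. Pour chaque $s$, la substitution $\tilde{x} = \tilde{m}'\tilde{a}$ sépare les variables: le facteur $\tilde{m}'$ engendre un élément de $\mathcal{I}_{\tilde{P}'}$, tandis que la dilatation par $\tilde{a}$ des parties unipotentes transversales produit, après changement de variable, les opérateurs $J_{\tilde{P}_s|s\tilde{P}}(s\omega)\lambda(s)$ et $J_{\tilde{P}_s^\diamond|s\tilde{P}}(s\check{\omega})\lambda(s)$ agissant respectivement sur $v$ et $\check{v}$; la convergence absolue de ces intégrales est garantie par le Théorème \ref{prop:J-convergence} ($\omega$ étant tempérée), et l'hypothèse de $\tilde{G}$-régularité de $\omega$ exclut toute singularité dans les $J$ évalués en $\omega$.

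L'étape la plus délicate sera l'analyse asymptotique: démontrer que les cellules indexées par $s \notin \mathcal{W}(M'|G|M)$ (c'est-à-dire $sM \not\subset M'$) contribuent de façon exponentiellement décroissante lorsque $\tilde{a} \to \infty$ dans le cône prescrit, tandis que celles avec $s \in \mathcal{W}(M'|G|M)$ convergent exactement vers les termes attendus du membre de droite, le facteur $\delta_{P'}^{-1/2}$ se compensant avec les jacobiens issus de la conjugaison par $\tilde{a}$. Ce contrôle uniforme en $\tilde{a}$ reposera sur les majorations de type $\Xi^{\tilde{G}}$ des coefficients tempérés et sur la décomposition $\tilde{K}\widetilde{M_0}^+\tilde{K}$ de la Proposition \ref{prop:KAK}, qui permet de transposer pas à pas les estimations de \cite{Wa03} au revêtement. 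Une fois cette combinatoire et ces estimations en place, la démonstration est une transcription directe de l'argument du cas des groupes réductifs connexes.
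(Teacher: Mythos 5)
The paper provides no explicit proof of this theorem: per its stated policy at the beginning of \S\ref{sec:Plancherel}, the argument is that of Waldspurger [Wa03, V.2] transposed to the covering group once the preliminaries (scindage unipotent, lemme géométrique, décomposition $\tilde{K}\widetilde{M_0}^+\tilde{K}$, majorations via $\Xi^{\tilde{G}}$) are in place, and your sketch reproduces exactly that strategy. Two small corrections to your wording: the Bruhat cells are indexed by double cosets $W^{M'}_0\backslash W^G_0/W^M_0$, not by $W^{M'}_0\backslash W^G_0$ (though for the surviving $s$ with $sM\subset M'$ the two coincide); and your appeal to absolute convergence via Théorème \ref{prop:J-convergence} is misplaced — at $\omega\in\mathcal{O}$ one is on the unitary axis $\Re=0$, so the hypothesis $\angles{\Re\chi,\alpha^\vee}>0$ of that theorem fails, and the operators $J_{\tilde{P}_s|s\tilde{P}}(s\omega)$, $J_{\tilde{P}_s^\diamond|s\tilde{P}}(s\check{\omega})$ are obtained by rational continuation; it is the $\tilde{G}$-régularité of $\omega$ alone, not temperedness plus convergence, that guarantees they are regular there.
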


Rappelons \eqref{eqn:wInd} pour la définition de $(\cdots)^{\mathrm{Ind},w}_{\tilde{P}'}$. Il faut aussi des fonctions auxiliaires. Soient $M \in \mathcal{L}(M_0)$, $\omega \in \mathcal{O}$ une $\Im X(\tilde{M})$-orbite contenant une représentation de carré intégrable modulo le centre, $P,P' \in \mathcal{P}(M)$ et $s \in \mathcal{W}(G|M)$. Définissons\index[iFT2]{${}^\circ c_{\tilde{P}'\arrowvert\tilde{P}}(s,\omega)$}
$$ {}^\circ c_{\tilde{P}'|\tilde{P}}(s,\omega) := c_{\tilde{P}'|\tilde{P}}(1,s\omega)^{-1} c_{\tilde{P}'|\tilde{P}}(s,\omega) \in \Hom_{\tilde{G}\times\tilde{G}}(L(\omega, \tilde{P}), L(s\omega, \tilde{P})). $$

Les opérateurs $c_{\tilde{P}'|\tilde{P}}(1,s\omega)^{-1}$, $c_{\tilde{P}'|\tilde{P}}(s,\omega)$ et ${}^\circ c_{\tilde{P}'|\tilde{P}}(s,\omega)$ sont définis comme des fonctions rationnelles en $\omega \in \mathcal{O}_{\mathfrak{C}}$, puisque les opérateurs d'entrelacement le sont. On peut ainsi parler de la régularité de ces opérateurs sur $\mathcal{O}$; cf. \cite[V.1]{Wa03}.

\begin{proposition}
  L'application $\omega \mapsto {}^\circ c_{\tilde{P}'|\tilde{P}}(s,\omega)$ est régulière sur $\mathcal{O}$. Pour tout $\omega \in \mathcal{O}$,  l'opérateur ${}^\circ c_{\tilde{P}'|\tilde{P}}(s,\omega)$ est unitaire. On a
  $$ E^{\tilde{G}}_{\tilde{P}'}({}^\circ c_{\tilde{P}'|\tilde{P}}(s,\omega) \psi) = E^{\tilde{G}}_{\tilde{P}}(\psi) $$
  pour tout $\psi \in L(\omega, \tilde{P})$.
\end{proposition}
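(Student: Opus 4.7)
Suivant \cite[V.1]{Wa03}, j'établirais d'abord la formule (iii), qui caractérise géométriquement ${}^\circ c_{\tilde{P}'|\tilde{P}}(s,\omega)$ comme l'opérateur préservant le coefficient matriciel. Par un calcul direct sur un tenseur pur $\psi = v \otimes \check{v} \in L(\omega, \tilde{P})$: en écrivant $E^{\tilde{G}}_{\tilde{P}}(\psi)(\tilde{x}) = \angles{\mathcal{I}_{\tilde{P}}(\omega)(\tilde{x})v, \check{v}}$ et en appliquant successivement $\lambda(s)$ puis les entrelaceurs $J_{\tilde{P}_s|s\tilde{P}}$ et $J_{\tilde{P}_s^\diamond|s\tilde{P}}$ intervenant dans la définition de $c_{\tilde{P}'|\tilde{P}}(s,\omega)$, la Proposition \ref{prop:prop-entrelacement}(i) (qui identifie $J^\vee$ à un opérateur du type $J$) montre que le coefficient est préservé modulo un facteur scalaire global. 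Ce facteur est exactement éliminé par la composition avec $c_{\tilde{P}'|\tilde{P}}(1, s\omega)^{-1}$; les constantes $\gamma(G|M')^{-1}$ des deux $c$ se simplifient. Cela fournit (iii).

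Pour la régularité (i), on étudie les pôles potentiels des fonctions rationnelles $\omega \mapsto c_{\tilde{P}'|\tilde{P}}(s,\omega)$ et $\omega \mapsto c_{\tilde{P}'|\tilde{P}}(1, s\omega)^{-1}$ sur $\mathcal{O}_\C$. Ils proviennent des entrelaceurs $J_{\tilde{P}_s|s\tilde{P}}$, $J_{\tilde{P}_s^\diamond|s\tilde{P}}$ d'un côté, et des inverses de $J_{\tilde{P}'|\tilde{P}}$, $J_{\tilde{\bar{P}}'|\tilde{P}}$ de l'autre. À l'aide des règles de composition de la Proposition \ref{prop:prop-entrelacement}(ii, iii), on factorise ces entrelaceurs en produits sur les racines réduites $\alpha \in \Sigma_M^\text{red}$ séparant les paraboliques considérés, suivant la même logique que la décomposition $\mu = \prod_\alpha \mu_\alpha$. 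Sur chaque facteur de co-rang un dans $\tilde{M}_\alpha$, la combinaison se ramène à un produit du type $J_{\tilde{P}|\tilde{\bar{P}}}(s\omega) J_{\tilde{\bar{P}}|\tilde{P}}(s\omega)$, égal au scalaire $j_\alpha(s\omega) = \mu_\alpha(s\omega)^{-1}$. Les pôles et zéros des deux termes composant ${}^\circ c$ se compensent ainsi exactement, et la régularité sur $\mathcal{O}$ découle de celle, déjà démontrée, de $\mu_\alpha$ sur $\mathcal{O}$ et de sa non-annulation sur les orbites d'éléments unitaires.

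Pour l'unitarité (ii), on munit $\mathcal{I}_{\tilde{P}}(E)$ de son produit scalaire $\tilde{G}$-invariant (disponible puisque $\omega$ est tempérée), d'où une norme de Hilbert-Schmidt sur $L(\omega, \tilde{P}) \hookrightarrow \End(\mathcal{I}_{\tilde{P}}E)$. Pour $\omega \in \mathcal{O}$ unitaire, les entrelaceurs satisfont $J_{\tilde{P}'|\tilde{P}}(\omega)^* = J_{\tilde{P}|\tilde{P}'}(\omega)$ modulo l'identification duale-contragrédiente; combinée à la Proposition \ref{prop:prop-entrelacement}(ii), cette relation entraîne que $J_{\tilde{P}'|\tilde{P}}(\omega)$ est une isométrie à un facteur scalaire positif $\mu(\omega)^{1/2}$ près. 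La définition de ${}^\circ c$ comme rapport $c_{\tilde{P}'|\tilde{P}}(1,s\omega)^{-1} c_{\tilde{P}'|\tilde{P}}(s,\omega)$ est conçue précisément pour que ces scalaires se compensent, rendant l'opérateur isométrique. L'obstacle principal de la preuve réside dans (i), où il faut établir rigoureusement la compensation exacte — et non seulement asymptotique — des pôles d'ordre fini sur l'orbite réelle $\mathcal{O}$, ce qui passe par une analyse co-rang par co-rang fine des facteurs scalaires, à la manière de \cite[V.1]{Wa03}.
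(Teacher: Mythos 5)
Pour (iii), votre calcul direct sur un tenseur pur est une piste raisonnable, proche du schéma de Waldspurger ; il reste toutefois à justifier soigneusement le passage de l'égalité des termes constants faibles (donnée par le théorème qui précède) à l'égalité des coefficients eux-mêmes, typiquement via l'unicité de la décomposition en exposants pour $\omega$ générique dans $\mathcal{O}$, puis prolongement par continuité sur $\mathcal{O}$ tout entier.

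En revanche, l'argument de régularité (i) comporte une faille concrète : vous invoquez la non-annulation de $\mu_\alpha$ sur les orbites d'éléments unitaires, ce qui est faux. La proposition qui précède n'affirme que la régularité et la positivité \emph{au sens large} de $\mu$ sur $\mathcal{O}$ ; les zéros de $\mu_\alpha$ sur $\mathcal{O}$ existent bel et bien, et ce sont précisément eux qui gouvernent la théorie des $R$-groupes au \S\ref{sec:R-groupe} (cf.\ l'ensemble $\Sigma_P^{\mathrm{red}}(\sigma)$ dans \eqref{eqn:epsilon-sigma}, défini comme l'ensemble des $\beta$ où $\mu_\beta$ s'annule en $\sigma$). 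En ces zéros, les opérateurs $J$ de co-rang un ont des pôles, $c_{\tilde{P}'|\tilde{P}}(1,s\omega)^{-1}$ n'est pas borné a priori, et la compensation rang par rang que vous décrivez ne fournit pas la régularité de ${}^\circ c$ précisément là où elle est délicate. Pour la même raison, votre preuve de (ii) est vide aux zéros de $\mu$ : les facteurs $\mu^{1/2}$ n'y ont rien à compenser. La démonstration de \cite[V.1]{Wa03}, que le texte reprend sans la reproduire, ne procède pas par cancellation algébrique de pôles ; elle exploite la formule du terme constant faible, les paquets d'onde et des arguments analytiques de continuité et de positivité (produits scalaires tronqués) pour établir simultanément la régularité et l'unitarité de ${}^\circ c$ sur tout $\mathcal{O}$, y compris aux zéros de $\mu$ --- c'est là le point essentiel que votre esquisse laisse de côté.
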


\subsection{Énoncé de la formule de Plancherel}\label{sec:Plancherel-enonce}
Considérons un parabolique $P=MU \in \mathcal{F}(M_0)$. Soit $\mathcal{O}$ une $\Im X(\tilde{M})$-orbite rencontrant $\Pi_2(\tilde{M})$. Soient $(\omega,E), (\omega', E') \in \mathcal{O}$ tels que $\omega \simeq \omega'$. Alors les espaces $\mathcal{I}^{\tilde{G} \times \tilde{G}}_{\tilde{P} \times \tilde{P}}(E \boxtimes \check{E})$ et $\mathcal{I}^{\tilde{G} \times \tilde{G}}_{\tilde{P} \times \tilde{P}}(E' \boxtimes \check{E}')$ sont canoniquement isomorphes: l'isomorphisme étant induit d'un isomorphisme $\omega \rightiso \omega'$ quelconque et de son dual.

Définissons $C^\infty(\mathcal{O}, \tilde{P})$\index[iFT2]{$C^\infty(\mathcal{O}, \tilde{P})$} comme l'espace des fonctions $\psi: \omega \to \psi_\omega \in L(\omega, \tilde{P})$ respectant les isomorphismes ci-dessus, telles que $\psi$ est $C^\infty$ sur $\mathcal{O}$. Rappelons que l'espace de $L(\omega, \tilde{P})$ ne change pas lorsque $\omega$ varie dans $\mathcal{O}$, pourvu que l'on le réalise comme un espace de fonctions sur $\tilde{K} \times \tilde{K}$; notons-le $L_{\tilde{K}}(\mathcal{O})$. Si $H$ est un sous-groupe ouvert compact de $\tilde{G}$, $L_{\tilde{K}}(\mathcal{O})^{H \times H}$ est de dimension finie. Comme $\Im X(\tilde{M})$ est compact, $C^\infty(\Im X(\tilde{M}))$ est muni de la famille des semi-normes $\|\varphi \|_D := \sup |D\varphi|$, où $D$ parcourt les opérateurs différentiels sur $\Im X(\tilde{M})$. Donc l'espace
$$ C^\infty(\Im X(\tilde{M})) \otimes L_{\tilde{K}}(\mathcal{O})^{H \times H} $$
est muni d'une topologie canonique. Il contient $C^\infty(\mathcal{O},\tilde{P})^{H \times H}$ comme un sous-espace fermé. On munit $C^\infty(\mathcal{O},\tilde{P})$ de la topologie $\varinjlim$ en variant $H$.

\begin{proposition}[Cf. {\cite[VI.2.1]{Wa03}}]
  Soit $\psi \in C^\infty(\mathcal{O},\tilde{P})$, alors
  \begin{enumerate}
    \item pour tout $\tilde{x} \in \tilde{G}$, la fonction $\omega \mapsto (E^{\tilde{G}}_{\tilde{P}}\psi_\omega)(\tilde{x})$ est $C^\infty$ sur $\mathcal{O}$;
    \item pour tout $P' = M' U' \in \mathcal{F}(M_0)$ et $\tilde{m}' \in \tilde{M}'$, les fonctions $\omega \mapsto (E^{\tilde{G}}_{\tilde{P}}\psi_\omega)_{\tilde{P}'}(\tilde{m}')$ et $\omega \mapsto (E^{\tilde{G}}_{\tilde{P}}\psi_\omega)^w_{\tilde{P}'}(\tilde{m}')$ sont $C^\infty$ sur $\mathcal{O}$.
  \end{enumerate}
\end{proposition}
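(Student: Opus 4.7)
The plan is to prove (1) by a direct inspection of the $\tilde{K}$-finite realization, and then derive (2) by invoking the expression of the weak constant term given by the preceding theorem, reducing the problem to smoothness of intertwining operators on tempered orbits.

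\smallskip
For (1), fix a compact open subgroup $H \subset \tilde{G}$ such that $\psi$ takes values in the finite-dimensional space $L_{\tilde{K}}(\mathcal{O})^{H \times H}$. Realizing $\mathcal{I}_{\tilde{P}}(\omega)$ on the fixed space $\Ind^{\tilde{K}}_{\tilde{K} \cap \tilde{P}}(E)$, the operator $\mathcal{I}_{\tilde{P}}(\omega)(\tilde{x})$ acts by the standard formula involving a translation by $\tilde{x}$, the cocycle given by Iwasawa decomposition, and a factor of the form $\delta_P^{1/2} \cdot \chi$ where $\chi \in \Im X(\tilde{M})$ parametrizes $\omega$ inside its orbit. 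The factor $\chi$ depends smoothly (indeed analytically) on $\chi \in \Im X(\tilde{M})$ whereas the combinatorial data depending on $\tilde{x}$ and $\tilde{K}$ are independent of $\omega$. Pairing with $\check{v}_\omega$ and evaluating at $v_\omega$, both of which vary $C^\infty$ in $\omega$ inside a fixed finite-dimensional space, yields the $C^\infty$-dependence of $\omega \mapsto (E^{\tilde{G}}_{\tilde{P}}\psi_\omega)(\tilde{x})$.

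\smallskip
For the second assertion concerning the weak constant term, I would apply the theorem already displayed in \S\ref{sec:op-entrelacement}, which writes
$$ (E^{\tilde{G}}_{\tilde{P}}\psi_\omega)^{\mathrm{Ind},w}_{\tilde{P}'}(\tilde{x},\tilde{y}) = \sum_{s \in W(M'|G|M)} E^{\tilde{P}'}_{\tilde{M} \cap s\tilde{P}}\bigl(c_{\tilde{P}'|\tilde{P}}(s,\omega)\psi_\omega\bigr)(\tilde{x},\tilde{y}) . $$
Specializing appropriately to extract the value at $\tilde{m}'$ reduces the problem to showing that $\omega \mapsto c_{\tilde{P}'|\tilde{P}}(s,\omega)\psi_\omega$ is $C^\infty$ with values in a fixed finite-dimensional $(H \times H)$-fixed space of sections, and then applying (1) to each matrix coefficient $E^{\tilde{M}'}_{\tilde{M} \cap s\tilde{P}}$ on the Lévi $\tilde{M}'$. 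The smoothness of the factor $c_{\tilde{P}'|\tilde{P}}(s,\omega)$ will follow from Theorem \ref{prop:J-convergence}: each of the intertwining operators $J_{\tilde{P}_s|s\tilde{P}}(s\omega)$ and $J_{\tilde{P}_s^\diamond|s\tilde{P}}(s\check{\omega})$ is defined by absolutely convergent integrals on the tempered axis (since $\omega \in \mathcal{O}$ is square-integrable modulo the centre, so all $s\omega$ remain tempered), and the usual $\tilde{K}$-finite analysis shows that the resulting operator depends $C^\infty$ on $\omega$ as a family on the fixed space $L_{\tilde{K}}(\mathcal{O})^{H \times H}$.

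\smallskip
For the strong constant term $f_{\tilde{P}'}(\tilde{m}')$, the characterization \eqref{eqn:terme-const} says that $(\delta_{P'}^{-1/2} f)(\tilde{m}\tilde{a}) = f_{\tilde{P}'}(\tilde{m}\tilde{a})$ for $\tilde{a} \in \widetilde{A_{M'}}^\dagger$ sufficiently deep in the cone cut out by $\Sigma_{P'}$. Combined with the exponential decomposition of elements of $\mathcal{A}(\tilde{M}')$ along $\widetilde{A_{M'}}^\dagger$, the function $f_{\tilde{P}'}$ is recovered as a finite sum $\sum_{\chi \in \Exp} \chi(\tilde{a}) P_{\chi}(H_{M'}(a))$; one then identifies the coefficients in terms of the weak constant term already treated, using that the strong and weak versions differ only by the tempered exponents. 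Part (1), applied at points $\tilde{m}\tilde{a}$ with $\tilde{a}$ in a suitable compact set, transfers its smoothness to each coefficient, whence the $C^\infty$-dependence of $\omega \mapsto (E^{\tilde{G}}_{\tilde{P}}\psi_\omega)_{\tilde{P}'}(\tilde{m}')$.

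\smallskip
The main obstacle is the smoothness of the intertwining operators in $\omega$ on the tempered locus. In the connected reductive case one uses that the defining integrals of $J_{\tilde{P}'|\tilde{P}}(\pi_\chi)$ converge uniformly on compact subsets of the tempered axis together with the rationality (and hence real-analyticity on $\mathcal{O}$) established via the $\tilde{K}$-finite realization. For coverings nothing substantially new is needed, but one must check that Waldspurger's argument in \cite[IV.1]{Wa03} adapts, which is the case since all the integrals live on unipotent radicals (where the covering splits canonically via \cite[\S 2.2]{Li10a}).
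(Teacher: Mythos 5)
The paper itself offers no proof here: it simply cites Waldspurger {\cite[VI.2.1]{Wa03}}, in line with the stated policy of \S 2 that the arguments are taken over essentially verbatim. So the comparison is really with Waldspurger's proof rather than with anything explicit in this article.

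Your argument for part (1) is correct and matches the standard one: in the $\tilde{K}$-realization only the unramified character $\chi$ depends on $\omega$, it does so analytically, and the pairing with a $C^\infty$-varying element of a fixed finite-dimensional space $L_{\tilde{K}}(\mathcal{O})^{H\times H}$ is $C^\infty$.

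For part (2) there is a genuine gap. You invoke the displayed theorem giving
$$ (E^{\tilde{G}}_{\tilde{P}}\psi)^{\mathrm{Ind},w}_{\tilde{P}'} = \sum_{s \in W(M'|G|M)} E^{\tilde{P}'}_{\tilde{M} \cap s\tilde{P}}\bigl(c_{\tilde{P}'|\tilde{P}}(s,\omega)\psi\bigr) $$
and then claim that each $c_{\tilde{P}'|\tilde{P}}(s,\omega)$ is $C^\infty$ on $\mathcal{O}$ because the defining intertwining integrals ``converge on the tempered axis''. That claim is false and is also not what Théorème \ref{prop:J-convergence} says: the theorem only gives convergence when $\angles{\Re\chi,\alpha^\vee}>0$ for the relevant $\alpha$, i.e.\ \emph{strictly} off the unitary axis. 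On $\mathcal{O}$ itself the unnormalized operators $J_{\tilde{P}'|\tilde{P}}(\omega)$, and hence the $c_{\tilde{P}'|\tilde{P}}(s,\omega)$, are only \emph{rational} and genuinely do have poles at the non-$\tilde{G}$-régulier points (this is exactly where $\mu_\alpha(\omega)=0$; recall $\mu = j^{-1}$ and $j = J_{\tilde{P}|\tilde{\bar P}}J_{\tilde{\bar P}|\tilde P}$, so at least one $J$ blows up wherever $\mu$ vanishes). This is precisely why the text introduces the regularized operators ${}^\circ c_{\tilde{P}'|\tilde{P}}(s,\omega)$ and records the proposition that \emph{those} are regular and unitary on $\mathcal{O}$; the raw $c$'s are not. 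Your term-by-term argument therefore proves smoothness only on the dense open set of $\tilde{G}$-regular $\omega$, whereas the statement requires it on all of $\mathcal{O}$. Waldspurger's actual proof does not proceed term by term through the $c$-expansion; it either works through the ${}^\circ c$-operators or exhibits the (weak) constant term directly as a finite linear combination, with coefficients uniform in $\omega$, of evaluations of $E^{\tilde{G}}_{\tilde{P}}\psi_\omega$ at a fixed finite set of points, then applies (1).

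Your final paragraph on the strong constant term is also not convincing as written: the recovery of $f_{\tilde{P}'}$ from finitely many translates $(\delta_{P'}^{-1/2}f)(\tilde{m}'\tilde{a}_j)$ requires inverting a generalized Vandermonde system whose entries are the exponents $\chi(\tilde{a}_j)$, and those exponents depend on $\omega$ and can collide at the singular points, so smoothness of the coefficients is not automatic. In addition, ``the strong and weak versions differ only by the tempered exponents'' is stated backwards: the weak constant term \emph{is} the tempered part, and the discrepancy is carried by the non-tempered exponents, which you do not control. A correct treatment has to deal with the full Jacquet module $V_{\tilde{P}'}$, typically via uniform admissibility on the Bernstein component, so that the map $V^{K_1} \twoheadrightarrow V_{\tilde{P}'}^{K_1 \cap \tilde{M}'}$ is computed by a fixed integral independent of $\omega$.
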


C'est donc loisible de poser pour $\psi \in C^\infty(\mathcal{O}, \tilde{P})$,
$$ f_\psi(\tilde{x}) = \int_{\mathcal{O}} \mu(\omega) (E^{\tilde{G}}_{\tilde{P}}\psi_\omega)(\tilde{x}) \dd\omega, \qquad \tilde{x} \in \tilde{G}, $$
où $\mathcal{O}$ est muni de la mesure telle que $\Im X(\tilde{M}) \to \mathcal{O}$ préserve localement les mesures.

\begin{proposition}[Cf. {\cite[VI.3.1]{Wa03}}]
  L'application $\psi \mapsto f_\psi$ est une application linéaire continue de $C^\infty(\mathcal{O}, \tilde{P})$ sur $\mathcal{C}(\tilde{G})$.
\end{proposition}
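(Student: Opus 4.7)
The plan is to adapt Waldspurger's proof of \cite[Proposition VI.3.1]{Wa03} to the covering setting. Linearity is immediate from the definition of $f_\psi$, so the real task is to bound $\nu_r(f_\psi)$ by a continuous seminorm of $\psi$ on $C^\infty(\mathcal{O},\tilde{P})$ for every $r \in \R$.

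First I would establish a preliminary uniform matrix-coefficient estimate: for a fixed base point $\omega_0 \in \mathcal{O}$ and $\omega = \omega_0 \otimes \chi$ with $\chi \in \Im X(\tilde{M})$, one has
$$ |(E^{\tilde{G}}_{\tilde{P}}\psi_\omega)(\tilde{x})| \leq c_0\, \Xi^{\tilde{G}}(x)(1+\sigma(x))^{d_0}, \qquad \tilde{x} \in \tilde{G}, $$
uniformly in $\chi$, where $c_0$ depends on a continuous seminorm of $\psi$ and $d_0$ only on the induction data. Since $\Xi^{\tilde{G}} = \Xi^G \circ \rev$ and $\sigma$ factors through $\rev$, this reduces, via the decomposition of Proposition~\ref{prop:KAK}, to the standard $\Xi$-estimate for matrix coefficients of the representation $\mathcal{I}_{\tilde{P}}(\omega)$, which is tempered since $\omega \in \mathcal{O} \cap \Pi_2(\tilde{M})$.

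The core of the argument is to extract additional polynomial decay by integration by parts on the compact torus $\Im X(\tilde{M}) = i\mathfrak{a}_M^*/\mathfrak{a}_{M,F}^\vee$, which parametrises $\mathcal{O}$ up to a finite quotient. For $\tilde{x} \in \tilde{G}$, the dependence of the induced matrix coefficient on $\chi$ is, after writing $\tilde{x}\tilde{k} = \tilde{u}(\tilde{x},\tilde{k})\,\tilde{m}(\tilde{x},\tilde{k})\,\tilde{k}'$ in the Iwasawa decomposition of $\tilde{G}$, carried by the factor $\chi(\tilde{m}(\tilde{x},\tilde{k}))$; a constant-coefficient differential operator $D$ on $\Im X(\tilde{M})$ dual to a vector in the lattice $\mathfrak{a}_{M,F}$ acts on this factor by multiplication by a polynomial in $H_M(m(\tilde{x},\tilde{k}))$. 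Bounding $\|H_M(m(\tilde{x},\tilde{k}))\|$ by a multiple of $\sigma(x)$ (standard Iwasawa estimate) and transferring $D$ onto $\mu \cdot \psi$ by integration by parts on the compact torus, which is permitted by the smoothness of $\mu$ on $\mathcal{O}$ and the assumption $\psi \in C^\infty(\mathcal{O},\tilde{P})$, yields upon iteration
$$ |f_\psi(\tilde{x})| \leq c_r\, \Xi^{\tilde{G}}(x)(1+\sigma(x))^{-r} $$
with $c_r$ a continuous seminorm of $\psi$ of order growing with $r$, which is exactly the required bound on $\nu_r(f_\psi)$.

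The main obstacle will be the bookkeeping: verifying that the duality between the lattice $\mathfrak{a}_{M,F}$ and the compact torus $\Im X(\tilde{M})$ is compatible with the subgroups $A_M(F)^\dagger$ and $\widetilde{A_M}^\dagger$ introduced in Proposition~\ref{prop:dagger}, that derivatives of $\mu$ on $\mathcal{O}$ and of matrix coefficients of $\omega_0$ are uniformly controlled, and that differentiation in $\chi$ commutes with the integration over $\tilde{K}$ implicit in the definition of $E^{\tilde{G}}_{\tilde{P}}$. Once these structural compatibilities are in place, all of which follow from facts already recorded in \S\ref{sec:def-base}--\S\ref{sec:op-entrelacement} for coverings, the argument transfers from the connected reductive case essentially verbatim.
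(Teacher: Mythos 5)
The paper does not prove this proposition at all — it is one of the statements the author carries over from Waldspurger's \cite[VI.3.1]{Wa03} with a citation only, under the blanket principle announced at the start of \S\ref{sec:Plancherel} that the proofs are essentially identical to the connected reductive case. So the only thing to assess is whether your sketch faithfully reconstructs that argument.

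Your overall strategy — establish a preliminary $\Xi$-estimate uniform in $\omega$, then exploit the compactness of $\Im X(\tilde{M})$ and the smoothness of $\mu\cdot\psi$ by integration by parts to win polynomial decay in $\sigma(x)$ — is the right one. But the concrete mechanism you describe does not work. You integrate by parts on the torus by noting that a lattice-dual differential operator $D$ applied to $\chi(\tilde m(\tilde x,\tilde k))$ produces the scalar $\angles{v, H_M(m(\tilde x,\tilde k))}$; transferring $D$ to $\mu\cdot\psi$ then costs you a factor $\angles{v, H_M(m(\tilde x,\tilde k))}^{-1}$ per iteration. To turn that into decay in $\sigma(x)$ you would need a \emph{lower} bound on $\|H_M(m(\tilde x,\tilde k))\|$ in terms of $\sigma(x)$, uniformly in $\tilde k$. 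What you invoke instead is the \emph{upper} bound $\|H_M(m(\tilde x,\tilde k))\|\leq c\,\sigma(x)$ — which goes in the wrong direction and anyway cannot be sharpened to a uniform lower bound: for a positive measure of $\tilde k\in\tilde K$, $H_M(m(\tilde x,\tilde k))$ is small even when $\sigma(x)$ is large (its values sweep out, up to the projection to $\mathfrak{a}_M$, the convex hull of the Weyl orbit of the Cartan coordinate of $x$, which in general contains $0$). So the chain of inequalities as written does not yield $\nu_r(f_\psi)<\infty$.

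What the argument actually requires — and this is where Proposition~\ref{prop:KAK} and the terme-constant material of \S\ref{sec:Plancherel} enter in an essential way — is the Cartan decomposition together with the weak constant term expansion. One reduces to estimating $f_\psi(\tilde a)$ for $\tilde a\in\widetilde{M_0}^+$, and on that cone $E^{\tilde G}_{\tilde P}\psi_\omega(\tilde a)$ is, up to exponentially small error, $\delta_{P_0}^{1/2}(a)$ times a finite sum of terms $c_{\dots}(s,\omega)\,e^{\angles{s\lambda(\omega),H_{M_0}(a)}}$ with $c$-function coefficients regular on $\mathcal{O}$. Here the phase parameter is $H_{M_0}(a)$, which \emph{is} comparable to $\sigma(a)$; integration by parts on $\Im X(\tilde M)$ then gives $(1+\sigma(a))^{-N}$ honestly, and combined with $\delta_{P_0}^{1/2}(a)\lesssim\Xi^{\tilde G}(a)$ one gets the claimed seminorm bound, with the constant controlled by derivatives of $\mu$, of the $c$-functions, and of $\psi$. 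Your Iwasawa-based variant skips this mechanism and therefore leaves the key step unproved.

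Your list of "bookkeeping" points (compatibility of the lattice duality with $A_M(F)^\dagger$, uniformity of derivative bounds, interchange of differentiation with the $\tilde K$-integral) is well chosen, and the preliminary $\Xi$-estimate step is fine; it is only the central decay mechanism that needs to be replaced by the Cartan/constant-term argument.
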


On fait varier les $(\mathcal{O},\tilde{P})$. Notons $\Theta$ l'ensemble des paires $(\mathcal{O}, P)$ où $P=MU \in \mathcal{F}(M_0)$ et $\mathcal{O}$ est comme ci-dessus. Posons $C^\infty(\Theta) := \bigoplus_{(\mathcal{O},P) \in \Theta} C^\infty(\mathcal{O}, \tilde{P})$\index[iFT2]{$C^\infty(\Theta)$}. On écrit un élément $\psi$ de $C^\infty(\Theta)$ sous la forme $\psi = (\psi[\mathcal{O},P])_{\mathcal{O},P}$. On définit l'application $\kappa: C^\infty(\Theta) \to \mathcal{C}(\tilde{G})$ par
$$ \kappa(\psi) = \sum_{(\mathcal{O},P) \in \Theta} \gamma(G|M) |W^M_0| |W^G_0|^{-1} |\mathcal{P}(M)|^{-1} f_{\psi[\mathcal{O},P]}. $$

On note $C^\infty(\Theta)^\text{inv}$ le sous-espace de $C^\infty(\Theta)$ des éléments $\psi$ tels que
$$ \psi[s\mathcal{O}, P']_{s\omega} = {}^\circ c_{\tilde{P}'|\tilde{P}}(s,\omega) \psi[\mathcal{O},P]_\omega $$
pour tout $(\mathcal{O},P) \in \Theta$ et tout $P'$. On définit la projection $\text{pr}^\text{inv}: C^\infty(\Theta) \to C^\infty(\Theta)^\text{inv}$  par
$$ (\text{pr}^\text{inv} \psi)[\mathcal{O},P]_\omega = |W^G_0|^{-1} |\mathcal{P}(M)|^{-1} \sum_{s \in W^G_0} \sum_{P' \in \mathcal{P}(sM)} {}^\circ c_{\tilde{P}'|\tilde{P}}(s,\omega)^{-1} \psi[s\mathcal{O}, P']_{s\omega}. $$

Un fait important est que $\kappa$ se factorise par $\text{pr}^\text{inv}$; on utilise toujours le symbole $\kappa$ pour l'application $C^\infty(\Theta)^\text{inv} \to \mathcal{C}(\tilde{G})$. Voir \cite[VI.3.2]{Wa03}.

L'étape suivante est de définir l'inverse de $\kappa$. Soit $f \in \mathcal{C}(\tilde{G})$, notons $\check{f}$ la fonction $\tilde{x} \mapsto f(\tilde{x}^{-1})$. Alors $\check{f} \in \mathcal{C}(\tilde{G})$. Soient $M \in \mathcal{L}(M_0)$ et $P \in \mathcal{P}(M)$. Soit $(\omega,E) \in \Pi_2(\tilde{M})$, notons $(\pi,V) := \mathcal{I}_{\tilde{P}}(\omega,E)$ et
$$ \hat{f}(\omega, \tilde{P}) := d(\omega) \pi(\check{f}). $$

On vérifie que $\hat{f}(\omega, \tilde{P}) \in L(\omega,\tilde{P})$ comme dans \cite[VII.1]{Wa03}. Soit $(\mathcal{O},P) \in \Theta$, on obtient ainsi une fonction $\psi_f[\mathcal{O},P]$ sur $\mathcal{O}$ par $\psi_f[\mathcal{O},P]_\omega = \hat{f}(\omega,P)$, pour tout $\omega \in \mathcal{O}$.

\begin{proposition}
  Pour tout $(\mathcal{O},P) \in \Theta$, l'application $f \mapsto \psi_f[\mathcal{O},P]$ définit une application linéaire continue $\mathcal{C}(\tilde{G}) \to C^\infty(\mathcal{O},\tilde{P})$.
\end{proposition}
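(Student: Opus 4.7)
The plan is to follow the argument of \cite[VII.1]{Wa03}, which adapts to covers once the temperedness estimates of \S\ref{sec:representations} are in place. The statement splits into three assertions: (a) $\hat{f}(\omega,\tilde{P}) = d(\omega)\pi(\check{f})$ is a well-defined element of $L(\omega,\tilde{P})$, (b) the map $\omega \mapsto \hat{f}(\omega,\tilde{P})$ is $C^\infty$ on $\mathcal{O}$, and (c) the resulting linear map $f \mapsto \psi_f[\mathcal{O},P]$ is continuous.

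For (a), pick a compact open subgroup $H \subset \tilde{G}$ such that $f$ is bi-$H$-invariant. Since $\omega \in \Pi_2(\tilde{M})$, the induced representation $\pi := \mathcal{I}_{\tilde{P}}(\omega)$ is tempered, so for any $v \in V$, $\check{v} \in \check{V}$ the matrix coefficient $\tilde{x}\mapsto \angles{\pi(\tilde{x})v,\check{v}}$ satisfies the weak inequality \eqref{eqn:eq-w}. Combined with $|\check{f}(\tilde{x})| \leq \nu_r(f) \Xi^{\tilde{G}}(x)(1+\sigma(x))^{-r}$, the integral defining $\pi(\check{f})$ converges absolutely thanks to the integrability of $\Xi^2(1+\sigma)^{-r}$ modulo $\widetilde{A_G}^\dagger$ for $r$ large (cf. \cite[II]{Wa03}, whose arguments carry over to $\tilde{G}$ via Proposition \ref{prop:KAK}). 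Since $\pi(\check{f})$ factors through and has image in the finite-dimensional space $V^H$, the operator $\pi(\check{f})$ lies in $V^H \otimes (V^H)^\vee \subset L(\omega,\tilde{P})$, and in the compact picture it becomes an element of $L_{\tilde{K}}(\mathcal{O})^{H\times H}$ — a space that does not depend on $\omega$.

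For (b), fix a base point $\omega_0 \in \mathcal{O}$ and parametrize $\mathcal{O}$ locally by $\chi \in \Im X(\tilde{M})$ so that $\omega = \omega_0 \otimes \chi$; in the compact picture $\pi_\omega(\tilde{x})$ differs from $\pi_{\omega_0}(\tilde{x})$ only by the scalar factor $\chi$ evaluated on the $\tilde{M}$-component of the Iwasawa-type decomposition of $\tilde{x}$. Consequently, for any differential operator $D$ on $\mathcal{O}$, differentiating under the integral sign produces
\[
  (D\pi_\omega(\check{f})) v = \int_{\tilde{G}} \check{f}(\tilde{x}) Q_D(H_{\tilde{M}}(\cdot))(\tilde{x}) \, \pi_\omega(\tilde{x}) v \dd\tilde{x},
\]
where $Q_D$ is a polynomial in the $\tilde{M}$-component coordinates of bounded degree. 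Since $f \in \mathcal{C}(\tilde{G})$, the polynomial factor is absorbed by any $\nu_r(f)$ for $r$ large enough, so the integral still converges absolutely and defines a smooth function of $\omega$. Smoothness of $d(\omega)$ on $\mathcal{O}$ is immediate since $d$ is essentially a value of the rational function $\mu$ (up to normalizing constants), which is regular and positive on $\mathcal{O}$.

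For (c), the topology on $C^\infty(\mathcal{O},\tilde{P})^{H\times H}$ is defined by the semi-norms $\|D\psi\|_\infty$. The explicit bound from (b) gives
\[
  \|D \psi_f[\mathcal{O},P]\|_\infty \leq C_{D,H} \, \nu_{r(D)}(f)
\]
for a constant $C_{D,H}$ depending on $D$, $H$, and the chosen base point, and for $r(D)$ large enough. This shows continuity of $f \mapsto \psi_f[\mathcal{O},P]$ on each $\mathcal{C}_H$, hence on the inductive limit $\mathcal{C}(\tilde{G})$. The principal technical obstacle is the polynomial–vs.–Schwartz estimate in (b)–(c): one must check that the extra polynomial factors $Q_D$ produced by differentiation in $\omega$ are uniformly controlled, which reduces to the integrability majorations for $\Xi^{\tilde{G}}$ already transferred from the linear case.
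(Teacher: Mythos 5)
Your overall strategy is exactly the one the paper intends (it simply cites \cite[VII.1]{Wa03} without reproducing the argument): convergence of $\pi(\check{f})$ from the weak inequality plus the Schwartz condition, finite rank from bi-$H$-invariance so that $\pi(\check{f}) \in L(\omega,\tilde{P})$ in the compact picture, smoothness by differentiating under the integral and absorbing the resulting polynomial factors in $H_M$ into the semi-norms $\nu_r$, and continuity from the resulting explicit bounds. That is the right proof.

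One sentence is wrong, though harmlessly so. You justify smoothness of $\omega \mapsto d(\omega)$ by asserting that $d$ ``is essentially a value of the rational function $\mu$ (up to normalizing constants).'' This is not the case: the formal degree of $\omega \in \Pi_2(\tilde{M})$ is intrinsic to $\tilde{M}$, whereas $\mu$ is the Harish-Chandra function attached to the embedding $M \subset G$, and there is no such identification. The correct (and simpler) observation is that $d(\omega)$ is \emph{constant} on $\mathcal{O}$: if $\chi \in \Im X(\tilde{M})$ is unitary, then $\angles{(\omega\otimes\chi)(\tilde{m})v,\check v}\,\angles{v',(\omega\otimes\chi)^\vee(\tilde{m})\check v'} = \angles{\omega(\tilde{m})v,\check v}\,\angles{v',\omega^\vee(\tilde{m})\check v'}$ since the factors $\chi(\tilde{m})$ and $\chi(\tilde{m})^{-1}$ cancel, so the defining integral for the formal degree is unchanged. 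With that substitution the proof stands as you wrote it.

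Two small points you may wish to make explicit. First, when you bound $\check f$ you are tacitly using $\Xi(x^{-1})=\Xi(x)$ and $\sigma(x^{-1})=\sigma(x)$, both of which hold and are worth a word. Second, the degree of the polynomial factor $Q_D$ in (b) depends on $D$, so the order $r(D)$ of the semi-norm in the final estimate must depend on $D$; you do write $\nu_{r(D)}(f)$ in step (c), so this is captured, but the dependence should be stated when $Q_D$ is introduced.
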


On arrive à l'énoncé de la formule de Plancherel.
\begin{theorem}\label{prop:Plancherel}
  L'application $f \mapsto \psi_f[\mathcal{O},P]$ induit une application $\mathcal{C}(\tilde{G}) \to C^\infty(\Theta)^\mathrm{inv}$. Elle est l'inverse bilatéral de $\kappa$. En particulier, $\kappa$ est un isomorphisme.
\end{theorem}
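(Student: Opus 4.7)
La stratégie consiste à adapter la preuve de Waldspurger \cite[\S VI--VIII]{Wa03} au cadre des revêtements; la tâche est essentiellement de s'assurer que chaque étape transfère sans obstruction. On procède en trois temps: (i) vérifier que l'application $f \mapsto \psi_f$ prend ses valeurs dans $C^\infty(\Theta)^\text{inv}$; (ii) établir l'identité $\kappa(\psi_f) = f$ pour tout $f \in \mathcal{C}(\tilde{G})$; (iii) conclure par un argument formel à la bijectivité avec inverse bilatéral.

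L'appartenance à $C^\infty(\Theta)^\text{inv}$ est formelle. Elle traduit le fait que l'entrelaceur ${}^\circ c_{\tilde{P}'|\tilde{P}}(s,\omega)$, étant unitaire et compatible avec l'isomorphisme canonique $\mathcal{I}_{\tilde{P}}(\omega) \rightiso \mathcal{I}_{\tilde{P}'}(s\omega)$, commute avec la construction $\omega \mapsto d(\omega) \mathcal{I}_{\tilde{P}}(\omega)(\check{f})$ qui définit $\psi_f[\mathcal{O},P]_\omega$. Cela se déduit directement des propriétés de la Proposition \ref{prop:prop-entrelacement} et de l'unitarité de ${}^\circ c_{\tilde{P}'|\tilde{P}}(s,\omega)$.

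Le cœur du théorème est l'identité $\kappa(\psi_f) = f$. On procède par récurrence sur $\dim \mathfrak{a}_{M_0}/\mathfrak{a}_G$. Dans le cas de base, où $M_0 = G$ modulo $A_G$, seul le terme $M=G$ subsiste dans la somme définissant $\kappa$, et l'identité se ramène à l'orthogonalité de Schur pour les représentations de $\Pi_2(\tilde{G})$, conséquence directe de la définition du degré formel $d(\omega)$ et du facteur $\iota_G$ de \eqref{eqn:iota}. Pour le pas inductif, on calcule le terme constant faible $(\kappa(\psi_f))^w_{\tilde{P}'}$ le long d'un parabolique propre quelconque $\tilde{P}' = \tilde{M}'\tilde{U}'$ à l'aide du théorème exprimant $(E^{\tilde{G}}_{\tilde{P}}\psi_\omega)^{\mathrm{Ind},w}_{\tilde{P}'}$ en fonction des opérateurs $c_{\tilde{P}'|\tilde{P}}(s,\omega)$. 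En regroupant les contributions selon les orbites sous $W^{M'}_0$, le facteur $\gamma(G|M')^{-1}$ intervenant dans la définition de $c_{\tilde{P}'|\tilde{P}}(s,\omega)$ se combine avec le $\gamma(G|M)$ présent dans $\kappa$ pour produire le facteur $\gamma(M'|M)$ requis par la formule de Plancherel interne à $\tilde{M}'$. L'hypothèse de récurrence appliquée à $\tilde{M}'$ identifie alors $(\kappa(\psi_f))^w_{\tilde{P}'}$ à $f^w_{\tilde{P}'}$; la différence $\kappa(\psi_f) - f$ est donc cuspidale, et une nouvelle application de Schur sur $\tilde{G}$ entraîne sa nullité.

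La seconde composition $\psi_{\kappa(\psi)} = \text{pr}^\text{inv}(\psi)$ découle d'un calcul direct utilisant encore les relations d'orthogonalité et l'action explicite de $W^G_0$ dans la définition de $\text{pr}^\text{inv}$. Le principal obstacle est d'ordre analytique plutôt que structurel: il s'agit de garantir la convergence absolue et les interversions d'intégrales nécessaires à l'exploitation des formules d'orthogonalité, ainsi que la continuité entre les espaces topologiques en jeu. Or, puisque $\Xi^{\tilde{G}} = \Xi^G \circ \rev$ et que la fonction hauteur $\|\cdot\|_{\tilde{G}}$ est tirée en arrière de $G(F)$, toutes les majorations de \cite[\S II]{Wa03} restent valables; les fonctions $\mu$ et les opérateurs $J_{\tilde{P}'|\tilde{P}}$ satisfont par ailleurs aux mêmes propriétés formelles que dans le cas non revêtu, établies en \S\ref{sec:op-entrelacement}. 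Aucune difficulté spécifique aux revêtements n'apparaît donc au-delà du suivi attentif des sous-groupes $A_M(F)^\dagger$ et des facteurs $\iota_M$ introduits au \S\ref{sec:def-base}.
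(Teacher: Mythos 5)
Le papier ne donne pas de démonstration de ce théorème: conformément à la remarque générale en tête du \S\ref{sec:Plancherel}, il renvoie implicitement à \cite[VIII.1.1]{Wa03} et se contente de vérifier que les ingrédients se transportent aux revêtements. Votre proposition n'est donc pas à comparer à une preuve explicite du papier mais à l'argument de Waldspurger que le papier admet.

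Votre schéma — récurrence sur $\dim\mathfrak{a}_{M_0}/\mathfrak{a}_G$, calcul des termes constants faibles de $\kappa(\psi_f)$ et de $f$, identification via l'hypothèse de récurrence appliquée à $\tilde{M}'$, réduction au cas cuspidal — est dans l'esprit de la descente de Harish-Chandra, qui est une autre route possible, et la gestion des facteurs $\gamma(G|M)\gamma(G|M')^{-1}=\gamma(M'|M)$ est correcte. Cependant, la conclusion de votre récurrence comporte une lacune réelle. Vous affirmez que, $h:=\kappa(\psi_f)-f$ étant cuspidale, « une nouvelle application de Schur sur $\tilde{G}$ entraîne sa nullité ». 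Or l'orthogonalité de Schur ne suffit pas: pour conclure $h=0$ il faut d'abord établir que $\psi_h[\mathcal{O},G]=0$ pour toute orbite discrète $\mathcal{O}$ (ce qui ne découle pas immédiatement de l'égalité des termes constants faibles), puis invoquer le théorème de complétude du spectre discret pour les fonctions cuspidales de l'espace de Schwartz-Harish-Chandra, c'est-à-dire le fait qu'une fonction cuspidale de $\mathcal{C}(\tilde{G})$ appartient à la partie discrète de la décomposition de Plancherel. C'est précisément la partie la plus difficile du théorème chez Waldspurger (et chez Harish-Chandra), qui repose sur la théorie des paquets d'ondes et ne se réduit pas à des relations d'orthogonalité; la présenter comme « Schur » escamote le cœur analytique de l'énoncé. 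Par ailleurs, l'échange entre l'intégrale sur $\mathcal{O}$ (pondérée par $\mu(\omega)\dd\omega$) et le passage au terme constant faible, ainsi que le passage de $(E^{\tilde{G}}_{\tilde{P}}\psi)^{\mathrm{Ind},w}_{\tilde{P}'}$ (défini via \eqref{eqn:wInd}) au terme constant de $\kappa(\psi_f)$ lui-même, demanderaient une justification explicite qui manque ici.
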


Pour $(\mathcal{O}, P) \in \Theta$ et $\omega \in \mathcal{O}$, on note $\Theta^{\tilde{G}}_\omega$ le caractère de $\mathcal{I}_{\tilde{P}}(\omega)$.

\begin{corollary}\label{prop:Plancherel-f(1)}
  Soit $f \in \mathcal{C}(\tilde{G})$, alors
  \begin{align*}
    f(1) & = \sum_{(\mathcal{O},P) \in \Theta} \gamma(G|M) |W^M_0| |W^G_0|^{-1} |\mathcal{P}(M)|^{-1} \int_{\mathcal{O}} d(\omega) \mu(\omega) \Theta^{\tilde{G}}_\omega(\check{f}) \dd\omega ,\\
    & = \sum_{M \in \mathcal{L}(M_0)} \gamma(G|M) |W^M_0| |W^G_0|^{-1} \int_{\Pi_2(\tilde{M})} d(\omega) \mu(\omega) \Theta_\omega^{\tilde{G}}(\check{f}) \dd\omega .
  \end{align*}
\end{corollary}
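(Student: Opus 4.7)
Le plan est d'appliquer le Théorème~\ref{prop:Plancherel} à $f \in \mathcal{C}(\tilde{G})$ et d'évaluer la formule résultante $f = \kappa(\psi_f)$ au point $\tilde{x} = 1$. En déroulant les définitions de $\kappa$ et de $f_\psi$, on obtient immédiatement
$$ f(1) = \sum_{(\mathcal{O},P) \in \Theta} \gamma(G|M) |W^M_0| |W^G_0|^{-1} |\mathcal{P}(M)|^{-1} \int_{\mathcal{O}} \mu(\omega) \left( E^{\tilde{G}}_{\tilde{P}} \psi_f[\mathcal{O},P]_\omega \right)(1) \dd\omega, $$
où $\psi_f[\mathcal{O},P]_\omega = \hat{f}(\omega,\tilde{P}) = d(\omega)\pi(\check{f})$ et $\pi := \mathcal{I}_{\tilde{P}}(\omega)$. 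Tout revient donc à identifier la valeur $(E^{\tilde{G}}_{\tilde{P}} \hat{f}(\omega,\tilde{P}))(1)$.

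Pour cela, je me sers de l'inclusion $L(\omega,\tilde{P}) \hookrightarrow \End(\mathcal{I}_{\tilde{P}}E)$ et de la définition du coefficient: pour $v \otimes \check{v} \in L(\omega,\tilde{P})$ on a $(E^{\tilde{G}}_{\tilde{P}}(v \otimes \check{v}))(1) = \angles{v,\check{v}}$. En décomposant l'opérateur de rang fini $\pi(\check{f}) = \sum_i v_i \otimes \check{v}_i$, on reconnaît $\sum_i \angles{v_i,\check{v}_i} = \Tr\pi(\check{f}) = \Theta^{\tilde{G}}_\omega(\check{f})$. Il vient donc
$$ \left( E^{\tilde{G}}_{\tilde{P}} \hat{f}(\omega,\tilde{P}) \right)(1) = d(\omega)\, \Theta^{\tilde{G}}_\omega(\check{f}), $$
ce qui donne la première égalité de l'énoncé.

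Pour la seconde égalité, je regroupe la somme $\sum_{(\mathcal{O},P) \in \Theta}$ selon le Lévi $M$: pour $M$ fixé, $(\mathcal{O},P)$ parcourt l'ensemble des couples où $\mathcal{O}$ est une $\Im X(\tilde{M})$-orbite dans $\Pi_2(\tilde{M})$ et $P \in \mathcal{P}(M)$. L'intégrande $d(\omega)\mu(\omega)\Theta^{\tilde{G}}_\omega(\check{f})$ ne dépend pas de $P \in \mathcal{P}(M)$ (le caractère d'une induite est indépendant du parabolique, le degré formel et la fonction $\mu$ étant intrinsèques à $\omega$), de sorte que la sommation sur $\mathcal{P}(M)$ fournit un facteur $|\mathcal{P}(M)|$ qui annule le $|\mathcal{P}(M)|^{-1}$. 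La décomposition $\Pi_2(\tilde{M}) = \bigsqcup_{\mathcal{O}} \mathcal{O}$ et la compatibilité des mesures locales choisies sur chaque orbite donnent la seconde formule.

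Aucune étape n'est profonde une fois la formule de Plancherel acquise; la seule vigilance à exercer porte sur les conventions de mesures (notamment le facteur $\iota_G$ caché dans la définition de $d(\omega)$ et le choix de la mesure sur $\mathcal{O}$ rendant $\Im X(\tilde{M}) \to \mathcal{O}$ localement isométrique) et sur le fait que le plongement $L(\omega,\tilde{P}) \hookrightarrow \End(\mathcal{I}_{\tilde{P}}E)$ transforme bien l'évaluation du coefficient en l'identité en la trace.
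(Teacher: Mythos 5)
Votre preuve est correcte et constitue le déroulement standard du Théorème~\ref{prop:Plancherel}: évaluer $f = \kappa(\psi_f)$ en $\tilde{x}=1$, observer que $(E^{\tilde{G}}_{\tilde{P}}\hat{f}(\omega,\tilde{P}))(1) = d(\omega)\Tr\pi(\check{f})$ parce que $\hat{f}(\omega,\tilde{P})$ est un opérateur de rang fini vu dans $L(\omega,\tilde{P})\hookrightarrow\End(\mathcal{I}_{\tilde{P}}E)$, puis regrouper par Lévi $M$ en invoquant l'indépendance du caractère induit et de $\mu,d,\gamma(G|M)$ vis-à-vis du choix de $P\in\mathcal{P}(M)$. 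Le papier ne rédige pas de preuve pour ce corollaire précisément parce qu'il résulte du théorème par cet argument direct.
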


\begin{remark}\label{rem:Plancherel-mesures}
  Afin de se débarrasser de tout souci de mesures, vérifions la dépendance de ladite formule sur des divers choix. Fixons $(\mathcal{O},P) \in \Theta$ avec $P=MU$.
  \begin{itemize}
    \item Le degré formel $d(\omega)$ pour $\omega \in \mathcal{O}$ est inversement proportionnelle à la mesure $\dd m$ sur $\tilde{M}$.
    \item D'après la définition des opérateur d'entrelacement, la fonction $\mu$ est proportionnelle à $(\dd \bar{u} \dd u)^{-1}$ où $\dd u$ (resp, $\dd \bar{u}$) désigne la mesure sur $U(F)$ (resp. $\bar{U}(F)$).
    \item $\Theta_\omega^{\tilde{G}}(f)$ est proportionnel à la mesure $\dd g$ sur $\tilde{G}$.
    \item $\gamma(G|M)$ est proportionnel à $\dd \bar{u} \dd m \dd u (\dd g)^{-1}$ pourvu que $\gamma(G|M)$ soit défini par la formule \eqref{eqn:UMU} dans \S\ref{sec:def-base}.
    \item Il n'y a aucune dépendance du choix de $A_M(F)^\dagger$.
  \end{itemize}

  Par conséquent, le produit indexé par $(\mathcal{O},P)$ dans le Corollaire \ref{prop:Plancherel-f(1)} est indépendant de tout choix. Cela nous permettra de varier certains choix de mesures dans \S\ref{sec:formule-traces}.
\end{remark}

\section{Normalisation des opérateurs d'entrelacement}
On se place toujours dans le cadre d'un revêtement local
$$ 1 \to \bmu_m \to \tilde{G} \stackrel{\rev}{\to} G(F) \to 1, $$
où $F$ est un corps local et $G$ est un $F$-groupe réductif connexe. On fixe un sous-groupe de Lévi minimal $M_0$ et un sous-groupe compact maximal spécial $K$ de $G(F)$ en bonne position relativement à $M_0$. On a les ensembles $\Pi(\tilde{M})$, $\Pi_\text{temp}(\tilde{M})$, $\Pi_2(\tilde{M})$ de classes d'équivalence des représentations irréductibles de $\tilde{M}$; ils sont défini dans \S\ref{sec:Plancherel} lorsque $F$ est non archimédien, dans le cas $F$ archimédien leurs définitions sont bien connues. En rajoutant l'équivariance sous $\bmu_m$, on définit leurs variantes spécifiques $\Pi_-(\tilde{M})$, $\Pi_{\text{temp},-}(\tilde{M})$, $\Pi_{2,-}(\tilde{M})$, etc.

Nous avons précisé les choix de mesures dans le cas non archimédien. Pour le cas archimédien, nous suivons le choix fait dans \cite{Ar89-IOR1} qui sera rappelé dans \S\ref{sec:normalisation-arch}.

\subsection{Facteurs normalisants}
Soit $M \in \mathcal{L}(M_0)$. Rappelons que, pour tout corps local $F$, on peut définir le groupe $X(\tilde{M}) := \Hom(\tilde{M}/\tilde{M}^1, \C^\times)$ et son sous-groupe $\Im X(\tilde{M})$. Ces groupes opèrent sur $\Pi(\tilde{M})$. Les $X(\tilde{M})$-orbites admettent une structure de variété complexe algébrique via la surjection canonique $\mathfrak{a}_{M,\C}^* \twoheadrightarrow X(\tilde{M})$. Soit $\pi \in \tilde{M}$, l'action de $X(\tilde{M})$ s'écrit sous la forme habituelle $(\pi, \chi) \mapsto \pi \otimes \chi$ avec $\chi \in X(\tilde{M})$, ou $(\pi, \lambda) \mapsto \pi_\lambda$ avec $\lambda \in \mathfrak{a}_{M,\C}^*$. Si $F$ est archimédien, l'action de $\mathfrak{a}_{M,\C}^*$ est libre et tout élément de $\Pi_2(\tilde{M})$ admet une écriture unique $\pi_\lambda$ avec $\pi \in \Pi_2(\tilde{M}^1)$ et $\lambda \in i\mathfrak{a}_M^*$.

\begin{definition}[cf. {\cite[\S 2]{Ar89-IOR1}} et {\cite[\S 2]{Ar94}}]\label{def:normalisant}\index[iFT2]{facteur normalisant}\index[iFT2]{$r_{\tilde{P}'\arrowvert\tilde{P}}(\pi), R_{\tilde{P}'\arrowvert\tilde{P}}(\pi)$}
  Soient $M \in \mathcal{L}(M_0)$ et $\mathcal{O}_\C$ une $X(\tilde{M})$-orbite dans $\Pi(\tilde{M})$. On dit qu'une famille de fonctions méromorphes sur $\mathcal{O}_\C$
  $$ r^{\tilde{L}}_{\tilde{P}'|\tilde{P}}(\pi), \quad L \in \mathcal{L}(M),\; P,P' \in \mathcal{P}^L(M), \; \pi \in \mathcal{O}_\C $$
  est une famille de facteurs normalisants si elle satisfait aux conditions suivantes. Posons d'abord
  $$ R^{\tilde{L}}_{\tilde{P}'|\tilde{P}}(\pi) := r^{\tilde{L}}_{\tilde{P}'|\tilde{P}}(\pi)^{-1} J^{\tilde{L}}_{\tilde{P}'|\tilde{P}}(\pi). $$
  \renewcommand{\labelenumi}{(\textbf{R\arabic{enumi}})}\begin{enumerate}
    \item Pour tous $P, P', P'' \in \mathcal{P}^L(M)$, on a $R^{\tilde{L}}_{\tilde{P}''|\tilde{P}}(\pi) = R^{\tilde{L}}_{\tilde{P}''|\tilde{P}'}(\pi) R^{\tilde{L}}_{\tilde{P}'|\tilde{P}}(\pi)$.
    \item Si $\pi$ est unitaire, alors
      $$ R^{\tilde{L}}_{\tilde{P}'|\tilde{P}}(\pi_\lambda)^* = R^{\tilde{L}}_{\tilde{P}|\tilde{P}'}(\pi_{-\bar{\lambda}}), \qquad \lambda \in \mathfrak{a}_{M,\C}^* . $$
      En particulier, $R^{\tilde{L}}_{\tilde{P}'|\tilde{P}}(\pi)$ est un opérateur unitaire.
    \item Cette famille est compatible au transport de structure par le groupe de Weyl, au sens suivant
      $$ R^{\tilde{L}}_{w\tilde{P}'|w\tilde{P}}(\tilde{w}\pi) = A(\tilde{w}) R^{\tilde{L}}_{\tilde{P}'|\tilde{P}}(\pi) A(\tilde{w})^{-1} $$
      où $w \in W^L_0$ et $\tilde{w} \in \tilde{K}$ est un représentant, cf. la Proposition \ref{prop:prop-entrelacement}.
    \item Avec les notations dans \S\ref{sec:op-entrelacement}, on a
      $$ r^{\tilde{L}}_{\tilde{P}'|\tilde{P}}(\pi) = \prod_{\alpha \in (\Sigma^L_{P'})^\text{red} \cap (\Sigma^L_{\bar{P}})^\text{red}} r^{\tilde{M}_\alpha}_{\tilde{\bar{P}}_\alpha|\tilde{P}_\alpha}(\pi), $$
      où $P_\alpha := P \cap M_\alpha$.
    \item Soit $P''=M'' U'' \in \mathcal{F}^L(M_0)$ contenant $P$ et $P'$, alors $R^{\tilde{L}}_{\tilde{P}'|\tilde{P}}(\pi)$ est l'opérateur déduit de $R^{\tilde{M}''}_{\tilde{P}' \cap \tilde{M}''|\tilde{P} \cap \tilde{M}''}(\pi)$ par le foncteur $\mathcal{I}^{\tilde{L}}_{\tilde{P}''}(\cdot)$.
    \item Si $F$ est archimédien, les coefficients $\tilde{K} \cap \tilde{L}$-finis de $R^{\tilde{L}}_{\tilde{P}'|\tilde{P}}(\pi_\lambda)$ sont des fonctions rationnelles  en $\angles{\lambda, \alpha^\vee}$ où $\alpha \in \Delta^L_P$ . Si $F$ est non archimédien, notons $q$ le cardinal du corps résiduel de $q$, alors $r^{\tilde{L}}_{\tilde{P}'|\tilde{P}}(\pi_\lambda)$ est une fonction rationnelle en les variables $q^{-\angles{\lambda, \check{\alpha}}}$, où $\alpha \in \Delta^L_P$ et $\check{\alpha} \in \Q_{>0} \cdot \alpha^\vee$ est défini dans \eqref{eqn:alpha-check}; donc les coefficients de $R^{\tilde{L}}_{\tilde{P}'|\tilde{P}}(\pi_\lambda)$ le sont aussi.
    \item Si $\pi$ est tempérée, $\lambda \mapsto r_{\tilde{P}'|\tilde{P}}(\pi_\lambda)$ n'a ni zéros ni pôles lorsque $\angles{\Re\lambda, \alpha^\vee}>0$ pour tout $\alpha \in \Delta^L_P$.
    \item Si $F$ est archimédien, $\pi \in \Pi_\text{temp}(\tilde{M}^1)$ et $\lambda \in i\mathfrak{a}_M^*$, on pose
      $$ q^{\tilde{L}}_{\tilde{P}'|\tilde{P}}(\pi_\lambda) := \prod_{\alpha \in (\Sigma^L_{P'})^\text{red} \cap (\Sigma^L_{\bar{P}})^\text{red}} \angles{\lambda, \alpha^\vee}^{n_\alpha} $$
      où $n_\alpha$ est l'ordre du pôle de $\lambda \mapsto r_\alpha(\pi_\lambda)$ en $\lambda=0$. Alors pour tout opérateur différentiel invariant $D$ sur $i\mathfrak{a}_M^*$, il existe des constantes $C, N > 0$ telles que
      $$ |D (q^{\tilde{L}}_{\tilde{P}'|\tilde{P}}(\pi_\lambda) r^{\tilde{L}}_{\tilde{P}'|\tilde{P}}(\pi_\lambda))^{-1}| \leq C (1+ \|\mu_\pi + \lambda \|)^N $$
      pour tout $\pi$ et tout $\lambda$, où nous adoptons les notations
      \begin{itemize}
        \item $\mathfrak{h}$ est une sous-algèbre de Cartan de $\mathfrak{g}$;
        \item $W^M_\C$ le groupe de Weyl complexe de $M$;
        \item $\mu_\pi \in \mathfrak{h}_\C^*/W^M_\C$ est le caractère infinitesimal de $\pi$;
        \item $\|\cdot\|$ une norme hermitienne $W^M_\C$-invariante sur $\mathfrak{h}_\C$ telle que
          $$ \| \mu_{\pi_\lambda} \|^2 = \|\mu_\pi + \lambda \|^2 = \|\mu_\pi\|^2 + \|\lambda\|^2, \quad \forall \lambda \in i\mathfrak{a}_M^* . $$
      \end{itemize}
      Signalons que le caractère infinitesimal ici est défini de la même façon que dans le cas des groupes réductifs connexes sur $\R$, voir \cite[1.6.5 et 3.2.3]{Wall88}.
  \renewcommand{\labelenumi}{\arabic{enumi}}\end{enumerate}
\end{definition}

La condition (\textbf{R7}) interviendra dans l'étude des quotients de Langlands. Dans cet article nous ferons usage d'une notion moins stricte.

\begin{definition}\label{def:normalisant-faible}\index[iFT2]{facteur normalisant!faible}\index[iFT2]{facteur normalisant!complémentaire}
  On dit qu'une famille de fonctions méromorphes sur $\mathcal{O}_\C$
  $$ r^{\tilde{L}}_{\tilde{P}'|\tilde{P}}(\pi), \quad L \in \mathcal{L}(M),\; P,P' \in \mathcal{P}^L(M), \; \pi \in \mathcal{O}_\C $$
  est une famille de facteurs normalisants faible si elle vérifie toutes les conditions de la Définition \ref{def:normalisant} sauf (\textbf{R7}). Deux familles de facteurs normalisants faibles $r$, $r^\vee$ sont dites complémentaires si
  \begin{gather}\label{eqn:normalisation-c}
    r^{\tilde{L},^\vee}_{\tilde{P}'|\tilde{P}}(\pi) = r^{\tilde{L}}_{\tilde{P}|\tilde{P}'}(\pi)
  \end{gather}
  pour tous $P, P'$ et $L$.
\end{definition}
\begin{remark}
  Étant donnée une famille de facteurs normalisants faible $r$, on peut toujours définir une famille complémentaire $r^\vee$ par \eqref{eqn:normalisation-c}. Mais $r^\vee$ et $r$ ne peuvent pas vérifier à la fois (\textbf{R7}): on peut le voir en prenant $P'$ égal à l'opposé de $P$.
\end{remark}

La construction des facteurs normalisants se réduit au cas $L=G$, $\mathcal{O}_\C \cap \Pi_{2}(\tilde{M}) \neq \emptyset$ et $P, P'$ des paraboliques propres maximaux, au sens suivant.

\begin{proposition}\label{prop:normalisant-red}
  Supposons choisie des familles de facteurs normalisants $r^{\tilde{L}}_{\tilde{Q}'|\tilde{Q}}(\cdot)$ pour tout $L \in \mathcal{L}(M_0)$, $L \neq G$ et $Q, Q'$ quelconques, qui sont compatibles au transport de structure par $W^G_0$.

  Soient $r_{\tilde{P}'|\tilde{P}}(\cdot)$ des fonctions méromorphes sur les $X(\tilde{M})$-orbites rencontrant $\Pi_2(\tilde{M})$ vérifiant toutes les conditions de la Définition \ref{def:normalisant} sauf (\textbf{R4}) et (\textbf{R5}), où $M \in \mathcal{L}(M_0)$ est standard propre maximal, $P, P' \in \mathcal{P}(M)$. Alors les facteurs $r^{\tilde{L}}_{\tilde{Q}'|\tilde{Q}}(\cdot)$ et $r_{\tilde{P}'|\tilde{P}}(\cdot)$ ci-dessus font partie d'une famille de facteurs normalisants pour $\tilde{G}$, dont la construction est canonique.
\end{proposition}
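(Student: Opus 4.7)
La stratégie consiste à bootstrapper les données hypothétiques en trois étapes, en utilisant successivement (R3), (R4), puis les classifications de représentations.

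\emph{Étape 1.} Pour un Lévi maximal $M$ non standard, on choisit $w \in W^G_0$ tel que $wMw^{-1}$ soit standard, et on pose
$$ r_{\tilde{P}'|\tilde{P}}(\pi) := r_{\widetilde{wP'w^{-1}}|\widetilde{wPw^{-1}}}(\tilde{w}\pi), $$
l'indépendance du choix de $w$ et de son relèvement $\tilde{w} \in \tilde{K}$ découlant de l'axiome (R3) supposé pour la famille de départ.

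\emph{Étape 2.} Pour $M \in \mathcal{L}(M_0)$ quelconque et une orbite $\mathcal{O}_\C$ rencontrant $\Pi_2(\tilde{M})$, on adopte l'axiome (R4) comme définition:
$$ r_{\tilde{P}'|\tilde{P}}(\pi) := \prod_{\alpha \in (\Sigma_{P'})^\text{red} \cap (\Sigma_{\bar{P}})^\text{red}} r^{\tilde{M}_\alpha}_{\tilde{\bar{P}}_\alpha|\tilde{P}_\alpha}(\pi). $$
Chaque facteur du membre de droite est fourni soit par l'hypothèse (cas $M_\alpha \neq G$, où $M$ est maximal dans $M_\alpha$), soit par l'Étape 1 (cas $M_\alpha = G$, où $M$ est alors lui-même un Lévi maximal).

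\emph{Étape 3.} Pour une orbite $\mathcal{O}_\C$ ne rencontrant pas $\Pi_2(\tilde{M})$, on recourt à la classification des représentations tempérées et à celle de Langlands. Toute $\pi \in \mathcal{O}_\C$ apparaît (comme sous-quotient) dans $\mathcal{I}^{\tilde{M}}_{\tilde{R}}(\sigma)$ pour un parabolique $R = M_R U_R \subsetneq M$ et une $\sigma$ de carré intégrable modulo le centre à torsion près. En posant $P_R := M_R(U_R U) \in \mathcal{P}(M_R)$ et analoguement $P'_R$, la transitivité de l'induction parabolique combinée à (R5) motive la définition
$$ r^{\tilde{G}}_{\tilde{P}'|\tilde{P}}(\pi) := r^{\tilde{G}}_{\tilde{P}'_R|\tilde{P}_R}(\sigma), $$
le membre de droite étant défini par l'Étape 2.

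\emph{Vérification des axiomes.} Les axiomes (R4) et (R5) sont bâtis dans la construction; (R1) résulte de la relation multiplicative correspondante pour les opérateurs $J$ (Proposition \ref{prop:prop-entrelacement}(2)); les axiomes (R2), (R3), (R6), (R7), (R8) se déduisent par multiplicativité des propriétés analogues au niveau des $M_\alpha$, qui sont supposées. L'obstacle principal sera la cohérence de l'Étape 3: il faudra vérifier que $r^{\tilde{G}}_{\tilde{P}'|\tilde{P}}(\pi)$ ne dépend pas du choix de $(R,\sigma)$ dans la classification, et coïncide avec l'Étape 2 lorsque les deux s'appliquent. C'est un argument standard exploitant (R3) et la transitivité de l'induction parabolique; il s'adapte aux revêtements grâce au scindage unipotent rappelé au \S\ref{sec:representations}, sans modification substantielle par rapport au cas des groupes réductifs connexes.
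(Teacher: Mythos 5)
Votre proposition suit essentiellement la même route que le texte — c'est la construction d'Arthur dans \cite[\S 2]{Ar89-IOR1}, reconstruite « par le bas » (on construit les facteurs à partir des données de rang~1) alors que le texte la décrit « par le haut » (on réduit une orbite arbitraire au cas générateur). Les deux premières étapes (extension par (\textbf{R3}) aux Lévis maximaux non standards, puis extension par (\textbf{R4}) à tous les Lévis pour les orbites rencontrant $\Pi_2$) sont correctes et correspondent bien à ce que fait Arthur.

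Le problème est à l'Étape~3. Vous invoquez uniquement (\textbf{R5}), la transitivité de l'induction et (\textbf{R3}), et déclarez que la cohérence est « un argument standard ». Or le texte indique explicitement que la réduction au cas tempéré requiert, outre (\textbf{R5}), \emph{deux ingrédients supplémentaires que vous omettez} : la condition (\textbf{R7}) et le critère d'unitarisabilité des quotients de Langlands de Knapp--Zuckerman \cite[Theorem~7]{KZ77}. Le rôle de (\textbf{R7}) est essentiel ici : si $\pi$ est le quotient de Langlands de $\mathcal{I}^{\tilde{M}}_{\tilde{R}}(\sigma_\lambda)$ avec $\sigma$ tempérée et $\lambda$ dans le cône positif, la définition $r^{\tilde{G}}_{\tilde{P}'|\tilde{P}}(\pi) := r^{\tilde{G}}_{\tilde{P}'_R|\tilde{P}_R}(\sigma_\lambda)$ n'a de sens (valeur finie, non nulle, et opérateur normalisé $R$ passant au quotient de Langlands) que parce que (\textbf{R7}) garantit l'absence de zéros et de pôles de $r$ dans la région positive correspondante ; (\textbf{R3}) et la transitivité ne règlent pas cette question analytique. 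De même, votre affirmation que (\textbf{R2}) « se déduit par multiplicativité » est inexacte pour les quotients de Langlands non tempérés : l'unitarité de $R^{\tilde{L}}_{\tilde{P}'|\tilde{P}}(\pi)$ lorsque $\pi$ est unitaire mais non tempérée est précisément ce que contrôle le critère de Knapp--Zuckerman, via le lien entre la fonction $\mu$ et $|r|^2$. Sans ces deux ingrédients, la construction de l'Étape~3 n'est pas complète.

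Signalons aussi un point mineur : la formule que vous donnez à l'Étape~3 n'est pas une conséquence directe de (\textbf{R5}) telle qu'énoncée (qui descend d'un $P'' \supset P,P'$ à un Lévi plus grand $M''$, et non d'un parabolique $P_R \subsetneq P$ de Lévi plus petit $M_R$) ; il faut d'abord décomposer en morceaux de rang~$1$ via (\textbf{R4}) et appliquer (\textbf{R5}) à chacun d'eux. Ce n'est pas faux en esprit, mais l'énoncé est glissant tel quel.
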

\begin{proof}
  C'est exactement ce qu'Arthur fait dans \cite[\S 2]{Ar89-IOR1}. En résumé, grâce à (\textbf{R5}), la condition (\textbf{R7}) et un critère simple de l'unitarisabilité des quotients de Langlands \cite[Theorem 7]{KZ77}, qui est valable pour les revêtements sur tout corps local, permettent de se ramener au cas tempérée. On se ramène finalement au cas de représentations de carré intégrable modulo le centre d'après le fait qu'une représentation tempérée est un constituant d'une induite parabolique normalisée d'une représentation de carré intégrable modulo le centre. La compatibilité au transport de structure est garantie par (\textbf{R3}). À chaque étape, on peut supposer $M$ propre maximal grâce à (\textbf{R4}). La préservation de (\textbf{R8}) est claire.
\end{proof}

\begin{remark}
  Pour les applications à la formule des trace, on considérera le cadre où $F$ est un corps de nombres, $\rev: \tilde{G} \to G(\A)$ un revêtement adélique, $S$ un ensemble fini de places de $F$ et $\tilde{G}_S \to G(F_S)$ la fibre de $\rev$ au-dessus de $G(F_S)$. On peut toujours considérer les normalisations des opérateurs d'entrelacement dans ce cadre, cf. \cite[\S 1]{Ar89-IOR1}. Les résultats découlent sans peine du cas $F$ local en prenant
  \begin{align*}
    \pi & = \bigotimes_{v \in S} \pi_v, \\
    r_{\tilde{P}'|\tilde{P}}(\pi) & = \prod_{v \in S} r_{\tilde{P}'|\tilde{P}}(\pi_v), \\
    R_{\tilde{P}'|\tilde{P}}(\pi) & = \bigotimes_{v \in S} R_{\tilde{P}'|\tilde{P}}(\pi_v).
  \end{align*}
  Cf. \cite[p.31]{Ar89-IOR1}.
\end{remark}

\subsection{Le cas archimédien}\label{sec:normalisation-arch}
Conservons les conventions de \S\ref{sec:normalisation-arch}. Nous donnerons une construction canonique de facteurs normalisants dans le cas archimédien à la Arthur \cite{Ar89-IOR1} avec formules explicites. C'est loisible de supposer que $F=\R$. 

Soit $\mathfrak{r}$ une $\R$-algèbre de Lie. On adopte la convention $\mathfrak{r}_\C := \mathfrak{r}(\R) \otimes_\R \C$. Notons $\theta$ l'involution de Cartan associé à $K$. Prenons une forme bilinéaire $G(\R)$-invariante $B$ sur $\mathfrak{g}(\R)$ telle que $X \mapsto -B(X,\theta X)$ est positive définie. Si $T$ est un $\R$-tore maximal $\theta$-stable de $M$, alors $B$ est non dégénérée sur $\mathfrak{t}(\R)$. Prenons $T$ un $\R$-tore fondamental dans $M$ anisotrope modulo $A_M$, ce qui existe d'après \cite[\S 39]{HC66} car $\Pi_2(\tilde{M}) \neq \emptyset$.  Le groupe $\mathrm{Gal}(\C/\R) = \{\identity, \sigma\}$ opère sur $X_*(T_\C)$, d'où une action sur $\mathfrak{t}_\C$.

On se ramène au cas où $M$, $P$, $P'$, $\pi$ sont comme dans la Proposition \ref{prop:normalisant-red}, en particulier $P' = \bar{P}$. Or la notation $P'$ sera conservée pour des raisons de typographie.

On pose
$$  \alpha_{P'|P} := \prod_{\alpha} \sqrt{\frac{1}{2} B(\alpha, \alpha)} $$
où $\alpha$ parcourt les racines de $(\mathfrak{g}_\C, \mathfrak{t}_\C)$ dont les restrictions sur $\mathfrak{a}_M$ appartiennent à $\Sigma_{P'}$. On munit $(\mathfrak{u}_{P'} \cap \mathfrak{u}_P)(\R)$ de la mesure induite par la forme positive définie $X \mapsto -B(X, \theta X)$; on choisit la mesure de Haar sur $(U_{P'} \cap U_P)(\R)$ de sorte que
$$ \int_{(U_{P'} \cap U_P)(\R)} \phi(u) \dd u =  \alpha_{P'|P} \int_{(\mathfrak{u}_{P'} \cap \mathfrak{u}_P)(\R)} \phi(\exp X) \dd X, \quad \forall \phi \in C_c((U_{P'} \cap U_P)(\R)). $$
On montre que cette mesure ne dépend pas du choix de $B$.

Notons $\rho_M$ la demi-somme des racines positives de $(\mathfrak{m}_\C, \mathfrak{t}_\C)$ par rapport à une base fixée. On prend $\dd\chi \in \mathfrak{t}^*_\C$ tel que $\dd\chi+\rho_M$ est un représentant du caractère infinitesimal de $\pi$. On peut prendre $\lambda_0 \in \mathfrak{t}^*_\C$, $\mu \in i\mathfrak{t}^*$ tels que
$$ \angles{\dd\chi, H} = \angles{\lambda_0, H - \sigma H} + \frac{1}{2}\angles{\mu - \rho_M, H + \sigma H}, \qquad H \in \mathfrak{t}(\C) , $$
Cf. \cite[(A.1)]{Ar89-IOR1}. Ici $\lambda_0$ est pour l'essentiel la dérivée du caractère central de $\pi$, et $\mu$ est le paramètre de Harish-Chandra. D'après Harish-Chandra, c'est connu que $\dd\chi$ se relève en un caractère de $\tilde{T}$; notons-le $\chi$.

Notons $\Sigma_P(G,T)$ l'ensemble des racines de $(G,T)$ dont les restrictions sur $\mathfrak{a}_M$ appartiennent à $\Sigma_P$ et étudions les $\{\identity,\sigma\}$-orbites de $\Sigma_P(G,T)$. Supposons d'abord que $\alpha$ est une racine réelle dans $\Sigma_P(G,T)$. Il y a au plus une telle racine. Rappelons la définition de l'élément $\gamma \in \tilde{T}$ dans \cite[\S 30, Lemma 2]{HC76}. On note $H_\alpha$ l'élément dans $\mathfrak{t}(\R)$ tel que $B(H,H_\alpha)=\angles{\alpha,H}$ pour tout $H \in \mathfrak{t}(\R)$, et on note $H' := 2 \angles{\alpha, H_\alpha}^{-1} H_\alpha$. Prenons $X' \in \mathfrak{g}_\alpha(\R)$ et $Y' \in \mathfrak{g}_{-\alpha}(\R)$ tels que $(H',X',Y')$ est un $\mathfrak{sl}_2$-triplet, c'est-à-dire
$$ [H',X']=2X', \; [H', Y']=-2Y', \; [X',Y']=H' . $$

Posons $\gamma := \exp(\pi(X'-Y')) \in \tilde{T}$. Le triplet $(H',X',Y')$ induit un homomorphisme $\Phi_\alpha: \widetilde{SL}(2,\R) \to \tilde{G}$, où $\widetilde{SL}(2,\R)$ est un revêtement de $\SL(2,\R)$. Prenons $k \in \{0, \ldots, 2m-1\}$ tel que
$$ (-1)^{\angles{\rho_P, \alpha^\vee}} \chi(\gamma) = e^{\frac{\pi k}{mi}}. $$

D'autre part, si $\alpha$ est une racine complexe, en remplaçant $\alpha$ par $\sigma\alpha$ si besoin est, on suppose toujours que
$$ \angles{\sigma\mu-\mu, \alpha^\vee} \in \Z_{\leq 0}. $$

Définissons le facteur normalisant $r_{\tilde{P}'|\tilde{P}}(\pi)$. S'il existe une racine réelle dans $\Sigma_P(G,T)$, on la notera $\alpha_0$ et on écrira $\exists \alpha_0$; dans ce cas-là on définit $\chi$, $\gamma$ et $k$ d'après ce qui précède. Posons\index[iFT2]{$\Gamma_\C$}
\begin{align*}
  \Gamma_\C(z) & := 2(2\pi)^{-z} \Gamma(z), \\
  G(z) & := \sqrt{\pi} \cdot \frac{\Gamma(\frac{z}{2}) \Gamma(\frac{z}{2}+\frac{1}{2})}{\Gamma(\frac{z}{2} + \frac{k}{2m}) \Gamma(\frac{z}{2} + 1 - \frac{k}{2m})}, \quad \text{si } \exists \alpha_0 ;
\end{align*}
(et désolé pour l'abus du symbole $\pi$). Pour tout $\lambda \in \mathfrak{a}_{M,\C}$ on définit
\begin{equation}
  r_{\tilde{P}'|\tilde{P}}(\pi_\lambda) :=
  \begin{cases}
    \prod_{\substack{\alpha \in \Sigma_P(G,T) \\ \mod \{\identity, \sigma\} \\ \alpha \neq \alpha_0}} \dfrac{\Gamma_\C(\angles{\mu+\lambda, \alpha^\vee})}{\Gamma_\C(\angles{\mu+\lambda,\alpha^\vee}+1)} \cdot G(\angles{\mu+\lambda, \alpha_0^\vee}), & \text{si } \exists\alpha_0, \\
    & \\
    \prod_{\substack{\alpha \in \Sigma_P(G,T) \\ \mod \{\identity, \sigma\}}} \dfrac{\Gamma_\C(\angles{\mu+\lambda, \alpha^\vee})}{\Gamma_\C(\angles{\mu+\lambda,\alpha^\vee}+1)}, & \text{sinon},
  \end{cases}
\end{equation}
où les représentants $\alpha$ des $\{\identity,\sigma\}$-orbites dans $\Sigma_P(G,T)$ sont choisis comme précédemment. C'est méromorphe en $\lambda$, on définit ainsi les opérateurs $R_{\tilde{P}'|\tilde{P}}(\pi_\lambda) := r_{\tilde{P}'|\tilde{P}}(\pi_\lambda)^{-1} J_{\tilde{P}'|\tilde{P}}(\pi_\lambda)$ qui sont méromorphes en $\lambda$.

\begin{theorem}
  Les facteurs $r_{\tilde{P}'|\tilde{P}}(\pi_\lambda)$ font partie d'une famille de facteurs normalisants dont la construction est canonique.
\end{theorem}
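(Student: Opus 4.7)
Le plan consiste à appliquer la Proposition \ref{prop:normalisant-red}: puisque l'énoncé porte précisément sur le cas de base (sous-groupe de Lévi $M$ propre maximal, $P'=\bar{P}$, $\pi \in \Pi_2(\tilde{M})$), la tâche se ramène à vérifier les conditions (\textbf{R1}), (\textbf{R2}), (\textbf{R3}), (\textbf{R6}), (\textbf{R7}) et (\textbf{R8}) de la Définition \ref{def:normalisant} pour la fonction explicite $r_{\tilde{P}'|\tilde{P}}(\pi_\lambda)$. La définition par produit sur les $\{\identity,\sigma\}$-orbites de $\Sigma_P(G,T)$ invite à traiter la contribution de chaque orbite séparément, en distinguant les racines complexes de l'éventuelle racine réelle $\alpha_0$.

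Pour les orbites complexes, je suivrais l'argument d'Arthur de \cite[\S 3]{Ar89-IOR1} presque verbatim: le sous-groupe associé se relève localement au revêtement sans ambiguïté métaplectique, et le facteur $\Gamma_\C(z)/\Gamma_\C(z+1) = 2\pi/z$ est identique à celui du cas réductif connexe. Pour l'orbite de la racine réelle $\alpha_0$, l'homomorphisme $\Phi_{\alpha_0}: \widetilde{SL}(2,\R) \to \tilde{G}$ ramène la question à un calcul rang-un sur le revêtement de $SL(2,\R)$. Le facteur $G(z)$ est précisément choisi pour constituer le facteur normalisant canonique de l'opérateur d'entrelacement entre séries principales de $\widetilde{SL}(2,\R)$; l'entier $k \in \{0,\ldots,2m-1\}$ code l'action du caractère central via $\chi(\gamma)$, et la structure des $\Gamma$ dans $G(z)$ reflète la contribution métaplectique. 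Je vérifierais cela par un calcul direct des coefficients $(\widetilde{K\cap M}_{\alpha_0})$-finis de l'opérateur d'entrelacement.

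Une fois cette analyse rang-un acquise, les conditions se vérifient en chaîne. Pour (\textbf{R1}), qui se réduit à $r_{\tilde{P}|\tilde{\bar{P}}}(\pi) \cdot r_{\tilde{\bar{P}}|\tilde{P}}(\pi) = j(\pi)$ dans $\mathcal{P}(M)=\{P,\bar{P}\}$, la formule de réflexion $\Gamma(z)\Gamma(1-z)=\pi/\sin(\pi z)$ fournit la contribution complexe, et un calcul analogue sur $G(z)G(-z)$ la contribution réelle. La condition (\textbf{R2}) résulte de $\overline{\Gamma(z)}=\Gamma(\bar{z})$ et de la symétrie $\lambda \mapsto -\bar{\lambda}$ des facteurs. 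Pour (\textbf{R3}), la canonicité des constructions de $\mu, \lambda_0, \gamma, k$ par rapport aux choix de $T$ et $B$ assure la compatibilité au transport par $W^G_0$. La condition (\textbf{R6}) découle de la rationalité des opérateurs d'entrelacement normalisés dans les deux cas rang-un. Enfin, (\textbf{R7}) suit du fait que $\Gamma_\C(z)/\Gamma_\C(z+1) = 2\pi/z$ et $G(z)$ n'ont ni zéros ni pôles pour $\Re z > 0$ (les singularités de $G(z)$ étant localisées sur l'axe réel négatif), et (\textbf{R8}) de la formule de Stirling.

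L'obstacle principal sera l'analyse rang-un sur le revêtement $\widetilde{SL}(2,\R)$: démontrer que $G(z)$ est bien le facteur normalisant canonique, que l'entier $k$ est indépendant du choix du $\mathfrak{sl}_2$-triplet $(H',X',Y')$ et que la construction tout entière est invariante sous les autres choix auxiliaires ($B$, $T$, bases de racines). Cette étape, qui distingue réellement le cas métaplectique du cas réductif connexe, nécessite un calcul explicite de l'opérateur d'entrelacement rang-un sur les séries principales du revêtement, et c'est là que réside la nouveauté par rapport à \cite{Ar89-IOR1}.
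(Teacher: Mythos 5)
Votre plan identifie correctement la réduction au cas de base via la Proposition \ref{prop:normalisant-red} et la liste des conditions à vérifier, mais pour la condition cruciale (\textbf{R1}) vous empruntez une route différente de celle de l'article. Vous proposez de dériver $G(z)$ par un calcul explicite de l'opérateur d'entrelacement rang-un sur $\widetilde{SL}(2,\R)$, que vous présentez comme l'obstacle principal et la nouveauté métaplectique. Or l'article contourne entièrement ce calcul: il s'appuie sur la formule explicite de la fonction $\mu$ de Harish-Chandra (\cite[\S 36]{HC76}, reprise dans \cite[Proposition 3.1 + Lemma A.1]{Ar89-IOR1}), en observant que celle-ci vaut aussi pour les revêtements puisque la théorie de Harish-Chandra concerne les groupes de Lie réels sans hypothèse de linéarité. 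Le facteur $G(z)$ y est posé \emph{a priori}, et (\textbf{R1}) se réduit alors à vérifier $|r_{\tilde{P}'|\tilde{P}}(\pi)|^2 = \gamma_{P'|P}^2\alpha_{P'|P}^2\,\mu(\pi)^{-1}$ par des manipulations purement algébriques de fonctions Gamma (réflexion, duplication, $|\Gamma(iy)|^2=\pi/(y\sinh\pi y)$), aboutissant aux identités $\mu_0(\chi)^{-1}=g(v)g(-v)$ et $G(v)G(-v)=2\pi^2\,g(v)g(-v)$; le cas $\pi_\lambda$ général s'obtient par prolongement méromorphe. Votre approche, si elle était menée à bien, serait plus explicative sur l'origine de $G(z)$ et sur l'indépendance des choix auxiliaires ($k$, le $\mathfrak{sl}_2$-triplet), mais elle demanderait un travail supplémentaire substantiel que l'article évite grâce au résultat de Harish-Chandra. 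Vos vérifications esquissées de (\textbf{R2}), (\textbf{R3}), (\textbf{R6})--(\textbf{R8}) coïncident pour l'essentiel avec celles de l'article: $\Gamma(\bar z)=\overline{\Gamma(z)}$ pour (\textbf{R2}), l'échange $P\leftrightarrow\bar P$, $\lambda\mapsto-\lambda$, $\mu\mapsto-\mu$ pour (\textbf{R3}), la forme $\prod\Gamma(c\lambda-a_i)/\prod\Gamma(c\lambda-b_i)$ et la formule de déterminant de Cohn pour (\textbf{R6}) et (\textbf{R8}), et la position des zéros et pôles de $G$ (grâce à $k\geq 0$) pour (\textbf{R7}).
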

\begin{proof}
  On utilise toujours la Proposition \ref{prop:normalisant-red}. On note $\Sigma_c(G,T) := \Sigma_P(G,T) \setminus \{\alpha_0\}$ si $\exists\alpha_0$. C'est une conséquence de la formule explicite de la fonction $\mu$ \cite[\S 36]{HC76} (voir le récit dans \cite[Proposition 3.1 + Lemma A.1]{Ar89-IOR1}), qui est aussi valable pour revêtements, que
  $$ \mu(\pi) = \begin{cases}
    \gamma_{P'|P}^2 \alpha_{P'|P}^2 (2\pi)^{-\dim U_P} \frac{\mu_0(\chi)}{\pi} |\prod_{\alpha \in \Sigma_c(G,T)} \angles{\mu, \alpha^\vee} |, & \text{si } \exists \alpha_0, \\
    & \\
    \gamma_{P'|P}^2 \alpha_{P'|P}^2 (2\pi)^{-\dim U_P} |\prod_{\alpha \in \Sigma_P(G,T)} \angles{\mu, \alpha^\vee} |, & \text{sinon},
  \end{cases}$$
  où $\mu_0(\chi)$ est l'expression
  $$
    \frac{\pi \angles{\mu, \alpha_0^\vee}}{i} \cdot \sinh\left( \frac{\pi\angles{\mu, \alpha_0^\vee}}{i} \right)
    \cdot \left[ \cosh\left( \frac{\pi\angles{\mu, \alpha_0^\vee}}{i} \right) - \frac{1}{2} (-1)^{\angles{\rho_P, \alpha_0^\vee}} (\chi(\gamma)+\chi(\gamma)^{-1}) \right]^{-1} .
  $$

  Avec le choix de mesures dans \cite[\S 3]{Ar89-IOR1}, on a $J_{\tilde{P}'|\tilde{P}}(\pi)^* J_{\tilde{P}'|\tilde{P}}(\pi) = \gamma_{P'|P}^2 \alpha_{P'|P}^2 \mu(\pi)^{-1}$ et pour démontrer (\textbf{R1}) il faut montrer l'égalité $J_{\tilde{P}'|\tilde{P}}(\pi)^* J_{\tilde{P}'|\tilde{P}}(\pi) = |r_{\tilde{P}'|\tilde{P}}(\pi)|^2$; le cas général où $\pi$ est remplacé par $\pi_\lambda$ ($\lambda \in \mathfrak{a}_{M,\C}^*$) en découlera par prolongation méromorphe.

  Supposons d'abord que $\exists \alpha_0$. Remarquons que
  $$ \frac{1}{2} (-1)^{\angles{\rho_P, \alpha_0^\vee}} (\chi(\gamma)+\chi(\gamma)^{-1}) = \cos\left( \frac{\pi k}{m} \right). $$

  Notons $v := \angles{\mu, \alpha_0^\vee} \in i\R$, on a
  \begin{align*}
    \mu_0(\chi)^{-1} & = \frac{\cosh(\frac{\pi v}{i}) - \cos\left(\frac{\pi k}{m}\right)}{ \frac{\pi v}{i}\sinh(\frac{\pi v}{i}) } = \frac{\cos(\pi v) - \cos\left(\frac{\pi k}{m}\right)}{ \frac{\pi v}{i}\sinh(\frac{\pi v}{i}) } \\
    & = \frac{2 \sin\left( \pi \left( \frac{v}{2} + \frac{k}{2m} \right) \right) \sin\left( \pi \left( -\frac{v}{2} + \frac{k}{2m} \right)\right) }{ \frac{v}{i}\sinh\left(\frac{v}{i}\right) }.
  \end{align*}

  D'une part, les formules $\Gamma(\bar{z})= \overline{\Gamma(z)}$ et $|\Gamma(iy)|^2 = \frac{\pi}{y \sinh(\pi y)}$ pour $y \in \R$ entraînent que le dénominateur est égal à $\left( \Gamma(v)\Gamma(-v) \right)^{-1} \pi^2$. D'autre part, il résulte de la formule de réflexion $\Gamma(1-z)\Gamma(z) = \frac{\pi}{\sin(\pi z)}$ que le numérateur est égal à
  $$ \left[ \Gamma\left( \frac{v}{2}+\frac{k}{2m} \right) \Gamma\left( 1-\frac{k}{2m}+\frac{v}{2} \right) \Gamma\left( -\frac{v}{2}+\frac{k}{2m} \right) \Gamma\left( 1-\frac{k}{2m}-\frac{v}{2} \right) \right]^{-1} 2\pi^2 .$$

  Donc $\mu_0(\chi)^{-1} = g(v)g(-v)$ où
  \begin{align*}
    g(z) & := \frac{\sqrt{2} \; \Gamma(z)}{ \Gamma\left( \frac{z}{2}+\frac{k}{2m} \right) \Gamma\left( \frac{z}{2}+1-\frac{k}{2m} \right)} \\
    & \\
    & = (\sqrt{2\pi})^{-1} \cdot 2^z \cdot \frac{\Gamma\left( \frac{z}{2} \right) \Gamma\left(\frac{z}{2}+\frac{1}{2} \right)}{ \Gamma\left( \frac{z}{2}+\frac{k}{2m} \right) \Gamma\left( \frac{z}{2}+1-\frac{k}{2m} \right) }
  \end{align*}
  où la deuxième égalité résulte de la formule de multiplication $\Gamma(z) = (\sqrt{\pi})^{-1} 2^{z-\frac{1}{2}} \Gamma(\frac{z}{2}) \Gamma(\frac{z}{2}+\frac{1}{2})$. On voit que $G(v)G(-v) = 2\pi^2 g(v)g(-v)$.

  Montrons (\textbf{R1}). Si $\exists \alpha_0$, la formule $|\Gamma_\C(iy)\Gamma_\C(1+iy)^{-1}|^2 = (2\pi)^2 |y|^{-2}$ pour $y \in \R$ et le raisonnement ci-dessus entraînent que
  \begin{align*}
    |r_{\tilde{P}'|\tilde{P}}(\pi)|^2 &= \prod_{\alpha \neq \alpha_0} (2\pi)^2 |\angles{\mu, \alpha}|^{-2} \cdot G(\pi\angles{\mu, \alpha_0^\vee}) G(-\pi\angles{\mu, \alpha_0^\vee}) \\
    & = \prod_{\alpha \in \Sigma_c(G,T)} (2\pi) |\angles{\mu, \alpha}|^{-1} \cdot g(\angles{\mu, \alpha_0^\vee}) g(-\angles{\mu, \alpha_0^\vee}) \cdot 2\pi^2 \\
    & = (2\pi)^{\dim U_P} \prod_{\alpha \in \Sigma_c(G,T)} |\angles{\mu, \alpha}|^{-1} \frac{\pi}{\mu_0(\chi)} \\
    & = \gamma_{P'|P}^2 \alpha_{P'|P}^2 \; \mu(\pi)^{-1}.
  \end{align*}

  Si $\nexists \alpha_0$, les mêmes manipulations des fonctions $\Gamma_\C$ donnent
  $$ |r_{\tilde{P}'|\tilde{P}}(\pi)|^2 = (2\pi)^{\dim U_P} \prod_{\alpha \in \Sigma_P(G,T)} |\angles{\mu, \alpha^\vee}|^{-1} = \gamma_{P'|P}^2 \alpha_{P'|P}^2 \; \mu(\pi)^{-1}. $$

  Montrons (\textbf{R2}). Comme $\Gamma(\bar{z})=\overline{\Gamma(z)}$, il en résulte que $\overline{r_{\tilde{P}'|\tilde{P}}(\pi_\lambda)} = r_{\tilde{P}|\tilde{P}'}(\pi_{-\bar{\lambda}})$, ce qui suffit pour conclure.

  Si $w \in W^G(M)$ est l'élément non trivial, alors il envoie $P$ à $P'$, $\lambda$ à $-\lambda$ et $\mu$ à $-\mu$, d'où $r_{w\tilde{P}'|w\tilde{P}}(\tilde{w}\pi_{w\lambda}) = r_{\tilde{P}'|\tilde{P}}(\pi_\lambda)$. Cela entraîne (\textbf{R3}).

  Les conditions (\textbf{R6}) et (\textbf{R8}) résultent des arguments dans \cite[\S 3]{Ar89-IOR1}, car Arthur n'utilise que la formule du déterminant de L. Cohn \cite[10.4.4]{Wall92} et la propriété que $r_{\tilde{P}'|\tilde{P}}(\pi_\lambda)$ est de la forme
  $$ \frac{\prod_{i=1}^N \Gamma(c\lambda - a_i)}{\prod_{i=1}^N \Gamma(c\lambda - b_i)}, \quad c \in \R^\times, \; a_i, b_i \in \C , $$
  à une constante multiplicative près. La condition (\textbf{R7}) résulte d'une propriété bien connue des fonctions $\Gamma$ car on a supposé $k \geq 0$ dans le cas $\exists \alpha_0$.
\end{proof}

Lorsque $\rev: \tilde{G} \to G(\R)$ provient d'une $\mathbf{K}_2$-extension de Brylinski-Deligne \cite[\S 10]{BD01}, on a forcément $m = 1$ ou $2$ et les facteurs normalisants sont étroitement liés à ceux d'Arthur. Cela est contenu dans les remarques suivantes.

\begin{remark}
  Supposons que $\exists \alpha_0$ et $\frac{k}{2m} \in \{0, \frac{1}{2}\}$. Alors on a
  $$ \frac{(-1)^{\angles{\rho_P, \alpha_0^\vee}}}{2} \left(\chi(\gamma) + \chi(\gamma)^{-1} \right) = (-1)^{N_0} $$
  où $N_0 := 1$ si $\frac{k}{2m}=0$ et $N_0 := 0$ si $\frac{k}{2m}=\frac{1}{2}$. On pose\index[iFT2]{$\Gamma_\R$}
  $$\Gamma_\R(z) := \pi^{-\frac{z}{2}} \Gamma\left(\frac{z}{2}\right)$$
  et on vérifie que
  $$ G(z) = \frac{\Gamma_\R(z+N_0)}{\Gamma_\R(z+N_0+1)}. $$

  On a toujours $\frac{k}{2m} \in \{0, \frac{1}{2}\}$ lorsque $m=1$. En comparant la définition des facteurs normalisants et l'interprétation de la constante $N_0$ dans \cite[Appendix]{Ar89-IOR1}, il en résulte que nos facteurs normalisants sont exactement ceux d'Arthur dans le cas archimédien.
\end{remark}

\begin{remark}
  Supposons que $\exists \alpha_0$ et $\frac{k}{2m} \in \{\frac{1}{4}, \frac{3}{4}\}$. On déduit de la formule de duplication pour les fonctions $\Gamma$ que
  $$ G(z) = \frac{\sqrt{\pi} \; \Gamma\left( \frac{z}{2} \right) \Gamma\left( \frac{z}{2}+\frac{1}{2} \right)}{ \Gamma\left( \frac{z}{2} + \frac{1}{4}\right) \Gamma\left( \frac{z}{2}+\frac{3}{4} \right) } = \sqrt{2}\cdot \frac{\Gamma_\R(2z)}{\Gamma_\R(2z+1)}. $$
  D'où
  $$ r_{\tilde{P}'|\tilde{P}}(\pi_\lambda) = \prod_{\alpha \neq \alpha_0} \dfrac{\Gamma_\C(\angles{\mu+\lambda, \alpha^\vee})}{\Gamma_\C(\angles{\mu+\lambda,\alpha^\vee}+1)} \cdot \sqrt{2} \cdot \frac{\Gamma_\R(\angles{\mu+\lambda, 2\alpha_0^\vee})}{\Gamma_\R(\angles{\mu+\lambda, 2\alpha_0^\vee}+1)}. $$

  Au facteur $\sqrt{2}$ près, c'est le facteur normalisant d'Arthur pour les groupes réductifs connexes sauf que $\alpha_0^\vee$ est remplacé par $2\alpha_0^\vee$. Cela reflète un phénomène de ``renversement du diagramme de Dynkin'' qui se passe pour certains revêtements à deux feuillets. Cf. \cite{ABPTV07}.
\end{remark}

\subsection{Le cas non archimédien}
Supposons $F$ local non archimédien de corps résiduel $\F_q$. Le résultat suivant ainsi que sa démonstration sont dus à \cite[Lecture 15]{CLL}, à quelques corrections près.

\begin{theorem}
  Il existe une famille de facteurs normalisants pour $\tilde{G}$.
\end{theorem}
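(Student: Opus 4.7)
Le plan est d'adapter la construction de \cite[Lecture 15]{CLL} au cadre des revêtements. D'abord, j'appliquerais la Proposition \ref{prop:normalisant-red} pour se ramener à la construction des facteurs $r_{\tilde{P}'|\tilde{P}}(\pi)$ dans la situation réduite où $L=G$, où $M \in \mathcal{L}(M_0)$ est standard propre maximal, où $P, P' \in \mathcal{P}(M)$ (donc nécessairement $P'=\bar{P}$), et où $\pi$ parcourt une $X(\tilde{M})$-orbite $\mathcal{O}_\C$ rencontrant $\Pi_2(\tilde{M})$. Dans cette situation, les conditions (\textbf{R4}) et (\textbf{R5}) sont automatiques, et seules (\textbf{R1}), (\textbf{R2}), (\textbf{R3}), (\textbf{R6}), (\textbf{R8}) restent à établir.

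Le point de départ est la Remarque \ref{rem:dep-entrelaceur}: la fonction $j(\pi_\lambda) := \mu(\pi_\lambda)^{-1}$ est rationnelle en la seule variable complexe $z := q^{-\angles{\lambda, \check{\alpha}}}$, où $\alpha$ désigne l'unique racine réduite de $A_M$ dans $\Delta_P$. La Proposition \ref{prop:prop-entrelacement}(1) combinée à l'identité $j(\pi_\lambda) = J_{\tilde{P}|\tilde{P}'}(\pi_\lambda) J_{\tilde{P}'|\tilde{P}}(\pi_\lambda)$ et à l'invariance par l'élément non trivial de $W^G(M)$ entraîne que $j(\pi_\lambda)$ est réelle positive sur l'axe unitaire $\lambda \in i\mathfrak{a}_M^*$, et que le diviseur de zéros et pôles de $j$ dans $\C^\times$ est stable par $z \mapsto z^{-1}$ et par conjugaison complexe, à l'échange $\pi \leftrightarrow \check{\pi}$ près.

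Cette symétrie permet de factoriser canoniquement $j(\pi_\lambda) = r_{\tilde{P}'|\tilde{P}}(\pi_\lambda) \cdot r_{\tilde{P}|\tilde{P}'}(\pi_\lambda)$ en regroupant les zéros et pôles de $j$ selon un demi-plan choisi, de sorte que $r_{\tilde{P}'|\tilde{P}}(\pi_\lambda)$ soit rationnelle en $z$ et satisfasse à $|r_{\tilde{P}'|\tilde{P}}(\pi_\lambda)|^2 = j(\pi_\lambda)$ sur l'axe unitaire. La constante multiplicative globale est fixée en demandant la continuité de $r_{\tilde{P}'|\tilde{P}}$ sur l'orbite compacte $\mathcal{O}$, ce qui assure (\textbf{R8}). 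En posant $R_{\tilde{P}'|\tilde{P}}(\pi_\lambda) := r_{\tilde{P}'|\tilde{P}}(\pi_\lambda)^{-1} J_{\tilde{P}'|\tilde{P}}(\pi_\lambda)$, la condition (\textbf{R2}) résulte de $|r|^2 = j$ et de la Proposition \ref{prop:prop-entrelacement}(1), tandis que (\textbf{R1}) est triviale dans le cas maximal. La condition (\textbf{R6}) est construite par définition, et (\textbf{R3}) découle de ce que l'élément non trivial de Weyl échange $P \leftrightarrow \bar{P}$, $\alpha \leftrightarrow -\alpha$, donc $z \leftrightarrow z^{-1}$, ce qui échange les deux facteurs de $j$ de façon cohérente.

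L'obstacle principal est le choix cohérent de la ``racine carrée'' sur la totalité de l'orbite $\mathcal{O}_\C$: le regroupement des pôles et zéros doit être invariant par le groupe fini $\mathrm{Stab}_{\Im X(\tilde{M})}(\pi)$ et compatible au transport de structure par $W^G_0$, et la constante multiplicative finale doit être choisie canoniquement pour préserver simultanément (\textbf{R2}) et (\textbf{R8}). C'est ici que les corrections à \cite[Lecture 15]{CLL} interviennent: il faut prendre garde aux constantes provenant du revêtement (facteurs de $m$ dans les exposants $\check{\alpha}$) et à l'identification de $X(\tilde{M})$ avec $X(M)$, pour que la factorisation rationnelle descende correctement au quotient compact $\mathcal{O}$. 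Une fois ceci établi, la Proposition \ref{prop:normalisant-red} recompose la famille générale à partir de ces briques maximales.
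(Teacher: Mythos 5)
Votre esquisse suit essentiellement la même stratégie que le papier : réduction via la Proposition \ref{prop:normalisant-red} au cas d'un Lévi propre maximal avec $P'=\bar P$ et $\pi$ de carré intégrable, puis réduction à une seule variable $z=q^{-\langle\lambda,\check\alpha\rangle}$ grâce à la Remarque \ref{rem:dep-entrelaceur}, et enfin factorisation canonique de la fonction $\mu$ (ou $j=\mu^{-1}$) en regroupant les zéros et pôles selon le disque $|z|<1$. C'est exactement la construction du papier (notée $V_P(\pi,z)$ là-bas), y compris la vérification que le résultat ne dépend que de $\pi\otimes z$, ce qui donne la compatibilité sur toute l'orbite.

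Quelques remarques de détail. D'abord, dans la liste des conditions à vérifier dans le cas réduit, vous omettez (\textbf{R7}), alors que c'est précisément la condition la plus contraignante : l'absence de zéros et de pôles de $r_{\tilde P'|\tilde P}(\pi_\lambda)$ dans le demi-plan $\langle\Re\lambda,\alpha^\vee\rangle>0$, c'est-à-dire pour $0<|z|<1$, est le critère qui sélectionne le « bon » regroupement des zéros et pôles dans la factorisation; le papier commence d'ailleurs par observer que (\textbf{R1}) et (\textbf{R7}) sont déjà vérifiées par la construction même. Inversement, vous mentionnez (\textbf{R8}) comme restant à établir et attribuez à la « continuité sur l'orbite compacte » le soin de la garantir; mais (\textbf{R8}) est une condition de croissance spécifique au cas archimédien, vide ici puisque $F$ est non archimédien — ce qui fixe la constante multiplicative $b$ n'est pas la continuité mais le choix canonique $b\in\R_{>0}$. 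Enfin, la « prise en garde aux constantes provenant du revêtement » que vous évoquez est effectivement absorbée dans la définition de $\check\alpha=r_\alpha\alpha^\vee$, ce qui est exactement l'esprit de la Remarque \ref{rem:dep-entrelaceur}. Aucun de ces points ne change la structure globale de l'argument, qui coïncide avec celle du papier.
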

\begin{proof}
  On peut supposer que $M$ est un Lévi propre maximal de $G$, $P,P' \in \mathcal{P}(M)$ tels que $P'=\bar{P}$ et $\pi \in \Pi_2(\tilde{M})$ grâce à la Proposition \ref{prop:normalisant-red}. Vu la Remarque \ref{rem:dep-entrelaceur} il suffit de considérer les représentations $\pi \otimes \chi$ où $\chi$ provient de $(\mathfrak{a}^G_{M,\C})^*$ via l'application
  $$ \mathfrak{a}_{M,\C}^* \to X(\tilde{M}). $$
  De telles $\chi$ forment une sous-tore complexe $X'$ de $X(\tilde{M})$. On note $\alpha$ l'unique élément de $\Delta_P$, et $\check{\alpha}$ l'élément dans $\Q_{>0} \cdot \alpha^\vee$ défini dans \eqref{eqn:alpha-check}. L'application $(\mathfrak{a}^G_{M,\C})^* \to \C^\times$ définie par $\lambda \mapsto z = q^{-\angles{\lambda,\check{\alpha}}}$ permet d'identifier $X'$ à $\C^\times$. On écrit l'action par $X'$ par $\pi \mapsto \pi \otimes z$. On définit ainsi $\mu_P(\pi,z) \in \C(z)$ telle que $\mu_P(\pi, z) := \mu(\pi \otimes z)$.

  Soit $P(z) \in \C(z)$, on adopte la convention
  $$ P(z)^* := \overline{P(\bar{z}^{-1})} \in \C(z). $$
  On construira $V_P(\pi, z) \in \C(z)$ telle que
  \begin{itemize}
    \item $V_P(\pi, z)$ n'a ni zéros ni pôles dans la région $0<|z|<1$;
    \item $\mu_P(\pi,z) = V_P(\pi, z) V_P(\pi, z)^*$;
    \item $V_P(\pi, z)$ est déterminé par $\pi \otimes z$.
  \end{itemize}
  Si ces conditions sont satisfaites, on pose $r_{\tilde{P}'|\tilde{P}}(\pi_\lambda) := V_P(\pi, q^{-\angles{\lambda, \check{\alpha}}})$.

  Fixons $\pi \in \Pi_2(\tilde{M})$ et construisons $V_P(\pi,z)$. On sait que $\mu_P(\pi,z) = \mu_P(\pi,z)^*$ et $\mu_P(\pi,z) \geq 0$ si $|z|=1$. D'où
  \begin{itemize}
    \item la distribution des zéros (resp. pôles) dans $\C \cup \{\infty\}$ est symétrique par rapport à l'inversion $z \mapsto \bar{z}^{-1}$;
    \item les zéros (resp. pôles) dans le cercle $|z|=1$ ont multiplicités paires.
  \end{itemize}

  Notons $\alpha_1^{-1}, \ldots, \alpha_r^{-1}$ (resp. $\beta_1^{-1}, \ldots, \beta_r^{-1}$) les zéros (resp. pôles) de $\mu(\pi,z)$ avec multiplicités, tels que $|\alpha_i| \leq 1$ (resp. $|\beta_i| \leq 1$) pour tout $i$. Montrons qu'il existe un unique $b \in \R_{>0}$ tel que
  $$ \mu_P(\pi,z) = b^2 \prod_{i=1}^r \frac{(1 - \alpha_i z)(z - \bar{\alpha}_i)}{(1 - \beta_i z)(z - \bar{\beta}_i)}. $$

  Si l'on pose $P(z; t) := z^{-1} (1- t z)$ pour $t \in \C$, alors $P(z; t)^* = z - \bar{t}$. On note $Q(z)$ le produit $\prod_{i=1}^r (\cdots)$ dans l'expression précédente, il s'écrit comme
  $$ Q(z) := \prod_{i=1}^r \frac{P(z; \alpha_i)P(z; \alpha_i)^*}{P(z; \beta_i)P(z; \beta_i)^*}. $$

  On a $Q(z)=Q(z)^*$, d'où $\text{ord}_{z=0} (Q(z)) = \text{ord}_{z=\infty} (Q(z))$. Idem pour $\mu_P(\pi, z)$. D'autre part $\text{ord}_{z=t} (Q(z)) = \text{ord}_{z=t} (\mu_P(\pi, z))$ pour tout $t \in \C$ avec $0 < |t| < \infty$ par construction. La formule de produit sur $\mathbb{P}^1_\C$ entraîne qu'il existe $a \in \C^\times$ tel que $a Q(z) = \mu_P(\pi,z)$. En l'évaluant en $z_0$ tel que $|z_0|=1$ et $\mu_P(\pi,z_0) > 0$, on voit que $a \in \R_{>0}$. On prend donc $b := \sqrt{a}$.

  Posons
  $$ V_P(\pi, z) := b \prod_{i=1}^r \frac{1 - \alpha_i z}{1 - \beta_i z}. $$
  Il en résulte immédiatement que $\mu_P(\pi,z) = V_P(\pi,z) V_P(\pi,z)^*$. Par construction $V_P(\pi,z)$ n'a ni zéros ni pôles dans la région $0<|z|<1$. Si l'on remplace $\pi$ par $\pi \otimes z_0$ où $|z_0|=1$, alors $\mu_P(\pi, z)$ est remplacé par $\mu_P(\pi \otimes z_0, z) = \mu_P(\pi, z_0 z)$ et $\alpha_i$ (resp. $\beta_i$) est remplacé par $z_0 \alpha_i$ (resp. $z_0 \beta_i$) pour tout $i$. Soit $t \in \C$, on a défini $P(z;t) \in \C(t)$ et on a
  \begin{align*}
    P(z; z_0 t) P(z; z_0 t)^* & = \frac{(1 - z_0 tz)(z - \overline{z_0 t})}{z} = \frac{(1 - t (z_0 z))(z_0 z - \bar{t} z_0 \bar{z}_0)}{z_0 z} \\
    & = \frac{1 - t (z_0 z)}{z_0 z} \cdot (z_0 z - \bar{t}) = P(z_0 z; t) P(z_0 z; t)^* .
  \end{align*}
  Attention: ici $P(z_0 z; t)^*$ signifie l'élément dans $\C(t)$ obtenu en remplaçant $z$ par $z_0 z$ dans $P(z; t)^*$. On conclut en prenant $t=\alpha_i, \beta_i$ ($i=1,\ldots,r$) que $V_P(\pi \otimes z_0, z) = V_P(\pi, z_0 z)$, donc $V_P(\pi, z)$ est déterminé par $\pi \otimes z$.

  Vérifions les conditions dans la Définition \ref{def:normalisant}. (\textbf{R1}) et (\textbf{R7}) sont déjà vérifiées. (\textbf{R3}) résulte du transport de structure car notre construction est canonique.

  Montrons (\textbf{R2}). Puisque $P'=\bar{P}$, on a $\mu_{P'}(\pi, z) = \mu_P(\pi, z^{-1})$, d'où
  $$ \mu_{P'}(\pi, z) = b^2 \prod_{i=1}^r \frac{(1 - \alpha_i z^{-1})(z^{-1} - \bar{\alpha}_i)}{(1 - \beta_i z^{-1})(z^{-1} - \bar{\beta}_i)} = b^2 \prod_{i=1}^r \frac{(z - \alpha_i)(1 - \bar{\alpha}_i z)}{(z - \beta_i)(1 - \bar{\beta}_i z)}. $$

  On en déduit que $V_{P'}(\pi, z) = b \prod_{i=1}^r (1 - \bar{\alpha}_i z)(1 - \bar{\beta}_i z)^{-1}$. Soit $\lambda \in \mathfrak{a}_{M,\C}^*$, on a
  \begin{align*}
     r_{\tilde{P}|\tilde{P}'}(\pi_\lambda) & = V_{P'}(\pi, q^{\angles{\lambda, \check{\alpha}}}) = b \prod_{i=1}^r \frac{1-\bar{\alpha}_i q^{\angles{\lambda,\check{\alpha}}}}{1-\bar{\beta}_i q^{\angles{\lambda,\check{\alpha}}}}, \\
    r_{\tilde{P}'|\tilde{P}}(\pi_{-\bar{\lambda}}) & = V_P(\pi, q^{\angles{\bar{\lambda}, \check{\alpha}}}) = b \prod_{i=1}^r \frac{1-\alpha_i q^{\angles{\bar{\lambda},\check{\alpha}}}}{1-\beta_i q^{\angles{\bar{\lambda},\check{\alpha}}}}.
  \end{align*}
  D'où $r_{\tilde{P}|\tilde{P}'}(\pi_\lambda) = \overline{r_{\tilde{P}'|\tilde{P}}(\pi_{-\bar{\lambda}})}$.
\end{proof}

\subsection{Le cas non ramifié}
On considère maintenant le cas $F$ local non archimédien, $K$ un sous-groupe hyperspécial de $G(F)$ en bonne position relativement à $M_0$, et $\rev: \tilde{G} \to G(F)$ un revêtement non ramifié au sens de \cite[\S 3.1]{Li10a}. On fixe une section $s: K \to \tilde{G}$ de $\rev$, par laquelle on identifie $K$ à un sous-groupe de $\tilde{G}$. Pour tout Lévi $M \in \mathcal{L}(M_0)$, les données $K^M := K \cap M(F)$ et $s|_{K^M}: K^M \rightarrow \tilde{M}$ définissent encore un revêtement non ramifié.

Une représentation admissible irréductible $(\pi,V)$ de $\tilde{G}$ est dite non ramifiée si $V^K \neq \{0\}$; dans ce cas-là on a $\dim V^K = 1$ d'après \cite[Corollaire 3.2.6]{Li10a}.

\begin{theorem}\label{prop:normalisant-nr}
  Il existe une famille de facteurs normalisants faibles
  $$ r^{\tilde{L}}_{\tilde{P}'|\tilde{P}}(\pi), \quad M \in \mathcal{L}(M_0), \; L \in \mathcal{L}(M) , \; P, P' \in \mathcal{P}^L(M), $$
  où $(\pi,V) \in \Pi(\tilde{M})$ est non ramifiée, tels que si $v \in \mathcal{I}_{\tilde{P}}(\pi)^K$, alors la restriction de $R^{\tilde{L}}_{\tilde{P}'|\tilde{P}}(\pi_\lambda)v$ à $\tilde{K}$ ne dépend pas de $\lambda$.
\end{theorem}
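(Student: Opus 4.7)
Le plan est de construire les facteurs normalisants à l'aide du vecteur sphérique. Soit $(\pi, V)$ une représentation admissible irréductible non ramifiée de $\tilde M$: d'après \cite[Corollaire 3.2.6]{Li10a} on a $\dim V^{K^M} = 1$, où $K^M := K \cap M(F)$; fixons un générateur $v_\pi$. Pour $L \in \mathcal{L}(M)$ et $P \in \mathcal{P}^L(M)$, la réciprocité de Frobenius fournit $\dim \mathcal{I}^{\tilde L}_{\tilde P}(\pi_\lambda)^{K \cap L} = 1$. Dans la réalisation compacte, l'espace $\mathcal{I}^{\tilde L}_{\tilde P}(\pi_\lambda)$ vu comme espace de fonctions sur $\tilde K \cap \tilde L$ ne dépend pas de $\lambda$, puisque la transformation sous $\tilde K \cap \tilde P$ fait intervenir seulement $\pi|_{\tilde K \cap \tilde M}$ --- car le caractère non ramifié déterminant $\pi_\lambda / \pi$ est trivial sur $\tilde M^1 \supset \tilde K \cap \tilde M$ --- et $\delta_P|_{\tilde K \cap \tilde P} = 1$. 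On dispose ainsi d'un vecteur sphérique normalisé $\phi^{\tilde L}_{\tilde P, 0}: \tilde K \cap \tilde L \to V$ indépendant de $\lambda$.

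Définissons alors $r^{\tilde L}_{\tilde P'|\tilde P}(\pi_\lambda) \in \C$ par l'équation
$$ J^{\tilde L}_{\tilde P'|\tilde P}(\pi_\lambda) \, \phi^{\tilde L}_{\tilde P, 0} = r^{\tilde L}_{\tilde P'|\tilde P}(\pi_\lambda) \, \phi^{\tilde L}_{\tilde P', 0} . $$
La $\tilde K$-équivariance de $J$ garantit que $J^{\tilde L}_{\tilde P'|\tilde P}(\pi_\lambda) \phi^{\tilde L}_{\tilde P, 0} \in \mathcal{I}^{\tilde L}_{\tilde P'}(\pi_\lambda)^{K \cap L} = \C \cdot \phi^{\tilde L}_{\tilde P', 0}$, et la méromorphie en $\lambda$ résulte du Théorème \ref{prop:J-convergence}. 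Par construction, $R^{\tilde L}_{\tilde P'|\tilde P}(\pi_\lambda) \phi^{\tilde L}_{\tilde P, 0} = \phi^{\tilde L}_{\tilde P', 0}$; pour tout $v = c \cdot \phi^{\tilde L}_{\tilde P, 0} \in \mathcal{I}^{\tilde L}_{\tilde P}(\pi)^{K \cap L}$, la restriction à $\tilde K \cap \tilde L$ de $R^{\tilde L}_{\tilde P'|\tilde P}(\pi_\lambda) v = c \cdot \phi^{\tilde L}_{\tilde P', 0}$ ne dépend pas de $\lambda$, ce qui est l'énoncé voulu.

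Il reste à vérifier les conditions de la Définition \ref{def:normalisant-faible}. (\textbf{R1}), (\textbf{R3}) et (\textbf{R5}) résultent en évaluant sur $\phi^{\tilde L}_{\tilde P, 0}$ les identités correspondantes de la Proposition \ref{prop:prop-entrelacement}, en utilisant la canonicité du vecteur sphérique et le fait que l'induction parabolique itérée préserve les vecteurs sphériques. Pour (\textbf{R4}), on décompose $J^{\tilde L}_{\tilde P'|\tilde P}(\pi_\lambda)$ en un produit d'opérateurs indexé par une galerie minimale reliant $\bar P$ à $P'$ (assertion (2) de la Proposition \ref{prop:prop-entrelacement}), chaque cran traversant une unique racine réduite $\alpha \in (\Sigma^L_{P'})^{\text{red}} \cap (\Sigma^L_{\bar P})^{\text{red}}$; sur le vecteur sphérique les scalaires se multiplient et le facteur associé à $\alpha$ se lit comme $r^{\tilde M_\alpha}_{\tilde{\bar P}_\alpha|\tilde P_\alpha}(\pi_\lambda)$. (\textbf{R6}) est une conséquence immédiate de la rationalité de $J^{\tilde L}_{\tilde P'|\tilde P}(\pi_\lambda)$ en les variables $q^{-\angles{\lambda, \check\alpha}}$ (Remarque \ref{rem:dep-entrelaceur}); (\textbf{R8}) est vide dans le cas non archimédien.

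L'étape technique la plus délicate est (\textbf{R2}): pour $\pi$ unitaire et $\lambda \in i\mathfrak{a}_M^*$, il faut $\overline{r^{\tilde L}_{\tilde P'|\tilde P}(\pi_\lambda)} = r^{\tilde L}_{\tilde P|\tilde P'}(\pi_{-\bar\lambda})$. On applique l'identité $J^{\tilde L}_{\tilde P'|\tilde P}(\pi_\lambda)^* = J^{\tilde L}_{\tilde P|\tilde P'}(\pi_{-\bar\lambda})$ au produit scalaire $\angles{J^{\tilde L}_{\tilde P'|\tilde P}(\pi_\lambda)\phi^{\tilde L}_{\tilde P, 0}, \phi^{\tilde L}_{\tilde P', 0}}$ dans la réalisation compacte; on évalue les deux membres et on utilise l'égalité $\|\phi^{\tilde L}_{\tilde P, 0}\| = \|\phi^{\tilde L}_{\tilde P', 0}\|$ (indépendante de $\lambda$) pour conclure. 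C'est le seul point où la structure unitaire intervient de manière essentielle.
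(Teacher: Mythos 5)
Your proposal is correct and follows essentially the same route as the paper: define $r_{\tilde P'|\tilde P}(\pi_\lambda)$ by the equation $J_{\tilde P'|\tilde P}(\pi_\lambda)v_P = r_{\tilde P'|\tilde P}(\pi_\lambda)v_{P'}$ on the normalized spherical vectors, so that $R_{\tilde P'|\tilde P}(\pi_\lambda)v_P = v_{P'}$ is independent of $\lambda$, then verify the axioms by evaluating on $v_P$ (in particular (\textbf{R2}) via $J_{\tilde P'|\tilde P}(\pi_\lambda)^* = J_{\tilde P|\tilde P'}(\pi_{-\bar\lambda})$ and $\|v_P\| = \|v_{P'}\|$). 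The only divergence is that the paper first reduces to $M=M_0$, $L=G$ and $\pi$ a full specific unramified principal series $\mathrm{Ind}^{\tilde M_0}_{\tilde H}(\chi_0)$, whereas you argue directly with the one-dimensional sphericals of $\mathcal{I}^{\tilde L}_{\tilde P}(\pi_\lambda)$; both are fine.
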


Avant d'entamer la preuve, rappelons brièvement la théorie des séries principales non ramifiées spécifiques. Soit $(\pi,V) \in \Pi_-(\tilde{M})$ non ramifiée. Il résulte de l'isomorphisme de Satake \cite[Proposition 3.2.5]{Li10a} que $\pi$ est une sous-représentation de $\text{Ind}^{\tilde{M}}_{\tilde{P}_0}(\chi)$, où $\chi$ est une représentation irréductible spécifique non ramifiée de $\tilde{M}_0$. Attention: $\tilde{M}_0$ n'est pas commutatif en général et $\chi$ n'est pas forcément dans $X(\tilde{M}_0)$. Néanmoins il admet une description simple: notons $\tilde{H} := Z_{\tilde{M}}(K \cap M_0(F))$ (voir \cite[Definition 3.1.1]{Li10a})\index[iFT2]{$\tilde{H}$}, c'est un sous-groupe commutatif maximal de $\tilde{M}_0$. Alors $\chi$ est de la forme
$$ \chi = \text{Ind}^{\tilde{M}_0}_{\tilde{H}}(\chi_0) $$
où $\chi_0$ est un caractère spécifique de $\tilde{H}$, trivial sur $K \cap \tilde{H}$; sa restriction à $Z(\tilde{M}_0)$ est uniquement déterminée par $\chi$. Cela résulte d'une variante du théorème de Stone-von Neumann, cf. \cite[\S 3]{We09}. Donc $\pi$ se réalise comme une sous-représentation de $\text{Ind}^{\tilde{M}}_{\tilde{H}}(\chi_0)$, ce que nous appelons une série principale non ramifiée spécifique\index[iFT2]{série principale non ramifiée spécifique}.

\begin{proof}[Démonstration du Théorème \ref{prop:normalisant-nr}]
  La restriction de $\pi$ au sous-groupe central $\bmu_m$ est un caractère $\bomega_{\pi}$. Quitte à pousser l'extension $1 \to \bmu_m \to \tilde{G} \to G(F) \to 1$ en avant par $\bomega_{\pi}$, on peut supposer $\pi$ spécifique. D'après ce qui précède, on réalise $\pi$ comme une sous-représentation irréductible de $\text{Ind}^{\tilde{M}}_{\tilde{H}}(\chi_0)$. Vu la transitivité de l'induction parabolique normalisée, on se ramène au cas $M=M_0$, $\pi = \text{Ind}^{\tilde{M}_0}_{\tilde{H}}(\chi_0)$. C'est aussi loisible de supposer que $L=G$.

  L'espace sous-jacent de $\mathcal{I}_{\tilde{P}}(\pi_\lambda)$ s'identifie à un espace de fonctions sur $\tilde{K}$ muni d'un produit hermitien invariant, noté $(|)$. Cet espace ne dépend pas de $\lambda$; son sous-espace de $K$-invariants est aussi indépendant de $\lambda$ et est de dimension $1$.

  Pour tout $P \in \mathcal{P}(M)$, prendre $v_P \in \mathcal{I}_{\tilde{P}}(\pi)^K$ la fonction telle que $v_P(1)=1$. On définit $r_{\tilde{P}|\tilde{P}'}(\pi_\lambda)$ de sorte que
  \begin{gather}\label{eqn:normalisant-nr}
    R_{\tilde{P}'|\tilde{P}}(\pi_\lambda) v_P = v_{P'}
  \end{gather}
  pour tous $P, P' \in \mathcal{P}(M)$ et presque tout $\lambda$. Le fait que $J_{\tilde{P}'|\tilde{P}}(\pi_\lambda)$ est rationnel en les variables $\{ q^{-\angles{\lambda, \check{\alpha}}} : \alpha \in \Delta_P \}$ entraîne que $r_{\tilde{P}'|\tilde{P}}(\pi_\lambda)$ l'est aussi. D'autre part \eqref{eqn:normalisant-nr} entraîne que $R_{\tilde{P}'|\tilde{P}}(\pi_\lambda) v_P$ est indépendant de $\lambda$. On vérifie (\textbf{R1}), (\textbf{R3}) et (\textbf{R6}) sans difficulté.

  Vérifions (\textbf{R2}). Les induites $\mathcal{I}_{\tilde{P}}(\pi_\lambda)$ et $\mathcal{I}_{\tilde{P}'}(\pi_\lambda)$ sont irréductibles si $\lambda$ est en position générale (voir les remarques après la Proposition \ref{prop:prop-entrelacement}), donc il existe $c_\lambda \in \C^\times$ tel que $R_{\tilde{P}'|\tilde{P}}(\pi_\lambda)^* = c_\lambda  R_{\tilde{P}|\tilde{P}'}(\pi_{-\bar{\lambda}})$. D'autre part, on a
  \begin{align*}
    (R_{\tilde{P}'|\tilde{P}}(\pi_\lambda) v_P | v_{P'}) & = (v_{P'}|v_{P'}), \\
    (v_P | R_{\tilde{P}|\tilde{P}'}(\pi_{-\bar{\lambda}}) v_{P'}) & = (v_P | v_P).
  \end{align*}
  Or $(v_{P'}|v_{P'})=(v_P|v_P)$, d'où $c_\lambda=1$ et $R_{\tilde{P}'|\tilde{P}}(\pi_\lambda)^* = R_{\tilde{P}|\tilde{P}'}(\pi_{-\bar{\lambda}})$.

  Les propriétés (\textbf{R4}), (\textbf{R5}) découlent des propriétés parallèles des opérateurs $J_{\tilde{P}'|\tilde{P}}(\pi)$ et de notre définition de $r_{\tilde{P}'|\tilde{P}}(\pi)$.
\end{proof}

\begin{remark}
  Il sera plus raisonnable d'établir une formule à la Gindikin-Karpelevi\u{c} pour les séries principales non ramifiées spécifiques, puis en déduire une formule explicite de $r_{\tilde{P}'|\tilde{P}}(\pi)$. On en obtiendra (\textbf{R7}). En fait, de nombreux cas ont été établis par McNamara \cite{Na11}, y compris les revêtements des groupes déployés simplement connexes construits par Matsumoto. Nous ne poursuivons pas cette approche ici.
\end{remark}

\section{Intégrales orbitales et caractères}\label{sec:int-caractere}
Dans cette section, on étudiera des distributions invariantes sur un revêtement $\rev: \tilde{G} \to G(F)$, où $F$ est un corps local de caractéristique nulle. Lorsque $F$ est archimédien, les résultats que nous cherchons sont déjà établis par Bouaziz \cite{Bo94a, Bo94b}. Par conséquent, on se limitera au cas $F$ non archimédien.

Soit $M$ un $F$-groupe réductif. Une distribution sur $\mathfrak{m}(F)$ signifie une fonctionnelle linéaire sur $C_c^\infty(\mathfrak{m}(F))$. Pour $f \in C_c^\infty(\mathfrak{m}(F))$ et $x \in M(F)$, on écrira $f^x: X \mapsto f(xXx^{-1})$. Le groupe $M(F)$ opère sur les distributions par $\angles{{}^x \Theta, f} = \angles{\Theta, f^x}$.

Le support d'une distribution $\theta$ a encore un sens et sera noté $\Supp\,\theta$. Les mêmes notions s'appliquent aux distributions sur $\tilde{G}$. On définit les distributions spécifiques ou anti-spécifiques comme dans \cite{Li10a}.

\subsection{Théorie sur l'algèbre de Lie}\label{sec:theorie-alg}
On se donne
\begin{itemize}
  \item $F$ un corps local non archimédien,
  \item $G$ un $F$-groupe réductif, sa composante neutre est noté $G^\circ$.
  \item $\bomega: G(F) \to \C^\times$ un caractère unitaire continu, trivial sur $Z_{G^\circ}(F)$.
\end{itemize}

Notre hypothèse sur $\bomega$ est justifiée car les résultats de cette sous-section deviendront triviaux si $\bomega$ n'est pas trivial sur $Z_{G^\circ}(F)$. La composante neutre de $Z_{G^\circ}$, notée $Z_{G^\circ}^\circ$, est un $F$-tore.

Munissons $G(F)$ d'une mesure de Haar. On utilise les notions définies dans \cite[\S 5]{Li10a}. On note $\pi: G_\text{SC} \to G$ le revêtement simplement connexe de $G^\circ_\text{der}$ et $\iota: Z_{G^\circ}^\circ \to G$ l'inclusion. Alors on a un module croisé\index[iFT2]{$G' \to G$}
$$ G' := G_\text{SC} \times Z_{G^\circ}^\circ \stackrel{(\pi,\iota)}{\longrightarrow} G. $$

L'action de $G$ sur $G_\text{SC}$ est la conjugaison et son action sur $Z_{G^\circ}^\circ$ est triviale. On note\index[iFT2]{$\Xi$}
$$\Xi := \Coker(G'(F) \to G(F)).$$
C'est un groupe fini.

Un fait important à se rappeler est que $\bomega \circ (\pi, \iota) = 1$. Un élément $X \in \mathfrak{g}(F)$ est dit $\bomega$-bon sous $G(F)$\index[iFT2]{$\bomega$-bon} si $\bomega$ est trivial sur $Z_G(X)(F)$. On définit ainsi les classes de conjugaison $\bomega$-bonnes sous $G(F)$; l'ensemble de telles classes est noté $\Gamma(\mathfrak{g}(F))^\bomega$. On introduit le $\C^\times$-torseur $\dot{\Gamma}(\mathfrak{g}(F))^\bomega \to \Gamma(\mathfrak{g}(F))^\bomega$; les éléments dans $\dot{\Gamma}(\mathfrak{g}(F))^\bomega$ sont les $\bomega$-bonnes classes de conjugaison par $G(F)$ munies d'une mesure complexe non triviale $\mu$ telle que $\angles{\mu, f^y} = \bomega(y) \angles{\mu,f}$ pour tout $f$. On définit $\Gamma_\text{reg}(\mathfrak{g}(F))^\bomega$, $\Gamma_\text{reg}(\mathfrak{g}(F))^\bomega$ (resp. $\Gamma_\text{nil}(\mathfrak{g}(F))^\bomega$, $\Gamma_\text{nil}(\mathfrak{g}(F))^\bomega$) en se limitant aux classes semi-simples régulières (resp. nilpotentes). Ces définitions généralisent celles de
\cite{Li10a} pour les groupes connexes.

Dans ce qui suit, on s'intéressera aux distributions $\theta$ sur $\mathfrak{g}(F)$ avec ${}^y \theta = \bomega(y)\theta$ pour tout $G(F)$. On définit des espaces vectoriels en dualité\index[iFT2]{$\mathcal{I}(\mathfrak{g}(F))_\bomega, \mathcal{I}(\mathfrak{g}'(F))_\bomega$}\index[iFT2]{$\mathcal{J}(\mathfrak{g}(F))^\bomega, \mathcal{J}(\mathfrak{g}'(F))^\bomega$}
\begin{align*}
  \mathcal{I}(\mathfrak{g}(F))_\bomega & := C_c^\infty(\mathfrak{g}(F))/ \left\langle f^y - \bomega(y) f : f \in C_c^\infty(\mathfrak{g}(F)), \; y \in G(F) \right\rangle, \\
  \mathcal{J}(\mathfrak{g}(F))^\bomega & := \{\theta : \text{distribution sur } \mathfrak{g}(F), \; \forall y \in G(F), {}^y \theta = \bomega(y)\theta \}.
\end{align*}
Si $\bomega=1$, on y omet le symbole $\bomega$; dans ce cas-là ce sont l'espace de Hecke invariant et l'espace de distributions invariantes, respectivement.

Les applications $f \mapsto f^{y^{-1}}$, $y \in G(F)$ induisent une action de $\Xi$ sur $\mathcal{I}(\mathfrak{g}'(F))$. Son action contragrédiente sur $\mathcal{J}(\mathfrak{g}'(F))$ se déduit de $\theta \mapsto {}^y \theta$. Posons $\Pi(\Xi)$ l'ensemble de représentations irréductibles de $\Xi$, on obtient donc les décompositions
\begin{align*}
  \mathcal{I}(\mathfrak{g}'(F)) & = \bigoplus_{\xi \in \Pi(\Xi)} \mathcal{I}(\mathfrak{g}'(F))_\xi , \\
  \mathcal{J}(\mathfrak{g}'(F)) & = \bigoplus_{\xi \in \Pi(\Xi)} \mathcal{J}(\mathfrak{g}'(F))^\xi ,
\end{align*}
où $(\cdots)^\xi$ désigne la composante $\xi$-isotypique et nous adoptons la convention $(\cdots)_\xi = (\cdots)^{(\xi^\vee)}$. Notons $\text{pr}_\xi$, $\text{pr}^\xi$ les projections $\xi$-isotypiques correspondantes. Vu notre hypothèse, $\bomega$ induit un caractère de $\Xi$, noté encore $\bomega$. Les espaces $\mathcal{I}(\mathfrak{g}'(F))_\bomega$ et $\mathcal{J}(\mathfrak{g}'(F))^\bomega$ sont en dualité.\index[iFT2]{$\text{pr}^\bomega, \text{pr}_\bomega$}

Notons que $\mathfrak{g} = \mathfrak{g}'$ comme $F$-algèbres de Lie. Le résultat suivant est évident.
\begin{lemma}\label{prop:dist-SC}
  On a
  \begin{align*}
    \mathcal{I}(\mathfrak{g}(F))_\bomega & = \mathcal{I}(\mathfrak{g}'(F))_\bomega , \\
    \mathcal{J}(\mathfrak{g}(F))^\bomega & = \mathcal{J}(\mathfrak{g}'(F))^\bomega .
  \end{align*}
\end{lemma}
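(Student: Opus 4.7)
Le plan est de dérouler les définitions: la différence entre les deux membres de chaque égalité est purement formelle, puisque $\mathfrak{g} = \mathfrak{g}'$ en tant que $F$-algèbres de Lie, seule l'action du groupe diffère. Je poserais $H := \Image(G'(F) \to G(F))$, de sorte que $\Xi = G(F)/H$. L'hypothèse $\bomega \circ (\pi,\iota) = 1$ entraîne $\bomega|_H = 1$, ce qui est précisément ce qui permet à $\bomega$ de descendre en un caractère de $\Xi$. L'action de $G'(F)$ sur $\mathfrak{g}'(F) = \mathfrak{g}(F)$ se factorise par $H$, puisque $Z_{G^\circ}^\circ$ y opère trivialement.

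Pour la première égalité, je partirais de l'écriture
$$ \mathcal{I}(\mathfrak{g}'(F)) = C_c^\infty(\mathfrak{g}(F))\big/\langle f^y - f : y \in H\rangle, $$
sur laquelle $\Xi$ opère, puis j'utiliserais l'identification canonique entre la composante $\bomega$-isotypique (facteur direct) et le quotient par les coinvariants $\bomega$-tordus, valable en caractéristique nulle pour $\Xi$ fini, afin d'obtenir
$$ \mathcal{I}(\mathfrak{g}'(F))_\bomega = C_c^\infty(\mathfrak{g}(F)) \Big/ \Big( \langle f^y - f : y \in H\rangle + \langle f^y - \bomega(y)f : y \in G(F)\rangle \Big). $$
Puisque $\bomega|_H = 1$, la première famille de relations est contenue dans la seconde, et le quotient se réduit immédiatement à $\mathcal{I}(\mathfrak{g}(F))_\bomega$.

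Pour l'égalité côté distributions, je procéderais soit par dualité, soit plus directement en vérifiant qu'une distribution $\theta$ sur $\mathfrak{g}(F)$ satisfait ${}^y\theta = \bomega(y)\theta$ pour tout $y \in G(F)$ si et seulement si elle est $H$-invariante (donc dans $\mathcal{J}(\mathfrak{g}'(F))$) et transforme selon $\bomega$ sous l'action quotient de $\Xi$ (donc dans la composante $\mathcal{J}(\mathfrak{g}'(F))^\bomega$); l'équivalence des deux conditions est immédiate grâce à $\bomega|_H = 1$.

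Le lemme ne présente pas de difficulté substantielle: c'est un pur dévissage reposant sur le fait crucial que $\bomega$ est trivial sur l'image de $G'(F)$. Le seul point à surveiller est le passage, pour la composante $\bomega$-isotypique d'une représentation du groupe fini $\Xi$, entre son incarnation comme sous-espace et son incarnation comme quotient de coinvariants tordus, lequel est standard en caractéristique nulle.
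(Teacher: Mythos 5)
Le papier se contente de déclarer ce lemme « évident » sans le démontrer; votre dévissage fournit exactement les détails sous-entendus. Votre argument est correct, et vous relevez à juste titre le seul point délicat: le passage, pour la composante $\bomega$-isotypique sous le groupe fini $\Xi$, de la description comme sous-espace (utilisée pour définir $\mathcal{I}(\mathfrak{g}'(F))_\bomega$) à celle comme quotient de coinvariants tordus (utilisée pour définir $\mathcal{I}(\mathfrak{g}(F))_\bomega$), justifié par la finitude de $\Xi$ en caractéristique nulle, combiné avec le fait que $\bomega|_H = 1$.
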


Pour tout $\dot{X} \in \dot{\Gamma}(\mathfrak{g}(F))^\bomega$, on peut définir les intégrales orbitales normalisées $J^\bomega_G(\dot{X}, f)$, qui n'est que $\angles{\dot{X}, f}$ car $\dot{X}$ est regardée comme une mesure complexe. Soit $X \in \mathfrak{g}(F)$ qui est $\bomega$-bon, le symbole $\dot{X}$ désignera toujours un élément $\dot{X} \in \dot{\Gamma}(\mathfrak{g}(F))^\bomega$ tel que la classe de conjugaison sous-jacente de $\dot{X}$ contient $X$. Notons $G_X := Z_{G^\circ}(X)^\circ$. Si une mesure de Haar sur $G_X(F)$ est choisie, on peut choisir $\dot{X}$ de sorte que\index[iFT2]{$J_G^\bomega(\dot{X}, \cdot)$}
\begin{gather}\label{eqn:int-orb-omega}
  J_G^\bomega(\dot{X}, f) = |D^G(X)|^{\frac{1}{2}} \int_{G_X(F) \backslash G(F)} \bomega(x) f(x^{-1} X x) \dd x
\end{gather}
où $D^G(X) := \det(\ad(X)|\mathfrak{g}/\mathfrak{g}_X)$ est le discriminant de Weyl sur l'algèbre de Lie, qui ne dépend que de $G^\circ$. Cf. \cite[1.21]{BZ76}.

\begin{lemma}\label{prop:int-orb-SC}
  Soit $X \in \mathfrak{g}(F)$.
  \begin{enumerate}
    \item $X$ est $\bomega$-bon si et seulement si pour tout (ou ce qui revient au même, pour un) $\dot{X}' \in \dot{\Gamma}(\mathfrak{g}'(F))$ à support contenu dans la $G(F)$-orbite de $X$, on a $\mathrm{pr}^\bomega \dot{X}' \neq 0$.
    \item Supposons que $X$ est $\bomega$-bon. Soit $\dot{X} \in \dot{\Gamma}(\mathfrak{g}(F))^\bomega$ à support dans la $G(F)$-orbite contenant $X$, alors il existe un unique élément $\dot{X}' \in \dot{\Gamma}(\mathfrak{g}'(F))$ tel que
    \begin{itemize}
      \item $\mathrm{pr}^\bomega \dot{X}'$ correspond à $\dot{X}$ sous l'isomorphisme du Lemme \ref{prop:dist-SC};
      \item $X$ appartient à la $G'(F)$-orbite sous-jacente de $\dot{X}'$.
    \end{itemize}
    De plus, tout $\dot{X}' \in \dot{\Gamma}(\mathfrak{g}'(F))$ avec $\mathrm{pr}^\bomega \dot{X}' \neq 0$ est obtenu de cette manière.
  \end{enumerate}
\end{lemma}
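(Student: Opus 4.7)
L'idée est d'analyser l'action du groupe fini $\Xi$ sur l'ensemble des $G'(F)$-orbites contenues dans la $G(F)$-orbite $\mathcal{O}_X := G(F) \cdot X$. On commence par observer que $Z_{G^\circ}^\circ$ agit trivialement sur $\mathfrak{g}(F)$ via l'action adjointe, donc les $G'(F)$-orbites dans $\mathcal{O}_X$ coïncident avec les $\pi(G_\mathrm{SC}(F))$-orbites, et forment un ensemble fini. L'action induite de $\Xi$ y est transitive, et le stabilisateur de la $G'(F)$-orbite de $X$ est l'image $H_X$ de $Z_G(X)(F)$ dans $\Xi$.

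Pour (1), j'identifie l'espace des mesures $G'(F)$-invariantes supportées dans $\mathcal{O}_X$, comme représentation de $\Xi$, à la représentation de permutation $\C[\Xi/H_X]$. Par réciprocité de Frobenius, le caractère $\bomega$ (vu comme caractère de $\Xi$) y apparaît avec multiplicité au plus $1$, et il apparaît si et seulement si $\bomega|_{H_X}$ est trivial. Puisque $\bomega$ est déjà trivial sur l'image de $G'(F)$ par hypothèse, cette condition équivaut à $\bomega|_{Z_G(X)(F)} = 1$, c'est-à-dire à la $\bomega$-bonté de $X$. Quand $\bomega$ apparaît, la transitivité de l'action de $\Xi$ entraîne que $\mathrm{pr}^\bomega$ envoie chaque vecteur de base (i.e.\ chaque $\dot{X}'$ à support dans une seule $G'(F)$-orbite) sur un vecteur dont toutes les composantes sont non nulles, d'où l'équivalence entre ``pour tout'' et ``pour un'' $\dot{X}'$.

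Pour (2), étant donné $\dot{X}$, je le restreins à la $G'(F)$-orbite de $X$. Comme $\bomega$ est trivial sur l'image de $G'(F)$, la $\bomega$-équivariance entraîne que cette restriction est $G'(F)$-invariante, et elle est non nulle d'après (1). Elle définit un élément de $\dot{\Gamma}(\mathfrak{g}'(F))$, que l'on renormalise par un scalaire de sorte que $\mathrm{pr}^\bomega \dot{X}' = \dot{X}$ via l'isomorphisme du Lemme~\ref{prop:dist-SC}. L'unicité découle de la $1$-dimensionnalité de l'espace des mesures $G'(F)$-invariantes sur $G'(F) \cdot X$, qui fixe le scalaire. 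La dernière assertion s'obtient en renversant le procédé: pour tout $\dot{X}'$ avec $\mathrm{pr}^\bomega \dot{X}' \neq 0$, on pose $\dot{X} := \mathrm{pr}^\bomega \dot{X}'$ et l'on choisit $X$ dans la $G'(F)$-orbite sous-jacente de $\dot{X}'$; l'élément de (2) associé à ce $\dot{X}$ et ce $X$ est alors précisément $\dot{X}'$.

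L'étape la plus délicate n'est pas vraiment un obstacle mais une vigilance combinatoire: il faut suivre explicitement le facteur $|H_X|/|\Xi|$ qui apparaît dans le calcul direct de $\mathrm{pr}^\bomega$ appliquée à une mesure supportée sur une seule $G'(F)$-orbite, et s'assurer que ce facteur est bien absorbé dans la renormalisation servant à définir $\dot{X}'$. Tout le reste découle de la structure de représentation permutationnelle $\C[\Xi/H_X]$ analysée ci-dessus.
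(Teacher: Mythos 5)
Your proof is correct, and the paper itself supplies none for this lemma (it passes directly to the Remark), so there is no argument of the author's to compare against. The structure you use — identifying the space of $G'(F)$-invariant measures supported on $\mathcal{O}_X := G(F)\cdot X$ with the permutation module $\C[\Xi/H_X]$, observing that $Z_{G^\circ}^\circ$ acts trivially on $\mathfrak{g}$ so that $G'(F)$-orbits and $\pi(G_\mathrm{SC}(F))$-orbits coincide, and applying Frobenius reciprocity to detect whether $\bomega$ appears — is exactly the combinatorics implicitly being invoked. Two small points of vigilance that you handle but that are worth stating crisply: first, the stabilizer calculation giving $H_X = \mathrm{Image}(Z_G(X)(F) \to \Xi)$ needs the identity $\mathrm{Stab}_{G(F)}(G'(F)\cdot X) = \mathrm{Im}(G'(F))\cdot Z_G(X)(F)$, which your argument tacitly uses; second, the scalar absorbed in your renormalization is precisely $|\Xi|/|H_X|$ (the reciprocal of the coefficient of $e_0$ in $\mathrm{pr}^\bomega e_0$), and it is indeed this scalar that makes the reformulation of the Remark — $J^\bomega_G(\dot X,f)=J_{G'}(\dot X,\mathrm{pr}_\bomega f)$ — come out with no extraneous constant, as required for the subsequent descent of Shalika germs and local expansions. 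Both points are consistent with what you wrote; I am only making the bookkeeping explicit.
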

\begin{remark}
  C'est plus commode de noter $\dot{X}'$ par $\dot{X}$. Alors ledit lemme signifie que
  $$ J_G^\bomega(\dot{X}, f) = J_{G'}(\dot{X}, \mathrm{pr}_\bomega f), \quad f \in \mathcal{I}(\mathfrak{g}(F)). $$
  Dorénavant, on adopte cette convention.
\end{remark}

Le résultat précédent permet de généraliser des résultats dans le cas $\bomega=1$ au cas général. Donnons des exemples importants.

\begin{theorem}[Germes de Shalika avec caractère]\label{prop:shalika-omega}
  Soit $T \subset G$ un $F$-tore maximal tel que $\bomega|_{T(F)}=1$. Posons $\mathfrak{t}_\mathrm{reg} := \mathfrak{t} \cap \mathfrak{g}_\mathrm{reg}$ et choisissons une mesure de Haar sur $T(F)$, qui permet définir par \eqref{eqn:int-orb-omega} les intégrales orbitales $J_G^\bomega(\dot{H}, \cdot)$ pour tout $H \in \mathfrak{t}_\mathrm{reg}(F)$.

  Alors il existe des fonctions $\Gamma_{\dot{u}} : \mathfrak{t}(F) \to \C$ où $\dot{u} \in \dot{\Gamma}_\mathrm{nil}(\mathfrak{g}(F))^\bomega$, telles que
  \begin{enumerate}
    \item $\Gamma_{z\dot{u}} = z^{-1} \Gamma_{\dot{u}}$ pour tout $z \in \C^\times$;
    \item $\Gamma_{\dot{u}}(t^2 H) = |t|^{-\dim G/G_u} \Gamma_{\dot{u}}(H)$ pour tout $t \in F^\times$;
    \item $\Gamma_{\dot{u}}(H+Z) = \Gamma_{\dot{u}}(H)$ si $Z \in \mathfrak{z}(F)$;
    \item $\Gamma_{\dot{u}}$ est localement constante sur $\mathfrak{t}_\mathrm{reg}(F)$ et localement bornée sur $\mathfrak{t}(F)$;
    \item pour tout $f \in C_c^\infty(\mathfrak{g}(F))$, il existe $\mathcal{U}$ un voisinage ouvert de $0$ dans $\mathfrak{t}(F)$ tel que pour tout $H \in \mathfrak{t}_\mathrm{reg}(F) \cap \mathcal{U}$ on a
    $$ J_G^\bomega(\dot{H}, f) = \sum_{u \in \Gamma_\mathrm{nil}(\mathfrak{g}(F))^\bomega} \Gamma_{\dot{u}}(H) J_G^\bomega(\dot{u}, f) $$
    où le produit dans la somme ne dépend pas du choix de $\dot{u}$.
  \end{enumerate}
\end{theorem}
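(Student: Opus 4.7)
Le plan consiste à se ramener au cas classique connexe via le module croisé $G' = G_\text{SC} \times Z_{G^\circ}^\circ \to G$ et à invoquer le théorème des germes de Shalika de Harish-Chandra pour $G'$. La clef est la formule $J_G^\bomega(\dot{X}, f) = J_{G'}(\dot{X}, \mathrm{pr}_\bomega f)$ signalée dans la remarque suivant le Lemme \ref{prop:int-orb-SC}, qui permet de transférer la question des intégrales orbitales semi-équivariantes sur $\mathfrak{g}(F)$ à celle des intégrales orbitales invariantes sur $\mathfrak{g}'(F) = \mathfrak{g}(F)$. On pose donc $f' := \mathrm{pr}_\bomega f$ et on travaille désormais dans $\mathcal{I}(\mathfrak{g}'(F))$.

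Tout $F$-tore maximal $T \subset G$ est en fait contenu dans $G^\circ$ et se relève canoniquement en un $F$-tore maximal $T'$ de $G'$; on transporte la mesure de Haar fixée sur $T(F)$ en une mesure sur $T'(F)$. Le théorème classique de Harish-Chandra appliqué à $G'$ livre alors, pour chaque $\dot{u}' \in \dot{\Gamma}_\text{nil}(\mathfrak{g}'(F))$, un germe $\Gamma^{G'}_{\dot{u}'}$ vérifiant les analogues des propriétés (1)-(4) et un développement
$$ J_{G'}(\dot{H}, f') = \sum_{u' \in \Gamma_\text{nil}(\mathfrak{g}'(F))} \Gamma^{G'}_{\dot{u}'}(H) J_{G'}(\dot{u}', f') $$
pour $H \in \mathfrak{t}_\text{reg}(F)$ suffisamment voisin de $0$, où le choix d'un relèvement de $u'$ dans $\dot{\Gamma}_\text{nil}(\mathfrak{g}'(F))$ est compensé par le facteur correspondant dans $J_{G'}(\dot{u}', f')$.

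L'étape suivante est de regrouper les termes de cette somme suivant les $G(F)$-orbites. D'après le Lemme \ref{prop:int-orb-SC}(1), si la $G(F)$-orbite de $u$ n'est pas $\bomega$-bonne, alors $\mathrm{pr}^\bomega \dot{u}' = 0$ pour toute mesure $\dot{u}'$ supportée sur cette orbite, ce qui entraîne $J_{G'}(\dot{u}', f') = 0$ puisque $f' \in \mathcal{I}(\mathfrak{g}'(F))_\bomega$. Pour $u$ $\bomega$-bon, le Lemme \ref{prop:int-orb-SC}(2) fournit une bijection entre $\dot{u} \in \dot{\Gamma}_\text{nil}(\mathfrak{g}(F))^\bomega$ au-dessus de la classe de $u$ et $\dot{u}' \in \dot{\Gamma}_\text{nil}(\mathfrak{g}'(F))$ avec $\mathrm{pr}^\bomega \dot{u}' \neq 0$, avec identification des intégrales orbitales via la remarque précitée. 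On pose alors $\Gamma_{\dot{u}}(H) := \Gamma^{G'}_{\dot{u}'}(H)$ pour un tel $\dot{u}'$.

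Les propriétés (1), (2) et (4) se déduisent immédiatement des propriétés analogues de $\Gamma^{G'}_{\dot{u}'}$, en observant que $\dim G/G_u = \dim G'/G'_u$ puisque $G_u$ et $G'_u$ ne diffèrent que par un sous-groupe fini central; la propriété (3) reflète l'invariance classique des germes de Shalika par translation par le centre de $\mathfrak{g}$. Le point le plus délicat sera la troisième étape, à savoir l'identification combinatoire entre les orbites nilpotentes sur $\mathfrak{g}$ et sur $\mathfrak{g}'$ et l'appariement correct des mesures $\bomega$-équivariantes $\dot{u}$ avec les $\dot{u}'$ correspondants, notamment lorsque la $G(F)$-orbite de $u$ se scinde en plusieurs $G'(F)$-orbites permutées par $\Xi$. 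Cette étape étant soigneusement effectuée à l'aide du Lemme \ref{prop:int-orb-SC}, le théorème se ramène exactement au résultat classique de Harish-Chandra pour le groupe réductif connexe $G'$.
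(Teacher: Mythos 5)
Your overall strategy is exactly the one the paper intends: the text around the statement says that Lemma~\ref{prop:int-orb-SC} and the remark following it "permettent de généraliser des résultats dans le cas $\bomega=1$ au cas général", and then lists the Shalika-germ theorem as an example without giving a detailed proof; your proposal fleshes out precisely that reduction to the crossed module $G' = G_\text{SC} \times Z^\circ_{G^\circ}$, using the identity $J_G^\bomega(\dot{X}, f) = J_{G'}(\dot{X}, \mathrm{pr}_\bomega f)$. The remarks about a maximal $F$-torus lying in $G^\circ$, the identity $\mathfrak{g}'(F) = \mathfrak{g}(F)$, the equality $\dim G/G_u = \dim G'/G'_u$ (isogeny), and the vanishing of the $G'$-nilpotent terms supported on non-$\bomega$-bonnes $G(F)$-orbites are all correct and are exactly the ingredients one needs.

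The one place where the argument is written imprecisely is the definition of $\Gamma_{\dot{u}}$ when the $G(F)$-orbit of a nilpotent $u$ breaks into several $G'(F)$-orbits $\mathcal{O}'_1,\dots,\mathcal{O}'_r$ permuted by $\Xi$. In the classical $G'$-expansion, \emph{each} of these $G'(F)$-orbits contributes its own term $\Gamma^{G'}_{\dot{u}'_i}(H)\, J_{G'}(\dot{u}'_i,f')$, and after normalizing the $\dot{u}'_i$ so that $\mathrm{pr}^\bomega\dot{u}'_i = \dot{u}$ for every $i$ (this is possible thanks to Lemma~\ref{prop:int-orb-SC}(2)), the sum over $i$ factors through the single invariant $J_G^\bomega(\dot{u},f)$, but the coefficient is $\Gamma_{\dot{u}}(H) = \sum_{i=1}^r \Gamma^{G'}_{\dot{u}'_i}(H)$, not the germ of a single chosen $\dot{u}'$ as you wrote. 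There is no a priori reason for the $G'$-germs $\Gamma^{G'}_{\dot{u}'_i}$ to coincide for different $i$, since the $\xi\in\Xi$ permuting the orbits need not normalize $T$. With this correction, properties (1)--(4) still follow termwise from the corresponding properties for $G'$, and (5) becomes exactly the $G'$-expansion applied to $\mathrm{pr}_\bomega f$, so the proof goes through. You flag this step as délicat, which is fair, but the explicit definition you give is not the right one in general; the weighted sum is what makes the bookkeeping close.
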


\begin{proposition}\label{prop:int-orb-dense-omega}
  Soit $f \in \mathcal{I}(\mathfrak{g}(F))_\bomega$. Supposons que $J_G^\bomega(\dot{X}, f)=0$ pour tout $\dot{X} \in \dot{\Gamma}_\mathrm{reg}(\mathfrak{g}(F))^\bomega$, alors $f=0$. Autrement dit, les intégrales orbitales $J_G^\bomega(\dot{X}, \cdot)$ sont faiblement denses dans $\mathcal{J}(\mathfrak{g}(F))^\bomega$.
\end{proposition}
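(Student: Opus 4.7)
The plan is to deduce the statement from the classical density of regular semisimple orbital integrals for the connected reductive group $G' = G_\mathrm{SC} \times Z_{G^\circ}^\circ$, transported across the identifications of Lemmas \ref{prop:dist-SC} and \ref{prop:int-orb-SC}. Given $f \in \mathcal{I}(\mathfrak{g}(F))_\bomega = \mathcal{I}(\mathfrak{g}'(F))_\bomega$ satisfying the hypothesis, it will suffice to show that $J_{G'}(\dot{Y}, f) = 0$ for every $\dot{Y} \in \dot{\Gamma}_\mathrm{reg}(\mathfrak{g}'(F))$: the classical density theorem for the connected reductive group $G'$ then forces $f = 0$ in $\mathcal{I}(\mathfrak{g}'(F))$, hence in its $\bomega$-isotypic summand as claimed.

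First I would analyze how $\Xi$ acts on the $G'(F)$-orbits inside a given $G(F)$-orbit. For a regular semisimple $Y$, the stabilizer $\Xi_Y$ in $\Xi$ of the $G'(F)$-orbit of $Y$ coincides with the image of $Z_G(Y)(F)$: indeed, $y \in G(F)$ preserves this orbit iff $y Y y^{-1}$ lies in the $G'(F)$-orbit of $Y$, iff $y \in G'(F) \cdot Z_G(Y)(F)$. Since every $z \in Z_G(Y)(F)$ fixes $Y$ pointwise, one has ${}^z \dot{Y} = \dot{Y}$ on the nose (no measure correction).

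Next, exploiting the $\bomega$-equivariance of $f$, a Schur-orthogonality computation yields
$$ J_{G'}(\dot{Y}, f) = J_{G'}({}^z \dot{Y}, f) = \bomega(z) \, J_{G'}(\dot{Y}, f), \qquad z \in \Xi_Y. $$
If $Y$ is not $\bomega$-bon, then $\bomega$ is nontrivial on $Z_G(Y)(F)$, hence on $\Xi_Y$, and some $z$ forces $J_{G'}(\dot{Y}, f) = 0$. If instead $Y$ is $\bomega$-bon, Lemma \ref{prop:int-orb-SC}(2) attaches to $\dot{Y}$ an element $\dot{X} \in \dot{\Gamma}_\mathrm{reg}(\mathfrak{g}(F))^\bomega$ with the same underlying $G(F)$-orbit, and the remark following it produces
$$ J_G^\bomega(\dot{X}, f) = J_{G'}(\dot{X}, \text{pr}_\bomega f) = J_{G'}(\dot{Y}, f), $$
since $f$ lies in the $\bomega$-isotypic summand, so $\text{pr}_\bomega f = f$; the hypothesis of the proposition then annihilates this quantity.

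The delicate step will be the Schur argument, namely the precise matching $\Xi_Y = \Image(Z_G(Y)(F) \to \Xi)$ together with the check that ${}^z \dot{Y} = \dot{Y}$ on the nose, so that no spurious measure-theoretic character intervenes. Everything else is either formal bookkeeping under the canonical decomposition $\mathcal{I}(\mathfrak{g}'(F)) = \bigoplus_\xi \mathcal{I}(\mathfrak{g}'(F))_\xi$, or the classical density theorem for connected reductive groups over a non-archimedean local field, which applies directly to $G'$ since both $G_\mathrm{SC}$ and $Z_{G^\circ}^\circ$ are connected reductive.
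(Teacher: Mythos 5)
Your proof is correct and follows essentially the same route as the paper: view $f$ inside $\mathcal{I}(\mathfrak{g}'(F))$ via Lemme~\ref{prop:dist-SC}, invoke the Harish-Chandra density theorem for the connected reductive group $G'$, and handle $\bomega$-bon orbits through Lemme~\ref{prop:int-orb-SC}(2) and the hypothesis. The only divergence is how the non-$\bomega$-bon orbits are dispatched: the paper observes that $\langle\dot{X}',f\rangle=\langle\mathrm{pr}^\bomega\dot{X}',f\rangle$ and that $\mathrm{pr}^\bomega\dot{X}'=0$ exactly off the good locus by Lemme~\ref{prop:int-orb-SC}(1), whereas you unwind what that lemma encodes — identifying $\Xi_Y$ with the image of $Z_G(Y)(F)$ and running the Schur computation $J_{G'}(\dot{Y},f)=\bomega(z)J_{G'}(\dot{Y},f)$ — so your argument is a bit more self-contained on that point but not a genuinely different strategy. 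Your flagged ``delicate step'' (that ${}^z\dot{Y}=\dot{Y}$ on the nose for $z\in Z_G(Y)(F)$, with no measure defect) is indeed the right thing to worry about and does hold, since $z$ normalizes $G'(F)$ and $Z_{G'}(Y)(F)$, both unimodular, so the transported invariant measure coincides with the original.
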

\begin{proof}
  Vu le Lemme \ref{prop:dist-SC}, on peut regarder $f$ comme un élément de $\mathcal{I}(\mathfrak{g}'(F))_\bomega$. D'après la densité des intégrales orbitales régulières \cite[Lemma 4.1]{HC99} appliquée à $G'$, il suffit de montrer que $J_{G'}(\dot{X}', f)=0$ pour tout $\dot{X}' \in \dot{\Gamma}_\text{reg}(\mathfrak{g}'(F))$ avec $\text{pr}^\bomega \dot{X}' \neq 0$. D'après le Lemme \ref{prop:int-orb-SC}, de tels $\dot{X}'$ correspondent aux éléments de $\dot{\Gamma}_\text{reg}(\mathfrak{g}(F))^\bomega$. On conclut en appliquant l'hypothèse.
\end{proof}

L'étape suivante est de généraliser \cite[Part II]{HC99}. On fixe un sous-groupe compact maximal spécial $K$ de $G(F)$ en bonne position relativement à un Lévi minimal $M_0$. On aura besoin de la transformée de Fourier sur $\mathfrak{g}(F)$. Pour ce faire, il convient de fixer
\begin{itemize}
  \item un caractère additif unitaire non trivial $\psi: F \to \C^\times$,
  \item une forme bilinéaire non dégénérée $B$ sur $\mathfrak{g}(F)$ qui est invariante par $G(F)$.
\end{itemize}

Montrons l'existence de $B$, qui n'est pas triviale car $G$ peut être non connexe. On pose $H := G(F)/G^\circ(F)$, $Z := Z_{G^\circ}^\circ$. Vu la décomposition $\mathfrak{g} = \mathfrak{z} \oplus \mathfrak{g}_\text{der}$, s'il existe une forme bilinéaire non dégénérée $B_Z$ sur $\mathfrak{z}(F)$ qui est invariante par $H$, alors on peut prendre $B = B_Z + B_\text{der}$ où $B_\text{der}$ est la forme de Killing de $\mathfrak{g}_\text{der}$. L'existence de $B_Z$ est garantie par le résultat suivant.

\begin{proposition}
  Soient $F$ un corps de caractéristique nulle, $H$ un groupe fini, $Z$ un $F$-tore muni d'une action $H \to \Aut_{F-\mathrm{tore}}(Z)$, alors il existe une forme bilinéaire non dégénérée $B_Z$ sur $\mathfrak{z}(F)$ qui est invariante par $H$.
\end{proposition}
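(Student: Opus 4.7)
Le plan est de construire $B_Z$ non pas directement sur $\mathfrak{z}(F)$, mais par moyennage sur le réseau des cocaractères après changement de base à un corps déployant, puis redescente galoisienne. Soient $E/F$ une extension galoisienne finie déployant $Z$, $\Gamma := \mathrm{Gal}(E/F)$, et $\Lambda := X_*(Z_E) = \Hom_{E\text{-gp}}(\Gm, Z_E)$, qui est un $\Z$-module libre de rang $\dim Z$. Comme $Z$ est défini sur $F$ et que l'action de $H$ est $F$-rationnelle, $\Lambda$ est muni d'actions commutantes de $H$ et de $\Gamma$ par $\Z$-automorphismes; la dérivation en $1$ fournit un isomorphisme $E$-linéaire
$$ d: \Lambda \otimes_{\Z} E \rightiso \mathfrak{z}(E) $$
équivariant pour $H$ et pour $\Gamma$.

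D'abord je produirais la forme rationnellement. Je fixerais une $\Z$-base de $\Lambda$ et prendrais le produit scalaire euclidien $B_0$ associé sur $\Lambda \otimes_\Z \Q$, puis je poserais
$$ B^\Q(x,y) := \sum_{(h,\gamma)\in H \times \Gamma} B_0\bigl(h\gamma \cdot x,\; h\gamma \cdot y\bigr). $$
Comme somme finie de formes définies positives, $B^\Q$ est elle-même définie positive, donc non dégénérée, à valeurs dans $\Q$, et invariante sous $H \times \Gamma$. L'extension $E$-bilinéaire à travers $d$ fournit une forme non dégénérée $H$-invariante $B^E: \mathfrak{z}(E) \times \mathfrak{z}(E) \to E$. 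Comme $B^\Q$ prend ses valeurs dans $\Q \subset F$ sur $\Lambda$ et que $d$ est $\Gamma$-équivariant, un calcul direct sur les tenseurs simples montre que $B^E(\gamma v,\gamma w) = \gamma\bigl(B^E(v,w)\bigr)$ pour tout $\gamma \in \Gamma$ et tous $v,w \in \mathfrak{z}(E)$.

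Enfin, je descendrais. Comme $\mathfrak{z}(F) = \mathfrak{z}(E)^\Gamma$ et $E^\Gamma = F$, la restriction $B_Z$ de $B^E$ à $\mathfrak{z}(F) \times \mathfrak{z}(F)$ est $F$-bilinéaire, à valeurs dans $F$, et $H$-invariante puisque les actions de $H$ et de $\Gamma$ commutent. La non-dégénérescence survit à la descente galoisienne: l'isomorphisme $E$-linéaire $\mathfrak{z}(E) \rightiso \mathfrak{z}(E)^*$ induit par $B^E$ est $\Gamma$-équivariant, donc se restreint en un isomorphisme $F$-linéaire $\mathfrak{z}(F) \rightiso \mathfrak{z}(F)^*$.

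Le point à surveiller est la nécessité de moyenner simultanément sur $H$ et sur $\Gamma$: une forme seulement $H$-invariante sur $\Lambda$ ne descend pas nécessairement à $F$ après extension des scalaires, et la non-dégénérescence peut alors se perdre. L'hypothèse $\mathrm{car}(F) = 0$ n'intervient que pour assurer que $\Q \subset F$, permettant de donner un sens à la moyenne définie positive et donc à sa non-dégénérescence sur $\Q$.
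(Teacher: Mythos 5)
Your proof is correct, but it takes a genuinely different route from the paper's. The paper argues abstractly: the $H$-invariant bilinear forms on $\mathfrak{z}$ form a linear subspace of an affine space, hence itself an affine space $\A^n_F$; the non-degenerate ones form a Zariski-open subset $U \subset \A^n_F$, and since $F$ is infinite (char $0$), $F$-points are Zariski-dense in $\A^n_F$, so $U(F) \neq \emptyset$ as soon as $U \neq \emptyset$ geometrically. This reduces the problem to any convenient field extension; passing to a splitting field one may assume $Z$ split, then (split tori and their morphisms being defined over $\Z$) one reduces to $F=\Q$ and, reapplying the density argument, to $F=\R$, where a positive-definite $H$-invariant form exists by averaging. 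Your argument, by contrast, is a direct Galois-descent construction: you build the form once and for all on the cocharacter lattice $\Lambda = X_*(Z_E)$ by averaging a Euclidean form over the finite group $H \times \mathrm{Gal}(E/F)$, yielding a positive-definite $\Q$-valued form; the $\Gamma$-invariance of $B^\Q$ is exactly what makes the $E$-bilinear extension $B^E$ a $\Gamma$-semilinear form on $\mathfrak{z}(E)$, which then restricts to an $F$-valued $F$-bilinear form on $\mathfrak{z}(F) = \mathfrak{z}(E)^\Gamma$. Non-degeneracy survives because the $\Gamma$-equivariant $E$-linear isomorphism $\mathfrak{z}(E) \rightiso \mathfrak{z}(E)^*$ restricts to an $F$-linear isomorphism on $\Gamma$-invariants. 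Both proofs are valid and of comparable length; yours is more constructive and exposes exactly where the descent happens, while the paper's abstract reduction is slicker but relies on the slightly opaque spreading-out step from $\Q$ to $\R$. Your closing remark that averaging over $\Gamma$ (and not just $H$) is indispensable is the crux: without it $B^E$ would not be $\Gamma$-semilinear and would not yield an $F$-valued form on $\mathfrak{z}(F)$.
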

\begin{proof}
  La variété des formes bilinéaires non dégénérées $H$-invariantes sur $\mathfrak{z}$ est un ouvert de Zariski d'un espace affine. Donc l'ensemble de ses $F$-points est dense pour la topologie de Zariski. Pour montrer que cette variété admet un $F$-point, il suffit de montrer qu'elle est non vide en tant qu'une variété algébrique, donc c'est loisible de remplacer $F$ par une extension quelconque. On se ramène ainsi au cas $Z$ déployé.

  Supposons $Z$ déployé. Puisque les tores déployés ainsi que les homomorphismes entres eux sont définis sur $\Z$, on se ramène au cas $F=\Q$. L'argument précédent nous ramène encore au cas $F=\R$. C'est bien connu qu'il existe une forme définie positive $H$-invariante sur $\mathfrak{z}(\R)$. Cela permet de conclure.
\end{proof}

La transformée de Fourier est définie par\index[iFT2]{transformée de Fourier}
$$ f \mapsto \hat{f}(\cdot) = \int_{\mathfrak{g}(F)} f(X) \psi(B(X, \cdot)) \dd X, \quad f \in C_c^\infty(\mathfrak{g}(F)), $$
où $\mathfrak{g}(F)$ est muni de la mesure autoduale par rapport à $\psi \circ B$. La transformée de Fourier d'une distribution $\theta$ est notée $\hat{\theta}$, définie par $\angles{\hat{\theta}, f} = \angles{\theta, \hat{f}}$ pour tout $f \in C_c^\infty(\mathfrak{g}(F))$.

On considérera les $\mathfrak{o}_F$-réseaux ``bien adaptés'' à $(M_0, K)$\index[iFT2]{réseau bien adapté}. Au lieu de donner la définition précise dans \cite[Definition 10.6]{HC99}, il suffit de donner une construction directe: on prend un sommet spécial $x$ correspondant à $K$ dans l'immeuble de Bruhat-Tits, alors les réseaux de Moy-Prasad $\mathfrak{g}(F)_{x,r}$ sont adaptés à $(M_0, K)$ pour tout $r>0$. Cf. \cite{AD02}.

Pour $\Omega$ un sous-ensemble ouvert compact de $\mathfrak{g}(F)$, on pose
\begin{align*}
  \mathcal{J}(\Omega) & := \left\{ \theta \in \mathcal{J}(\mathfrak{g}(F)) : \Supp\,\theta \subset \bigcup_{x \in G(F)}  x\Omega x^{-1} \right\}, \\
  \mathcal{J}(\Omega)^\bomega & := \mathcal{J}(\mathfrak{g}(F))^\bomega \cap \mathcal{J}(\Omega).
\end{align*}
On pose aussi\index[iFT2]{$\mathcal{J}_0^\bomega$}
\begin{align*}
  \mathcal{J}_0 & := \bigcup_{\Omega} \mathcal{J}(\Omega), \\
  \mathcal{J}_0^\bomega & := \bigcup_{\Omega} \mathcal{J}(\Omega)^\bomega = \mathcal{J}(\mathfrak{g}(F))^\bomega \cap \mathcal{J}_0.
\end{align*}

Définissons une norme $|\cdot|$ sur $\mathfrak{g}(F)$ comme dans \cite[\S 2]{HC99}. Soient $t>0$, $L$ un réseau bien adapté à $(M_0,K)$, et $V$ un voisinage de $\mathfrak{g}_\text{nil}(F)^{|\cdot|=1}$ dans $\mathfrak{g}(F)^{|\cdot|=1}$. Définissons
\begin{align*}
  \mathcal{J}(V,t,L)^\bomega & := \{ \theta \in \mathcal{J}(\mathfrak{g}(F))^\bomega : \forall X \in \mathfrak{g}(F), \; |X|>t \text{ et } \angles{\theta, \mathbbm{1}_{X+L}} \neq 0 \Rightarrow X \in F \cdot V \}
\end{align*}
où $\mathbbm{1}_{X+L}$ désigne la fonction caractéristique de $X+L$. On vérifie que $\mathcal{J}(V,t,L)^\bomega \subset \mathcal{J}(V,t',L)^\bomega$ si $t'>t$. On pose\index[iFT2]{$\mathcal{J}(V,\infty,L)^\bomega$}
\begin{align*}
  \mathcal{J}(V, \infty, L)^\bomega & := \bigcup_{t>0} \mathcal{J}(V,t,L)^\bomega .
\end{align*}

Soient $L' \subset \mathfrak{g}(F)$ un $\mathfrak{o}_F$-réseau quelconque et $\theta \in \mathcal{J}(\Omega)^\bomega$, on note $j_{L'} \theta$ la restriction de $\theta$ à l'espace $C_c(\mathfrak{g}(F)/L')$. Le résultat suivant interviendra dans la démonstration du Théorème \ref{prop:dist-admissible}.

\begin{proposition}\label{prop:Howe-res}
  Soient $L$ un $\mathfrak{o}_F$-réseau bien adapté à $(M_0,K)$ et $V$ un voisinage de $\mathfrak{g}_\text{nil}(F)^{|\cdot|=1}$ dans $\mathfrak{g}(F)^{|\cdot|=1}$. Quitte à rétrécir $V$, il existe un $\mathfrak{o}_F$-réseau $L' \supset L$ tel que
  $$ j_{L'} \mathcal{J}(V, \infty, L)^\bomega \subset j_{L'} \mathcal{J}_0^\bomega . $$
\end{proposition}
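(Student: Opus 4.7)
Le plan est de déduire l'énoncé du théorème classique de Howe pour le groupe réductif connexe $G' = G_{\mathrm{SC}} \times Z_{G^\circ}^\circ$, puis de descendre à la composante $\bomega$-isotypique sous l'action du groupe fini $\Xi$. Premièrement, on utilise le Lemme \ref{prop:dist-SC} pour identifier $\mathcal{J}(\mathfrak{g}(F))^\bomega$ à la composante $\bomega$-isotypique de $\mathcal{J}(\mathfrak{g}'(F))$ pour l'action de $\Xi$. La condition de support définissant $\mathcal{J}(V,\infty,L)^\bomega$ étant indépendante du type d'équivariance, toute distribution $\theta$ dans cet espace, vue comme distribution $G'(F)$-invariante sur $\mathfrak{g}'(F)=\mathfrak{g}(F)$, appartient à l'analogue $\mathcal{J}^{G'}(V,\infty,L)$ pour $G'$.

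Deuxièmement, on appliquera le théorème de Howe pour les groupes réductifs connexes à $G'$, cf. \cite{HC99}: quitte à rétrécir $V$, il existe un $\mathfrak{o}_F$-réseau $L'_0 \supset L$ tel que
$$ j_{L'_0} \mathcal{J}^{G'}(V,\infty,L) \subset j_{L'_0} \mathcal{J}^{G'}_0. $$
Pour permettre la commutation de la projection $\mathrm{pr}^\bomega$ avec la restriction $j_{L'}$, on remplacera $L'_0$ par $L' := \sum_{\xi \in \Xi} \xi \cdot L'_0$, qui est un $\mathfrak{o}_F$-réseau (somme finie de réseaux) $\Xi$-stable contenant $L'_0 \supset L$. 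Comme $j_{L'}$ se factorise par $j_{L'_0}$, la conclusion de Howe reste valable avec $L'$ à la place de $L'_0$.

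Troisièmement, étant donné $\theta \in \mathcal{J}(V,\infty,L)^\bomega$, Howe fournit $\theta_0 \in \mathcal{J}^{G'}_0$ tel que $j_{L'}\theta = j_{L'}\theta_0$. En appliquant $\mathrm{pr}^\bomega$ — qui commute à $j_{L'}$ grâce à la $\Xi$-stabilité de $L'$ — et en utilisant $\mathrm{pr}^\bomega \theta = \theta$, on obtient $j_{L'}\theta = j_{L'}(\mathrm{pr}^\bomega \theta_0)$. Il restera à vérifier que $\mathrm{pr}^\bomega \theta_0$ appartient à $\mathcal{J}_0^\bomega$: il est $\bomega$-isotypique et $G'(F)$-invariant, donc correspond à un élément de $\mathcal{J}(\mathfrak{g}(F))^\bomega$; son support est contenu dans la réunion finie $\bigcup_{\xi \in \Xi} \xi \cdot \Supp(\theta_0)$, qui reste à support compact modulo $G(F)$-conjugaison puisque $\theta_0 \in \mathcal{J}^{G'}_0$ et $\Xi$ est fini.

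L'étape délicate sera la vérification de la compatibilité des deux filtrations par support — celle associée à l'équivariance $\bomega$ sous $G(F)$ et celle associée à la pure invariance sous $G'(F)$ —, notamment le fait que la $\Xi$-stabilité du réseau $L'$ est bien suffisante pour commuter $j_{L'}$ et $\mathrm{pr}^\bomega$ et que les notions pertinentes (cône nilpotent, compacité modulo conjugaison, mesures orbitales $\bomega$-équivariantes) se comportent bien sous $\Xi$. Ces vérifications sont directes mais nécessitent d'être menées avec soin, car elles reposent sur la finitude de $\Xi$ et sur la caractérisation de $\mathcal{J}_0^\bomega$ comme intersection $\mathcal{J}^{G'}_0 \cap \mathcal{J}(\mathfrak{g}(F))^\bomega$ via le Lemme \ref{prop:dist-SC}.
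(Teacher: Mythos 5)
Ta proposition est correcte et suit essentiellement la même route que la démonstration du papier : on regarde $\theta$ comme une distribution $G'(F)$-invariante (Lemme \ref{prop:dist-SC}), on applique le théorème de Howe de \cite{HC99} à $G'$, puis on projette sur la composante $\bomega$-isotypique, le réseau $L'$ étant grossi en $\sum_{\xi} \xi \cdot (\cdot)$ pour rendre $j_{L'}$ et $\mathrm{pr}^\bomega$ compatibles. La seule divergence est cosmétique : tu introduis un réseau intermédiaire $L'_0$ avant de moyenner sur $\Xi$, alors que le papier, s'appuyant sur la forme précise de \cite[Corollary 11.4]{HC99} (agrément de $\theta$ et $\theta_0$ déjà sur $C_c(\mathfrak{g}(F)/L)$), prend directement $L' := \sum_{\xi \in \dot{\Xi}} \xi \cdot L$ ; dans les deux cas le calcul final $\angles{\theta,\varphi} = \angles{\mathrm{pr}^\bomega\theta_0, \varphi}$ pour $\varphi$ $L'$-périodique est identique.
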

\begin{proof}
  On choisit $\dot{\Xi} \subset G(F)$ un ensemble de représentants de $\Xi$ tel que $1 \in \dot{\Xi}$. On pose
  $$ L' := \sum_{\xi \in \dot{\Xi}} \xi \cdot L \supset L .$$

  Soit $\theta \in \mathcal{J}(V,t,L)^\bomega$. D'après \cite[Corollary 11.4]{HC99}, il existe $\theta_0 \in \mathcal{J}_0(\Omega)$ tel que
  $$\forall \varphi \in C_c(\mathfrak{g}(F)/L), \quad \angles{\theta, \varphi} = \angles{\theta_0, \varphi}. $$

  On a $C_c(\mathfrak{g}(F)/L') \subset C_c(\mathfrak{g}(F)/L)$. De plus, si $\varphi \in C_c(\mathfrak{g}(F)/L')$ alors $\varphi^\xi \in C_c(\mathfrak{g}(F)/\xi^{-1} L') \subset C_c(\mathfrak{g}(F)/L)$ pour tout $\xi \in \dot{\Xi}$. Pour tout $\varphi \in C_c(\mathfrak{g}(F)/L')$, on a donc
  \begin{align*}
    \angles{\theta, \varphi} &= \angles{\text{pr}^\bomega \theta, \varphi} = \angles{\theta, \text{pr}_\bomega \varphi} \\
    & = |\Xi|^{-1} \sum_{\xi \in \dot{\Xi}} \bomega(\xi)^{-1} \angles{\theta, \varphi^\xi} \\
    & = |\Xi|^{-1} \sum_{\xi \in \dot{\Xi}} \bomega(\xi)^{-1} \angles{\theta_0, \varphi^\xi} \\
    & = \angles{\theta_0, \text{pr}_\bomega \varphi} = \angles{\text{pr}^\bomega \theta_0, \varphi}.
  \end{align*}

  Autrement dit $j_{L'} \theta = j_{L'} (\text{pr}^\bomega \theta_0)$. Puisque $\Xi$ est fini, on vérifie que $\text{pr}^\bomega \theta_0 \in \mathcal{J}_0^\bomega$. Cela achève la preuve.
\end{proof}

Le résultat suivant ne sera pas utilisé dans cet article, toutefois on en donne l'énoncé à cause de son importance.

\begin{theorem}\label{prop:Howe-fini}
  Soient $\Omega \subset \mathfrak{g}(F)$ compact, $L \subset \mathfrak{g}(F)$ un réseau bien adapté à $(K,M_0)$. Alors $j_L \mathcal{J}(\Omega)^\bomega$ est de dimension finie.
\end{theorem}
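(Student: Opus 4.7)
L'idée est de se ramener au théorème classique de finitude de Howe pour le $F$-groupe réductif connexe $G' = G_\mathrm{SC} \times Z_{G^\circ}^\circ$, c'est-à-dire au cas $\bomega = 1$ traité dans \cite[Theorem 16.5]{HC99}. Observons d'abord que toute $\theta \in \mathcal{J}(\mathfrak{g}(F))^\bomega$ est automatiquement $G'(F)$-invariante, car $\bomega \circ (\pi,\iota) = 1$; par conséquent, le Lemme \ref{prop:dist-SC} permet de la regarder comme un élément de $\mathcal{J}(\mathfrak{g}'(F))$, appartenant à la composante $\bomega$-isotypique pour l'action de $\Xi$.

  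Ensuite, je traduis la condition de support. Choisissons $\dot{\Xi} \subset G(F)$ un ensemble fini de représentants de $\Xi$, et posons $\Omega' := \bigcup_{\xi \in \dot{\Xi}} \xi \Omega \xi^{-1}$, qui reste compact dans $\mathfrak{g}(F) = \mathfrak{g}'(F)$. Tout $x \in G(F)$ s'écrit $x = \xi \cdot \pi(g')$ avec $\xi \in \dot{\Xi}$ et $g' \in G'(F)$; comme $G'(F)$ agit sur $\mathfrak{g}'(F)$ par conjugaison via $\pi$, on a
  $$ \bigcup_{x \in G(F)} x\Omega x^{-1} \;\subset\; \bigcup_{g' \in G'(F)} \mathrm{Ad}(g')(\Omega') . $$
  Donc $\mathcal{J}(\Omega)^\bomega$ s'injecte dans l'espace $\mathcal{J}'(\Omega')$ des distributions $G'(F)$-invariantes sur $\mathfrak{g}'(F)$ à support dans la $G'(F)$-saturation du compact $\Omega'$.

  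Il reste à s'assurer que $L$ est aussi bien adapté pour $G'$. Puisque la définition via les réseaux de Moy-Prasad (cf. \cite{AD02}) s'exprime en termes de l'immeuble de Bruhat-Tits et que celui de $G'$ se déduit naturellement de celui de $G^\circ$ (l'isomorphisme $\mathfrak{g}_\mathrm{SC} \oplus \mathfrak{z}_{G^\circ}^\circ \rightiso \mathfrak{g}$ est un isomorphisme de $F$-espaces vectoriels), tout réseau $L$ bien adapté à $(M_0,K)$ est encore bien adapté à une paire analogue $(M_0',K')$ pour $G'$. Le théorème classique de Howe pour le groupe réductif connexe $G'$ fournit alors la finitude de $j_L \mathcal{J}'(\Omega')$, d'où celle de $j_L \mathcal{J}(\Omega)^\bomega$ par inclusion.

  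Le cœur de la preuve est donc la réduction géométrique de la condition de support au groupe $G'$; la vérification de compatibilité du réseau $L$ entre $G$ et $G'$ est l'étape technique la plus délicate, mais elle ne pose pas d'obstacle sérieux grâce à l'identification $\mathfrak{g}(F) = \mathfrak{g}'(F)$ en caractéristique nulle. Une fois ces préliminaires établis, la finitude résulte directement du résultat invariant classique.
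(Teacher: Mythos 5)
Your proof takes essentially the same approach as the paper: identify $\mathcal{J}(\mathfrak{g}(F))^\bomega$ with the $\bomega$-isotypic piece of $\mathcal{J}(\mathfrak{g}'(F))$ via Lemma~\ref{prop:dist-SC} (using $\bomega\circ(\pi,\iota)=1$), and invoke Harish-Chandra's finiteness theorem for the connected group $G'=G_{\mathrm{SC}}\times Z_{G^\circ}^\circ$. The paper's proof (which handles Theorems~\ref{prop:Howe-fini}, \ref{prop:regularite-dist} and~\ref{prop:Howe-dist} in one short paragraph) simply asserts that passing from $G(F)$ to $G'(F)$ does not affect the Fourier transform, well-adapted lattices or $G$-domains; your write-up usefully fills in the two points it leaves implicit, namely the saturation argument $\bigcup_{x\in G(F)}x\Omega x^{-1}\subset\bigcup_{g'\in G'(F)}\mathrm{Ad}(g')\Omega'$ with $\Omega'=\bigcup_{\xi\in\dot\Xi}\xi\Omega\xi^{-1}$ (which uses that $\pi(G'(F))$ is normal in $G(F)$) and the stability of the well-adapted lattice under the change of group.
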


Soit $D \subset \mathfrak{g}(F)$. On dit que $D$ est un $G$-domaine si $D$ est ouvert, fermé et $G(F)$-invariant\index[iFT2]{$G$-domaine}. Pour $X \in \mathfrak{g}(F)$ semi-simple, on note $\mathcal{O}(X)$ l'ensemble des $G(F)$-orbites dans $\mathfrak{g}(F)$ dont l'adhérence contient $X$. C'est un ensemble fini.

\begin{theorem}\label{prop:regularite-dist}
  Soient $\Omega$ un ouvert compact dans $\mathfrak{g}(F)$ et $\theta \in \mathcal{J}(\Omega)^\bomega$, alors $\hat{\theta}$ est représentée par une fonction localement intégrable $g$ sur $\mathfrak{g}(F)$ telle que
    \begin{itemize}
      \item $g$ est localement constante sur $\mathfrak{g}_\text{reg}(F)$;
      \item $|D^G|^{\frac{1}{2}}g$ est localement bornée sur $\mathfrak{g}(F)$.
    \end{itemize}
\end{theorem}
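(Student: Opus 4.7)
Mon plan est de réduire l'énoncé au théorème classique de régularité de Harish-Chandra \cite{HC99} appliqué au groupe connexe réductif $G' = G_\text{SC} \times Z^\circ_{G^\circ}$. Tout le formalisme du module croisé $G' \to G$ développé au début de \S\ref{sec:theorie-alg} semble précisément conçu pour effectuer cette réduction, notamment les Lemmes \ref{prop:dist-SC} et \ref{prop:int-orb-SC}, ainsi que la projection isotypique $\mathrm{pr}^\bomega$.

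Premièrement, j'identifie $\theta$ à une distribution $G'(F)$-invariante sur $\mathfrak{g}'(F) = \mathfrak{g}(F)$ via le Lemme \ref{prop:dist-SC}, en remarquant qu'elle appartient à la composante $\bomega$-isotypique pour l'action de $\Xi$. Il faut vérifier que la condition de support est préservée: en choisissant un ensemble de représentants $\dot\Xi$ de $\Xi$ dans $G(F)$, le compact $\Omega' := \bigcup_{\xi \in \dot\Xi} \xi\Omega\xi^{-1} \subset \mathfrak{g}(F)$ satisfait
$$ \bigcup_{x \in G(F)} x\Omega x^{-1} = \bigcup_{x' \in \Image(G'(F) \to G(F))} x' \Omega' x'^{-1}, $$
de sorte que $\theta$, regardée comme $G'(F)$-invariante, appartient à $\mathcal{J}(\Omega')$ au sens du cas connexe.

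Deuxièmement, j'applique le théorème classique de régularité de Harish-Chandra au groupe connexe réductif $G'$ et à la distribution $G'(F)$-invariante $\theta$ à support compact modulo conjugaison: on obtient une fonction $g$ localement intégrable sur $\mathfrak{g}'(F)$ représentant $\hat\theta$, localement constante sur $\mathfrak{g}'_\mathrm{reg}(F)$, avec $|D^{G'}|^{1/2} g$ localement bornée. Enfin, je traduis ces propriétés au niveau de $G$: la forme $B$ est encore bilinéaire non dégénérée invariante sous $G'(F)$, donc la transformée de Fourier coïncide avec celle définie au niveau de $G$; l'ensemble régulier $\mathfrak{g}_\mathrm{reg}(F) = \mathfrak{g}'_\mathrm{reg}(F)$ ne dépend que de l'algèbre de Lie; et $D^G(X) = \det(\ad X | \mathfrak{g}/\mathfrak{g}_X) = D^{G'}(X)$ par la même raison. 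Le résultat en découle.

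La principale difficulté conceptuelle a déjà été levée par la construction du module croisé $G' \to G$ et par les résultats préparatoires de \S\ref{sec:theorie-alg}, notamment la compatibilité entre $\mathrm{pr}^\bomega$ et les opérations naturelles (support, intégrales orbitales, réseaux adaptés). L'unique point délicat restant serait une vérification que la transformée de Fourier commute avec $\mathrm{pr}^\bomega$ et avec le passage de $G$ à $G'$; comme $B$ est $H$-invariante par construction (d'où la Proposition intermédiaire sur l'existence de $B_Z$), cela est immédiat. À noter qu'on n'a pas besoin ici du Théorème \ref{prop:Howe-fini}, qui serait plutôt utilisé pour une version quantitative; la démonstration de Harish-Chandra de la régularité repose toutefois sur la théorie de Howe en interne, ce qui est absorbé dans l'application du cas connexe.
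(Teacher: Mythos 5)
Your proof is correct and takes essentially the same route as the paper's: reduction via the crossed module $G' = G_\text{SC} \times Z^\circ_{G^\circ}$, identification of $\mathcal{J}(\mathfrak{g}(F))^\bomega$ with the $\bomega$-isotypic piece of $G'(F)$-invariant distributions through Lemma \ref{prop:dist-SC}, and application of Harish-Chandra's regularity theorem for the connected reductive group $G'$, noting as the paper does that the Fourier transform, the regular set, and the Weyl discriminant do not change when passing from $G$ to $G'$. Your explicit verification that the support condition transfers (via $\Omega' := \bigcup_{\xi \in \dot\Xi}\xi\Omega\xi^{-1}$) spells out a step the paper leaves implicit.
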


\begin{theorem}\label{prop:Howe-dist}
  Soient $\Omega$ un ouvert compact dans $\mathfrak{g}(F)$. Alors il existe un $G$-domaine $D$ contenant $0$ tel que pour tout $\theta \in \mathcal{J}(\Omega)^\bomega$, il existe des coefficients $c_{\dot{u}}(\theta)$, où $\dot{u} \in \dot{\Gamma}_\mathrm{nil}(\mathfrak{g}(F))^\bomega$, tels que
  \begin{itemize}
    \item $c_{z\dot{u}} = z^{-1} c_{\dot{u}}$ pour tout $z \in \C^\times$;
    \item posons $\mu_{\dot{u}} := J_G^\bomega(\dot{u},\cdot)$, alors
    $$ \hat{\theta}|_D = \sum_{u \in \Gamma_\mathrm{nil}(\mathfrak{g}(F))^\bomega} c_{\dot{u}}(\theta) \widehat{\mu_{\dot{u}}}|_D. $$
  \end{itemize}

  Pour tout voisinage $V$ de $0$ dans $\mathfrak{g}(F)$. Les distributions $\widehat{\mu_{\dot{u}}}$ sont linéairement indépendants sur $V \cap \mathfrak{g}_\text{reg}(F)$.
\end{theorem}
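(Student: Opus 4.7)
The plan is to descend to the ``simply connected'' replacement $G' = G_{\mathrm{SC}} \times Z_{G^\circ}^\circ$, where the classical Howe--Harish-Chandra local character expansion applies, and then extract the $\bomega$-isotypic piece using the finite $\Xi$-action. By Lemme \ref{prop:dist-SC}, $\theta$ is naturally a $G'(F)$-invariant distribution on $\mathfrak{g}(F)=\mathfrak{g}'(F)$ lying in the $\bomega$-isotypic subspace $\mathcal{J}(\mathfrak{g}'(F))^\bomega$; its support modulo $G'(F)$-conjugation is contained in some compact $\Omega' \subset \mathfrak{g}'(F)$.

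First I would apply the classical expansion of \cite{HC99} to $G'$: there exists a $G'(F)$-domain $D_0 \ni 0$ and coefficients $c_{\dot{u}'}(\theta)$, for $\dot{u}' \in \dot{\Gamma}_{\mathrm{nil}}(\mathfrak{g}'(F))$, such that
$$ \hat{\theta}|_{D_0} = \sum_{u' \in \Gamma_{\mathrm{nil}}(\mathfrak{g}'(F))} c_{\dot{u}'}(\theta) \, \widehat{\mu_{\dot{u}'}}\big|_{D_0}, $$
with $c_{z \dot{u}'} = z^{-1} c_{\dot{u}'}$. Since $\Xi$ is finite, the intersection $D := \bigcap_{\xi \in \dot{\Xi}} \mathrm{Ad}(\xi) D_0$ is a $G$-domain containing $0$.

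Next I would apply $\mathrm{pr}^\bomega$ to both sides of the expansion, restricted to $D$. The form $B$ is $G(F)$-invariant and in particular $\Xi$-invariant, so the Fourier transform on $\mathfrak{g}(F)$ commutes with the $\Xi$-action; combined with $\mathrm{pr}^\bomega \theta = \theta$ this gives
$$ \hat{\theta}|_{D} = \sum_{u' \in \Gamma_{\mathrm{nil}}(\mathfrak{g}'(F))} c_{\dot{u}'}(\theta) \, \widehat{\mathrm{pr}^\bomega \mu_{\dot{u}'}}\big|_{D}. $$
By Lemme \ref{prop:int-orb-SC}, $\mathrm{pr}^\bomega \mu_{\dot{u}'}$ is zero unless the $G(F)$-orbit of $u'$ is $\bomega$-bonne, and otherwise it coincides (up to a nonzero scalar) with $\mu_{\dot{u}}$ for the corresponding $\dot{u} \in \dot{\Gamma}_{\mathrm{nil}}(\mathfrak{g}(F))^\bomega$: a single $\bomega$-bonne $G(F)$-orbit decomposes into a $\Xi$-orbit of $G'(F)$-orbits whose $\bomega$-averaged orbital integral is precisely the $\bomega$-twisted orbital integral defined by \eqref{eqn:int-orb-omega}. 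Regrouping indices yields the expansion $\hat{\theta}|_D = \sum_{u \in \Gamma_{\mathrm{nil}}(\mathfrak{g}(F))^\bomega} c_{\dot{u}}(\theta) \, \widehat{\mu_{\dot{u}}}|_D$, and the homogeneity $c_{z\dot{u}} = z^{-1} c_{\dot{u}}$ is inherited.

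For the linear independence statement on $V \cap \mathfrak{g}_{\mathrm{reg}}(F)$, the reindexing of Step~3 matches $\bomega$-bonnes nilpotent $G(F)$-orbits with $\Xi$-orbits of $G'(F)$-nilpotent orbits of nontrivial $\bomega$-isotypic projection; independence then descends from the classical statement of \cite[Theorem 5.11]{HC99} applied to $G'$. The main obstacle is this bookkeeping in Step~3, i.e.\ verifying that the $\bomega$-projection of a $G'$-orbital integral indeed equals (up to a nonzero constant) the $\bomega$-twisted $G$-orbital integral; once this matching is settled via Lemme \ref{prop:int-orb-SC}, the remaining arguments are purely formal consequences of the classical Howe--Harish-Chandra theorem and of the commutation of $\mathrm{pr}^\bomega$ with the Fourier transform.
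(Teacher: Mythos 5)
Your proposal is correct and is essentially the same reduction the paper uses: pass to $G' = G_{\mathrm{SC}} \times Z_{G^\circ}^\circ$ via Lemme~\ref{prop:dist-SC}, invoke Harish-Chandra's local expansion for $G'$, and re-index the nilpotent terms through $\mathrm{pr}^\bomega$ using Lemme~\ref{prop:int-orb-SC}. The paper's own proof is a two-sentence remark; what you have written is a careful unpacking of it, and the one place you go beyond the text — replacing Harish-Chandra's a priori only $G'(F)$-invariant domain $D_0$ by $D := \bigcap_{\xi \in \dot\Xi} \mathrm{Ad}(\xi) D_0$ to obtain a genuine $G$-domain — is a point the paper glosses over (it simply asserts that ``le passage de $G(F)$ à $G'(F)$ n'affecte pas … les $G$-domaines''). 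Your step is the safe way to justify that assertion, so the proposal is, if anything, slightly more complete than the proof in the paper.
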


\begin{proof}[Démonstration des Théorèmes \ref{prop:Howe-fini}, \ref{prop:regularite-dist} et \ref{prop:Howe-dist}]
  Vu les Lemmes \ref{prop:dist-SC} et \ref{prop:int-orb-SC}, on se ramène aux assertions concernant $\mathcal{J}(\mathfrak{g}'(F))^\bomega$, qui découlent immédiatement du cas établi par Harish-Chandra \cite{HC99}. Remarquons aussi que le passage de $G(F)$ à $G'(F)$ n'affecte pas les notions de la transformée de Fourier, des réseaux bien-adapté et des $G$-domaines. 
\end{proof}

\subsection{Théorie sur le groupe: descente semi-simple}
Revenons à la théorie sur le groupe. Considérons un revêtement local
$$ 1 \to \bmu_m \to \tilde{G} \stackrel{\rev}{\to} G(F) \to 1 $$
où $G$ est un $F$-groupe réductif connexe. On normalise les mesures comme dans \S\ref{sec:Plancherel}. On fixe
\begin{itemize}
  \item $\sigma \in G(F)$ semi-simple, $G^\sigma := Z_G(\sigma)$, $G_\sigma := Z_G(\sigma)^0$;
  \item $\tilde{\sigma} \in \rev^{-1}(\sigma)$;
  \item $\widetilde{G_\sigma} := \rev^{-1}(G_\sigma(F))$.
\end{itemize}
On obtient ainsi le caractère continu $[\cdot,\sigma]: G^\sigma(F) \to \bmu_m$ défini par
$$ [y,\sigma] = \tilde{y}^{-1} \tilde{\sigma}^{-1} \tilde{y} \tilde{\sigma}, \quad y \in G^\sigma(F), $$
avec $\tilde{y} \in \rev^{-1}(y)$ quelconque. On a
$$\tilde{\sigma}\tilde{y}=[y,\sigma]^{-1} \tilde{y}\tilde{\sigma}.$$

Posons
\begin{align*}
  G^\sigma(F)^\diamond & := \Ker [\cdot,\sigma] , \\
  G_\sigma(F)^\diamond & := \Ker [\cdot,\sigma]|_{G_\sigma(F)}, \\
  \widetilde{G_\sigma}^\diamond & := \rev^{-1}(G_\sigma(F)^\diamond).
\end{align*}

On choisit les objets suivants
\begin{itemize}
  \item $\mathcal{V}^\flat \subset \mathfrak{g}_\sigma(F)$ de la forme $\mathfrak{g}_\sigma(F)_r := \bigcup_{x} \mathfrak{g}_\sigma(F)_{x,r}$ avec $r \gg 0$ \cite[\S 3.1]{AD02}, de tels ouverts ne dépendent que de $r$ et forment une base locale en $0$ pour la topologie définie par les ouverts $G^\sigma(F)$-invariants;
  \item $\mathcal{W}^\flat := \exp(\mathcal{V}^\flat) \subset \widetilde{G_\sigma}^\diamond$, on peut prendre $r \gg 0$ de telle sorte que l'exponentielle définit un homéomorphisme de $\mathcal{V}^\flat$ sur $\mathcal{W}^\flat$, et que $\mathcal{W}^\flat$ est ouvert et invariant par $G^\sigma(F)$.
\end{itemize}
On peut aussi supposer que
$$ \widetilde{\mathcal{W}}^\flat := \bigcup_{\noyau \in \bmu_m} \noyau \mathcal{W}^\flat $$
est une réunion disjointe.

On définit ainsi
$$ G(F) \times^\sigma \widetilde{\mathcal{W}}^\flat := \frac{G(F) \times \widetilde{\mathcal{W}}^\flat}{(gh, \tilde{t}) \sim (g, [h,\sigma] h\tilde{t}h^{-1}), \quad \forall h \in G^\sigma(F)}. $$
Notons $\Phi:G(F) \times \tilde{G} \to \tilde{G}$ l'application $(g,\tilde{t}) \mapsto g\tilde{\sigma}\tilde{t}g^{-1}$. Faisons opérer  $G(F)$ sur $G(F) \times \tilde{G}$ (translation à gauche sur la composante $G(F)$) et sur $\tilde{G}$ (l'action adjointe), alors $\Phi$ est $G(F)$-équivariant. Le fait suivant est dû, pour l'essentiel, à Harish-Chandra.

\begin{proposition}\label{prop:submersion}
  L'application $\Phi$ est submersive. Quitte à rétrécir $\mathcal{V}^\flat$, elle induit un homéomorphisme $G(F)$-équivariant
  \begin{gather*}
    \bar{\Phi}: G(F) \times^\sigma \widetilde{\mathcal{W}}^\flat \rightiso \widetilde{\mathcal{W}}
  \end{gather*}
  où  $\widetilde{\mathcal{W}} := \Phi(G(F) \times \widetilde{\mathcal{W}}^\flat)$. Il existe une application ``intégrer le long des fibres''
  $$ \Phi_*: C_c^\infty(G(F) \times \widetilde{\mathcal{W}}^\flat) \to C_c^\infty(\widetilde{\mathcal{W}})$$
  caractérisée par l'égalité
  $$ \int_{G(F) \times \widetilde{\mathcal{W}}^\flat} \phi(g,\tilde{t}) F(\Phi(g,\tilde{t})) \dd g \dd \tilde{t} = \int_{\widetilde{\mathcal{W}}} (\Phi_*\phi)(\tilde{x}) F(\tilde{x}) \dd\tilde{x}, \qquad \forall F \in C^\infty(\widetilde{\mathcal{W}}). $$
  qui est une application surjective $G(F)$-équivariante.
\end{proposition}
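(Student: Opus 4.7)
\textit{Esquisse.} Le plan est de relever la descente semi-simple classique de Harish-Chandra au revêtement $\rev: \tilde{G} \to G(F)$; le caractère central de $\bmu_m$ rendra le passage du groupe au revêtement essentiellement automatique.

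\emph{Étape 1 : submersivité.} Grâce à la $G(F)$-équivariance (translation à gauche sur la première composante, action adjointe sur $\tilde{G}$), il suffit de calculer la différentielle de $\Phi$ en un point de la forme $(1, \tilde{t})$ avec $\tilde{t} = \exp(Z) \in \mathcal{W}^\flat$, $Z \in \mathcal{V}^\flat$. Un développement au premier ordre, suivi d'une translation à droite de l'arrivée par $(\tilde{\sigma}\tilde{t})^{-1}$, montre que la différentielle $\mathfrak{g}(F) \oplus \mathfrak{g}_\sigma(F) \to \mathfrak{g}(F)$ est
$$ (X, Y) \longmapsto (\identity - \Ad(\sigma t)) X + \Ad(\sigma) Y. $$
Décomposons $\mathfrak{g} = \mathfrak{g}_\sigma \oplus \mathfrak{g}_\sigma^\perp$ suivant les sous-espaces caractéristiques de $\Ad(\sigma)$ pour la valeur propre $1$ et pour les autres. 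Comme $Z \in \mathfrak{g}_\sigma$, l'opérateur $\ad Z$ commute à $\Ad(\sigma)$, donc $\Ad(\sigma t) = \Ad(\sigma) \exp(\ad Z)$ préserve cette décomposition. Sur $\mathfrak{g}_\sigma^\perp$, $\identity - \Ad(\sigma)$ est inversible, et donc $\identity - \Ad(\sigma t)$ l'est aussi quitte à rétrécir $\mathcal{V}^\flat$; son image couvre $\mathfrak{g}_\sigma^\perp$. Le second terme $\Ad(\sigma) Y$ parcourt $\mathfrak{g}_\sigma$. D'où la surjectivité.

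\emph{Étape 2 : bijectivité de $\bar{\Phi}$ et homéomorphisme.} Supposons $\Phi(g_1, \tilde{t}_1) = \Phi(g_2, \tilde{t}_2)$ et posons $h := g_2^{-1} g_1 \in G(F)$. L'égalité $h \tilde{\sigma} \tilde{t}_1 h^{-1} = \tilde{\sigma} \tilde{t}_2$ se projette sur $G(F)$ en $h \sigma t_1 h^{-1} = \sigma t_2$, avec $t_i = \exp(Z_i)$, $Z_i \in \mathcal{V}^\flat \subset \mathfrak{g}_\sigma(F)$. Quitte à rétrécir $\mathcal{V}^\flat$ de sorte que $\exp(\mathcal{V}^\flat)$ soit formé d'éléments dont la partie semi-simple soit $1$ (c'est le lemme classique de Harish-Chandra sur la descente semi-simple), l'unicité de la décomposition de Jordan pour $\sigma t_1$ et $\sigma t_2$ entraîne $h \sigma h^{-1} = \sigma$, donc $h \in G^\sigma(F)$, puis $h t_1 h^{-1} = t_2$. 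De la définition $[h, \sigma] = \tilde{h}^{-1}\tilde{\sigma}^{-1}\tilde{h}\tilde{\sigma}$ on tire $h\tilde{\sigma}h^{-1} = \tilde{\sigma}[h,\sigma]$, ce qui donne $\tilde{t}_2 = [h,\sigma] h \tilde{t}_1 h^{-1}$, c'est-à-dire la relation d'équivalence définissant $G(F) \times^\sigma \widetilde{\mathcal{W}}^\flat$. La submersivité entraîne que $\Phi$ est ouverte, d'où $\widetilde{\mathcal{W}}$ est ouvert dans $\tilde{G}$ et $\bar{\Phi}$ est un homéomorphisme.

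\emph{Étape 3 : intégration le long des fibres.} La submersion $\Phi$ admet localement des sections; sur une telle carte, $\Phi_*$ se définit par une application du théorème de Fubini à la décomposition locale fournie par $\bar{\Phi}$ et par la fibration $G(F) \to G^\sigma(F) \backslash G(F)$. On recolle au moyen d'une partition de l'unité sur $\tilde{G}$. La $G(F)$-équivariance et la caractérisation de $\Phi_*$ par l'égalité intégrale en résultent; la surjectivité s'obtient en remarquant que pour tout $F \in C_c^\infty(\widetilde{\mathcal{W}})$, la fonction $\phi := (F \circ \Phi) \cdot \rho$, où $\rho \in C_c^\infty(G(F) \times \widetilde{\mathcal{W}}^\flat)$ est une fonction ``de découpe'' normalisée le long des $G^\sigma(F)$-orbites (existant par paracompacité), vérifie $\Phi_*\phi = F$. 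Le point délicat reste le lemme de descente semi-simple de l'étape 2, qui contraint le choix de $r$ dans la définition de $\mathcal{V}^\flat = \mathfrak{g}_\sigma(F)_r$.
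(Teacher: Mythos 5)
The paper itself gives no proof of this Proposition; it simply attributes it to Harish-Chandra ("Le fait suivant est dû, pour l'essentiel, à Harish-Chandra"), so there is no argument of the paper's to compare against, only the standard descent picture it appeals to. Your three-step structure — differential computation for submersivity, Jordan-decomposition argument for injectivity, fiber integration and partition of unity for $\Phi_*$ — is the correct route, and the calculation in Étape~1 and the commutator bookkeeping $\tilde{t}_2 = [h,\sigma]\,h\tilde{t}_1 h^{-1}$ in Étape~2 match the equivalence relation defining $G(F)\times^\sigma\widetilde{\mathcal{W}}^\flat$.

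There is however a genuine flaw in the justification of Étape~2. You claim one can shrink $\mathcal{V}^\flat$ so that every element of $\exp(\mathcal{V}^\flat)$ has trivial semisimple part, i.e. is unipotent. This is impossible: $\mathcal{V}^\flat$ is an open neighbourhood of $0$ in $\mathfrak{g}_\sigma(F)$ and therefore contains nonzero semisimple elements (take any small element of a Cartan subalgebra), whose exponentials are not unipotent. The true statement of Harish-Chandra's descent lemma is different. One shrinks $\mathcal{V}^\flat$ so that, for $Z\in\mathcal{V}^\flat$ with semisimple part $Z_s$, the eigenvalues of $\Ad(\exp Z_s)$ on $\mathfrak{g}$ are so close to $1$ that they cannot move an eigenvalue $\lambda\neq 1$ of $\Ad(\sigma)$ to $1$; concretely, the $1$-eigenspace of $\Ad(\sigma\exp Z_s)$ then coincides with that of $\Ad(\sigma)$, namely $\mathfrak{g}_\sigma$. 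From $h\,\sigma t_1\,h^{-1}=\sigma t_2$ one deduces that the semisimple part $h\,\sigma\exp(Z_{1,s})\,h^{-1}=\sigma\exp(Z_{2,s})$ has $\Ad$-fixed subspace $\mathfrak{g}_\sigma$, hence $h$ normalises $\mathfrak{g}_\sigma$, and a short additional argument (or directly comparing the $\sigma$-``block'' in the Jordan decomposition of the semisimple part) gives $h\sigma h^{-1}=\sigma$, i.e. $h\in G^\sigma(F)$. The conclusion you reach is correct, but it does not follow from unipotence; this is the ``lemme de descente'' that constrains the choice of $r$ in $\mathcal{V}^\flat=\mathfrak{g}_\sigma(F)_r$, and it is precisely what the paper's reference to Adler--DeBacker is meant to guarantee.
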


On notera $\Phi^*$ le dual de $\Phi_*$ au niveau des distributions. Observons que tous les espaces en vue sont munis d'actions évidentes de $\bmu_m$, et toutes ces applications sont $\bmu_m$-équivariantes. On note $\mathbbm{1}$ la distribution $\phi \mapsto \int_{G(F)} \phi(g)\dd g$ sur $G(F)$. 

\begin{proposition}\label{prop:descente}
  Soit $\Theta$ une distribution invariante spécifique sur $\widetilde{\mathcal{W}}$. Alors il existe une unique distribution spécifique $\Theta^\flat$ sur $\widetilde{\mathcal{W}}^\flat$, caractérisée par 
  $$ \angles{\Theta, \Phi_* f} = \angles{\mathbbm{1} \otimes \Theta^\flat, f}, \qquad f \in C^\infty_{c,\asp}(G(F) \times \widetilde{\mathcal{W}}^\flat). $$
  Soit $y \in G^\sigma(F)$, alors
  $$ {}^y \Theta^\flat = [y, \sigma] \Theta^\flat, \qquad y \in G^\sigma(F). $$
\end{proposition}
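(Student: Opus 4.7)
L'idée est d'exploiter la $G(F)$-équivariance de $\Phi_*$ donnée par la Proposition~\ref{prop:submersion} conjointement avec l'invariance supposée de $\Theta$. L'unicité de $\Theta^\flat$ est immédiate: en prenant $f = \alpha \otimes \phi$ avec $\alpha \in C_c^\infty(G(F))$ vérifiant $\int_{G(F)} \alpha \,\dd g = 1$, la formule impose
$$ \angles{\Theta^\flat, \phi} = \angles{\Theta, \Phi_*(\alpha \otimes \phi)}, \quad \phi \in C^\infty_{c,\asp}(\widetilde{\mathcal{W}}^\flat). $$

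Pour l'existence, on adopte cette égalité comme définition et l'on montre qu'elle ne dépend pas du choix de $\alpha$. À $\phi$ fixée, considérons la fonctionnelle $L_\phi: \alpha \mapsto \angles{\Theta, \Phi_*(\alpha \otimes \phi)}$ sur $C_c^\infty(G(F))$. Puisque la translation à gauche par $h \in G(F)$ sur la première variable correspond, via $\Phi$, à la conjugation par $h$ sur $\widetilde{\mathcal{W}}$, l'équivariance de $\Phi_*$ et l'invariance de $\Theta$ fournissent $L_\phi(L_h\alpha) = L_\phi(\alpha)$. Donc $L_\phi$ est une distribution sur $G(F)$ invariante à gauche, nécessairement proportionnelle à la mesure de Haar: $L_\phi(\alpha) = c(\phi)\int \alpha\,\dd g$. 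On pose $\angles{\Theta^\flat,\phi}:=c(\phi)$. Le caractère spécifique de $\Theta^\flat$ découle de la $\bmu_m$-équivariance de $\Phi_*$ combinée au caractère spécifique de $\Theta$ et anti-spécifique de $f$.

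Pour la loi de transformation sous $G^\sigma(F)$, l'ingrédient clé est l'identité
$$ \Phi(gy, \tilde{t}) = [y,\sigma] \cdot \Phi(g, \tilde{y}\tilde{t}\tilde{y}^{-1}), \quad y \in G^\sigma(F), $$
qui résulte directement de la relation $\tilde{y}\tilde{\sigma}=[y,\sigma]\tilde{\sigma}\tilde{y}$. En l'insérant dans la caractérisation intégrale de $\Phi_*$ et en effectuant successivement les changements de variables $\tilde{t}' = \tilde{y}\tilde{t}\tilde{y}^{-1}$ et $g' = gy^{-1}$ (invariance de la mesure sur $\widetilde{\mathcal{W}}^\flat$ sous conjugation par $G^\sigma(F)$, invariance à droite de la mesure de Haar sur $G(F)$), on obtient une égalité de la forme $\Phi_*(\alpha \otimes \phi^y) = [y,\sigma]\,\Phi_*(\alpha' \otimes \phi)$ où $\alpha'(g) := \alpha(gy)$ vérifie $\int \alpha'\,\dd g = \int \alpha\,\dd g$. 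En évaluant $\Theta$ sur les deux membres, cela donne bien $\angles{\Theta^\flat, \phi^y} = [y,\sigma]\angles{\Theta^\flat,\phi}$.

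L'obstacle principal est d'ordre surtout notationnel: il faut garder une comptabilité rigoureuse des actions de $\bmu_m$ et des conventions spécifique/anti-spécifique pour s'assurer que $\Theta^\flat$ est bien une distribution spécifique sur $\widetilde{\mathcal{W}}^\flat$, et il faut vérifier que la conjugation par $y \in G^\sigma(F)$ préserve la mesure sur $\widetilde{\mathcal{W}}^\flat$ --- ce qui repose sur l'unimodularité de $\widetilde{G_\sigma}$ et sur la possibilité de choisir $\mathcal{V}^\flat$ stable sous $G^\sigma(F)$. Ces vérifications sont essentiellement formelles une fois la Proposition~\ref{prop:submersion} en main.
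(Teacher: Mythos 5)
Votre démonstration est correcte et repose sur la même idée que celle du texte, simplement rendue plus explicite : la preuve du papier observe directement que $\Phi^*\Theta$ est une distribution spécifique $G(F)$-invariante sur $G(F)\times\widetilde{\mathcal{W}}^\flat$, donc nécessairement de la forme $\mathbbm{1}\otimes\Theta^\flat$ (unicité de la mesure de Haar invariante à gauche sur le premier facteur), puis invoque la Proposition~\ref{prop:submersion} pour la loi de transformation sous $G^\sigma(F)$ ; vous déroulez exactement cette logique en langage des fonctions test, en explicitant la relation de recollement $\Phi(gy,\tilde{t})=[y,\sigma]\Phi(g,\tilde{y}\tilde{t}\tilde{y}^{-1})$ et les changements de variables correspondants. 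La seule précision utile à ajouter est que dans l'étape finale le facteur $[y,\sigma]$ apparaît d'abord comme une \emph{translation} par un élément de $\bmu_m$ à l'intérieur de $\Phi_*(\alpha'\otimes\phi)$, et que c'est l'anti-spécificité de $\alpha'\otimes\phi$ (jointe à la $\bmu_m$-équivariance de $\Phi_*$) qui convertit cette translation en le scalaire $[y,\sigma]$ ; vous signalez ce point en passant mais il mérite d'être nommé, car c'est exactement là que la $\bmu_m$-équivariance intervient de façon non triviale.
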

\begin{proof}
 L'image par $\Phi^*$ de $\Theta$ est une distribution spécifique $G(F)$-invariante sur $G(F) \times \widetilde{\mathcal{W}}^\flat$, donc est de la forme $\mathbbm{1} \otimes \Theta^\flat$. La formule pour $\angles{\Theta, \Phi_* f}$ en découle. La deuxième assertion découle de la première, du fait que $\Theta$ est $G(F)$-invariant et de la Proposition \ref{prop:submersion}.
\end{proof}
\begin{corollary}
  Notons désormais $\theta := \exp^* (\Theta^\flat|_{\mathcal{W}^\flat})$, alors $\theta$ est une distribution sur $\mathcal{V}^\flat$ telle que ${}^y \theta = [y,\sigma]\theta$ pour tout $y \in G^\sigma(F)$. De plus, $\theta$ détermine $\Theta^\flat$.
\end{corollary}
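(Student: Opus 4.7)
L'existence de $\theta$ comme distribution sur $\mathcal{V}^\flat$ est immédiate puisque $\exp$ est par construction un homéomorphisme de $\mathcal{V}^\flat$ sur $\mathcal{W}^\flat$. Il reste donc à établir l'équivariance annoncée et le fait que $\theta$ détermine $\Theta^\flat$.

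Pour l'équivariance, le point-clé est la compatibilité de l'exponentielle sur $\tilde{G}$, construite via le scindage unipotent de \cite[\S 2.2]{Li10a}, avec la conjugaison: pour $y \in G^\sigma(F)$ et $X \in \mathcal{V}^\flat$, on a $y\exp(X)y^{-1} = \exp(\Ad(y)X)$, la conjugaison dans $\tilde{G}$ étant indépendante du relèvement choisi puisque $\bmu_m$ est central. Quitte à rétrécir $\mathcal{V}^\flat$, on peut le supposer invariant par $\Ad(G^\sigma(F))$, donc $\mathcal{W}^\flat$ est invariant par conjugaison par $G^\sigma(F)$. Pour $\phi \in C_c^\infty(\mathcal{V}^\flat)$ cette compatibilité donne $\phi^y \circ \exp^{-1} = (\phi \circ \exp^{-1})^y$, d'où
$$ \angles{{}^y \theta, \phi} = \angles{\Theta^\flat|_{\mathcal{W}^\flat}, (\phi \circ \exp^{-1})^y} = \angles{{}^y(\Theta^\flat|_{\mathcal{W}^\flat}), \phi \circ \exp^{-1}}, $$
et l'équivariance ${}^y\Theta^\flat = [y,\sigma]\Theta^\flat$ de la Proposition \ref{prop:descente} permet de conclure.

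Pour la dernière assertion, $\theta$ détermine $\Theta^\flat|_{\mathcal{W}^\flat}$ via l'homéomorphisme $\exp$. Le caractère spécifique de $\Theta^\flat$, couplé à la décomposition disjointe $\widetilde{\mathcal{W}}^\flat = \bigsqcup_{\noyau \in \bmu_m} \noyau \mathcal{W}^\flat$, impose que la restriction de $\Theta^\flat$ à $\noyau \mathcal{W}^\flat$ se déduit de $\Theta^\flat|_{\mathcal{W}^\flat}$ par la translation $\tilde{t} \mapsto \noyau \tilde{t}$ à multiplication par $\noyau^{-1}$ près; ainsi $\Theta^\flat$ est entièrement déterminée.

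L'obstacle principal, plutôt mineur, est la vérification rigoureuse de l'identité $y\exp(X)y^{-1} = \exp(\Ad(y)X)$ dans $\tilde{G}$, qui repose sur la normalisation de l'exponentielle sur le revêtement et la compatibilité de la conjugaison intérieure avec son analogue sur $G(F)$ via $\exp$.
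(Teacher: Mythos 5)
The paper states this corollary without proof, treating it as an immediate consequence of Proposition \ref{prop:descente}; your proof supplies the natural direct verification and is essentially correct. The splitting of the argument into three parts (existence, équivariance, détermination) is exactly right, and the identity
$$ \angles{{}^y\theta, \phi} = \angles{\Theta^\flat|_{\mathcal{W}^\flat}, (\phi\circ\exp^{-1})^y} = [y,\sigma]\angles{\theta,\phi} $$
is the whole content of the équivariance claim, granted the compatibility $y\exp(X)y^{-1}=\exp(\Ad(y)X)$ dans $\tilde{G}$. Your argument for the last assertion — that the specificity of $\Theta^\flat$ together with the disjoint decomposition $\widetilde{\mathcal{W}}^\flat = \bigsqcup_{\noyau}\noyau\mathcal{W}^\flat$ determines $\Theta^\flat$ from its restriction to $\mathcal{W}^\flat$ — is also correct.

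One small inaccuracy: you attribute the exponential $\mathcal{V}^\flat \to \mathcal{W}^\flat \subset \widetilde{G_\sigma}^\diamond$ to the ``scindage unipotent'' of \cite[\S 2.2]{Li10a}. That splitting concerns only unipotent radicals and does not produce this exponential, since $\widetilde{G_\sigma}^\diamond$ is not a unipotent subgroup. The exponential here is the one constructed on Moy-Prasad lattices $\mathfrak{g}_\sigma(F)_{x,r}$ (for $r\gg 0$) and lifted uniquely to the covering $\widetilde{G_\sigma}^\diamond \to G_\sigma(F)^\diamond$; the compatibility $y\exp(X)y^{-1}=\exp(\Ad(y)X)$ then follows either from this explicit construction or, more abstractly, from uniqueness of continuous lifts through the covering map of the map $X\mapsto y\exp(X)y^{-1}$ on $G_\sigma(F)$ agreeing with $\exp\circ\Ad(y)$ near $0$. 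This does not affect the validity of your argument, but the cited ingredient is not the right one.
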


\begin{remark}\label{rem:descente-analytique}
  Proprement dit, ce principe de descente semi-simple n'est pas celui de Harish-Chandra, car on n'utilise que des ouverts stables par $\bmu_m$ et le caractère $[\cdot,\sigma]$ intervient. Pour obtenir une version purement ``analytique'', il suffit de considérer $\theta$ comme une distribution $G^\sigma(F)^\diamond$-invariante sur $\mathcal{V}^\flat$.
\end{remark}

Appliquons la théorie de \S\ref{sec:theorie-alg} au groupe $G^\sigma$ et le caractère $[\cdot,\sigma]$. Utilisons les conventions de \cite[\S 6.3]{Li10a} pour les intégrales orbitales sur $\tilde{G}$. Le résultat suivant est immédiat selon la définition de $\Phi$ et la Proposition \ref{prop:submersion}.

\begin{lemma}\label{prop:descente-int-orb}
  Un élément $\tilde{\gamma} = \tilde{\sigma} \exp X$ avec $X \in \mathcal{V}^\flat$ est bon si et seulement si $X$ est $[\cdot,\sigma]$-bon sous $G^\sigma(F)$. Soient $\dot{\tilde{\gamma}} \in \dot{\Gamma}(\tilde{G})$ et $\Theta := J_{\tilde{G}}(\dot{\tilde{\gamma}}, \cdot)$, alors il existe un unique $\dot{X} \in \dot{\Gamma}(\mathfrak{g}_\sigma(F))^{[\cdot,\sigma]}$ tel que
  $$ \theta = J_{G^\sigma}^{[\cdot,\sigma]}(\dot{X}, \cdot). $$

  Inversement, toute distribution $\theta$ de cette forme se remonte en une intégrale orbitale anti-spécifique sur $\tilde{G}$.
\end{lemma}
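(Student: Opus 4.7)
The plan is to prove the equivalence of the two notions of ``bon'' by a direct commutator computation, then identify the descended distribution $\theta$ explicitly by unfolding Proposition \ref{prop:descente} along the submersion $\Phi$, and finally to run the construction backwards to obtain the converse.

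First I would match the two notions of goodness. Shrink $\mathcal{V}^\flat$ so that $\sigma$ remains the semisimple part of $\gamma := \sigma\exp X$ for every $X \in \mathcal{V}^\flat$; this standard Jordan-decomposition argument yields the equality of centralizers $Z_G(\gamma)(F) = Z_{G^\sigma}(X)(F)$. Fix $y$ in this common centralizer and a lift $\tilde{y} \in \tilde{G}$. The relation $\tilde{\sigma}\tilde{y} = [y,\sigma]^{-1}\tilde{y}\tilde{\sigma}$ recalled before Proposition \ref{prop:submersion}, combined with the fact that $\tilde{y}$ commutes with $\exp X$ (because $\Ad(y)X = X$ and the section $\exp: \mathcal{V}^\flat \to \mathcal{W}^\flat \subset \widetilde{G_\sigma}^\diamond$ is $G^\sigma(F)$-equivariant under conjugation), gives
\[
\tilde{y}^{-1}\tilde{\gamma}^{-1}\tilde{y}\tilde{\gamma} \;=\; \tilde{y}^{-1}\exp(-X)\,\tilde{\sigma}^{-1}\tilde{y}\tilde{\sigma}\,\exp X \;=\; [y,\sigma].
\]
So the commutator character of $\tilde{\gamma}$ on $Z_G(\gamma)(F)$ is exactly $[\cdot,\sigma]|_{Z_{G^\sigma}(X)(F)}$, whence $\tilde{\gamma}$ is bon if and only if $X$ is $[\cdot,\sigma]$-bon under $G^\sigma(F)$.

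Second I would identify the descended distribution. Set $\Theta := J_{\tilde{G}}(\dot{\tilde{\gamma}}, \cdot)$ and let $\theta$ be the distribution on $\mathcal{V}^\flat$ produced by Proposition \ref{prop:descente}; it automatically satisfies ${}^y \theta = [y,\sigma]\,\theta$ for $y \in G^\sigma(F)$. To compute $\theta$, test $\Theta$ on a function $\Phi_*(\varphi_1 \otimes \exp_*\varphi_2)$, with $\varphi_2 \in C_c^\infty(\mathcal{V}^\flat)$ and $\varphi_1 \in C_c^\infty(G(F))$ of total integral one. Unfolding the orbital integral defining $\Theta$, using the homeomorphism $G(F)\times^\sigma \widetilde{\mathcal{W}}^\flat \rightiso \widetilde{\mathcal{W}}$ of Proposition \ref{prop:submersion}, and invoking the identity $|D^G(\sigma\exp X)| = |\det(1-\Ad(\sigma))|_{\mathfrak{g}/\mathfrak{g}_\sigma}|\cdot|D^{G^\sigma}(X)|$ valid on $\mathcal{V}^\flat$, the integral reduces to $|D^{G^\sigma}(X)|^{\frac{1}{2}}$ times a $[\cdot,\sigma]$-twisted integral of $\varphi_2$ over the $G^\sigma(F)$-orbit of $X$. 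This determines a unique complex measure $\dot{X} \in \dot{\Gamma}(\mathfrak{g}_\sigma(F))^{[\cdot,\sigma]}$ supported on that orbit with $\theta = J_{G^\sigma}^{[\cdot,\sigma]}(\dot{X},\cdot)$; uniqueness follows from the injectivity of the descent $\Theta \mapsto \theta$ ensured by Proposition \ref{prop:descente}.

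For the converse, I would run the same chain backwards: given $\theta = J_{G^\sigma}^{[\cdot,\sigma]}(\dot{X},\cdot)$ with $\dot{X}$ supported in $\mathcal{V}^\flat$, transport it through $\exp$ and extend by $\bmu_m$-equivariance to an anti-specific $\Theta^\flat$ on $\widetilde{\mathcal{W}}^\flat$ with ${}^y \Theta^\flat = [y,\sigma]\Theta^\flat$, then apply Proposition \ref{prop:descente} to produce a $G(F)$-invariant anti-specific $\Theta$ on $\widetilde{\mathcal{W}}$. By the first paragraph the underlying class $\tilde{\gamma} = \tilde{\sigma}\exp X$ is bon, so the same computation as above identifies $\Theta$ with the orbital integral $J_{\tilde{G}}(\dot{\tilde{\gamma}},\cdot)$ for the unique lifted measure $\dot{\tilde{\gamma}} \in \dot{\Gamma}(\tilde{G})$. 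The main technical obstacle is the second paragraph: one must carefully juggle the Jacobian of $\Phi$, the comparison of the two Weyl discriminants, and the measure datum in $\dot{\tilde{\gamma}}$ and $\dot{X}$ so that the twist $[\cdot,\sigma]$ appears with the correct normalization; the remaining steps are formal consequences of Propositions \ref{prop:submersion} and \ref{prop:descente}.
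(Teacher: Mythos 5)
The paper dispatches this lemma with the single sentence ``Le résultat suivant est immédiat selon la définition de $\Phi$ et la Proposition~\ref{prop:submersion},'' so what you have written is exactly the computation the author had in mind but chose to leave implicit: identify the two centralizers, compute the commutator character of $\tilde{\gamma}=\tilde{\sigma}\exp X$ on $Z_{G^\sigma}(X)(F)$ and find it equals $[\cdot,\sigma]$, then unfold $\Theta$ through $\Phi_*$ to recognize $\theta$ as the twisted orbital integral. Your commutator calculation is correct, and the equivariance of $\exp:\mathcal{V}^\flat\to\mathcal{W}^\flat$ under conjugation by $G^\sigma(F)$ that you invoke is built into the paper's set-up (both $\mathcal{V}^\flat$ and $\mathcal{W}^\flat$ are $G^\sigma(F)$-invariant, and $\exp$ is the canonical lift).

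One point of wording deserves a flag: you say ``shrink $\mathcal{V}^\flat$ so that $\sigma$ remains the semisimple part of $\gamma:=\sigma\exp X$.'' For a general $X\in\mathcal{V}^\flat$ the element $\exp X$ is only topologically unipotent, not literally unipotent, so $\sigma$ is the ``topologically semisimple'' (or absolutely semisimple) part rather than the Jordan semisimple part. What you actually need — and what does hold by the injectivity half of Proposition~\ref{prop:submersion} for $\mathcal{V}^\flat$ small — is the centralizer equality $Z_G(\gamma)(F)=Z_{G^\sigma}(X)(F)$, which follows because the factorization $\gamma=\sigma\exp X$ is rigid on $\widetilde{\mathcal{W}}$. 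With that substitution the argument is sound, and your remark that the measure normalizations (Jacobian of $\Phi$, ratio of Weyl discriminants) must be tracked carefully correctly identifies where the only real work lies.
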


À l'instar du cas de l'algèbre de Lie, définissons
$$ \mathcal{I}(\tilde{G}) := C_c^\infty(\tilde{G})/ \left\langle f^y - f : f \in C_c^\infty(\tilde{G}), \; y \in G(F) \right\rangle $$
et posons $\mathcal{J}(\tilde{G})$ son dual. Notons $\mathcal{I}_{\asp}(\tilde{G})$ la partie anti-spécifique de $\mathcal{I}(\tilde{G})$, son dual est noté par $\mathcal{J}_-(\tilde{G})$, qui n'est que l'espace des distributions invariantes spécifiques. Le résultat suivant est parallèle à la Proposition \ref{prop:int-orb-dense-omega}. Dualement à l'application $\Theta \mapsto \theta$, on a une application $\mathcal{I}(\mathcal{V}^\flat)^{[\cdot,\sigma]} \to \mathcal{I}_{\asp}(\widetilde{\mathcal{W}})$, notée $f^\flat \mapsto f$, satisfaisant à
$$ \angles{\Theta, f} = \angles{\theta, f^\flat}. $$
De plus, $f^\flat \mapsto f$ est surjective d'après la Proposition \ref{prop:submersion}.

\begin{proposition}\label{prop:int-orb-dense}
  Soit $f \in \mathcal{I}_{\asp}(\tilde{G})$ tel que pour tout $\dot{\tilde{\gamma}} \in \dot{\Gamma}(\tilde{G})$ avec $\gamma$ fortement régulier, on a $J_{\tilde{G}}(\dot{\tilde{\gamma}}, f)=0$. Alors $f=0$.
\end{proposition}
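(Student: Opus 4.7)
Le plan est de déduire le résultat de la Proposition \ref{prop:int-orb-dense-omega} par descente semi-simple, en suivant les préparatifs qui précèdent l'énoncé.

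D'abord, je reformulerais l'assertion par dualité: montrer que $f = 0$ dans $\mathcal{I}_{\asp}(\tilde{G})$ équivaut à vérifier $\angles{\Theta, f} = 0$ pour toute distribution invariante spécifique $\Theta \in \mathcal{J}_-(\tilde{G})$. Fixons un tel $\Theta$ ainsi qu'un représentant $f \in C^\infty_{c,\asp}(\tilde{G})$. Au moyen d'une partition de l'unité $G(F)$-invariante subordonnée à un recouvrement du support de $f$ par des ouverts du type $\widetilde{\mathcal{W}}$ associés à divers éléments semi-simples $\tilde{\sigma}$ comme dans la Proposition \ref{prop:submersion}, il est loisible de supposer que $\Supp(f) \subset \widetilde{\mathcal{W}}$ pour un seul choix de $\tilde{\sigma}$ semi-simple.

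Appliquons alors la descente semi-simple de la Proposition \ref{prop:descente} et son corollaire: on obtient d'un côté $\theta \in \mathcal{J}(\mathfrak{g}_\sigma(F))^{[\cdot,\sigma]}$ à partir de la restriction de $\Theta$ à $\widetilde{\mathcal{W}}$; dualement, $f$ s'envoie sur un élément $f^\flat \in \mathcal{I}(\mathfrak{g}_\sigma(F))_{[\cdot,\sigma]}$ caractérisé par l'identité $\angles{\Theta, f} = \angles{\theta, f^\flat}$. Le Lemme \ref{prop:descente-int-orb} traduit alors l'hypothèse d'annulation des intégrales orbitales régulières $J_{\tilde{G}}(\dot{\tilde{\gamma}}, f)$ près de $\tilde{\sigma}$ en l'annulation des intégrales orbitales $[\cdot,\sigma]$-bonnes régulières $J_{G^\sigma}^{[\cdot,\sigma]}(\dot{X}, f^\flat)$ sur $\mathfrak{g}_\sigma(F)$, pour tout $\dot{X} \in \dot{\Gamma}_\mathrm{reg}(\mathfrak{g}_\sigma(F))^{[\cdot,\sigma]}$.

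Il ne reste qu'à appliquer la Proposition \ref{prop:int-orb-dense-omega} au couple $(G^\sigma, [\cdot,\sigma])$: l'annulation de ces intégrales orbitales entraîne $f^\flat = 0$ dans $\mathcal{I}(\mathfrak{g}_\sigma(F))_{[\cdot,\sigma]}$, d'où $\angles{\theta, f^\flat} = 0$, puis $\angles{\Theta, f} = 0$. La principale difficulté technique, plutôt que conceptuelle, se situe dans l'étape de partition de l'unité et dans la construction précise de l'application duale $f \mapsto f^\flat$ au niveau des classes de $\mathcal{I}_{\asp}$; ceci découle toutefois de la surjectivité de $\Phi_*$ dans la Proposition \ref{prop:submersion} combinée à l'équivariance par $[\cdot,\sigma]$ dictée par la Proposition \ref{prop:descente}. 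Tout le reste est formel.
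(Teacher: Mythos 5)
Your proposal is correct and follows essentially the same path as the paper: reduce by semi-simple descent near a semisimple element $\tilde{\sigma}$, use the Proposition \ref{prop:submersion}/\ref{prop:descente} package to transfer to $\mathcal{I}(\mathcal{V}^\flat)^{[\cdot,\sigma]}$, translate the vanishing of regular orbital integrals through the Lemme \ref{prop:descente-int-orb}, and invoke the Proposition \ref{prop:int-orb-dense-omega} for $(G^\sigma,[\cdot,\sigma])$. The only difference is presentational: the paper picks a preimage $f^\flat$ with $f^\flat \mapsto f$ directly and shows $f^\flat=0$ (hence $f=0$ by linearity of the descent map), whereas you pass through the dual pairing $\langle\Theta,f\rangle=\langle\theta,f^\flat\rangle$ and make explicit the $G(F)$-invariant partition of unity needed to localize the support of a representative of $f$ inside a single $\widetilde{\mathcal{W}}$; the paper leaves this reduction implicit. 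Your remark that such a partition of unity preserves the hypothesis (because an invariant cutoff is constant on each conjugacy class, so multiplies each orbital integral by a scalar) is the correct justification, and it actually fills in a step that the paper's terse proof glosses over.
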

\begin{proof}
  On peut prendre $f^\flat \in \mathcal{I}(\mathcal{V}^\flat)^{[\cdot,\sigma]}$ tel que $f^\flat \mapsto f$. Soit $\dot{X} \in \dot{\Gamma}_\text{reg}(\mathfrak{g}_\sigma(F))^{[\cdot,\sigma]}$. Si $X \notin \mathcal{V}^\flat$ alors $J_{G^\sigma}^{[\cdot,\sigma]}(\dot{X}, f^\flat)=0$. Supposons donc $X \in \mathcal{V}^\flat$, alors le Lemme \ref{prop:descente-int-orb} affirme que $J_{G^\sigma}^{[\cdot,\sigma]}(\dot{X}, f^\flat)$ remonte en l'intégrale orbitale de $f$ le long de la classe de conjugaison de  $\tilde{\sigma}\exp X$. Puisque $\mathcal{V}^\flat$ est supposé suffisamment petit, la classe de $\tilde{\sigma}\exp X$ est fortement régulière. D'où $J_{G^\sigma}^{[\cdot,\sigma]}(\dot{X}, f^\flat)=0$ pour tout $\dot{X} \in \dot{\Gamma}_\text{reg}(\mathfrak{g}_\sigma(F))^{[\cdot,\sigma]}$.

  Maintenant c'est une conséquence de la Proposition \ref{prop:int-orb-dense-omega} que $f^\flat = 0$, d'où $f=0$.
\end{proof}

\subsection{Distributions admissibles invariantes spécifiques}
Soit $H$ est groupe compact, on note $\Pi(H)$ l'ensemble de classes d'équivalences de représentations irréductibles de $H$. Si $\mathbf{d} \in \Pi(H)$, on note $\xi_\mathbf{d} \in C^\infty(H)$ son caractère. La représentation triviale de $H$ est notée $\mathbbm{1}_H$.

Nous reprendrons les arguments de \cite[Part III]{HC99}.

\begin{definition}\label{def:dist-admissible}\index[iFT2]{distribution admissible}
  Soit $\Theta$ une distribution sur un ouvert $\mathcal{W}$ de $\tilde{G}$. Soit $K_0$ un sous-groupe ouvert compact de $\tilde{G}$. On dit que $\Theta$ est $(\tilde{G}, K_0)$-admissible en $\tilde{\gamma} \in \tilde{G}$ si
  \begin{itemize}
    \item $\tilde{\gamma}K_0 \subset \mathcal{W}$;
    \item pour tout sous-groupe ouvert $K_1 \subset K_0$ et $\mathbf{d} \in \hat{K}_1$, on a
    $$ \Theta * \xi_\mathbf{d} = 0 $$
    où $*$ désigne la convolution, sauf s'il existe $x \in G(F)$ tel que les restrictions à $xK_0 x^{-1} \cap K_1$ de $\mathbbm{1}_{xK_0 x^{-1}}$ et de $\mathbf{d}$ s'entrelacent.
  \end{itemize}
  On dit que $\Theta$ est admissible en $\tilde{\gamma}$ s'il existe $K_0$ tel que $\Theta$ est $(\tilde{G},K_0)$-admissible en $\tilde{\gamma}$. Si $\Theta$ est admissible partout, on dit qu'elle est admissible.
\end{definition}
Cette définition s'applique à tout groupe localement profini. Notre but est le résultat suivant.

\begin{theorem}\label{prop:dist-admissible}
  Soit $\Theta$ une distribution invariante spécifique de $\tilde{G}$. Soient $\sigma \in G(F)$, $\tilde{\sigma} \in \rev^{-1}(\sigma)$. Si $\Theta$ est admissible en $\tilde{\sigma}$, alors $\Theta$ est représentée par une fonction localement intégrable au voisinage de $\tilde{\sigma}$, qui est localement constante sur $\tilde{G}_\mathrm{reg}$. La fonction $|D^G|^{\frac{1}{2}} \Theta$ est localement bornée.

  De plus, il existe des coefficients uniques $c_{\dot{u}}$, où $\dot{u} \in \dot{\Gamma}_{\mathfrak{nil}}(\mathfrak{g}_\sigma(F))^{[\cdot,\sigma]}$, tels que
  \begin{itemize}
    \item $c_{z\dot{u}} = z^{-1} c_{\dot{u}}$ pour tout $z \in \C^\times$,
    \item pour $X \in \mathfrak{g}_\sigma(F)$ suffisamment voisin de $0$, avec des conventions de \S\ref{sec:theorie-alg}, on a
      $$\Theta(\tilde{\sigma}\exp X) = \sum_{u \in \Gamma_{\mathfrak{nil}}(\mathfrak{g}_\sigma(F))^{[\cdot,\sigma]}} c_{\dot{u}} \widehat{\mu_{\dot{u}}}(X). $$
  \end{itemize}
\end{theorem}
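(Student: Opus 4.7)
Le plan est de ramener l'énoncé, par descente semi-simple, à une question sur l'algèbre de Lie $\mathfrak{g}_\sigma$ avec le caractère $[\cdot,\sigma]$, puis d'invoquer les résultats de \S\ref{sec:theorie-alg}. Plus précisément, on applique d'abord la Proposition \ref{prop:descente} et son corollaire pour obtenir, à partir de la restriction de $\Theta$ à un voisinage $\widetilde{\mathcal{W}}$ de $\tilde{\sigma}$, une distribution $\theta$ sur $\mathcal{V}^\flat \subset \mathfrak{g}_\sigma(F)$ telle que ${}^y \theta = [y,\sigma] \theta$ pour tout $y \in G^\sigma(F)$. Quitte à rétrécir $\mathcal{V}^\flat$, ceci produit bien un élément de $\mathcal{J}(\mathfrak{g}_\sigma(F))^{[\cdot,\sigma]}$ au sens de \S\ref{sec:theorie-alg} (cf. Remarque \ref{rem:descente-analytique}; la $G^\sigma(F)$-équivariance suffit car $G^\sigma(F)^\diamond$ est d'indice fini).

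L'étape-clé suivante est de traduire l'hypothèse d'admissibilité de $\Theta$ en $\tilde{\sigma}$ en une condition de support après transformation de Fourier sur $\theta$. Reprenant l'argument de \cite[\S 16]{HC99}, l'admissibilité $(\tilde{G}, K_0)$-admissibilité, combinée à la décomposition $K_0$ en composantes isotypiques sous divers $K_1 \subset K_0$, entraîne qu'il existe un $\mathfrak{o}_F$-réseau $L$ bien adapté et un voisinage $V$ de $\mathfrak{g}_{\sigma,\mathrm{nil}}(F)^{|\cdot|=1}$ tels que $\theta \in \mathcal{J}(V, \infty, L)^{[\cdot,\sigma]}$; cette traduction repose sur le fait que la conjugaison par $K_0$ commute avec $\exp$ et sur la relation entre les caractères $\xi_\mathbf{d}$ et les filtrations de Moy-Prasad via la dualité de Pontryagin sur $\mathfrak{g}_\sigma(F)$.

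On applique alors la Proposition \ref{prop:Howe-res} : quitte à rétrécir encore $V$, il existe $L' \supset L$ tel que $j_{L'}\theta = j_{L'}\theta_0$ pour un certain $\theta_0 \in \mathcal{J}_0^{[\cdot,\sigma]}$ — disons $\theta_0 \in \mathcal{J}(\Omega)^{[\cdot,\sigma]}$ avec $\Omega$ compact. Le Théorème \ref{prop:regularite-dist} fournit une fonction localement intégrable $g$ représentant $\widehat{\theta_0}$, localement constante sur $\mathfrak{g}_\sigma(F)_\mathrm{reg}$ et telle que $|D^{G_\sigma}|^{1/2} g$ soit localement bornée. La coïncidence $j_{L'}\theta = j_{L'}\theta_0$ montre, via la transformation de Fourier, que $\hat\theta$ et $\widehat{\theta_0}$ ont même restriction à l'ouvert $\{X : \hat{\mathbbm{1}}_{X+L'} \neq 0\}$, et par une homogénéité standard $\theta$ est représentée par $g$ sur un voisinage de $0$. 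Le Théorème \ref{prop:Howe-dist}, appliqué à $\theta_0$, donne l'existence et l'unicité des coefficients $c_{\dot{u}}$ et le développement local demandé. Comme $D^G$ et $D^{G_\sigma}$ coïncident sur $\tilde{\sigma}\exp X$ pour $X \in \mathfrak{g}_\sigma(F)$ à un facteur localement borné près (déterminant sur $\mathfrak{g}/\mathfrak{g}_\sigma$, inversible en $\sigma$), la majoration $|D^G|^{1/2}\Theta$ localement bornée en découle.

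Finalement, on remonte via l'homéomorphisme $\bar{\Phi}$ de la Proposition \ref{prop:submersion} et l'exponentielle, et on observe que l'unicité des coefficients $c_{\dot{u}}$ résulte de l'indépendance linéaire des $\widehat{\mu_{\dot{u}}}$ sur un voisinage de $0$ dans l'ouvert régulier (dernière assertion du Théorème \ref{prop:Howe-dist}). L'obstacle principal est le passage de l'admissibilité de $\Theta$ à l'appartenance $\theta \in \mathcal{J}(V,\infty,L)^{[\cdot,\sigma]}$ : il faut suivre précisément comment la convolution par $\xi_\mathbf{d}$ sur $\tilde{G}$ se transporte via $\bar{\Phi}$ et $\exp$ en une condition duale (support borné en direction non nilpotente) sur $\mathfrak{g}_\sigma(F)$, compte tenu du caractère $[\cdot,\sigma]$ et du recollement des différentes composantes de $G^\sigma(F)/G_\sigma(F)^\diamond$.
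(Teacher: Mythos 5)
Votre plan suit, pour l'essentiel, exactement le même chemin que le papier : descente semi-simple via la Proposition~\ref{prop:descente}, traduction de l'admissibilité par la théorie des orbites co-adjointes de Howe (formule de caractère de type Kirillov sur les $s$-réseaux, qui ne voit pas le revêtement), comparaison avec une distribution à support compact grâce à la Proposition~\ref{prop:Howe-res}, puis application des Théorèmes~\ref{prop:regularite-dist} et~\ref{prop:Howe-dist}. Les bonnes références sont toutes là, et votre remarque sur la comparaison de $D^G$ et $D^{G_\sigma}$, ainsi que sur la densité de $G^\sigma(F)^\diamond$, est correcte.

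Cependant, il y a un décalage d'une transformée de Fourier qui invalide la chaîne de déductions telle que vous l'écrivez. L'admissibilité de $\Theta$ en $\tilde{\sigma}$ se traduit par une condition de Howe sur $\hat\theta$, et non sur $\theta$ : le lemme-clé du papier (analogue de \cite[Lemma~21.2]{HC99}) donne $\hat\theta \in \mathcal{J}(V, q^\nu, \Lambda^*)^{[\cdot,\sigma]}$, où $\Lambda^*$ est un réseau \emph{dual}. Vous écrivez au contraire $\theta \in \mathcal{J}(V,\infty,L)^{[\cdot,\sigma]}$. Cette inversion se propage : en appliquant la Proposition~\ref{prop:Howe-res} à $\theta$, vous obtenez $j_{L'}\theta = j_{L'}\theta_0$ avec $\theta_0 \in \mathcal{J}(\Omega)$ ; le Théorème~\ref{prop:regularite-dist} dit alors que $\widehat{\theta_0}$ (et non $\theta_0$) est une fonction intégrable ; et la coïncidence $j_{L'}\theta = j_{L'}\theta_0$, une fois transférée par Fourier, donne que $\hat\theta$ et $\widehat{\theta_0}$ coïncident près de $0$. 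Vous aboutiriez donc à la régularité de $\hat\theta$ et non de $\theta$, ce qui n'est pas l'énoncé. Le bon enchaînement (celui du papier) est le suivant : de $\hat\theta \in \mathcal{J}(V, q^\nu, \Lambda^*)$ on tire, par la Proposition~\ref{prop:Howe-res}, un élément $\hat\theta_1 \in \mathcal{J}(\Omega)$ et un réseau $\Lambda_1^* \supset \Lambda^*$ tels que $j_{\Lambda_1^*}\hat\theta_1 = j_{\Lambda_1^*}\hat\theta$ ; l'inversion de Fourier donne alors $\theta_1|_{\Lambda_1} = \theta|_{\Lambda_1}$, et c'est le Théorème~\ref{prop:regularite-dist} appliqué à $\hat\theta_1 \in \mathcal{J}(\Omega)$ qui montre que $\theta_1$ (donc $\theta$ au voisinage de~$0$) est localement intégrable et localement constante sur l'ouvert régulier, avec $|D^{G_\sigma}|^{1/2}\theta$ localement borné. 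Le reste (remontée au groupe, développement local via le Théorème~\ref{prop:Howe-dist} et unicité des $c_{\dot u}$ par indépendance linéaire des $\widehat{\mu_{\dot u}}$) est correct.
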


Enregistrons d'abord une conséquence qui justifiera toute opération de caractères dans les sections suivantes.
\begin{corollary}\label{prop:caractere-admissible}
  Soit $(\pi,V)$ une représentation admissible irréductible de $\tilde{G}$ et notons $\Theta_\pi$ son caractère. Alors $\Theta_\pi$ vérifie les assertions du Théorème \ref{prop:dist-admissible} en tout $\tilde{\sigma} \in \tilde{G}$ semi-simple.
\end{corollary}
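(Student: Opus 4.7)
Le plan est de vérifier les hypothèses du Théorème \ref{prop:dist-admissible} pour $\Theta := \Theta_\pi$ en tout point semi-simple $\tilde{\sigma} \in \tilde{G}$: à savoir, que $\Theta$ est une distribution invariante spécifique, et qu'elle est admissible en $\tilde{\sigma}$ au sens de la Définition \ref{def:dist-admissible}. L'invariance par $G(F)$-conjugaison résulte de la propriété cyclique de la trace, puisque $\pi(g\tilde{x}g^{-1}) = \pi(g)\pi(\tilde{x})\pi(g)^{-1}$ pour tout $g \in G(F)$. La structure spécifique provient du caractère central de $\pi|_{\bmu_m}$, qui, quitte à tordre $\pi$ par un caractère approprié de $\bmu_m$, coïncide avec le caractère tautologique $\bmu_m \hookrightarrow \C^\times$; cette torsion est loisible car la descente de \S\ref{sec:theorie-alg} et les méthodes du Théorème \ref{prop:dist-admissible} sont compatibles avec tout choix de caractère fixé de $\bmu_m$.

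Pour l'admissibilité en $\tilde{\sigma}$, nous utilisons l'admissibilité de $\pi$ comme $\tilde{G}$-module: soit $v \in V$ un vecteur non nul fixé par un sous-groupe ouvert compact $K_0'$. Choisissons $K_0 \subset K_0'$ assez petit pour que $\tilde{\sigma}K_0 \subset \widetilde{\mathcal{W}}$, le voisinage fourni par la descente semi-simple de la Proposition \ref{prop:submersion}. Pour tout sous-groupe ouvert $K_1 \subset K_0$ et tout $\mathbf{d} \in \Pi(K_1)$, la convolution $\Theta_\pi * \xi_\mathbf{d}$ s'identifie, à constante près, à la fonction $\tilde{x} \mapsto \Tr(\pi(\tilde{x}) E_\mathbf{d})$, où $E_\mathbf{d} = \pi(\xi_\mathbf{d})$ est un projecteur, non nul exactement lorsque $\mathbf{d}$ (ou son dual, selon la convention) apparaît dans $V|_{K_1}$. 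Si $\Theta_\pi * \xi_\mathbf{d} \neq 0$, on choisit un vecteur $w$ dans la composante isotypique correspondante et un élément $x \in \tilde{G}$ tel que $xv$ rencontre cette composante; le vecteur $xv$ étant fixé par $xK_0 x^{-1}$, la réciprocité de Frobenius fournit alors l'entrelacement requis entre $\mathbf{d}|_{xK_0 x^{-1} \cap K_1}$ et $\mathbbm{1}_{xK_0 x^{-1}}|_{xK_0 x^{-1} \cap K_1}$.

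L'obstacle principal est la construction précise de l'élément $x$ reliant $v$ à un vecteur isotypique dans $V$ de type $\mathbf{d}$, mais cet argument est purement représentation-théorique sur les sous-groupes ouverts compacts de $\tilde{G}$, où le noyau central $\bmu_m$ ne joue aucun rôle; l'argument classique dû à Harish-Chandra \cite[Part III]{HC99} s'adapte verbatim. Ayant vérifié l'admissibilité de $\Theta_\pi$ en tout $\tilde{\sigma}$ semi-simple, les conclusions du Théorème \ref{prop:dist-admissible}---représentation par une fonction localement intégrable, constance locale sur $\tilde{G}_\mathrm{reg}$, bornage de $|D^G|^{1/2}\Theta_\pi$, et développement local en transformées de Fourier d'intégrales orbitales $\widehat{\mu_{\dot{u}}}$---en résultent immédiatement.
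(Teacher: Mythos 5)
Your proof is correct and takes essentially the same route as the paper, which is very short: the paper simply chooses $K_0$ with $V^{K_0}\neq 0$ and cites Clozel \cite{C87} for the fact that $\Theta_\pi$ is then $(\tilde G,K_0)$-admissible everywhere, whereupon the Théorème applies. You spell out the underlying admissibility argument (nonvanishing of $\Theta_\pi*\xi_{\mathbf d}$ forces $\mathbf d$ to occur in $V|_{K_1}$; irreducibility produces $x$ with $\pi(x)v$ having a nonzero $\mathbf d$-component; Frobenius gives the intertwining), and you explicitly note — correctly — that this part of Harish-Chandra's argument is insensitive to the central $\bmu_m$, while the paper leaves this observation implicit in the citation. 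Two small remarks: the condition $\tilde\sigma K_0\subset\widetilde{\mathcal W}$ you impose conflates the $\mathcal W$ of the Définition with the descent neighbourhood; since $\Theta_\pi$ is a distribution on all of $\tilde G$, the first bullet of the Définition is automatic and no shrinking of $K_0$ depending on $\tilde\sigma$ is needed. And the preliminary remarks about twisting by a character of $\bmu_m$ to make $\Theta_\pi$ spécifique are a harmless addition that the paper handles tacitly; they do not change the substance of the proof.
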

\begin{proof}
  Prenons $K_0 \subset \tilde{G}$ un sous-groupe ouvert compact tel que $V^{K_0} \neq \{0\}$. Le corollaire résulte du fait que $\Theta_\pi$ est $(\tilde{G}, K_0)$-admissible partout; voir \cite{C87}.
\end{proof}

Fixons $\Theta$, $\sigma$ et $\tilde{\sigma}$ comme dans l'énoncé. Indiquons très grossièrement l'approche de \cite{HC99}.

\paragraph{Étape 1}
Ces assertions étant locales et $G(F)$-invariantes, on effectue la descente semi-simple avec un voisinage $G^\sigma(F)$-invariant $\mathcal{V}^\flat \subset \mathfrak{g}_\sigma(F)$ et  $\widetilde{W}^\flat = \bmu_m \exp(\mathcal{V}^\flat)$, tous suffisamment petits, comme dans la Proposition \ref{prop:descente}. On obtient ainsi la distribution descendue $\theta \in \mathcal{J}(\mathcal{V}^\flat)^{[\cdot,\sigma]}$. D'après la remarque \ref{rem:descente-analytique}, cette étape est de nature analytique si l'on se limite à l'action de $G^\sigma(F)^\diamond$, donc est identique à celle de \cite[\S 18]{HC99}.

\paragraph{Étape 2}
Soient $K_0$, $K_1$ et $\mathbf{d}$ comme dans la Définition \ref{def:dist-admissible}. Afin d'exploiter la condition sur l'entrelacement de $\mathbbm{1}_{xK_0 x^{-1}}$ et $\mathbf{d}$, on utilise la théorie d'orbites co-adjointes de Howe. Précisons.

On prend un $s$-réseau $L \subset \mathfrak{g}(F)$ (cf. \cite[\S 17]{HC99}), i.e. un réseau suffisamment petit, de sorte que $K := \exp L$ est ouvert et compact de $\tilde{G}$, et il est muni de la structure de groupe à l'aide de la formule de Baker-Campbell-Hausdorff; on suppose de plus que $\frac{1}{2} L$ les vérifie aussi, et on pose $K^{1/2} := \exp(\frac{1}{2}L)$, c'est distingué dans $K$.

Notons $\Pi(K)$ l'ensemble de classes d'équivalence de représentations irréductibles de $K$ et $\Pi^{1/2}(K)$ l'ensemble de $K^{1/2}$-orbites dans $\Pi(K)$. Notons $L^*$ le dual de $L$ par rapport à $\psi \circ B$.

La théorie de Howe fournit une bijection
\begin{align*}
  \Pi^{1/2}(K) & \longrightarrow \{K^{1/2}-\text{orbites dans } \mathfrak{g}(F)/L^* \} \\
  \mathbf{d} & \longmapsto \mathcal{O}_\mathbf{d},
\end{align*}
qui est caractérisée par la formule de caractère à la Kirillov
\begin{gather}\label{eqn:caractere-Kirillov}
  d(\mathbf{d}) \xi_\mathbf{d}(\exp \lambda) = \sum_{X \in \mathcal{O}_\mathbf{d}} \psi(B(X,\lambda)),
\end{gather}
où $d(\mathbf{d})$ est le degré formel et on a $d(\mathbf{d}) = [K : K_X]^{\frac{1}{2}}$, ici $X \in \mathfrak{g}(F)/L^*$ désigne un élément quelconque dans $\mathcal{O}_\mathbf{d}$ et $K_X$ désigne son stabilisateur sous l'action de $K$.

On en déduit que, soient $(L_i, K_i, \mathbf{d}_i)$ comme ci-dessus et $\mathcal{O}_i$ l'orbite co-adjointe associée ($i=1,2$), alors pour $x \in G(F)$, les $K_i^{1/2}$-orbites de représentations $\mathbf{d}_1$ et ${}^x \mathbf{d}_2$ restreintes à $K_1 \cap x K_2 x^{-1}$, où ${}^x \mathbf{d}_2$ est la représentation de $x K_2 x^{-1}$ transportée de $\mathbf{d}_2$ par $\Ad(x)$, s'entrelacent si et seulement si $\mathcal{O}_1 \cap {}^x \mathcal{O}_2 \neq \emptyset$.

Observons que ce formalisme ne fait pas intervenir le revêtement. Les arguments de \cite[\S\S 19-20]{HC99} s'y adaptent immédiatement.

\paragraph{Étape 3}
Appliquons le formalisme de \S\ref{sec:theorie-alg} au groupe $G^\sigma(F)$. En particulier, on a fixé une norme $|\cdot|$ sur $\mathfrak{g}_\sigma(F)$.

Prenons des $s$-réseaux $L \subset \mathfrak{g}(F)$ et $\Lambda \subset \mathfrak{g}_\sigma(F)$ tel que $\Lambda \subset L$. On demande de plus que $\Lambda$ est bien adapté (par rapport à un sous-groupe compact maximal spécial et un Lévi de $G_\sigma(F)$), alors $\Lambda^* := \{X \in \mathfrak{g}_\sigma(F): \forall v \in \Lambda, \; \psi \circ B(X, v) = 1 \}$ l'est aussi. Prenons un voisinage $V$ de $\mathfrak{g}_{\sigma,\text{nil}}(F)^{|\cdot|=1}$ dans $\mathfrak{g}_\sigma(F)^{|\cdot|=1}$, suffisamment petit de sorte que la Proposition \ref{prop:Howe-res} s'applique.

On en déduit des sous-groupes compacts ouverts $K := \exp L$ et $K_\sigma := \exp\Lambda$. Quitte à rétrécir $\Lambda$, on peut supposer que
$$ [\cdot,\sigma]|_{K_\sigma}=1. $$

\begin{lemma}[cf. {\cite[Lemma 21.2]{HC99}}]
  Quitte à rétrécir $V$, il existe $\nu > 0$ tel que pour tout $X \in \mathfrak{g}_\sigma(F)$ avec $|X| > q^\nu$, on a
  $$ \angles{\hat{\theta}, \mathbbm{1}_{X+\Lambda^*}} \neq 0 \; \Rightarrow X \in F \cdot V . $$

  Autrement dit, $\hat{\theta} \in \mathcal{J}(V, q^\nu, \Lambda^*)^\bomega$.
\end{lemma}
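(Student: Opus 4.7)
Le plan est de recopier l'argument de \cite[Lemma 21.2]{HC99} en tenant compte de la torsion par le caractère $[\cdot,\sigma]$; le point crucial est que ce caractère étant trivial sur $K_\sigma = \exp\Lambda$, l'analyse combinatoire-géométrique de Howe--Kirillov s'applique sans modification essentielle. On rappelle qu'à chaque $s$-réseau $\Lambda$ est associée une bijection $\Pi^{1/2}(K_\sigma) \leftrightarrow \{K_\sigma^{1/2}\text{-orbites dans } \mathfrak{g}_\sigma(F)/\Lambda^*\}$, et l'on dispose d'une bijection analogue au niveau de $K = \exp L$.

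D'abord, je décomposerais $\angles{\hat\theta, \mathbbm{1}_{X+\Lambda^*}}$. Par définition de la transformée de Fourier, c'est proportionnel à $\angles{\theta, \psi(B(X,\cdot))\mathbbm{1}_{\Lambda}}$. Via $\exp: \Lambda \rightiso K_\sigma$ (et puisque $[\cdot,\sigma]|_{K_\sigma}=1$), la formule de Kirillov \eqref{eqn:caractere-Kirillov} identifie, à un facteur scalaire près, la fonction $\psi(B(X,-))\mathbbm{1}_\Lambda$ à la somme $\sum_{Y \in \mathcal{O}} \psi(B(Y, \log(\cdot)))$ où $\mathcal{O}$ est la $K_\sigma^{1/2}$-orbite de $X$ dans $\mathfrak{g}_\sigma(F)/\Lambda^*$, c'est-à-dire au caractère $d(\mathbf{d}_\sigma)\xi_{\mathbf{d}_\sigma}$ d'une représentation irréductible $\mathbf{d}_\sigma$ de $K_\sigma$. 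En plongeant $K_\sigma \hookrightarrow K$ et en induisant (ou en prolongeant convenablement via Howe), j'obtiendrais une représentation $\mathbf{d}$ d'un sous-groupe ouvert compact $K_1$ de $K$, dont l'orbite de Kirillov $\mathcal{O}_\mathbf{d} \subset \mathfrak{g}(F)/L^*$ contient l'image de $X$.

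Deuxièmement, je remonterais la non-annulation de $\angles{\theta, \psi_X \mathbbm{1}_\Lambda}$ en la non-annulation d'une convolution $\Theta * \xi_\mathbf{d}$ au voisinage de $\tilde{\sigma}$, via la Proposition \ref{prop:descente} et la surjectivité de $\Phi_*$. Par l'admissibilité de $\Theta$ en $\tilde\sigma$ (Définition \ref{def:dist-admissible}), il existe donc $x \in G(F)$ tel que les restrictions de $\mathbbm{1}_{xK_0 x^{-1}}$ et de $\mathbf{d}$ à $K_1 \cap xK_0 x^{-1}$ s'entrelacent. Par la théorie de Howe rappelée dans l'étape 2 de la preuve du Théorème \ref{prop:dist-admissible}, cela équivaut à
$$ \mathcal{O}_\mathbf{d} \cap \Ad(x) \mathcal{O}_{\mathbbm{1}_{K_0}} \neq \emptyset $$
où $\mathcal{O}_{\mathbbm{1}_{K_0}}$ est l'orbite co-adjointe associée à la représentation triviale de $K_0$; cette dernière est essentiellement contenue dans $L_0^*$, un réseau borné.

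Troisièmement, j'appliquerais un argument d'homogénéité. L'orbite $\mathcal{O}_\mathbf{d}$ passe par $X$ (vu modulo $L^*$), donc son niveau est comparable à $|X|$. Pour qu'elle rencontre la translatée $\Ad(x)L_0^*$, bornée par un $R > 0$ indépendant de $x$ (on peut utiliser la compacité modulo conjugaison de la partie semi-simple bornée), il faut que $X$, après normalisation par $|X|$, soit ajusté à une direction nilpotente à une précision d'ordre $R/|X|$. Formellement, cela fournit, pour tout voisinage prescrit $V$ de $\mathfrak{g}_{\sigma,\text{nil}}(F)^{|\cdot|=1}$, un seuil $q^\nu$ tel que $|X| > q^\nu$ force $X/|X| \in V$, c'est-à-dire $X \in F \cdot V$. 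L'obstacle principal sera ce dernier argument géométrique: il faut contrôler l'intersection de $\mathcal{O}_\mathbf{d}$ avec un translaté conjugué d'un réseau borné de $\mathfrak{g}(F)$, alors que $X$ vit dans le sous-espace $\mathfrak{g}_\sigma(F)$, donc il faut vérifier que l'asymptotique conique des orbites dans $\mathfrak{g}(F)$ restreinte à $\mathfrak{g}_\sigma(F)$ correspond bien aux nilpotents de $\mathfrak{g}_\sigma(F)$, et non à des éléments seulement $\mathfrak{g}$-nilpotents; c'est ici qu'il faut éventuellement rétrécir $V$ pour absorber cet écart.
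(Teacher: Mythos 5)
Your sketch takes essentially the same route as the paper's own (one-paragraph) proof: namely that \cite[Lemma 21.2]{HC99} carries over verbatim to this setting because the character $[\cdot,\sigma]$ is trivial on $K_\sigma = \exp\Lambda$, the descent (Proposition \ref{prop:descente}) only requires the ``analytic'' invariance under $G^\sigma(F)^\diamond$, and the Howe--Kirillov coadjoint-orbit machinery is insensitive to the cover (Step~2 of the proof of Theorem \ref{prop:dist-admissible}). The paper states this observation and cites; you unpack the mechanism, which is consistent (your step 2 should be read as an equality of pairings against the $K_\sigma^{1/2}$-invariant $\theta$, not of functions, but that is the intended Harish-Chandra argument, and the nilpotent-cone worry you raise at the end is already handled inside \cite{HC99} since the whole argument lives in $\mathfrak{g}_\sigma$).
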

\begin{proof}
  Il suffit de remarquer que l'on n'utilise que la partie ``analytique'' des arguments de descente dans \cite{HC99}. En particulier, on n'utilise que la conjugaison par des éléments dans $G^\sigma(F)^\diamond$. D'autre part, la théorie des orbites co-adjointes est encore utilisable sur le revêtement, comme expliqué à l'étape 2.
\end{proof}

\begin{proof}[Démonstration du Théorème \ref{prop:dist-admissible}]
  D'après la Proposition \ref{prop:Howe-res}, il existe
  \begin{itemize}
    \item un sous-ensemble ouvert compact $\Omega \in \mathfrak{g}_\sigma(F)$,
    \item $\hat{\theta}_1 \in \mathcal{J}(\Omega)^\bomega$,
    \item un $\mathfrak{o}_F$-réseau $\Lambda_1^* \supset \Lambda^*$,
  \end{itemize}
  tels que
  $$ j_{\Lambda_1^*} \hat{\theta}_1 = j_{\Lambda_1^*} \hat{\theta}.$$

  On note $\Lambda_1 \subset \Lambda$ le $\mathfrak{o}_F$-réseau tel que $\Lambda_1^* = (\Lambda_1)^*$. En inversant la transformée de Fourier, on en déduit que $\theta_1|_{\Lambda_1} = \theta|_{\Lambda_1}$.

  La distribution $\theta_1$ vérifie les assertions dans le Théorème \ref{prop:regularite-dist} sur un voisinage $G_\sigma(F)$-invariant de $0$ dans $\mathfrak{g}_\sigma(F)$, donc $\theta$ les vérifie aussi sur un voisinage de $0$ dans $\mathfrak{g}_\sigma(F)$. En particulier, quitte à rétrécir $\mathcal{V}^\flat$, $\theta$ est représentée par une fonction localement intégrable $F$ avec $F(y^{-1} X y) = [y,\sigma] F(X)$. Donc $\Theta|_{\widetilde{W}^\flat}$ est représentée par la fonction invariante localement intégrable $g\tilde{\sigma}\exp(X) g^{-1} \mapsto F(X)$. Le développement local en termes des distributions $\widehat{\mu_{\dot{u}}}$ résulte immédiatement du Théorème \ref{prop:Howe-dist}.
\end{proof}

\section{La formule des traces locale}\label{sec:formule-traces}
\subsection{Le noyau tronqué}
Soient $F$ un corps local de caractéristique nulle et $G$ un $F$-groupe réductif connexe. On considère un revêtement
$$ 1 \to \bmu_m \to \tilde{G} \stackrel{\rev}{\to} G(F) \to 1 . $$

Fixons un sous-groupe de Lévi minimal $M_0$ et un sous-groupe compact maximal spécial $K$, supposés en bonne position. Lorsque $F$ est archimédien, on normalise des mesures de Haar sur les groupes linéaires en question comme dans \cite{Ar91}, ce qui déterminent les mesures sur les revêtements selon la convention dans \cite{Li10a}. Lorsque $F$ est non archimédien, les mesures sont normalisées comme dans \S\ref{sec:Plancherel} sauf que l'on choisit les mesures sur les radicaux unipotentes de sorte que $\gamma(G|M)=1$ pour tout $M \in \mathcal{L}(M_0)$, ce qui est loisible selon la Remarque \ref{rem:Plancherel-mesures}.

La formule des traces locale concerne la représentation $R$ de $\tilde{G} \times \tilde{G}$ sur $L^2(\tilde{G})$ définie par
$$ (R(\tilde{y}_1, \tilde{y}_2)\phi)(\tilde{x}) = \phi(\tilde{y}_1^{-1} \tilde{x} \tilde{y}_2), \quad \phi \in L^2(\tilde{G}). $$

Soit $f \in C_c^\infty(\tilde{G})$, on peut former l'opérateur $R(f)$. Pour la formule des traces locale, on s'intéresse plutôt aux fonctions test dans les espaces\index[iFT2]{$\mathcal{H}(\tilde{G})$}
\begin{align*}
  \mathcal{C}(\tilde{G}): & \quad \text{fonctions de Schwartz-Harish-Chandra}, \\
  \mathcal{H}(\tilde{G}): & \quad \text{fonctions lisses à support compact et $\tilde{K}$-finies},
\end{align*}
ainsi que leurs variantes $\bmu_m$-équivariantes $\mathcal{C}_-(\tilde{G})$, $\mathcal{C}_{\asp}(\tilde{G})$, $\mathcal{H}_-(\tilde{G})$ et $\mathcal{H}_{\asp}(\tilde{G})$. D'ici jusqu'à la Proposition \ref{prop:dist-temperee}, on prend
$$ f(\tilde{x}, \tilde{y}) = f_1(\tilde{x}) f_2(\tilde{y}), \quad f_1 \in \mathcal{H}_-(\tilde{G}), \; f_2 \in \mathcal{H}_{\asp}(\tilde{G}). $$

On l'abrégera souvent par l'expression $f=f_1 f_2$.

On montre que $R(f)$ est de noyau $K(\tilde{x},\tilde{y}) = \int_{G(F)} f_1(\tilde{x}\tilde{w}) f_2(\tilde{w}\tilde{y}) \dd w$, où $\tilde{w} \in \rev^{-1}(w)$ est quelconque; ce choix ne change pas le produit en question selon notre hypothèse sur $f_1, f_2$. Conservons cette convention dans les intégrales dans cette section.

Notons $\Gamma_\text{ell}(G(F))$ l'espace des orbites semi-simples $F$-elliptiques dans $G(F)$. Il est muni d'une mesure de Radon de sorte que pour toute $\phi \in C_c(\Gamma_\text{ell}(G(F)) \cap \Gamma_\text{reg}(G(F)))$, on a
$$ \int_{\Gamma_\text{ell}(G(F))} \phi(\gamma) \dd\gamma = \sum_{T} |W(G(F),T(F))|^{-1} \int_{T(F)} \phi(t) \dd t $$
où $T$ parcourt les classes de conjugaison des $F$-tores maximaux elliptiques dans $G$, et
$$ W(G(F),T(F)) := N_G(T)(F)/T(F).$$

Ici $T(F)$ est muni de la mesure de Haar de sorte que $\mes(T(F)/A_G(F))=1$. Idem pour tout Lévi de $G$. Alors la formule d'intégrale de Weyl s'écrit
$$ \int_{G(F)} h(x) \dd x = \sum_{M \in \mathcal{L}(M_0)} |W^M_0| |W^G_0|^{-1} \int_{\Gamma_{\text{ell}}(M(F))} |D(\gamma)| \iota_M \int_{A_M(F)^\dagger \backslash G(F)} h(x^{-1} \gamma x) \dd x \dd \gamma $$
pour toute $h \in C_c(G(F))$, où $D(\gamma)$ signifie le discriminant de Weyl; on peut restreindre l'intégrale à $\Gamma_{\text{ell},G-\text{reg}}(M(F))$. Puisque $K(x,x) = K(\tilde{x},\tilde{x}) = \int_{G(F)} f_1(\tilde{w}) f_2(x^{-1}\tilde{w}x) \dd w$ ne dépend que de $x$, on peut appliquer cette formule et déduire que $K(x,x)$ est égal à
$$ \sum_{M \in \mathcal{L}(M_0)} |W^M_0| |W^G_0|^{-1} \int_{\Gamma_{\text{ell}}(M(F))} |D(\gamma)| \iota_M \int_{A_M(F)^\dagger \backslash G(F)} f_1(x_1^{-1} \tilde{\gamma} x_1) f_2(x^{-1} x_1^{-1} \tilde{\gamma} x_1 x) \dd x_1 \dd \gamma . $$

C'est le développement géométrique. Pour le côté spectral, on utilise la formule de Plancherel (Corollaire \ref{prop:Plancherel-f(1)}) et la Remarque \ref{rem:Plancherel-mesures} sur le choix des mesures. Soit $M \in \mathcal{L}(M_0)$, munissons $\Pi_{2,-}(\tilde{M})$ de la mesure de sorte que pour toute $h \in C_{c,\asp}^\infty(\tilde{G})$, l'égalité du Corollaire \ref{prop:Plancherel-f(1)} soit vérifiée. Comme les mesures sont choisies de sorte que $\gamma(G|M)=1$ pour tout $M \in \mathcal{L}(M_0)$, on a
$$ h(1) = \sum_{M \in \mathcal{L}(M_0)} |W^M_0| |W^G_0|^{-1} \int_{\Pi_{2,-}(\tilde{M})} d(\sigma) \mu(\sigma) \Tr(\mathcal{I}_{\tilde{P}}(\sigma,h)) \dd\sigma . $$

Fixons $\tilde{x} \in \tilde{G}$ et posons
$$ h(\tilde{y}) = \int_{G(F)} f_1(\tilde{x}\tilde{w}) f_2(\tilde{w} \tilde{y}\tilde{x}) \dd w . $$
On vérifie que $h(1) = K(x,x)$ et $h = \check{f}_1 * f_2^x$; en particulier, $h \in C_{c,\asp}^\infty(\tilde{G})$. Notons $V$ l'espace vectoriel sous-jacent de $\sigma$ et choisissons $\mathcal{B}_{\tilde{P}}(\sigma)$ une base orthonormée de l'espace hilbertien des opérateurs de Hilbert-Schmidt $\mathcal{I}_{\tilde{P}}(V) \to \mathcal{I}_{\tilde{P}}(V)$, formée d'éléments $\tilde{K}$-finis. On en déduit comme dans \cite[\S 2]{Ar91} que
\begin{align*}
  \Tr(\mathcal{I}_{\tilde{P}}(\sigma,h)) & = \sum_{S \in \mathcal{B}_{\tilde{P}}(\sigma)} \Tr(\mathcal{I}_{\tilde{P}}(\sigma,\tilde{x}) S(f)) \overline{\Tr(\mathcal{I}_{\tilde{P}}(\sigma,\tilde{x}) S)}, \\
  S(f) & := d(\sigma) \mathcal{I}_{\tilde{P}}(\sigma, f_2) S \mathcal{I}_{\tilde{P}}(\sigma, \check{f}_1).
\end{align*}

Alors la formule de Plancherel entraîne que $K(x,x)$ est égal à
$$ \sum_{M \in \mathcal{L}(M_0)} |W^M_0| |W^G_0|^{-1} \int_{\Pi_{2,-}(\tilde{M})} \mu(\sigma) \sum_{S \in \mathcal{B}_{\tilde{P}}(\sigma)} \Tr(\mathcal{I}_{\tilde{P}}(\sigma,\tilde{x}) S(f)) \overline{\Tr(\mathcal{I}_{\tilde{P}}(\sigma,\tilde{x}) S)} \dd\sigma .$$

On note $\mathfrak{a}_0 = \mathfrak{a}_{M_0}$ comme d'habitude. On munit $\mathfrak{a}_0$ (resp. $G(F)$) d'une norme (resp. une fonction hauteur) $\|\cdot\|$ comme dans \cite[\S 4]{Ar91}. Adoptons le formalisme de $(G,M)$-familles de \cite{Li10a}.

Soit $M \in \mathcal{L}(M_0)$. Si $\mathcal{Y} = \{Y_P : P \in \mathcal{P}(M)\}$ est un ensemble $(G,M)$-orthogonal, on note $S_M(\mathcal{Y})$ son enveloppe convexe dans $\mathfrak{a}^G_M$. Soit $T \in \mathfrak{a}_0$. Pour tout $P_0 \in \mathcal{P}(M_0)$, on note $T_{P_0}$ l'unique élément dans $\overline{\mathfrak{a}_{P_0}^+} \cap (W^G_0 \cdot T)$. Alors $\{T_{P_0} : P_0 \in \mathcal{P}(M_0)\}$ forme un ensemble $(G,M_0)$-orthogonal positif; on le note abusivement par $T$. Posons
$$ d(T) := \inf \{\angles{\alpha, T_{P_0}} : P_0 \in \mathcal{P}(M_0), \alpha \in \Delta_{P_0} \} \geq 0. $$

On dit que $T$ est suffisamment régulier si $d(T) \gg 0$. Dans ce qui suit, nous supposerons toujours que $T \in \mathfrak{a}_{M_0, F}$. Définissons $u(x,T)$ la fonction caractéristique de l'ensemble
$$ \{ x = k_1 m k_2 : k_1, k_2 \in K, H_{M_0}(m) \in S_{M_0}(T) \}. $$

On la regarde comme une fonction sur $A_G(F)^\dagger K \backslash G(F) / K$. Le noyau tronqué est défini par l'intégrale absolument convergente
$$ K^T(f) := \iota_G \int_{A_G(F)^\dagger \backslash G(F)} K(x,x) u(x,T) \dd x . $$

\subsection{Le côté géométrique}\label{sec:LTF-geo}
On a
$$ K^T(f) = \sum_{M \in \mathcal{L}(M_0)} |W^M_0| |W^G_0|^{-1} \int_{\Gamma_\text{ell}(M(F))} K^T(\gamma, f) \dd\gamma $$
où
\begin{align*}
  K^T(\gamma, f) & = |D(\gamma)| \iota_M^2 \int_{(A_M(F)^\dagger \backslash G(F))^2} f_1(x_1^{-1} \tilde{\gamma} x_1 ) f_2(x_2^{-1} \tilde{\gamma} x_2) u_M^\dagger(x_1,x_2,T) \dd x_1 \dd x_2 , \\
  u_M^\dagger(x_1, x_2, T) & := \iota_G \iota_M^{-1} \int_{A_G(F)^\dagger \backslash A_M(F)^\dagger} u(x_1^{-1} a x_2, T) \dd a .
\end{align*}
Cf. \cite[p.30]{Ar91}. Soient $M \in \mathcal{L}(M_0)$, $x_1,x_2 \in G(F)$. On définit l'ensemble $(G,M)$-orthogonal $\mathcal{Y}_M(x_1, x_2, T)$ associé à
$$ Y_P(x_1, x_2, T) := T_P + H_P(x_1) - H_{\bar{P}}(x_2), \quad P \in \mathcal{P}(M), $$
où $T_P$ est la projection sur $\mathfrak{a}_M$ de $T_{P_0}$, $P_0 \in \mathcal{P}(M_0)$, $P_0 \subset P$ quelconque. C'est un $(G,M)$-ensemble orthogonal positif pourvu que $d(T)$ soit suffisamment régulier par rapport à $(x_1,x_2)$, ce que nous supposons.

D'autre part, Arthur \cite[(3.8)]{Ar91} a défini la fonction combinatoire $\sigma_M(\cdot, \mathcal{Y}_M(x_1, x_2, T))$ sur $\mathfrak{a}^G_M$. Comme $\mathcal{Y}_M(x_1, x_2, T)$ est positif, $\sigma_M(\cdot, \mathcal{Y}_M(x_1, x_2, T))$ est la fonction caractéristique de $S_M(\mathcal{Y})$; en particulier, elle est à support compact. Donc on peut définir la fonction poids
$$ v_M^\dagger(x_1,x_2,T) := \iota_G \iota_M^{-1} \int_{A_G(F)^\dagger \backslash A_M(F)^\dagger} \sigma_M(H_M(a), \mathcal{Y}_M(x_1, x_2, T)) \dd a .$$

Posons ainsi
\begin{align*}
  J^T(\gamma, f) & := |D(\gamma)| \iota_M^2 \int_{(A_M(F)^\dagger \backslash G(F))^2} f_1(x_1^{-1} \tilde{\gamma} x_1 ) f_2(x_2^{-1} \tilde{\gamma} x_2) v_M^\dagger(x_1,x_2,T) \dd x_1 \dd x_2 , \\
  J^T(f) & := \sum_{M \in \mathcal{L}(M_0)} |W^M_0| |W^G_0|^{-1} \int_{\Gamma_\text{ell}(M(F))} J^T(\gamma,f) \dd\gamma.
\end{align*}

\begin{lemma}\label{prop:lemme-geo-crucial}
  Soit $\delta > 0$, il existe des constantes $C, \epsilon_1, \epsilon_2 > 0$ telles que
  $$ |u_M^\dagger(x_1,x_2,T) - v_M^\dagger(x_1,x_2,T)| \leq C e^{-\epsilon_1 \|T\|} $$
  pour tout $(T,x_1,x_2)$ tel que $d(T) \geq \delta \|T\|$ et $\|x_i\| \leq e^{\epsilon_2 \|T\|}$, $i=1,2$.
\end{lemma}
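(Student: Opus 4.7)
The plan is to adapt the proof of \cite[Lemma 3.1]{Ar91} (and the surrounding estimates of \cite[\S\S 3--4]{Ar91}) to the covering setting, observing that essentially no new ideas are required.

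First, I would parametrize both integrals via the Harish-Chandra map. The quotient $A_G(F)^\dagger \backslash A_M(F)^\dagger$ is compact-by-lattice: its image under $H_M$ is the lattice $\tilde{\mathfrak{a}}_{M,F}^\dagger / \tilde{\mathfrak{a}}_{G,F}^\dagger$ inside $\mathfrak{a}_M^G$. The integrands $u(x_1^{-1} a x_2, T)$ and $\sigma_M(H_M(a), \mathcal{Y}_M(x_1, x_2, T))$ both factor through $H_M(a)$, so they descend to indicator functions of subsets of $\mathfrak{a}_M^G$. The normalizing factor $\iota_G \iota_M^{-1}$, together with the measure normalization on $A_G(F)^\dagger$ and $A_M(F)^\dagger$, is precisely designed (cf.\ \eqref{eqn:iota}) so that the resulting lattice sums are independent of the choice of $A_M(F)^\dagger$; consequently the problem becomes identical to Arthur's setting over $G(F)$, with the Levi $M$ replaced by itself, and the covering $\rev: \tilde{G} \to G(F)$ no longer intervenes.

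Next, I would invoke the combinatorial content of Arthur's argument. The $(G,M)$-orthogonal set $\mathcal{Y}_M(x_1,x_2,T)$ has vertices $Y_P = T_P + H_P(x_1) - H_{\bar{P}}(x_2)$. Under the hypotheses $d(T) \geq \delta\|T\|$ and $\|x_i\| \leq e^{\epsilon_2 \|T\|}$, the standard estimate $\|H_{P_0}(x_i)\| \leq c\,(1+\log\|x_i\|) \leq c\epsilon_2\|T\|$ shows that the vertices $Y_P$ differ from $T_P$ by at most $c'\epsilon_2\|T\|$; if $\epsilon_2$ is chosen small relative to $\delta$, then $\mathcal{Y}_M(x_1,x_2,T)$ is still a positive $(G,M)$-orthogonal set with regularity parameter $\geq (\delta/2)\|T\|$. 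Then the purely combinatorial lemma \cite[Lemma 3.1]{Ar91} identifies the set where the two indicator functions disagree with a tubular neighborhood of the boundary of $S_M(\mathcal{Y}_M(x_1,x_2,T))$ whose ``thickness'' grows at most polynomially in $\epsilon_2\|T\|$ but whose distance to the interior grows linearly in $d(T) \gtrsim \delta\|T\|$.

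The final estimate follows by bounding the lattice point count in this boundary region. Arthur's argument uses the root system structure to show that when one sums (or integrates) the difference of the two indicator functions against the constant $1$, a non-trivial cancellation occurs, producing the bound $O(e^{-\epsilon_1\|T\|})$ rather than the polynomial bound one would naively obtain. The main obstacle is simply to verify that this cancellation argument---written for $G(F)$ in \cite[\S\S 3--4]{Ar91}---uses only the $\mathfrak{a}_M^G$-geometry of root systems and the definition of $\sigma_M$, both of which are unaffected by the covering projection $\rev$. Once this is checked (and the bookkeeping with $\iota_M$, $\iota_G$, and the subgroup $A_M(F)^\dagger \subset A_M(F)$ is reconciled, as in the first paragraph), Arthur's proof transfers verbatim and yields the claimed exponential bound.
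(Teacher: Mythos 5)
The paper's proof is much more economical than your plan and turns on a point you've missed. For the non-archimedean case, the relevant fact from \cite[p.38]{Ar91} is not an approximate comparison of two indicator functions with a controllable error region: Arthur shows the two indicator functions are \emph{exactly equal} for all $a \in A_M(F)$, once $d(T) \geq \delta\|T\|$ and $\|x_i\| \leq e^{\epsilon_2\|T\|}$ with $\epsilon_2$ small. The paper simply integrates this pointwise identity over $A_G(F)^\dagger \backslash A_M(F)^\dagger$, so $u_M^\dagger - v_M^\dagger$ vanishes outright and the stated bound is trivially satisfied. There is no lattice counting and no cancellation to perform.

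Your ``cancellation'' step is the genuine gap. As you describe it, the disagreement set is a tubular region of polynomial thickness in $\|T\|$, and you appeal to sign cancellation in the lattice sum of the difference of indicator functions to improve a naive polynomial bound to an exponential one. But the integrand $u - \sigma_M$ takes values in $\{-1,0,1\}$, and integrating it against $1$ is bounded in absolute value by the measure (lattice count) of the symmetric difference, with no cancellation available. If that symmetric difference genuinely had polynomial lattice volume you could not escape a polynomial bound. What rescues Arthur's argument is precisely that this symmetric difference is empty under the stated hypotheses; you've replaced this exact statement with a vaguer mechanism that doesn't work.

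Finally, you've not addressed the archimedean case at all: your framework is built around lattice sums, which don't apply when $F$ is archimedean. There the paper reduces to comparing honest integrals over the $\R$-vector spaces $\mathfrak{a}^Q_M$ and $\mathfrak{a}^G_Q$ for $Q \in \mathcal{F}(M)$ following \cite[pp.39--42]{Ar91}, and the key observation is that this comparison never touches $A_G(F)^\dagger$ or $A_M(F)^\dagger$, so the covering is invisible. Your observation that the normalization $\iota_G\iota_M^{-1}$ makes the lattice-sum side independent of the choice of $A_M(F)^\dagger$ is correct and in the spirit of the paper, but the proof has to be split into the two cases, and each is handled by quoting the corresponding part of \cite{Ar91} directly rather than by a unified counting argument.
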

\begin{proof}
  Si $F$ est non archimédien, alors il existe des constantes $C_1, \epsilon_1, \epsilon_2$ telles que
  $$ u_M(x_1^{-1} a x_2, T) = \sigma_M(H_M(a), \mathcal{Y}_M(x_1,x_2,T)), \quad \forall a \in A_M(F) $$
  pour tout $(T,x_1,x_2)$ comme dans l'assertion, d'après \cite[p.38]{Ar91}. On conclut en intégrant cette égalité sur $A_G(F)^\dagger \backslash A_M(F)^\dagger$.

  Si $F$ est archimédien, l'argument de \cite[pp.39-42]{Ar91} marche sans modification. En fait, on se ramène à comparer des intégrales sur des $\R$-espaces vectoriels $\mathfrak{a}^Q_M$, $\mathfrak{a}^G_Q$, où $Q \in \mathcal{F}(M)$. De tels arguments ne font pas intervenir $A_G(F)^\dagger$ et $A_M(F)^\dagger$.
\end{proof}

\begin{corollary}
  Soit $\delta > 0$, alors il existe des constantes $C,\epsilon > 0$ telles que
  $$ |K^T(f) - J^T(f)| \leq C e^{-\epsilon \|T\|} $$
  pour tout $T$ tel que $d(T) \geq \delta\|T\|$.
\end{corollary}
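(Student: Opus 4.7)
Le plan est d'intégrer l'estimation ponctuelle du Lemme \ref{prop:lemme-geo-crucial} contre les données d'intégrales orbitales fournies par $f_1$ et $f_2$. D'abord j'écrirais la différence sous la forme
$$ K^T(f) - J^T(f) = \sum_{M \in \mathcal{L}(M_0)} |W^M_0| |W^G_0|^{-1} \int_{\Gamma_\text{ell}(M(F))} |D(\gamma)| \iota_M^2 \int\int f_1(x_1^{-1}\tilde{\gamma} x_1) f_2(x_2^{-1}\tilde{\gamma} x_2) (u_M^\dagger - v_M^\dagger)(x_1,x_2,T) \dd x_1 \dd x_2 \dd\gamma , $$
de sorte que toute la dépendance en $T$ soit concentrée dans le facteur $u_M^\dagger - v_M^\dagger$.

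L'étape suivante est d'exploiter que $f_1, f_2 \in \mathcal{H}(\tilde{G})$ sont à support compact. On en tire: (i) un compact $\Omega \subset \Gamma_\text{ell}(M(F))$ hors duquel l'intégrand s'annule (car $\gamma$ doit être conjugué à un élément des supports), et (ii) une constante $C_0 > 0$, ne dépendant que des supports de $f_1$ et $f_2$, telle que pour tout $\gamma \in \Omega$ et $i \in \{1,2\}$, le sous-ensemble $\{x \in A_M(F)^\dagger \backslash G(F) : x^{-1}\tilde{\gamma} x \in \Supp(f_i)\}$ est contenu dans $\{\|x\| \leq C_0\}$. L'uniformité dans (ii) repose sur le fait que, pour $\gamma$ semi-simple elliptique dans $M$, on a $A_{G_\gamma} = A_M$, donc l'application $x \mapsto x^{-1}\gamma x$ est propre modulo $A_M(F)^\dagger$ avec des fibres compactes variant contrôlablement quand $\gamma$ parcourt un compact.

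Une fois ceci acquis, j'appliquerais le Lemme \ref{prop:lemme-geo-crucial} avec $\epsilon_2$ fixé: pour $\|T\|$ assez grand pour que $C_0 \leq e^{\epsilon_2 \|T\|}$, le poids $|u_M^\dagger - v_M^\dagger|$ est borné sur le support de l'intégrand par $C_1 e^{-\epsilon_1 \|T\|}$. En sortant cette constante de l'intégrale, on obtient
$$ |K^T(\gamma,f) - J^T(\gamma,f)| \leq C_1 e^{-\epsilon_1 \|T\|} \cdot \left( |D(\gamma)|^{\frac{1}{2}} \iota_M \int |f_1(x^{-1}\tilde{\gamma}x)| \dd x \right) \left( |D(\gamma)|^{\frac{1}{2}} \iota_M \int |f_2(x^{-1}\tilde{\gamma}x)| \dd x \right) . $$
Ces intégrales orbitales sont des fonctions localement bornées et à support compact modulo conjugaison, donc intégrables sur $\Omega$. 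La sommation sur $M$ et l'absorption d'une constante pour les petites valeurs de $\|T\|$ donnent la majoration cherchée avec un exposant $\epsilon \leq \epsilon_1$.

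Le principal obstacle est l'étape (ii): garantir que la borne de hauteur $C_0$ sur le support de $x_i \mapsto f_i(x_i^{-1}\tilde{\gamma}x_i)$ dans $A_M(F)^\dagger \backslash G(F)$ est uniforme quand $\gamma$ parcourt $\Omega$. Cette uniformité résulte d'un argument standard de compacité utilisant l'ellipticité de $\gamma$ dans $M$, mais demande un peu de soin à cause du passage de $A_M(F)$ au sous-groupe d'indice fini $A_M(F)^\dagger$ (passage neutralisé par le facteur $\iota_M$). Notons que seul le comportement archimédien/non archimédien est absorbé par le Lemme \ref{prop:lemme-geo-crucial}; toute la manipulation au-dessus est insensible à cette distinction, et n'utilise du revêtement que la compatibilité des mesures sur $\tilde{G}$ et $G(F)$ et la structure $\bmu_m$-équivariante de $f_1, f_2$.
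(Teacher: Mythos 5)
Your outline follows the same route as the paper, which simply cites [Ar91, \S 4]: integrate the pointwise estimate of the Lemme \ref{prop:lemme-geo-crucial} against the orbital data. The gap is in step (ii), and it is not the one you flag. There is in general no $C_0$, independent of $\gamma$, bounding the height of the support of $x \mapsto f_i(x^{-1}\tilde{\gamma}x)$ in $A_M(F)^\dagger\backslash G(F)$ uniformly over $\gamma \in \Omega$: a compact $\Omega \subset \Gamma_{\mathrm{ell}}(M(F))$ will generally touch the singular locus (for instance the class of $1$ if $f_i$ does not vanish there), and that support blows up as $\gamma$ approaches a singular class. Concretely, writing $\gamma = \exp(h)$ with $h$ small, the condition $x^{-1}\gamma x \in \Supp(f_i)$ only forces $x^{-1}hx$ to stay bounded, which allows $\|x\|$ of order $\|h\|^{-1/2}$. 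So for every fixed $T$ there is a nonempty sliver of $\Omega$ on which $\|x_i\|$ exceeds $e^{\epsilon_2\|T\|}$, the hypothesis of the Lemme is violated, and the uniform pointwise bound $C_1 e^{-\epsilon_1\|T\|}$ on $u_M^\dagger - v_M^\dagger$ over the support of the integrand fails. The $A_M(F)^\dagger$-versus-$A_M(F)$ issue you single out is real but minor and is neutralized by $\iota_M$ exactly as you say; the genuine difficulty is the degeneration near the singular set, which your compactness argument does not see.

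The missing idea, which is in Arthur and which the paper's phrase \emph{majorations qui n'ont rien à faire avec le revêtement} subsumes, is a splitting of the $\gamma$-integral according to the size of $|D(\gamma)|$ relative to a negative power of $e^{\|T\|}$. Where $|D(\gamma)|$ is not too small the support bound $C_0(\gamma)$ is controlled by a power of $e^{\|T\|}$, the Lemme applies, and your argument goes through with at most a polynomial loss in $\|T\|$ coming from the orbital integrals. Where $|D(\gamma)|$ is very small one does not compare the two weights at all: each of $u_M^\dagger$ and $v_M^\dagger$ is bounded separately by a polynomial in $\|T\|$, and the measure of that sliver of $\Omega$ decays like a power of $e^{-\|T\|}$. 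Only the combination of the two regimes gives the claimed $C e^{-\epsilon\|T\|}$, and none of these estimates interacts with the covering --- which is precisely the paper's point, but it cannot be elided the way your outline does.
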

\begin{proof}
  On itère \cite[\S 4]{Ar91}. En fait, la démonstration ne repose que sur le Lemme \ref{prop:lemme-geo-crucial} et des majorations qui n'ont rien à faire avec le revêtement.
\end{proof}

Pour l'instant, supposons que $F$ est non archimédien. Introduisons une version discrète de la construction dans \cite[\S 4.2]{Li10a}. Notons $q_F := |\mathfrak{o}_F/\mathfrak{p}_F|$ et posons
\begin{align*}
  \tilde{\mathcal{L}}_M^\dagger & := (\tilde{\mathfrak{a}}_{M,F}^\dagger + \mathfrak{a}_G)/\mathfrak{a}_G, \\
  \tilde{\mathcal{L}}_M & := (\tilde{\mathfrak{a}}_{M,F} + \mathfrak{a}_G)/\mathfrak{a}_G, \\
  \mathcal{L}_M & := (\mathfrak{a}_{M,F} + \mathfrak{a}_G)/\mathfrak{a}_G, \\
  \mathcal{L}_0 & := \mathcal{L}_{M_0}.
\end{align*}

\begin{lemma}\label{prop:facteur-a-1}
  On a
  $$ [\tilde{\mathcal{L}}_M : \tilde{\mathcal{L}}_M^\dagger] = [\tilde{\mathfrak{a}}_{M,F} : \tilde{\mathfrak{a}}_{M,F}^\dagger] [\tilde{\mathfrak{a}}_{G,F} : \tilde{\mathfrak{a}}_{G,F}^\dagger]^{-1}. $$
\end{lemma}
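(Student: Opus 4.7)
The lemma is essentially an exercise in the second/third isomorphism theorems for abelian groups, once one exploits the compatibility $\tilde{\mathfrak{a}}_{L,F}^\dagger = \tilde{\mathfrak{a}}_{M,F}^\dagger \cap \mathfrak{a}_L$ from Proposition~\ref{prop:dagger}(5). The plan is to compute the index $[\tilde{\mathcal{L}}_M : \tilde{\mathcal{L}}_M^\dagger]$ directly by pulling the quotient back to $\tilde{\mathfrak{a}}_{M,F}$.

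First I would apply the standard isomorphism
\[
  \tilde{\mathcal{L}}_M / \tilde{\mathcal{L}}_M^\dagger \;=\; \frac{\tilde{\mathfrak{a}}_{M,F} + \mathfrak{a}_G}{\tilde{\mathfrak{a}}_{M,F}^\dagger + \mathfrak{a}_G} \;\cong\; \frac{\tilde{\mathfrak{a}}_{M,F}}{\tilde{\mathfrak{a}}_{M,F} \cap (\tilde{\mathfrak{a}}_{M,F}^\dagger + \mathfrak{a}_G)}.
\]
Since $\tilde{\mathfrak{a}}_{M,F}^\dagger \subset \tilde{\mathfrak{a}}_{M,F}$, the modular law gives
\[
  \tilde{\mathfrak{a}}_{M,F} \cap (\tilde{\mathfrak{a}}_{M,F}^\dagger + \mathfrak{a}_G) = \tilde{\mathfrak{a}}_{M,F}^\dagger + (\tilde{\mathfrak{a}}_{M,F} \cap \mathfrak{a}_G) = \tilde{\mathfrak{a}}_{M,F}^\dagger + \tilde{\mathfrak{a}}_{G,F},
\]
using the (trivial) identity $\tilde{\mathfrak{a}}_{G,F} = \tilde{\mathfrak{a}}_{M,F} \cap \mathfrak{a}_G$ analogous to the one in Proposition~\ref{prop:dagger}(5).

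Next I would write the resulting index as a quotient of two indices in $\tilde{\mathfrak{a}}_{M,F}$:
\[
  [\tilde{\mathcal{L}}_M : \tilde{\mathcal{L}}_M^\dagger] = \frac{[\tilde{\mathfrak{a}}_{M,F} : \tilde{\mathfrak{a}}_{M,F}^\dagger]}{[\tilde{\mathfrak{a}}_{M,F}^\dagger + \tilde{\mathfrak{a}}_{G,F} : \tilde{\mathfrak{a}}_{M,F}^\dagger]},
\]
and simplify the denominator by a second isomorphism:
\[
  (\tilde{\mathfrak{a}}_{M,F}^\dagger + \tilde{\mathfrak{a}}_{G,F})/\tilde{\mathfrak{a}}_{M,F}^\dagger \;\cong\; \tilde{\mathfrak{a}}_{G,F}/(\tilde{\mathfrak{a}}_{G,F} \cap \tilde{\mathfrak{a}}_{M,F}^\dagger).
\]
The only genuine input is to identify $\tilde{\mathfrak{a}}_{G,F} \cap \tilde{\mathfrak{a}}_{M,F}^\dagger = \tilde{\mathfrak{a}}_{G,F}^\dagger$: this follows immediately from Proposition~\ref{prop:dagger}(5) applied to $L = G$, which gives $\tilde{\mathfrak{a}}_{G,F}^\dagger = \tilde{\mathfrak{a}}_{M,F}^\dagger \cap \mathfrak{a}_G$, intersected further with $\tilde{\mathfrak{a}}_{G,F} \subset \mathfrak{a}_G$.

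There is no real obstacle; the sole conceptual point is invoking Proposition~\ref{prop:dagger}(5) to control $\tilde{\mathfrak{a}}_{M,F}^\dagger \cap \mathfrak{a}_G$, without which one could not relate the intersection back to $\tilde{\mathfrak{a}}_{G,F}^\dagger$. All the finiteness of indices is guaranteed by the first clause of the same proposition. Putting the pieces together yields exactly the claimed formula.
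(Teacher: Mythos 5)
Your proof is correct and is essentially the same as the paper's. The paper phrases the first step as a short exact sequence with middle term $\tilde{\mathfrak{a}}_{M,F}/\tilde{\mathfrak{a}}_{M,F}^\dagger$, which encodes exactly the index multiplicativity you invoke, and the paper's computation of the left-hand term $\bigl(\tilde{\mathfrak{a}}_{M,F}^\dagger + \tilde{\mathfrak{a}}_{G,F}\bigr)/\tilde{\mathfrak{a}}_{M,F}^\dagger \cong \tilde{\mathfrak{a}}_{G,F}/\tilde{\mathfrak{a}}_{G,F}^\dagger$ is your second isomorphism step verbatim, relying on the same identities $\tilde{\mathfrak{a}}_{G,F} = \tilde{\mathfrak{a}}_{M,F} \cap \mathfrak{a}_G$ and $\tilde{\mathfrak{a}}_{G,F}^\dagger = \tilde{\mathfrak{a}}_{M,F}^\dagger \cap \mathfrak{a}_G$.
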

\begin{proof}
  D'une part, on a la suite exacte
  $$\xymatrix{
    1 \ar[r] & \dfrac{(\tilde{\mathfrak{a}}_{M,F}^\dagger + \mathfrak{a}_G) \cap \tilde{\mathfrak{a}}_{M,F}}{\tilde{\mathfrak{a}}_{M,F}^\dagger} \ar[r] & \dfrac{\tilde{\mathfrak{a}}_{M,F}}{\tilde{\mathfrak{a}}_{M,F}^\dagger} \ar[r] & \dfrac{\tilde{\mathfrak{a}}_{M,F}}{(\tilde{\mathfrak{a}}_{M,F}^\dagger + \mathfrak{a}_G) \cap \tilde{\mathfrak{a}}_{M,F}} \ar[r] \ar@{=}[d] & 1 \\
    & & & \dfrac{\tilde{\mathfrak{a}}_{M,F} + \mathfrak{a}_G}{\tilde{\mathfrak{a}}_{M,F}^\dagger + \mathfrak{a}_G} &
  }, $$
  et $(\tilde{\mathfrak{a}}_{M,F} + \mathfrak{a}_G)/(\tilde{\mathfrak{a}}_{M,F}^\dagger + \mathfrak{a}_G) = \tilde{\mathcal{L}}_M/\tilde{\mathcal{L}}_M^\dagger$. D'autre part,
  \begin{align*}
    \dfrac{(\tilde{\mathfrak{a}}_{M,F}^\dagger + \mathfrak{a}_G) \cap \tilde{\mathfrak{a}}_{M,F}}{\tilde{\mathfrak{a}}_{M,F}^\dagger} & = \dfrac{\tilde{\mathfrak{a}}_{M,F}^\dagger + (\mathfrak{a}_G \cap \tilde{\mathfrak{a}}_{M,F})}{\tilde{\mathfrak{a}}_{M,F}^\dagger} = \dfrac{\tilde{\mathfrak{a}}_{M,F}^\dagger + \tilde{\mathfrak{a}}_{G,F}}{\tilde{\mathfrak{a}}_{M,F}^\dagger} \\
    & = \dfrac{\tilde{\mathfrak{a}}_{G,F}}{\tilde{\mathfrak{a}}_{M,F}^\dagger \cap \tilde{\mathfrak{a}}_{G,F}} = \dfrac{\tilde{\mathfrak{a}}_{G,F}}{\tilde{\mathfrak{a}}_{G,F}^\dagger}
  \end{align*}
  car $\tilde{\mathfrak{a}}_{G,F}^\dagger = \mathfrak{a}_G \cap \tilde{\mathfrak{a}}_{M,F}^\dagger$. Cela permet de conclure.
\end{proof}

On pose $A_M(F)^1 := A_M(F) \cap \Ker H_M$ et $A_M(F)^{\dagger, 1} := A_M(F)^\dagger \cap \Ker H_M$.
\begin{lemma}\label{prop:facteur-a-2}
  On a
  $$ \iota_M [A_M(F)^1 : A_M(F)^{\dagger, 1}] [\tilde{\mathfrak{a}}_{M,F}: \tilde{\mathfrak{a}}_{M,F}^\dagger] = 1. $$
\end{lemma}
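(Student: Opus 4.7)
My plan is to exploit the short exact sequence
\begin{gather*}
  1 \longrightarrow A_M(F)^1 \longrightarrow A_M(F) \xrightarrow{H_M} \tilde{\mathfrak{a}}_{M,F} \longrightarrow 0,
\end{gather*}
together with its analogue for the ``dagger'' version
\begin{gather*}
  1 \longrightarrow A_M(F)^{\dagger,1} \longrightarrow A_M(F)^\dagger \xrightarrow{H_M} \tilde{\mathfrak{a}}_{M,F}^\dagger \longrightarrow 0,
\end{gather*}
which sits inside the first as a subsequence (note that $A_M(F)^{\dagger,1} = A_M(F)^\dagger \cap A_M(F)^1$ and $H_M(A_M(F)^\dagger) = \tilde{\mathfrak{a}}_{M,F}^\dagger$ by definition). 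Since $A_M(F)^\dagger$ has finite index in $A_M(F)$ by the Proposition \ref{prop:dagger}, all three indices appearing are finite.

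The key step is then purely group-theoretic: for a group $A$ with a normal subgroup $N$ and quotient map $p\colon A \to Q$, and for a subgroup $H \subset A$, one has the multiplicativity formula
\begin{gather*}
  [A : H] \;=\; [N : N \cap H] \cdot [Q : p(H)].
\end{gather*}
Applied to $A=A_M(F)$, $N = A_M(F)^1$, $p = H_M$, $Q = \tilde{\mathfrak{a}}_{M,F}$, and $H=A_M(F)^\dagger$, this yields exactly
\begin{gather*}
  [A_M(F) : A_M(F)^\dagger] \;=\; [A_M(F)^1 : A_M(F)^{\dagger,1}] \cdot [\tilde{\mathfrak{a}}_{M,F} : \tilde{\mathfrak{a}}_{M,F}^\dagger].
\end{gather*}

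Recalling $\iota_M = [A_M(F) : A_M(F)^\dagger]^{-1}$ from \eqref{eqn:iota}, multiplying through by $\iota_M$ gives the identity in the statement. There is no real obstacle: the whole content is the elementary index formula in the short exact sequence $1 \to A_M(F)^1 \to A_M(F) \to \tilde{\mathfrak{a}}_{M,F} \to 0$. (If I wanted to avoid citing the multiplicativity formula, I could also deduce it on the spot by computing $\#(A_M(F)/A_M(F)^\dagger)$ fiber-by-fiber over the finite quotient $\tilde{\mathfrak{a}}_{M,F}/\tilde{\mathfrak{a}}_{M,F}^\dagger$, each fiber being a coset of $A_M(F)^1/A_M(F)^{\dagger,1}$.) This lemma is the natural companion of Lemma \ref{prop:facteur-a-1}, where the same kind of bookkeeping is carried out after further quotienting by $\mathfrak{a}_G$.
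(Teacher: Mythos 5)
Your proof is correct and is essentially the same as the paper's. The paper applies the snake lemma to the commutative diagram whose two rows are exactly the short exact sequences you wrote down (with all vertical arrows injective, so the cokernel sequence gives the index multiplicativity); your direct appeal to the formula $[A:H] = [N : N \cap H]\,[Q : p(H)]$ is just that same observation stated without the homological packaging.
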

\begin{proof}
  Il suffit d'appliquer le lemme du serpent au diagramme suivant.
  $$\xymatrix{
    1 \ar[r] & A_M(F)^{\dagger, 1} \ar[r] \ar[d] & A_M(F)^\dagger \ar[r] \ar[d] & \tilde{\mathfrak{a}}_{M,F}^\dagger \ar[r] \ar[d] & 1 \\
    1 \ar[r] & A_M(F)^1 \ar[r] & A_M(F) \ar[r] & \tilde{\mathfrak{a}}_{M,F} \ar[r] & 1
  }$$
\end{proof}

Soient $k \in \R_{>0}$, $M \in \mathcal{L}(M_0)$ et $P \in \mathcal{P}(M)$. Posons
\begin{align*}
  \mu_{\alpha,k} & := k \log q_F \cdot \alpha^\vee, \quad \alpha \in \Delta_P, \\
  \mathcal{L}_{M,k} & := k \log q_F \cdot \Z\Delta_P^\vee.
\end{align*}
Alors $\mathcal{L}_{M,k}$ est un réseau dans $\mathfrak{a}^G_M$, qui est indépendant de $P$. Si $k \in \Z_{>0}$ est suffisamment divisible, alors $\mathcal{L}_{M,k} \subset \tilde{\mathcal{L}}_M$.

On a déjà normalisé la mesure de Haar sur $i\mathfrak{a}_G^*$ (resp. $i\mathfrak{a}_M^*$) dans \S\ref{sec:def-base}, ce qui détermine la mesure duale sur $\mathfrak{a}_G$ (resp. $\mathfrak{a}_M$) comme dans \cite[\S 2.5]{Li10a}. On vérifie que la mesure induite sur $\mathfrak{a}^G_M$ satisfait à $\mes(\mathfrak{a}^G_M/\tilde{\mathcal{L}}_M)=1$. Supposons toujours $k$ suffisamment divisible et posons\index[iFT2]{$\theta_{P,k}(\lambda)$}
\begin{gather}
  \theta_{P,k}(\lambda) := \mes(\mathfrak{a}^G_M/\mathcal{L}_{M,k})^{-1} \prod_{\alpha \in \Delta_P} \left(1 - e^{-\angles{\lambda, \mu_{\alpha,k}}} \right), \qquad \lambda \in \mathfrak{a}_{M,\C}^* .
\end{gather}

On va transformer $v_M^\dagger(x_1,x_2,T)$ en une expression qui ne dépend pas de $A_M(F)^\dagger$ et $A_G(F)^\dagger$. Les arguments sont identiques à ceux dans \cite[\S 6]{Ar91} sauf que certains facteurs supplémentaires interviennent. Montrons qu'ils disparaissent à la fin. On a
\begin{multline*}
  v_M^\dagger(x_1, x_2, T) = \iota_G \iota_M^{-1} [A_M(F)^1 : A_M(F)^{\dagger, 1}]^{-1} [A_G(F)^1 : A_G(F)^{\dagger, 1}] \cdot \\
  \cdot \sum_{X \in \tilde{\mathfrak{a}}_{M,F}^\dagger / \tilde{\mathfrak{a}}_{G,F}^\dagger} \sigma_M(X, \mathcal{Y}_M(x_1,x_2,T)).
\end{multline*}

On a $\tilde{\mathfrak{a}}_{M,F}^\dagger / \tilde{\mathfrak{a}}_{G,F}^\dagger = \tilde{\mathfrak{a}}_{M,F}^\dagger / (\tilde{\mathfrak{a}}_{M,F}^\dagger \cap \mathfrak{a}_G) = \tilde{\mathcal{L}}_M^\dagger$. Comme dans \cite[\S 6]{Ar91}, la somme sur $\tilde{\mathcal{L}}_M^\dagger$ se transforme en
$$ \sum_{\nu \in \tilde{\mathcal{L}}_M^{\dagger,\vee} /\mathcal{L}_M^\vee} \sum_{P \in \mathcal{P}(M)} [\mathcal{L}_M : \tilde{\mathcal{L}}_M^\dagger]^{-1} \mes (\mathfrak{a}^G_M/\mathcal{L}_{M,k})^{-1} \lim_{\Lambda \to 0} \left( \sum_{X \in \mathcal{L}_M / \mathcal{L}_{M,k}} e^{\angles{\Lambda + \nu, X_P(Y_P)}} \theta_{P,k}(\Lambda + \nu)^{-1} \right) $$
où $\Lambda \in (\mathfrak{a}^G_{M,\C})^*$ est supposé en position générale proche de $0$, et $Y_P = Y_P(x_1, x_2, T)$. Donc $v_M^\dagger(x_1,x_2,T)$ est le produit de
$$ \iota_G \iota_M^{-1} [A_M(F)^1 : A_M(F)^{\dagger, 1}]^{-1} [A_G(F)^1 : A_G(F)^{\dagger, 1}] [\tilde{\mathcal{L}}_M : \tilde{\mathcal{L}}_M^\dagger]^{-1} $$
avec
$$ \sum_{\nu \in \tilde{\mathcal{L}}_M^{\dagger,\vee} /\mathcal{L}_M^\vee} \lim_{\Lambda \to 0} \left( \sum_{P \in \mathcal{P}(M)} [\mathcal{L}_M : \mathcal{L}_{M,k}]^{-1} \sum_{X \in \mathcal{L}_M / \mathcal{L}_{M,k}} e^{\angles{\Lambda + \nu, X_P(Y_P)}} \theta_{P,k}(\Lambda + \nu)^{-1} \right). $$

La première expression vaut $1$ d'après les Lemmes \ref{prop:facteur-a-1}, \ref{prop:facteur-a-2}, tandis que la limite dans la deuxième expression est exactement celle dans le cas des groupes réductifs. C'est justifié de reprendre tous les arguments d'Arthur pour obtenir l'expression
\begin{gather}
  v_M^\dagger(x_1,x_2,T) = \sum_{\xi \in \frac{1}{N} \mathcal{L}_0^\vee / \mathcal{L}_0^\vee} q_\xi(T) e^{\angles{\xi, T}},
\end{gather}
où
\begin{itemize}
  \item $T \in \mathcal{L}_0 \cap \mathfrak{a}_0^+$ est suffisamment régulier en un sens indépendant de $M$, et $\mathfrak{a}_0^+$ est une chambre quelconque dans $\mathfrak{a}_0$;
  \item $N \in \Z_{>0}$ suffisamment divisible, indépendamment de $M$;
  \item pour tout $\xi$, $q_\xi$ est un polynôme.
\end{itemize}

C'est loisible de parler du terme constant $\tilde{v}_M(x_1,x_2) := q_0(0)$. La somme sur $\xi$ provient de la somme précédente sur $\nu \in \tilde{\mathcal{L}}_M^{\dagger,\vee} /\mathcal{L}_M^\vee$, donc elle dépend du choix de $A_M(F)^\dagger$; cependant le terme $q_0$ correspondant à $\xi=0$ n'en dépend pas. En adaptant \cite[(6.6)]{Ar91}, on obtient
\begin{gather}\label{eqn:v_M-tilde}
  \tilde{v}_M(x_1, x_2) = \lim_{\Lambda \to 0} \sum_{P \in \mathcal{P}(M)} |\mathcal{L}_M/\mathcal{L}_{M,k}|^{-1} \sum_{X \in \mathcal{L}_M/\mathcal{L}_{M,k}} e^{\angles{\Lambda, X_P + H_P(x_1) - H_{\bar{P}}(x_2)}} \theta_{P,k}(\Lambda)^{-1}
\end{gather}
où $k \in \Z_{>0}$ est suffisamment divisible en un sens indépendant de $M$. Cela permet de définir
\begin{gather}\label{eqn:int-orb-tilde}
  \tilde{J}_{\tilde{M}}(\gamma, f) := |D(\gamma)| \iota_M^2 \int_{(A_M(F)^\dagger \backslash G(F))^2} f_1(x_1^{-1} \gamma x_1) f_2(x_2^{-1} \gamma x_2) \tilde{v}_M(x_1, x_2) \dd x_1 \dd x_2 .
\end{gather}
pour tout $\gamma \in \Gamma_\text{ell}(M(F))$.

\begin{proposition}[Cf. {\cite[Proposition 6.1]{Ar91}}]\label{prop:J-geo-tilde}
  Supposons $F$ non archimédien. Alors il existe une décomposition
  $$ J^T(f) = \sum_{\xi \in \frac{1}{N} \mathcal{L}_0^\vee / \mathcal{L}_0^\vee} p_\xi(T,f) e^{\angles{\xi, T}}, \qquad T \in \mathcal{L}_0 \cap \mathfrak{a}_0^+ , $$
  où $T$, $\mathfrak{a}_0^+$, $N$ sont comme précédemment et $p_\xi(\cdot,f)$ sont des polynômes. Son terme constant $\tilde{J}(f) := p_0(0,f)$ admet une décomposition
  $$ \tilde{J}(f) = \sum_{M \in \mathcal{L}(M_0)} |W^M_0| |W^G_0|^{-1} \int_{\Gamma_{\mathrm{ell}}(M(F))} \tilde{J}_{\tilde{M}}(\gamma, f) \dd \gamma . $$
\end{proposition}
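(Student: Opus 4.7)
Le plan est d'insérer directement dans $J^T(f)$ le développement polynôme-exponentiel de $v_M^\dagger(x_1,x_2,T)$ déjà établi, puis de permuter sommation finie et intégration, comme dans \cite[\S 6]{Ar91}.

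Plus précisément, fixons $M \in \mathcal{L}(M_0)$. D'après l'analyse combinatoire qui précède l'énoncé, pour $T \in \mathcal{L}_0 \cap \mathfrak{a}_0^+$ suffisamment régulier on dispose d'un développement
\begin{gather*}
  v_M^\dagger(x_1,x_2,T) = \sum_{\xi \in \frac{1}{N}\mathcal{L}_0^\vee / \mathcal{L}_0^\vee} q_{M,\xi}(x_1,x_2,T) \, e^{\angles{\xi,T}},
\end{gather*}
où $q_{M,\xi}(x_1,x_2,\cdot)$ est un polynôme en $T$ de degré uniformément borné en $(x_1,x_2)$, et $q_{M,0}(x_1,x_2,0) = \tilde{v}_M(x_1,x_2)$ d'après la formule \eqref{eqn:v_M-tilde}. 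On reporte cette expansion dans la définition de $J^T(\gamma,f)$ et l'on pose
\begin{gather*}
  p_\xi(T,f) := \sum_{M \in \mathcal{L}(M_0)} |W^M_0| |W^G_0|^{-1} \int_{\Gamma_\text{ell}(M(F))} |D(\gamma)| \, \iota_M^2 \cdot I_{M,\xi}(\gamma, T) \, \dd\gamma,
\end{gather*}
où $I_{M,\xi}(\gamma,T) := \int f_1(x_1^{-1}\tilde{\gamma}x_1) f_2(x_2^{-1}\tilde{\gamma}x_2) q_{M,\xi}(x_1,x_2,T) \dd x_1 \dd x_2$ sur $(A_M(F)^\dagger \backslash G(F))^2$.

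L'étape suivante est de justifier ces manipulations. La finitude de l'indice $\xi$ permet trivialement l'échange avec la somme extérieure; l'échange avec l'intégrale requiert la convergence absolue de chaque $I_{M,\xi}(\gamma,T)$ et son intégrabilité en $\gamma$. Or, comme $f_1 \in \mathcal{H}_-(\tilde{G})$ et $f_2 \in \mathcal{H}_{\asp}(\tilde{G})$ sont à support compact, et que $q_{M,\xi}(x_1,x_2,T)$ est de croissance polynomiale en $H_M(x_i)$ (par construction via \eqref{eqn:v_M-tilde}), la convergence absolue des intégrales en $(x_1,x_2)$ suit exactement comme dans \cite[\S 6]{Ar91}, les estimations utilisées étant de nature purement analytique sur $G(F)$ et sans lien avec la structure du revêtement. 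L'intégrabilité en $\gamma \in \Gamma_\text{ell}(M(F))$ découle du facteur $|D(\gamma)|$ qui compense la singularité des intégrales orbitales près du centralisateur. On obtient alors $J^T(f) = \sum_\xi p_\xi(T,f) e^{\angles{\xi,T}}$, avec $p_\xi(\cdot,f)$ polynomial en $T$ puisque les $q_{M,\xi}$ le sont.

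Il ne reste qu'à identifier le terme constant : en spécialisant à $\xi=0$ et $T=0$ et en utilisant $q_{M,0}(x_1,x_2,0) = \tilde{v}_M(x_1,x_2)$ ainsi que la définition \eqref{eqn:int-orb-tilde} de $\tilde{J}_{\tilde{M}}(\gamma,f)$, on obtient immédiatement la décomposition annoncée pour $\tilde{J}(f) = p_0(0,f)$. L'obstacle principal est essentiellement de vérifier que les majorations analytiques d'Arthur (croissance des poids $v_M^\dagger$, décroissance rapide apportée par le support compact de $f_1, f_2$, contrôle du discriminant) restent valables dans le cadre des revêtements; ceci est clair car les fonctions $v_M^\dagger$ sont construites à partir d'invariants (notamment $H_P, H_{\bar{P}}$) qui se factorisent par $\rev$, et nos choix de mesures (avec $\gamma(G|M)=1$) assurent la compatibilité nécessaire pour que le reste de l'argument d'Arthur s'applique mot à mot.
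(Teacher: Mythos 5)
Votre esquisse reproduit fidèlement l'argument auquel le papier renvoie (cf. \cite[Proposition 6.1]{Ar91}) et qui repose sur les développements combinatoires du \S 5.2 --- décomposition polynôme-exponentielle de $v_M^\dagger(x_1,x_2,T)$, formule \eqref{eqn:v_M-tilde} pour $\tilde{v}_M$, définition \eqref{eqn:int-orb-tilde} de $\tilde{J}_{\tilde{M}}(\gamma,f)$ --- de sorte que l'approche est essentiellement la même que celle du papier, lequel ne rédige d'ailleurs pas de preuve explicite. Signalons seulement une nuance glissée un peu vite: la validité du développement de $v_M^\dagger$ exige que $d(T)$ soit grand \emph{relativement à $(x_1,x_2)$} et non seulement relativement à $M$; ce sont alors le support compact de $f_1,f_2$ et l'ellipticité de $\gamma$ qui bornent le domaine d'intégration effectif et rendent cette condition uniforme, ce qui est précisément le contenu des estimations d'Arthur que vous invoquez en bloc.
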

\begin{remark}
  Les distributions $K^T$, $J^T$ sont $W^G_0$-invariantes, donc $\tilde{J}(f)$ ne dépend pas du choix de $\mathfrak{a}_0^+$. En particulier, le terme constant $\tilde{J}(f)$ n'en dépend pas.
\end{remark}

Le cas archimédien est plus simple du point de vue combinatoire.
\begin{proposition}\label{prop:J-geo-tilde-arch}
  Supposons $F$ archimédien. Alors la Proposition \ref{prop:J-geo-tilde} demeure valable si l'on remplace les réseaux $\mathcal{L}_M$, $\tilde{\mathcal{L}}_M$, $\tilde{\mathcal{L}}_M^\dagger$, $\mathcal{L}_{M,k}$ par $\mathfrak{a}^G_M$, et si l'on remplace $\theta_{P,k}$ par la fonction $\theta_P$ définie dans \cite[\S 4.1]{Li10a} dans la définition \eqref{eqn:int-orb-tilde} de $\tilde{J}_{\tilde{M}}(\gamma,f)$.
\end{proposition}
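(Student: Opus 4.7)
Le plan est de reprendre mot pour mot la preuve de la Proposition \ref{prop:J-geo-tilde}, en substituant systématiquement des intégrales continues sur des $\R$-espaces vectoriels aux sommes discrètes sur des réseaux de coracines. L'observation fondamentale qui rend cette substitution licite est que dans le cas $F$ archimédien, l'homomorphisme $H_M : A_M(F) \to \mathfrak{a}_M$ est surjectif (car $A_M$ est un tore déployé et $H_M$ applique la composante neutre $A_M(F)^0$ sur $\mathfrak{a}_M$ tout entier), et donc $\tilde{\mathfrak{a}}_{M,F} = \tilde{\mathfrak{a}}_{M,F}^\dagger = \mathfrak{a}_M$ pour n'importe quel sous-groupe ouvert d'indice fini $A_M(F)^\dagger$. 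Par conséquent, les quotients de réseaux $\tilde{\mathcal{L}}_M / \tilde{\mathcal{L}}_M^\dagger$ utilisés en \S\ref{sec:LTF-geo} dégénèrent trivialement, et $\mathcal{L}_{M,k}$ disparaît au profit de $\mathfrak{a}^G_M$ tout entier, muni de sa mesure de Haar canonique.

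La première étape serait d'écrire
$$ v_M^\dagger(x_1, x_2, T) = c \int_{\mathfrak{a}^G_M} \sigma_M(X, \mathcal{Y}_M(x_1, x_2, T)) \dd X $$
pour une constante $c$ dépendant de $\iota_M$, $\iota_G$ et des indices $[A_M(F)^1 : A_M(F)^{\dagger,1}]$, $[A_G(F)^1 : A_G(F)^{\dagger,1}]$ relatifs aux noyaux de $H_M$ et $H_G$. Les analogues archimédiens des Lemmes \ref{prop:facteur-a-1}--\ref{prop:facteur-a-2} (où les indices entre réseaux sont remplacés par des rapports de volumes sur les composantes compactes) devraient donner $c=1$. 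La deuxième étape appliquerait l'identité combinatoire de Fourier \cite[\S 7]{Ar91}, qui en version continue remplace $\theta_{P,k}$ par $\theta_P$ et exprime cette intégrale comme somme sur $P \in \mathcal{P}(M)$, fournissant ainsi le développement polynôme-exponentielle en $T$ avec paramètre $\xi \in \frac{1}{N}\mathcal{L}_0^\vee/\mathcal{L}_0^\vee$. La troisième étape identifierait le terme constant $\tilde{v}_M(x_1,x_2)$ par l'analogue évident de \eqref{eqn:v_M-tilde}, à savoir
$$ \tilde{v}_M(x_1, x_2) = \lim_{\Lambda \to 0} \sum_{P \in \mathcal{P}(M)} e^{\angles{\Lambda, H_P(x_1) - H_{\bar{P}}(x_2)}} \theta_P(\Lambda)^{-1}, $$
et la décomposition de $\tilde{J}(f)$ selon les classes elliptiques s'en déduirait en intégrant.

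Le point délicat sera la vérification que tous les facteurs parasites $\iota_M$, $\iota_G$, $[A_M(F)^1:A_M(F)^{\dagger,1}]$, etc. se simplifient exactement. Dans le cas non archimédien cette simplification repose sur des comptages discrets d'indices entre réseaux; dans le cas archimédien il faudra la remplacer par des comparaisons de mesures de Haar entre $A_M(F)^\dagger \cap \Ker H_M$ et son quotient, lequel s'identifie à $\mathfrak{a}_M$ via $H_M$. Une fois ce calcul fait, les arguments analytiques de \cite[\S 7]{Ar91} contrôlant la partie non constante s'adaptent sans difficulté car ils n'utilisent que les $\R$-espaces vectoriels $\mathfrak{a}^Q_M$ pour $Q \in \mathcal{F}(M)$, indépendamment du revêtement et du choix de $A_M(F)^\dagger$.
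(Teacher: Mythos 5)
Ta démarche est essentiellement correcte et correspond à ce que l'auteur a sans doute à l'esprit — le papier ne donne d'ailleurs aucune démonstration pour cette Proposition, se contentant de la phrase « le cas archimédien est plus simple du point de vue combinatoire » et du renvoi à Arthur. Les points structurants que tu identifies sont les bons : la surjectivité de $H_M$ pour $F$ archimédien (d'où $\tilde{\mathfrak{a}}_{M,F}^\dagger = \tilde{\mathfrak{a}}_{M,F} = \mathfrak{a}_M$, ce qui trivialise les quotients de réseaux), le remplacement des sommes discrètes par des intégrales sur $\mathfrak{a}^G_M$, et le fait que le Lemme facteur-a-2 dégénère en $\iota_M\,[A_M(F)^1 : A_M(F)^{\dagger,1}] = 1$ (le diagramme de serpent du lemme subsiste tel quel puisque la flèche de droite est désormais l'identité).

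Deux points méritent d'être rendus explicites plutôt que laissés au conditionnel. D'abord, le calcul de la constante $c$: si on pousse la mesure le long de $H_M$, on trouve $c = \iota_G\,\iota_M^{-1}\cdot \mes(A_M(F)^{\dagger,1})/\mes(A_G(F)^{\dagger,1})$, et le lemme dégénéré donne $\iota_M = [A_M(F)^1:A_M(F)^{\dagger,1}]^{-1}$, d'où $c = \mes(A_M(F)^1)/\mes(A_G(F)^1)$; il faut donc invoquer la normalisation des mesures archimédiennes (celle d'Arthur \cite{Ar91}, qui rend $\mes(A_M(F)^1)$ indépendante de $M$) pour conclure $c=1$ — le « devraient donner $c=1$ » est le cœur de la vérification et ne devrait pas rester un voeu pieux. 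Ensuite, en remplaçant $\mathcal{L}_0$ par $\mathfrak{a}^G_0$, le dual $\mathcal{L}_0^\vee$ devient $\{0\}$ et la somme sur $\xi$ de la Proposition originale dégénère en l'unique terme $\xi=0$ : $J^T(f)$ est alors un honnête polynôme en $T$ et non une somme polynôme-exponentielle. Ce n'est pas une erreur de ta part, mais ta phrase « fournissant ainsi le développement polynôme-exponentielle en $T$ avec paramètre $\xi \in \frac{1}{N}\mathcal{L}_0^\vee/\mathcal{L}_0^\vee$ » recopie le cas non archimédien sans noter cette dégénérescence, qui est justement ce qui simplifie le cas archimédien.
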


\subsection{Le côté spectral}\label{sec:LTF-spec}
La référence pour les discussions suivantes sont \cite[\S\S 8-11]{Ar91}. Fixons $P_0 \in \mathcal{P}(M_0)$. On a
\begin{align*}
  K^T(f) & = \sum_{M \in \mathcal{L}(M_0)} |W^M_0| |W^G_0|^{-1} \int_{\Pi_{2,-}(\tilde{M})} K^T(\sigma,f) \dd \sigma, \\
  K^T(\sigma,f) & := \mu(\sigma) \iota_G \int_{A_G(F)^\dagger \backslash G(F)} \sum_{S \in \mathcal{B}_{\tilde{P}}(\sigma)} \Tr(\mathcal{I}_{\tilde{P}}(\sigma, \tilde{x})S(f)) \overline{\Tr(\mathcal{I}_{\tilde{P}}(\sigma, \tilde{x})S)} u(x,T) \dd x,
\end{align*}
où $P \in \mathcal{P}(M_0)$ est standard.

Soient $(\sigma, V) \in \Pi_{2,-}(\tilde{M})$, $\chi \in \Im X(\tilde{M})$. Rappelons que l'on peut réaliser $\mathcal{I}_{\tilde{P}}(\sigma \otimes \chi)$ tel que l'espace $\mathcal{I}_{\tilde{P}}(V)$ muni de l'action de $\tilde{K}$ ne dépend pas de $\chi$. Dans ce qui suit, on fixe un point base $\sigma$ dans chaque $\Im X(\tilde{M})$-orbite de $\Pi_{2,-}(\tilde{M})$.

Donc dans l'expression de $K^T(f)$ on peut regrouper les termes selon $M$ et les $\Im X(\tilde{M})$-orbites de $\Pi_{2,-}(\tilde{M})$. On se ramène à l'étude de l'intégrale
$$ \iota_G \int_{A_G(F)^\dagger \backslash G(F)} \Tr(\mathcal{I}_{\tilde{P}}(\sigma \otimes \chi, \tilde{x})S(f)) \overline{\Tr(\mathcal{I}_{\tilde{P}}(\sigma \otimes \chi, \tilde{x})S)} u(x,T) \dd x $$
où $S \in \mathcal{B}_{\tilde{P}}(\sigma)$ est fixé, et $\chi$ varie dans $\Im X(\tilde{M})$.

On note $\mathcal{O}$ la $\Im X(\tilde{M})$-orbite contenant $\sigma$. Pour simplifier la vie, supposons momentanément $F$ non archimédien. En examinant les définitions dans \S\ref{sec:Plancherel}, on voit que $S$ et $S(f)$ appartiennent à $C^\infty(\mathcal{O},\tilde{P})$, regardées comme fonctions $\chi \mapsto S_\chi$ et $\chi \mapsto S(f)_\chi$, et que
\begin{align*}
  \Tr(\mathcal{I}_{\tilde{P}}(\sigma \otimes \chi, \tilde{x})S(f)) & = E^{\tilde{G}}_{\tilde{P}}(S(f)_\chi)(\tilde{x}), \\
  \Tr(\mathcal{I}_{\tilde{P}}(\sigma \otimes \chi, \tilde{x})S) & = E^{\tilde{G}}_{\tilde{P}}(S_\chi)(\tilde{x}).
\end{align*}

Avec la notation usuelle $(a|b) = a \bar{b}$ pour $a, b \in \C$, définissons le produit scalaire tronqué des coefficients
$$
  \Omega^T_{\tilde{P}}(\sigma \otimes \chi, S(f), S) := \iota_G \int_{A_G(F)^\dagger \backslash G(F)} \left( E^{\tilde{G}}_{\tilde{P}}(S(f)_\chi)(\tilde{x}) \,|\, E^{\tilde{G}}_{\tilde{P}}(S_\chi)(\tilde{x}) \right) u(x,T) \dd x
$$

On obtient donc l'expression suivante pour $K^T(f)$:
\begin{gather}\label{eqn:K^T-Omega}
  \sum_{M \in \mathcal{L}(M_0)} |W^M_0| |W^G_0|^{-1} \sum_{\sigma} \sum_S |\text{Stab}_{\Im X(\tilde{M})}(\sigma)|^{-1} \int_{\Im X(\tilde{M})} \mu(\sigma \otimes \chi) \Omega^T_{\tilde{P}}(\sigma \otimes \chi, S(f), S) \dd\sigma,
\end{gather}
où
\begin{itemize}
  \item $\sigma$ parcourt un système de représentants de $\Pi_{2,-}(\tilde{M}) / \Im X(\tilde{M})$;
  \item $S$ parcourt $\mathcal{B}_{\tilde{P}}(\sigma)$;
  \item $P \in \mathcal{P}(M)$ est standard.
\end{itemize}

\begin{remark}
  Dans \cite[\S\S 7-8]{Ar91}, Arthur choisit le langage des intégrales d'Eisenstein au lieu des coefficients d'induites. Nous laissons le soin au lecteur de réconcilier les définitions.
\end{remark}

Comme dans \cite{Ar91}, le côté spectral nécessite des majorations en trois étapes. Donnons-en une esquisse.

\paragraph{De $K^T$ à $k^T$}
Pour $M \in \mathcal{P}(M_0)$, $P \in \mathcal{P}(M_0)$ standard et $\Psi_1, \Psi_2 \in \mathcal{A}_{2,-}(\tilde{M})$, on note $\varphi_P: \mathfrak{a}_0 \to \R$ la fonction caractéristique de l'ensemble
$$ \{H \in \mathfrak{a}_0 : \angles{\varpi, H} \leq 0, \forall \varpi \in \hat{\Delta}_P \}$$
et on pose
$$ r^T_{\tilde{P}}(\Psi_1 | \Psi_2) := \iota_G \int_{A_G(F)^\dagger \backslash M(F)} (\Psi_1(\tilde{m})|\Psi_2(\tilde{m})) \varphi_P(H_M(m)-T_P) \dd m . $$

Pour tout $\sigma \in \Pi_{2,-}(\tilde{M})$ et $S_1, S_2 \in L(\sigma, \tilde{P})$, on pose
$$
  \omega^T_{\tilde{P}}(\sigma, S_1, S_2) := \sum_{P_1 \supset P_0} r^T_{\tilde{P}_1}(E^{\tilde{G}}_{\tilde{P}}(S_1)^w_{\tilde{P}_1} \,|\, E^{\tilde{G}}_{\tilde{P}}(S_2)^w_{\tilde{P}_1} ) .
$$

Avec la même convention de \eqref{eqn:K^T-Omega}, posons $k^T(f)$ égale à
\begin{gather}\label{eqn:k^T-omega}
  \sum_{M \in \mathcal{L}(M_0)} |W^M_0| |W^G_0|^{-1} \sum_{\sigma} \sum_S |\text{Stab}_{\Im X(\tilde{M})}(\sigma)|^{-1} \int_{\Im X(\tilde{M})} \mu(\sigma \otimes \chi) \omega^T_{\tilde{P}}(\sigma \otimes \chi, S(f), S) \dd\sigma .
\end{gather}

\begin{proposition}
  Soit $\delta > 0$, il existe des constantes $C, \epsilon > 0$ telles que
  $$ |K^T(f) - k^T(f)| \leq C e^{-\epsilon \|T\|} $$
  pourvu que $d(T) \geq \delta\|T\|$.
\end{proposition}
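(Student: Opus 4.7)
On suit les arguments de \cite[\S\S 9-11]{Ar91} essentiellement sans modification, en observant que la présence du revêtement n'introduit que des changements cosmétiques. On esquisse les étapes clés.

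\textbf{Réduction à une estimation ponctuelle.} D'après les expressions \eqref{eqn:K^T-Omega} et \eqref{eqn:k^T-omega}, les distributions $K^T$ et $k^T$ s'écrivent comme des sommes absolument convergentes sur $M, \sigma, S$ et des intégrales sur $\Im X(\tilde{M})$ des mêmes facteurs, sauf que $\Omega^T_{\tilde{P}}$ est remplacé par $\omega^T_{\tilde{P}}$. Il suffit donc de montrer une majoration ponctuelle
$$ |\Omega^T_{\tilde{P}}(\sigma \otimes \chi, S(f), S) - \omega^T_{\tilde{P}}(\sigma \otimes \chi, S(f), S)| \leq C e^{-\epsilon\|T\|} \cdot n(\sigma, S), $$
uniforme en $\chi \in \Im X(\tilde{M})$, avec $n(\sigma, S)$ tel que la somme-intégrale restante (pondérée par $\mu(\sigma \otimes \chi)$ et les coefficients d'orbite) converge. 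Les bornes de Hilbert-Schmidt et de la fonction de Plancherel, ainsi que la décroissance rapide à la Schwartz-Harish-Chandra de $f_1, f_2$, fournissent les bornes nécessaires pour $n(\sigma, S)$.

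\textbf{Manipulation combinatoire.} Grâce à la Proposition \ref{prop:KAK} et à la décomposition d'Iwasawa, on déplie l'intégrale définissant $\Omega^T$ comme une intégrale sur $K \times K \times (A_G(F)^\dagger \backslash M_0(F))$. La fonction de troncature $u(x,T)$ se développe, via l'identité combinatoire à la Langlands-Arthur (cf.\ \cite[\S 9]{Ar91}), en somme alternée indexée par $P_1 \supset P_0$ de produits de fonctions caractéristiques $\varphi_{P_1}$ et $\tau^G_{P_1}$ de cônes dans $\mathfrak{a}_0$. Les facteurs $\iota_G$ et $\iota_M$ et le passage de $A_G(F)$ à $A_G(F)^\dagger$ (Proposition \ref{prop:dagger}) engendrent seulement des constantes multiplicatives qui se compensent exactement entre $\Omega^T$ et $\omega^T$.

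\textbf{Passage aux termes constants faibles.} Pour chaque $P_1 \supset P_0$ apparaissant, on remplace le coefficient $\tilde{x} \mapsto E^{\tilde{G}}_{\tilde{P}}(S_\chi)(\tilde{x})$ par son terme constant faible $E^{\tilde{G}}_{\tilde{P}}(S_\chi)^w_{\tilde{P}_1}$ le long de $\tilde{P}_1$, et de même pour $S(f)$. La propriété caractéristique \eqref{eqn:terme-const-faible}, combinée aux majorations \eqref{eqn:eq-w} valables sur les représentations tempérées, entraîne que la différence décroît comme $e^{-\epsilon \|T\|}$ dans le cône déterminé par $\Delta_{P_1}$, uniformément en $\sigma \otimes \chi$ grâce à la compacité de $\Im X(\tilde{M})$ et à la régularité de la fonction $\mu$. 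Après resommation sur $P_1$, le terme principal obtenu est précisément $\omega^T_{\tilde{P}}$, et le terme d'erreur fournit la majoration cherchée.

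\textbf{Obstacle principal.} La difficulté essentielle, comme chez Arthur, réside dans le contrôle de l'uniformité de la décroissance exponentielle par rapport à $\chi$ : il faut que les constantes $C, \epsilon$ puissent être choisies indépendamment de $\sigma \otimes \chi \in \mathcal{O}$. Ceci repose sur l'uniformité des majorations de $f^w_{\tilde{P}_1}$ dans les familles $\Im X(\tilde{M})$-analytiques, qui est une conséquence standard de la théorie de Harish-Chandra reprise dans \S\ref{sec:Plancherel} pour les revêtements. En revanche, les aspects spécifiquement liés au revêtement (scindage unipotent, équivariance sous $\bmu_m$, facteurs $\iota$) sont inoffensifs : ils préservent toutes les décompositions et ne contribuent qu'à des constantes finies déjà absorbées dans nos normalisations.
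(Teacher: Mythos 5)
Your proposal is correct and follows the same route as the paper: the paper's proof simply invokes Arthur's Lemma~10.1, observes that the crucial ingredient is the estimate of Arthur's Theorem~8.1 relating $\Omega^T_{\tilde{P}}$ and $\omega^T_{\tilde{P}}$, and notes that the necessary ingredients (weak constant terms, tempered estimates, Plancherel theory) are established for coverings in \S 2. Your sketch unpacks exactly this --- reduction to the pointwise bound on $\Omega^T - \omega^T$, the Langlands--Arthur combinatorics on the truncation function, passage to weak constant terms via \eqref{eqn:terme-const-faible}, and the observation that the covering-specific factors $\iota_G$, $\iota_M$ cancel --- so it is an expanded version of the paper's two-line citation rather than a different argument.
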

\begin{proof}
  C'est l'analogue de \cite[Lemma 10.1]{Ar91}. Le fait crucial est la majoration \cite[Theorem 8.1]{Ar91} reliant $\Omega^T_{\tilde{P}}$ et $\omega^T_{\tilde{P}}$, dont les ingrédients de la preuve sont établis dans \S\ref{sec:Plancherel}.
\end{proof}

\paragraph{De $k^T$ à $J^T_\text{spec}$}
On définit une distribution $J^T_\text{spec}(f)$ comme dans \cite[Lemma 11.1]{Ar91}, qui consiste en des fonctions $c$ de Harish-Chandra et des fonctions combinatoires. Sa définition précise est malheureusement trop compliquée à rapporter ici. On montre que pour tout $n \in \Z_{\geq 1}$ et tout $\delta > 0$, il existe une constante $c_n$ telle que $|k^T(f)-J^T_\text{spec}(f)| \leq c_n \|T\|^{-n}$ lorsque $d(T) \geq \delta\|T\|$. Vu l'étape précédente et la majoration pour $|K^T(f)-J^T(f)|$, on obtient $|J^T(f)-J^T_\text{spec}(f)| \leq c_n \|T\|^{-n}$ sous les mêmes conditions, quitte à agrandir $c_n$.

D'autre part, on montre, comme dans la preuve de \cite[Lemma 11.1]{Ar91}, que
$$ J^T_\text{spec}(f) = \sum_{\xi \in \frac{1}{N}\mathcal{L}_0^\vee / \mathcal{L}_0^\vee} q_\xi(T,f) e^{\angles{\xi, T}}, $$
pourvu que $T \in \mathcal{L}_0 \cap \mathfrak{a}_0^+$ et $N \in \Z_{\geq 1}$ suffisamment divisible, où $\mathfrak{a}_0^+$ est une chambre fixée. Les fonctions $q_\xi(\cdot,T)$ sont des polynômes. Or $J^T(f)$ admet une décomposition du même genre pour $T \in \mathfrak{a}_0^+$. Comme $\mathfrak{a}_0^+$ est quelconque, la majoration précédente pour $|J^T(f)-J^T_\text{spec}(f)|$ affirme que $J^T(f)=J^T_\text{spec}(f)$ pour de tels $T$.

Notons $\tilde{J}_\text{spec}(f) := q_0(0,f)$ le terme constant. Il en résulte que $\tilde{J}_\text{spec}(f)=\tilde{J}(f)$.

\paragraph{Description de $\tilde{J}_\text{spec}$}
Donnons finalement une expression explicite pour $\tilde{J}_\text{spec}$ comme dans \cite[Corollary 11.2, Proposition 11.3]{Ar91}.

Soient $L, M \in \mathcal{L}(M_0)$, $L \supset M$. Nous posons\index[iFT2]{$W^L(M)_\text{reg}$}
$$ W^L(M)_\text{reg} := \{t \in W^L(M) : \det(t-1|\mathfrak{a}^L_M) \neq 0 \}. $$

Soit $t \in W^G(M)$, on note $\Pi_{2,-}(\tilde{M})^t := \{\sigma \in \Pi_{2,-}(\tilde{M}) : t\sigma \simeq \sigma \}$. Soient $L, M \in \mathcal{L}(M_0)$, $L \supset M$, $R \in \mathcal{P}(L)$, $P \in \mathcal{P}(M)$, $\zeta \in i\mathfrak{a}_L^*$, $t \in W^L(M)_\text{reg}$, $\sigma \in \Pi_{2,-}(\tilde{M})^t$. Introduisons les objets suivants.
\begin{itemize}
  \item $\Pi := P \cap L$, $R(\Pi)$ est l'unique élément de $\mathcal{P}(M)$ tel que $R(\Pi) \cap L = \Pi$ et $R(\Pi) \subset R$.
  \item Fixons des facteurs normalisants faibles $r_{\tilde{Q}'|\tilde{Q}}(\sigma)$ (rappel: la Définition \ref{def:normalisant-faible}), d'où les opérateurs d'entrelacement normalisés $R_{\tilde{Q}'|\tilde{Q}}(\sigma)$ pour $Q, Q' \in \mathcal{P}(M)$.
  \item $R_{\tilde{P}}(t, \sigma) := \sigma(\tilde{t}) \circ A(\tilde{t}) \circ R_{t^{-1}\tilde{P}|\tilde{P}}(\sigma)$, où $\tilde{t} \in \tilde{L} \cap \tilde{K}$ est un représentant de $t$, et $\sigma(\tilde{t})$ est l'isomorphisme $\mathcal{I}_{\tilde{P}}(\tilde{t}\sigma) \rightiso \mathcal{I}_{\tilde{P}}(\sigma)$ induit par un isomorphisme fixé $\tilde{t}\sigma \rightiso \sigma$. C'est bien défini à une constante multiplicative près.
  \item $\tau_{1,\tilde{R}}(\zeta)$ est
    $$ \Tr\left( \mathcal{I}_{\tilde{P}}(\sigma, \check{f}_1) R_{\tilde{P}}(t,\sigma)^{-1} J_{\widetilde{P}|\widetilde{R(\Pi)}}(\sigma_\zeta) J_{\widetilde{P}|\widetilde{R(\Pi)}}(\sigma)^{-1} \right). $$
  \item $\tau_{2,\tilde{R}}(\zeta)$ est
    $$ \Tr\left( J_{\widetilde{R(\Pi)}|\widetilde{P}}(\sigma)^{-1} J_{\widetilde{R(\Pi)}|\widetilde{P}}(\sigma_\zeta) R_{\tilde{P}}(t,\sigma) \mathcal{I}_{\tilde{P}}(\sigma, f_2) \right). $$
  \item $\tilde{J}_{\tilde{L}}(\sigma,t,f)$ est
    \begin{gather}\label{eqn:J-tilde-L-t}
      \lim_{\zeta \to 0} \sum_{R \in \mathcal{P}(L)} \tau_{1,\tilde{R}}(\zeta) \tau_{2,\widetilde{\bar{R}}}(\zeta) |\mathcal{L}_L/\mathcal{L}_{L,k}|^{-1} \sum_{X \in \mathcal{L}_L/\mathcal{L}_{L,k}} e^{\angles{\zeta, X_R}} \theta_{R,k}(\zeta)^{-1}
    \end{gather}
    avec $k \in \Z_{\geq 1}$ suffisamment divisible.
  \item On note
    \begin{align*}
      \Sigma_P^\text{red}(\sigma) & := \{\beta \in \Sigma_P^\text{red} : r_\beta \text{ a un pôle en } \sigma \} \\
      & = \{\beta \in \Sigma_P^\text{red} : \mu_\beta \text{ a un zéro en } \sigma \},
    \end{align*}
    et on pose
    \begin{gather}\label{eqn:epsilon-sigma}
      \epsilon_\sigma(t) := (-1)^{|t \Sigma_P^\text{red}(\sigma) \cap \Sigma_{\bar{P}}^\text{red}(\sigma)|}.
    \end{gather}
\end{itemize}

Pour $L, \sigma$ comme ci-dessus, on munit $\Im X(\tilde{L}) \cdot \sigma$ de la mesure quotient déduite de la mesure de $i\mathfrak{a}_L^*$.

\begin{proposition}\label{prop:J-spec-tilde}
  On a
  $$ \tilde{J}_\mathrm{spec}(f) = \sum_{L,M,t} |W^M_0| |W^G_0|^{-1} |\det(t-1|\mathfrak{a}^L_M)|^{-1} \sum_{\sigma} \int_{\Im X(\tilde{L}) \cdot \sigma} \epsilon_{\sigma'}(t) \tilde{J}_{\tilde{L}}(\sigma',t,f) \dd\sigma' $$
  où $L,M,t$ sont comme précédemment, et $\sigma$ parcourt les $\Im X(\tilde{L})$-orbites de $\Pi_{2,t}(\tilde{M})^t$.
\end{proposition}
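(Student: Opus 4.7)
Le plan est de reprendre la démonstration de \cite[\S 11]{Ar91} en s'appuyant sur les développements déjà établis dans les sous-sections précédentes. On part de l'expression \eqref{eqn:k^T-omega} pour $k^T(f)$, dont on sait qu'elle coïncide avec $J^T_\text{spec}(f)$ à un reste à décroissance rapide en $\|T\|$ près, et dont le terme constant est exactement le $\tilde{J}_\text{spec}(f)$ que l'on cherche à décrire.

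Pour cela, on développera d'abord la fonction $\omega^T_{\tilde{P}}(\sigma \otimes \chi, S(f), S)$ en explicitant $r^T_{\tilde{P}_1}$. Il convient d'exprimer les termes constants faibles $E^{\tilde{G}}_{\tilde{P}}(S(f)_\chi)^w_{\tilde{P}_1}$ et $E^{\tilde{G}}_{\tilde{P}}(S_\chi)^w_{\tilde{P}_1}$ à l'aide du Théorème d'expansion des coefficients d'induites (variante faible) de \S\ref{sec:Plancherel}: on les écrit comme sommes sur $W(M_1|G|M)$, avec $M_1$ la composante de Lévi de $P_1$, faisant intervenir les opérateurs $c_{\tilde{P}_1|\tilde{P}}(s,\omega)$ et ${}^\circ c_{\tilde{P}_1|\tilde{P}}(s,\omega)$. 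L'intégration sur $\Im X(\tilde{M})$ produit alors une intégrale sur $i\mathfrak{a}_M^* / \tilde{\mathfrak{a}}_{M,F}^\vee$ de fonctions rationnelles en $\chi$, multipliées par le poids combinatoire $\varphi_{P_1}$.

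L'étape suivante est d'appliquer la transformation de Fourier discrète, comme dans le côté géométrique de \S\ref{sec:LTF-geo}, pour isoler le terme constant en $T$: l'intégrale sur $\Im X(\tilde{M})$ se transforme en une limite de sommes discrètes sur les réseaux $\mathcal{L}_{L,k}$, contrôlées par les $(G,L)$-familles fabriquées à partir des opérateurs d'entrelacement normalisés $R_{\tilde{P}}(t,\sigma)$, dont l'existence est garantie par \S\ref{sec:op-entrelacement}. Les arguments combinatoires de \cite[Proposition 11.3]{Ar91}, reposant sur des identités pour les fonctions $\theta_{R,k}$ et sur la décomposition de $W^G_0$ selon les couples $(L,t)$ avec $L \supset M$ et $t \in W^L(M)_\text{reg}$, réorganisent la somme en celle annoncée. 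Le signe $\epsilon_\sigma(t)$ défini en \eqref{eqn:epsilon-sigma} provient du décompte des zéros de la fonction $\mu$ qui sont absorbés par les opérateurs $R_{\tilde{P}}(t,\sigma)$, via la relation entre facteurs normalisants et fonction $\mu$ explicitée dans \cite[\S 11]{Ar91}.

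Vérifions finalement que le passage au revêtement ne soulève aucune obstruction nouvelle. D'une part, l'ambigüité liée aux sous-groupes $A_M(F)^\dagger$ et $A_G(F)^\dagger$ disparaît exactement comme sur le côté géométrique: les facteurs $\iota_M$ et les indices $[\tilde{\mathcal{L}}_M : \tilde{\mathcal{L}}_M^\dagger]$ se compensent grâce aux Lemmes \ref{prop:facteur-a-1} et \ref{prop:facteur-a-2}, rendant le terme constant indépendant de ces choix. D'autre part, les sommes sur $\sigma$ restent portées sur $\Pi_{2,-}(\tilde{M})$ puisque les fonctions test appartiennent à $\mathcal{C}_{\asp}(\tilde{G}) \otimes \mathcal{C}_-(\tilde{G})$. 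L'opérateur $R_{\tilde{P}}(t,\sigma)$ n'est défini qu'à un scalaire central près, mais les traces $\tau_{1,\tilde{R}}(\zeta)$ et $\tau_{2,\widetilde{\bar{R}}}(\zeta)$ font respectivement intervenir $R_{\tilde{P}}(t,\sigma)^{-1}$ et $R_{\tilde{P}}(t,\sigma)$, ce qui lève l'ambigüité. L'obstacle principal reste l'argumentation combinatoire de \cite[Proposition 11.3]{Ar91} elle-même, mais celle-ci est purement formelle et se déroule dans les espaces vectoriels réels $\mathfrak{a}^G_M$ avec les poids $\theta_{R,k}$ — elle ne voit pas le revêtement. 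L'adaptation est donc immédiate pourvu que les préliminaires (expansion des coefficients faibles, opérateurs d'entrelacement normalisés, fonctions $\mu$) soient vérifiés, ce qui est le cas.
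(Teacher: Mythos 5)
Votre démonstration suit essentiellement la même voie que celle du texte : on reprend l'argument de \cite[\S 11]{Ar91}, les ingrédients analytiques (formule de Plancherel, fonctions $c$, opérateurs d'entrelacement normalisés) étant déjà établis dans \S\ref{sec:Plancherel}, et l'on vérifie que les facteurs parasites $\iota_M$, $[A_M(F)^1 : A_M(F)^{\dagger,1}]$, $[\tilde{\mathfrak{a}}_{M,F} : \tilde{\mathfrak{a}}_{M,F}^\dagger]$ se compensent grâce aux Lemmes \ref{prop:facteur-a-1} et \ref{prop:facteur-a-2}, comme du côté géométrique. Vos précisions supplémentaires (décomposition de $\omega^T_{\tilde{P}}$ via les termes constants faibles, levée de l'ambiguïté scalaire dans $R_{\tilde{P}}(t,\sigma)$ par la paire $\tau_{1,\tilde{R}}$/$\tau_{2,\widetilde{\bar{R}}}$) sont correctes et explicitent utilement ce que le texte laisse implicite.
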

\begin{proof}
  Puisque la théorie des opérateurs d'entrelacement, des fonctions $c$ de Harish-Chandra et la formule de Plancherel pour revêtements sont établies dans \S\ref{sec:Plancherel}, il suffit d'adapter \cite[\S 11]{Ar91}. Comme pour le côté géométrique, des facteurs de la forme $\iota_M$, $[A_M(F)^1 : A_M(F)^{\dagger, 1}]$ ou $[\tilde{\mathfrak{a}}_{M,F} : \tilde{\mathfrak{a}}_{M,F}^\dagger]$ (où $M \in \mathcal{L}(M_0)$) peuvent intervenir dans $J_\text{spec}^T(f)$, donc dans la description de $\tilde{J}_{\tilde{L}}(\sigma',t,f)$. Toutefois on peut inspecter les arguments d'Arthur et montrer, comme dans \S\ref{sec:LTF-geo}, que ces facteurs se compensent à l'aide des Lemmes \ref{prop:facteur-a-1}, \ref{prop:facteur-a-2}. Ces compensations ne sont pas surprenantes car le formalisme est choisi de sorte que si $\tilde{G} = \bmu_m \times G(F)$ et $\rev = (1, \identity)$, alors on revient à la situation considérée par Arthur.
\end{proof}

\subsection{Interlude: \texorpdfstring{$R$}{R}-groupes}\label{sec:R-groupe}
Avant de procéder à la formule des traces locale, rappelons brièvement le formalisme de $R$-groupes de \cite[\S 2]{Ar93}. Des sources possibles des démonstrations sont \cite{Si78, Si79}. Les preuves reposent sur
\begin{itemize}
  \item la formule de Plancherel,
  \item la normalisation des opérateurs d'entrelacement,
  \item une formule de Casselman \cite[I.4.1]{Wa03} des coefficients matriciels du module de Jacquet, pour le cas archimédien.
\end{itemize}
On a tous ces ingrédients pour les revêtements.

Soient $M \in \mathcal{L}(M_0)$, $P \in \mathcal{P}(M)$, $(\sigma,V) \in \Pi_{2,-}(\tilde{M})$. Notons $\Pi_\sigma(\tilde{G})$ l'ensemble des classes de constituants irréductibles de $\mathcal{I}_{\tilde{P}}(\sigma)$, qui ne dépend pas du choix de $P$. Notons\index[iFT2]{$W_\sigma$} $W_\sigma := \{w \in W^G(M) : w\sigma \simeq \sigma \}$. Fixons des facteurs normalisants faibles $r_{\tilde{Q}'|\tilde{Q}}(\sigma)$ et les opérateurs d'entrelacement normalisés $R_{\tilde{Q}'|\tilde{Q}}(\sigma)$.

Soient $w \in W_\sigma$ et $\tilde{w} \in \tilde{K}$ un représentant, alors $\sigma$ se prolonge en une représentation du groupe $\widetilde{M}_w^+$ engendré par $\tilde{M}$ et $\tilde{w}$; désignons cette représentation par $\sigma_w$. Ce prolongement est  unique à une constante près, nous y imposerons des conditions plus tard.

Définissons l'opérateur d'entrelacement
\begin{align*}
  A(\sigma_w): \mathcal{I}_{w^{-1} \tilde{P}}(\sigma) & \rightiso \mathcal{I}_{\tilde{P}}(\sigma), \\
  \phi(\cdot) & \mapsto \sigma_w(\tilde{w}) \phi(\tilde{w}^{-1} \cdot ).
\end{align*}

Posons $R_{\tilde{P}}(w,\sigma) := A(\sigma_w) R_{w^{-1}\tilde{P}|\tilde{P}}(\sigma)$ et\index[iFT2]{$W_\sigma^0$}\index[iFT2]{$R_\sigma$}
\begin{align*}
  W_\sigma^0  &:= \{w \in W_\sigma : R_{\tilde{P}}(w,\sigma) \in \C^\times \identity \}, \\
  R_\sigma & := W_\sigma/W_\sigma^0 .
\end{align*}
Notons que $R_\sigma$ ne dépend pas de $P$.

L'un des faits de la théorie de $R$-groupes est que $W_\sigma^0$ est le groupe de Weyl associé au système de racines engendré par $\{\beta \in \Sigma_P^\text{red} : \mu_\beta(\sigma)=0\}$. Si l'on fixe une chambre $\mathfrak{a}_\sigma^+$ de ce système, alors on peut identifier $R_\sigma$ et $\{w \in W_\sigma : w\mathfrak{a}_\sigma^+ = \mathfrak{a}_\sigma^+ \}$; avec ce choix, on peut écrire
$$ W_\sigma = W_\sigma^0 \rtimes R_\sigma . $$

Supposons maintenant que pour tout $w \in W_\sigma^0$, $\sigma_w$ est choisi de sorte que $R_{\tilde{P}}(w, \sigma)=\identity$. L'application $r \mapsto R_{\tilde{P}}(r,\sigma)$ pour $r \in R_\sigma$ n'est pas forcément un homomorphisme: il existe un $2$-cocycle $\eta_\sigma: R_\sigma^2 \to \C^\times$ tel que $R_{\tilde{P}}(r_1 r_2, \sigma) = \eta_\sigma(r_1,r_2) R_{\tilde{P}}(r_1,\sigma) R_{\tilde{P}}(r_2,\sigma)$. Plus précisément, on a $\eta_\sigma(r_1, r_2) = A(\sigma_{r_1 r_2}) A(\sigma_{r_1})^{-1} A(\sigma_{r_2})^{-1}$. Fixons désormais une extension centrale\index[iFT2]{$R_\sigma$}\index[iFT2]{$Z_\sigma$}\index[iFT2]{$\chi_\sigma$}
\begin{gather}\label{eqn:R-ext}
  1 \to Z_\sigma \to \tilde{R}_\sigma \to R_\sigma \to 1
\end{gather}
de sorte que $Z_\sigma$ est fini et l'image de $\eta_\sigma$ dans $H^2(\tilde{R}_\sigma, \C^\times)$ est triviale. Autrement dit, il existe $\xi_\sigma: \tilde{R}_\sigma \to \C^\times$ tel que $\eta_\sigma(r_1, r_2) = \xi_\sigma(r_1 r_2) \xi_\sigma(r_1)^{-1} \xi_\sigma(r_2)^{-1}$ pour tous $r_1, r_2 \in \tilde{R}_\sigma$. On en déduit le caractère $\chi_\sigma: Z_\sigma \to \C^\times$ tel que $\xi_\sigma(zr)=\chi_\sigma(z)\xi_\sigma(r)$ pour tout $r \in \tilde{R}_\sigma$ et tout $z \in Z_\sigma$. Si l'on pose\index[iFT2]{$\tilde{R}_{\tilde{P}}(r,\sigma)$}
$$ \tilde{R}_{\tilde{P}}(r,\sigma) := \xi_\sigma(r)^{-1} R_{\tilde{P}}(r,\sigma), \qquad r \in \tilde{R}_\sigma $$
alors $r \mapsto \tilde{R}_{\tilde{P}}(r,\sigma)$ est un homomorphisme avec $\tilde{R}_{\tilde{P}}(zr,\sigma)=\chi_\sigma(z)^{-1} \tilde{R}_{\tilde{P}}(r,\sigma)$.

Notons $\Pi(\tilde{R}_\sigma, \chi_\sigma)$ l'ensemble des classes de représentation irréductibles de $\tilde{R}_\sigma$ dont la restriction sur $Z_\sigma$ est $\chi_\sigma$. Le théorème principal des $R$-groupes (d'après Harish-Chandra, Knapp, Stein, Silberger) s'exprime comme suit, ce que nous admettre comme une hypothèse dans le cas non archimédien.

\begin{theorem}\label{prop:R}
  La représentation $\mathcal{R}_{\tilde{P}}(r,\tilde{x}) = \tilde{R}_{\tilde{P}}(r,\sigma) \mathcal{I}_{\tilde{P}}(\sigma, \tilde{x})$ de $\tilde{R}_\sigma \times \tilde{G}$ sur $\mathcal{I}_{\tilde{P}}(V)$ admet une décomposition
  $$ \mathcal{R}_{\tilde{P}} = \bigoplus_{\rho \in \Pi(\tilde{R}_\sigma, \chi_\sigma)} (\rho^\vee \boxtimes \pi_\rho) $$
  où $\rho \mapsto \pi_\rho$ est une bijection de $\Pi(\tilde{R}_\sigma, \chi_\sigma)$ sur $\Pi_\sigma(\tilde{G})$. En particulier, on a   $$ \Tr(\tilde{R}_{\tilde{P}}(r,\sigma) \mathcal{I}_{\tilde{P}}(\sigma, f)) = \sum_{\rho \in \Pi(\tilde{R}_\sigma, \chi_\sigma)} \Tr\rho^\vee (r) \cdot \angles{\Theta_{\pi_\rho}, f}, \qquad r \in \tilde{R}_\sigma, \; f \in C_{c,\asp}^\infty(\tilde{G}). $$
\end{theorem}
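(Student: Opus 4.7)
Le plan consiste à établir séparément (a) la structure d'algèbre de l'espace des endomorphismes $\End_{\tilde{G}}(\mathcal{I}_{\tilde{P}}(\sigma))$, puis (b) la décomposition de $\mathcal{R}_{\tilde{P}}$ à l'aide du théorème du double commutant.

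D'abord, il faut vérifier que $\mathcal{R}_{\tilde{P}}$ est bien une représentation de $\tilde{R}_\sigma \times \tilde{G}$. L'action de $\tilde{G}$ est celle déduite de $\mathcal{I}_{\tilde{P}}(\sigma)$. Pour $\tilde{R}_\sigma$, la correction cocyclique $\tilde{R}_{\tilde{P}}(r,\sigma) = \xi_\sigma(r)^{-1} R_{\tilde{P}}(r,\sigma)$ a été précisément conçue de sorte que $r \mapsto \tilde{R}_{\tilde{P}}(r,\sigma)$ soit un homomorphisme, avec $\tilde{R}_{\tilde{P}}(zr,\sigma) = \chi_\sigma(z)^{-1} \tilde{R}_{\tilde{P}}(r,\sigma)$ pour $z \in Z_\sigma$; la commutation avec l'action de $\tilde{G}$ découle du fait que $R_{\tilde{P}}(w,\sigma)$ est un entrelaceur $\mathcal{I}_{\tilde{P}}(\sigma) \to \mathcal{I}_{\tilde{P}}(\sigma)$.

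L'étape clé est de montrer que
\begin{gather*}
  \End_{\tilde{G}}(\mathcal{I}_{\tilde{P}}(\sigma)) = \sum_{r \in \tilde{R}_\sigma / Z_\sigma} \C \cdot \tilde{R}_{\tilde{P}}(r,\sigma).
\end{gather*}
Pour la majoration de la dimension, on invoque la réciprocité de Frobenius et le second théorème d'adjonction (établis dans \S\ref{sec:representations}), puis la formule
$$ \Hom_{\tilde{G}}(\mathcal{I}_{\tilde{P}}(\sigma), \mathcal{I}_{\tilde{P}}(\sigma)) = \Hom_{\tilde{M}}(\sigma, (\mathcal{I}_{\tilde{P}}(\sigma))^w_{\tilde{P}}) .$$
Le lemme géométrique de Bernstein-Zelevinsky appliqué au module de Jacquet faible (aussi justifié dans \S\ref{sec:representations}) fournit une filtration à sous-quotients indexés par $W^G(M)$; l'élément $w$ contribue à la composante $\sigma$-isotypique si et seulement si $w\sigma \simeq \sigma$, c'est-à-dire $w \in W_\sigma$, chaque contribution étant de dimension $1$ (car $\sigma \in \Pi_{2,-}(\tilde{M})$, donc elliptique, entraînant la propriété de multiplicité un). D'après la théorie des $R$-groupes, les $w \in W^0_\sigma$ agissent via des scalaires sur $\mathcal{I}_{\tilde{P}}(\sigma)$; on obtient donc $\dim \End_{\tilde{G}}(\mathcal{I}_{\tilde{P}}(\sigma)) \leq |R_\sigma|$. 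Pour la minoration, les opérateurs $\{\tilde{R}_{\tilde{P}}(r,\sigma)\}_{r \in \tilde{R}_\sigma/Z_\sigma}$ sont linéairement indépendants: c'est une conséquence de la description explicite par la fonction $\mu$ et des facteurs normalisants, puisque l'annulation de $W^0_\sigma$ dans le $R$-groupe est précisément caractérisée par la théorie de Harish-Chandra développée dans \S\ref{sec:Plancherel} (en utilisant la formule des coefficients de Casselman pour le terme constant faible).

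Ensuite, l'algèbre $\sum_r \C \cdot \tilde{R}_{\tilde{P}}(r,\sigma)$ s'identifie à la composante $\chi_\sigma$-isotypique de l'algèbre de groupe $\C[\tilde{R}_\sigma]$, qui est semi-simple et isomorphe à $\bigoplus_{\rho \in \Pi(\tilde{R}_\sigma, \chi_\sigma)} \End_\C(\rho^\vee)$. Par le théorème du double commutant (type Schur-Weyl), appliqué à la paire duale $(\tilde{R}_\sigma, \tilde{G})$ agissant sur $\mathcal{I}_{\tilde{P}}(V)$, on obtient la décomposition multiplicité-un
$$ \mathcal{R}_{\tilde{P}} = \bigoplus_{\rho \in \Pi(\tilde{R}_\sigma, \chi_\sigma)} (\rho^\vee \boxtimes \pi_\rho) $$
avec des $\pi_\rho$ irréductibles et deux à deux non isomorphes. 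Le fait que $\rho \mapsto \pi_\rho$ soit une bijection sur $\Pi_\sigma(\tilde{G})$ découle de la surjectivité (tout constituant de $\mathcal{I}_{\tilde{P}}(\sigma)$ apparaît dans une unique composante $\rho^\vee \boxtimes \pi_\rho$) et de la comparaison de cardinaux: $|\Pi(\tilde{R}_\sigma, \chi_\sigma)|$ est la longueur de $\mathcal{I}_{\tilde{P}}(\sigma)$, qui est aussi $|\Pi_\sigma(\tilde{G})|$. La formule de trace finale s'obtient en prenant la trace de la décomposition et en identifiant les composantes isotypiques.

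L'obstacle principal sera la vérification soigneuse que les hypothèses admises (notamment le lemme géométrique, la classification des tempérées, et la propriété $\Hom = \C$ générique) entraînent effectivement les bonnes dimensions. C'est précisément pour éviter ce travail que l'énoncé est admis comme hypothèse dans l'article: il faudrait essentiellement recopier \cite{Si78, Si79} en vérifiant la compatibilité avec l'équivariance sous $\bmu_m$ à chaque étape.
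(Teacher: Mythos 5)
The paper does not actually prove this statement: the sentence immediately preceding Theorem~\ref{prop:R} says explicitly ``ce que nous admettons comme une hypothèse dans le cas non archimédien'', with \cite{Si78, Si79} cited as the sources of proofs. So there is no paper proof to compare against; your sketch is an outline of the Silberger route, which is indeed the one the paper alludes to, and you correctly acknowledge at the end that carrying it out amounts to redoing Silberger with $\bmu_m$-equivariance tracked throughout. That part is fair.

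That said, the dimension count in your sketch has a genuine gap. The Jacquet module / Frobenius reciprocity argument (geometric lemma applied to the weak Jacquet module, each surviving subquotient indexed by $w\in W_\sigma$ contributing dimension one) gives only
$\dim \End_{\tilde{G}}(\mathcal{I}_{\tilde{P}}(\sigma)) \leq |W_\sigma|$,
not $\leq |R_\sigma|$. You then invoke ``$W_\sigma^0$ acts by scalars'' — which is simply the definition of $W_\sigma^0$ in the paper — to deduce $\dim \leq |R_\sigma|$. But that fact only bounds the dimension of the \emph{span} of the operators $\{\tilde R_{\tilde{P}}(r,\sigma)\}$ by $|R_\sigma|$. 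To conclude that the commutant itself has dimension $\leq |R_\sigma|$ you must show separately that the normalized intertwining operators $R_{\tilde{P}}(w,\sigma)$, $w\in W_\sigma$, span the whole commutant. That is a nontrivial step in Silberger's argument (it rests on the Plancherel formula and the behaviour of the long intertwining operator), and as written your proof skips it. Without it, your two bounds run in parallel and never meet. A second, smaller imprecision: the parenthetical ``car $\sigma\in\Pi_{2,-}(\tilde M)$, donc elliptique, entraînant la propriété de multiplicité un'' misattributes the reason — the dimension-one contributions come from Schur's lemma (irreducibility of $\sigma$), while the fact that $\sigma$ is discrete series is what rules out $\sigma$-isotypic contributions from elements $w$ with $wM \neq M$ in the geometric-lemma filtration (those subquotients involve proper parabolic inductions inside $\tilde M$ and hence cannot contain a discrete-series subrepresentation).
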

\begin{remark}
  Selon notre normalisation $R_{\tilde{P}}(w, \sigma)=\identity$ pour tout $w \in W_\sigma^0$, on voit que l'opérateur $\tilde{R}_{\tilde{P}}(r, \sigma)$ est indépendant du plongement $R_\sigma \hookrightarrow W_\sigma$, autrement dit indépendant du choix de la chambre $\mathfrak{a}_\sigma^+$.
\end{remark}

Enregistrons des propriétés de $R$-groupes, dont les preuves se trouvent dans \cite{Ar93}.
\begin{enumerate}
  \item Fixons $\mathfrak{a}_\sigma^+$. Soit $L \in \mathcal{L}(M)$ tel que $\mathfrak{a}_\sigma^+$ contient un ouvert de $\mathfrak{a}_L$. Notons $R^L_\sigma$ le $R$-groupe relativement à $\tilde{L}$ au lieu de $\tilde{G}$, alors on a une identification $R^L_\sigma = R_\sigma \cap W^L(M)$. Vu les propriétés des opérateurs d'entrelacement normalisés, on peut tirer l'extension centrale \eqref{eqn:R-ext} par $R^L_\sigma \hookrightarrow R_\sigma$ et obtient
  $$ 1 \to Z_\sigma \to \tilde{R}^L_\sigma \to R^L_\sigma \to 1 $$
  qui trivialise encore les $2$-cocycles $\xi^L_\sigma$ associés à $\tilde{L}$.

  \item Soit $L$ comme ci-dessus. On note $\mathbf{K}_0(\tilde{R}_\sigma, \chi_\sigma)$ l'espace des caractères virtuels engendré par $\Pi(\tilde{R}_\sigma, \chi_\sigma)$. Idem pour $\mathbf{K}_0(\tilde{R}^L_\sigma, \chi_\sigma)$. Alors un élément de $\mathbf{K}_0(\tilde{R}_\sigma, \chi_\sigma)$ est induit de $\mathbf{K}_0(\tilde{R}^L_\sigma, \chi_\sigma)$ si et seulement si le caractère virtuel associé de $\tilde{G}$ est induit d'un caractère virtuel engendré par $\Pi_\sigma(\tilde{L})$.

  \item Le signe $\epsilon_\sigma(t)$ défini dans \eqref{eqn:epsilon-sigma} est égal à $(-1)^{\ell(w^0)}$ si $t = w^0 r$ avec $w^0 \in W^0_\sigma$, $r \in R_\sigma$.
\end{enumerate}

Supposons de plus que de tels choix (eg. facteurs normalisants, $\mathfrak{a}_\sigma^+$, $\sigma_w$, l'extension centrale \eqref{eqn:R-ext}, etc.) sont faits pour tout $w\sigma$, $w \in W^G_0$, de façon compatible. Par exemple, on exige que l'action de $w \in W^G_0$ induit un isomorphisme $\tilde{R}_\sigma \rightiso \tilde{R}_{w\sigma}$. L'étape suivante est d'introduire des espaces de paramètres convenables pour le côté spectral.

Soit $(M,\sigma,r)$ un triplet avec $M \in \mathcal{L}(M_0)$, $\sigma \in \Pi_{2,-}(\tilde{M})$, $r \in \tilde{R}_\sigma$. On dit qu'il est essentiel si pour tout $z \in Z_\sigma$ tel que $zr$ est conjugué à $r$, on a $\chi_\sigma(z)=1$. Le groupe de Weyl $W^G_0$ opère sur de tels triplets par $w(M,\sigma,r) = (wM, w\sigma, wrw^{-1})$. Posons\index[iFT2]{$R_{\sigma,\text{reg}}$} $R_{\sigma, \text{reg}} := R_\sigma \cap W^G(M)_\text{reg}$, notons $\tilde{R}_{\sigma, \text{reg}}$ son image réciproque dans $\tilde{R}_\sigma$ et\index[iFT2]{$T(\tilde{G}), T_\text{disc}(\tilde{G}), T_\text{ell}(\tilde{G})$}
\begin{align*}
  \tilde{T}(\tilde{G}) & := \{ (M,\sigma,r) : \text{un triplet essentiel}. \}, \\
  \tilde{T}_{\text{disc}}(\tilde{G}) & := \{ (M,\sigma,r) \in T(\tilde{G}) : W_\sigma^0 \cdot r \cap W^G(M)_\text{reg} \neq \emptyset \}, \\
  \tilde{T}_\text{ell}(\tilde{G}) & := \{ (M,\sigma,r) \in T(\tilde{G}) : r \in \tilde{R}_{\sigma,\text{reg}} \}, \\
\end{align*}

On a $\tilde{T}(\tilde{G}) \supset \tilde{T}_{\text{disc}}(\tilde{G}) \supset \tilde{T}_\text{ell}(\tilde{G})$. Le groupe $\Im X(\tilde{G})$ opère sur $\tilde{T}_\text{ell}(\tilde{G})$ par $\chi \cdot (M, \sigma, r) = (M, \sigma \otimes \chi, r)$, ce qui munit $\tilde{T}_\text{ell}(\tilde{G})$ d'une structure de variété analytique connexe. On vérifie aussi que $\tilde{T}(\tilde{G}) = \bigsqcup_{L \in \mathcal{L}(M_0)} \tilde{T}_\text{ell}(\tilde{L})$. Donc $\tilde{T}(\tilde{G})$ et $\tilde{T}_{\text{disc}}(\tilde{G})$ sont aussi munis de structures de variété analytique. Posons

\begin{align*}
  T(\tilde{G}) & := \tilde{T}(\tilde{G})/ W^G_0, \\
  T_\text{disc}(\tilde{G}) & := \tilde{T}_\text{disc}(\tilde{G})/W^G_0 , \\
  T_\text{ell}(\tilde{G}) & := \tilde{T}_\text{ell}(\tilde{G})/W^G_0  .
\end{align*}

Certes, on peut introduire l'équivariance sous $\bmu_m$ et définir les espaces $T_-(\tilde{G})$, $T_{\asp}(\tilde{G})$, etc. Munissons $T_{\text{disc},-}(\tilde{G})$ de la mesure de Radon telle que
$$ \int_{T_{\text{disc},-}(\tilde{G})} \theta(\tau) \dd\tau = \sum_{\tau \in T_{\text{disc},-}(\tilde{G})/\Im X(\tilde{G})} |\tilde{R}_{\sigma,r}|^{-1} \int_{\Im X(\tilde{G}) \cdot \tau} \theta(\tau') \dd\tau' $$
pour toute $\theta \in C_c(T_{\text{disc},-}(\tilde{G}))$, où $\tilde{R}_{\sigma,r}$ est le stabilisateur de $r$ dans $\tilde{R}_\sigma$, et $\Im X(\tilde{G}) \cdot \tau$ est muni de la mesure quotient déduite de la mesure de $i\mathfrak{a}_G^*$.

\begin{definition}\index[iFT2]{représentation elliptique}
  Une représentation $\pi \in \Pi_{\text{temp}}(\tilde{G})$ est dite elliptique si la restriction de son caractère sur l'ensemble des éléments réguliers $F$-elliptiques n'est pas nulle. On définit les représentations virtuelles tempérées elliptiques de la même manière.
\end{definition}

La proposition suivante décrit les représentations tempérées (resp. virtuelles tempérées) en termes du $R$-groupe; elle explique aussi la notation $T_{\text{ell},-}(\tilde{G})$. Sa démonstration dans le cas non archimédien dans \cite[\S 2]{Ar93} nécessite un résultat de Kazhdan qui sera prouvé dans le Théorème \ref{prop:KZ0}. Nous n'utiliserons pas cette proposition dans cet article.

\begin{proposition}
  Soient $M \in \mathcal{L}(M_0)$, $\sigma \in \Pi_{2,-}(\tilde{M})$ et $\rho \in \Pi(\tilde{R}_\sigma, \chi_\sigma)$. La représentation $\pi_\rho \in \Pi_\sigma(\tilde{G})$ est elliptique si et seulement si $\Tr\rho$ ne s'annule pas sur $\tilde{R}_{\sigma,\mathrm{reg}}$. L'ensemble $T_{\mathrm{ell},-}(\tilde{G})$ paramètre une base de l'espace engendré par les représentations virtuelles tempérées elliptiques spécifiques de $\tilde{G}$.
\end{proposition}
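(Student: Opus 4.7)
Notre plan suit pas à pas celui d'Arthur \cite[\S 2]{Ar93}. La première étape est d'inverser la formule du Théorème \ref{prop:R} par analyse de Fourier sur le groupe fini $\tilde{R}_\sigma$, restreinte au caractère central $\chi_\sigma$. Pour tout $f \in \mathcal{H}_{\asp}(\tilde{G})$, en posant $\Theta(\sigma,r,f) := \Tr(\tilde{R}_{\tilde{P}}(r,\sigma) \mathcal{I}_{\tilde{P}}(\sigma,f))$ pour $r \in \tilde{R}_\sigma$, on obtient une formule de la forme
\[
 \angles{\Theta_{\pi_\rho}, f} = |\tilde{R}_\sigma/Z_\sigma|^{-1} \sum_{r \in \tilde{R}_\sigma/Z_\sigma} \Tr \rho(r) \cdot \Theta(\sigma, r, f),
\]
qui ramène l'étude de $\Theta_{\pi_\rho}$ sur l'ensemble régulier elliptique à celle des distributions $\Theta(\sigma,r,\cdot)$ indexées par $r$.

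La deuxième étape est la descente parabolique: si $r \notin \tilde{R}_{\sigma,\text{reg}}$, il existe un Lévi propre $L \supsetneq M$ dans $\mathcal{L}(M)$ tel que $r \in \tilde{R}^L_\sigma$, d'après la définition de $\tilde{R}_{\sigma,\text{reg}} = \tilde{R}_\sigma \cap W^G(M)_\text{reg}$ et le fait que le système de racines engendré par les zéros de $\mu$ le long de $\mathfrak{a}^L_M$ contrôle $W^0_\sigma$. Les propriétés de compatibilité des opérateurs d'entrelacement normalisés avec l'induction parabolique (propriétés (1) et (2) rappelées dans \S\ref{sec:R-groupe}) entraînent alors que $\Theta(\sigma,r,\cdot)$ est induit à $\tilde{G}$ depuis $\tilde{L}$, et s'annule donc sur $\tilde{G}_\text{reg,ell}$ par la propriété classique de la descente des caractères des induites (qui est aussi valable pour les revêtements, compte tenu du Corollaire \ref{prop:caractere-admissible}).

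La troisième étape, cruciale, est la liberté linéaire des restrictions $\Theta(\sigma,r,\cdot)|_{\tilde{G}_\text{reg,ell}}$ lorsque $(M,\sigma,r)$ parcourt $T_{\text{ell},-}(\tilde{G})$. C'est précisément ici qu'intervient le ``Théorème 0'' de Kazhdan démontré dans le Théorème \ref{prop:KZ0} via la formule des traces locale: il exprime la densité des caractères tempérés spécifiques, ce qui garantit qu'une combinaison linéaire finie des $\Theta(\sigma,r,\cdot)$ est déterminée par sa valeur sur les intégrales orbitales régulières, puis par dualité on obtient l'indépendance linéaire sur l'ensemble régulier elliptique (en utilisant aussi que les $\Im X(\tilde{M})$-orbites distinctes donnent des caractères linéairement indépendants grâce à la formule de Plancherel \ref{prop:Plancherel}).

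Une fois ces trois étapes acquises, les deux énoncés de la proposition en découlent. Pour le premier: si $\Tr\rho$ s'annule sur $\tilde{R}_{\sigma,\text{reg}}$, l'étape 2 donne $\Theta_{\pi_\rho}|_{\tilde{G}_\text{reg,ell}}=0$; réciproquement si $\Tr\rho$ ne s'annule pas sur $\tilde{R}_{\sigma,\text{reg}}$, l'étape 3 garantit que la somme sur $r \in \tilde{R}_{\sigma,\text{reg}}/Z_\sigma$ ne peut s'annuler. Pour le second: les caractères virtuels associés aux triplets $(M,\sigma,r) \in T_{\text{ell},-}(\tilde{G})$ engendrent l'espace des caractères tempérés elliptiques (puisque toute $\pi \in \Pi_{\text{temp},-}(\tilde{G})$ s'écrit comme combinaison de $\pi_\rho$ via l'étape 1), et la liberté linéaire de l'étape 3 en fait une base. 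L'obstacle principal est donc la liberté linéaire de l'étape 3, qui repose sur le Théorème \ref{prop:KZ0} et crée l'entrelacement logique annoncé dans le diagramme de l'introduction.
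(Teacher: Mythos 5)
Le texte du papier ne démontre pas cette proposition~: la remarque qui la précède renvoie explicitement à \cite[\S 2]{Ar93} et indique que la preuve non archimédienne repose sur le théorème de densité de Kazhdan établi au Théorème~\ref{prop:KZ0}, tout en précisant que la proposition ne sera pas utilisée dans l'article. Votre esquisse identifie correctement les trois blocs de l'argument d'Arthur~: (1) inversion de Fourier sur $\tilde{R}_\sigma$ donnant $\Theta_{\pi_\rho}$ comme combinaison des $\Theta(\sigma,r,\cdot)$, (2) annulation de $\Theta(\sigma,r,\cdot)$ sur $\tilde{G}_{\mathrm{reg,ell}}$ pour $r \notin \tilde{R}_{\sigma,\mathrm{reg}}$ par descente parabolique, et (3) indépendance linéaire des $\Theta(\tau)|_{\tilde{G}_{\mathrm{reg,ell}}}$ pour $\tau \in T_{\mathrm{ell},-}(\tilde{G})$. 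Le plan est donc le bon.

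Il y a néanmoins une imprécision importante à l'étape 3, qui est le c\oe ur de la preuve. Dire que KZ0 ``garantit qu'une combinaison linéaire finie des $\Theta(\sigma,r,\cdot)$ est déterminée par sa valeur sur les intégrales orbitales régulières'' est trivialement vrai pour toute fonction localement intégrable et ne donne pas l'indépendance linéaire sur l'ensemble \emph{elliptique} régulier. L'argument effectif est plus structuré~: si $\Theta := \sum_\tau c_\tau \Theta(\tau)$ (somme finie, $\tau \in T_{\mathrm{ell},-}(\tilde{G})$) s'annule sur $\tilde{G}_{\mathrm{reg,ell}}$, alors pour $f \in \mathcal{C}_{\asp}(\tilde{G})$ cuspidale la formule d'intégration de Weyl décompose $\angles{\Theta, f}$ en une somme sur $M \in \mathcal{L}(M_0)$~; le terme $M=G$ est nul par hypothèse, et pour $M \neq G$ la descente des intégrales orbitales jointe à la cuspidalité de $f$ et au Théorème~\ref{prop:KZ0} appliqué à $\tilde{M}$ donne $I_{\tilde{G}}(\tilde{\gamma}, f) = I_{\tilde{M}}(\tilde{\gamma}, f_{\tilde{P}}) = 0$ pour $\gamma \in M(F)_{\mathrm{ell,reg}}$. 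Donc $\angles{\Theta, f} = 0$ pour toute $f$ cuspidale. Il faut ensuite le théorème de Paley--Wiener invariant (Théorème~\ref{prop:PW-invariant}), qui permet de choisir $f$ cuspidale telle que $f_{\tilde{G}}$ soit une bosse arbitraire sur $T_{\mathrm{ell},-}(\tilde{G})$~; comme $\angles{\Theta, f} = \sum_\tau c_\tau f_{\tilde{G}}(\tau)$, on en déduit $c_\tau = 0$ pour tout $\tau$. C'est cette chaîne ---  Weyl, descente, KZ0 pour le Lévi, cuspidalité, surjectivité de Paley--Wiener --- que votre ``par dualité'' laisse dans l'ombre~; or c'est là que se trouve la preuve. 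La parenthèse sur la formule de Plancherel et les $\Im X(\tilde{M})$-orbites n'est pas pertinente ici~: l'indépendance linéaire des caractères de représentations irréductibles non équivalentes sur tout $\tilde{G}_{\mathrm{reg}}$ est classique, le problème est la restriction à l'ensemble elliptique. Enfin, à l'étape 2, l'existence du Lévi propre devrait être formulée $M \subseteq L \subsetneq G$ (et non ``$L \supsetneq M$'', ce qui échoue par exemple pour $r$ central dans $R_\sigma$ non régulier).
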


Pour tout $g \in \mathcal{C}_{\asp}(\tilde{G})$ et $\tau = (M,\sigma,r) \in T(\tilde{G})$, définissons\index[iFT2]{$\Theta(\tau,\cdot)$}\index[iFT2]{$i(\tau)$}
\begin{align}
  \label{eqn:Theta(tau)} \Theta(\tau,g) & := \Tr(\tilde{R}_{\tilde{P}}(r,\sigma) \mathcal{I}_{\tilde{P}}(\sigma,g)), \\
  \label{eqn:i(tau)} i(\tau) & := |W_\sigma^0|^{-1} \sum_{t \in W_\sigma^0 \cdot r \cap W^G(M)_\text{reg}} \epsilon_\sigma(t) |\det(1-t|\mathfrak{a}^G_M)|^{-1}, \quad \text{lorsque } \tau \in T_{\text{disc}}(\tilde{G}).
\end{align}
On vérifie que $\Theta$ est bien défini, c'est-à-dire qu'il ne dépend que de la $W^G_0$-orbite de $(M,\sigma,r)$. D'autre part, on vérifie que $\Theta((M, \sigma, zr), g)=\chi_\sigma(z)^{-1} \Theta((M,\sigma,r), g)$ pour tout $z \in Z_\sigma$. Idem si l'on considère $g \in \mathcal{C}_-(\tilde{G})$ et $\tau \in T_{\text{disc},\asp}(\tilde{G})$.

Soit $f=f_1 f_2$ avec $f_1 \in \mathcal{H}_-(\tilde{G})$, $f_2 \in \mathcal{H}_{\asp}(\tilde{G})$. On définit\index[iFT2]{$I_\text{disc}$}
\begin{gather}\label{eqn:I_disc}
  I_\text{disc}(f) = \int_{T_{\text{disc},-}(\tilde{G})} i(\tau) \Theta(\tau^\vee, f_1) \Theta(\tau, f_2) \dd\tau.
\end{gather}

Cette distribution est reliée à la formule des traces locale grâce au fait suivant.

\begin{proposition}
  Soit $f=f_1 f_2$ avec $f_1 \in \mathcal{H}_-(\tilde{G})$, $f_2 \in \mathcal{H}_{\asp}(\tilde{G})$. Alors $I_\mathrm{disc}(f)$ est la somme des termes correspondant à $L=G$ dans l'expression de $\tilde{J}_\mathrm{spec}(f)$ dans la Proposition \ref{prop:J-spec-tilde}.
\end{proposition}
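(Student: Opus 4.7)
Le plan est de spécialiser $L=G$ dans la Proposition \ref{prop:J-spec-tilde}, puis de réorganiser la sommation à l'aide du formalisme des $R$-groupes rappelé en \S\ref{sec:R-groupe}. Tout d'abord, on simplifie l'expression de $\tilde{J}_{\tilde{L}}(\sigma, t, f)$ dans le cas $L=G$. Puisque $\mathcal{P}(G) = \{G\}$, on a $\Pi = P$ et $R(\Pi) = P$, donc les opérateurs $J_{\tilde{P}|\widetilde{R(\Pi)}}(\sigma_\zeta)$ sont l'identité. Comme $\mathfrak{a}^G_G = 0$, le facteur combinatoire $|\mathcal{L}_G/\mathcal{L}_{G,k}|^{-1} \sum_X e^{\angles{\zeta, X_R}} \theta_{R,k}(\zeta)^{-1}$ se réduit à $1$ et la limite $\zeta \to 0$ devient triviale. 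On obtient ainsi
$$\tilde{J}_{\tilde{G}}(\sigma, t, f) = \Tr\bigl( \mathcal{I}_{\tilde{P}}(\sigma, \check{f}_1) R_{\tilde{P}}(t, \sigma)^{-1} \bigr) \Tr\bigl( R_{\tilde{P}}(t, \sigma) \mathcal{I}_{\tilde{P}}(\sigma, f_2) \bigr).$$

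Ensuite, pour chaque triplet $(M, \sigma)$ avec $t \in W^G(M)_{\mathrm{reg}} \cap W_\sigma$, on décompose $t = w^0 r$ selon $W_\sigma = W_\sigma^0 \rtimes R_\sigma$ (la section $R_\sigma \hookrightarrow W_\sigma$ étant déterminée par une chambre $\mathfrak{a}_\sigma^+$). Grâce à la normalisation $R_{\tilde{P}}(w^0, \sigma) = \identity$ pour $w^0 \in W_\sigma^0$, l'opérateur $R_{\tilde{P}}(t, \sigma)$ est un multiple scalaire de $R_{\tilde{P}}(r, \sigma) = \xi_\sigma(\tilde{r}) \tilde{R}_{\tilde{P}}(\tilde{r}, \sigma)$, où $\tilde{r} \in \tilde{R}_\sigma$ est un relèvement de $r$. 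Ce scalaire ainsi que le facteur $\xi_\sigma(\tilde{r})$ se compensent dans le produit de traces, qui devient $\Theta(\tau^\vee, f_1)\Theta(\tau, f_2)$ pour $\tau := (M, \sigma, \tilde{r})$, au moyen de \eqref{eqn:Theta(tau)} et de la dualité des traces par rapport aux représentations contragrédientes.

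Troisièmement, on invoque la propriété $\epsilon_\sigma(t) = (-1)^{\ell(w^0)}$ (\S\ref{sec:R-groupe}, propriété 3) afin d'identifier, pour $\tilde{r} \in \tilde{R}_\sigma$ fixé d'image $r \in R_\sigma$ vérifiant $(W_\sigma^0 \cdot r) \cap W^G(M)_{\mathrm{reg}} \neq \emptyset$ (condition équivalente à $(M, \sigma, \tilde{r}) \in \tilde{T}_{\mathrm{disc},-}(\tilde{G})$), la somme partielle
$$\sum_{t \in (W_\sigma^0 \cdot r) \cap W^G(M)_{\mathrm{reg}}} \epsilon_\sigma(t) |\det(t-1|\mathfrak{a}^G_M)|^{-1} = |W_\sigma^0| \cdot i(\tau)$$
suivant la définition \eqref{eqn:i(tau)}. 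La sommation sur $t$ se récrit donc en une sommation sur $\tilde{r}$ avec la contrainte $(M, \sigma, \tilde{r}) \in \tilde{T}_{\mathrm{disc},-}(\tilde{G})$, le facteur $|W_\sigma^0|$ compensant $|W_\sigma^0|^{-1}$ issu de $i(\tau)$.

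Enfin, la somme $\sum_M |W^M_0| |W^G_0|^{-1} \sum_{\sigma} \int_{\Im X(\tilde{G}) \cdot \sigma} \cdots \dd\sigma'$ sur les Lévis semi-standards et les $\Im X(\tilde{G})$-orbites de $\Pi_{2,-}(\tilde{M})$ se reformule comme l'intégrale $\int_{T_{\mathrm{disc},-}(\tilde{G})} i(\tau) \Theta(\tau^\vee, f_1) \Theta(\tau, f_2) \dd\tau = I_\mathrm{disc}(f)$, en invoquant la définition de la mesure de Radon sur $T_{\mathrm{disc},-}(\tilde{G})$: le facteur $|\tilde{R}_{\sigma, r}|^{-1}$ tient compte des stabilisateurs, tandis que les coefficients $|W^M_0| |W^G_0|^{-1}$ s'ajustent par passage au quotient par $W^G_0$ sur les triplets. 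L'obstacle principal sera la vérification minutieuse de la compatibilité des constantes combinatoires; ces arguments s'adaptent de \cite[\S 3]{Ar93} modulo les modifications propres aux revêtements déjà rencontrées en \S\ref{sec:LTF-geo} via les Lemmes \ref{prop:facteur-a-1} et \ref{prop:facteur-a-2}.
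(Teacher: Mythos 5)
Votre démonstration suit exactement le même chemin que celle du papier — qui se contente d'ailleurs de renvoyer à \cite[pp.\,95-96]{Ar93} sans donner de détails — mais vous en explicitez l'argument. L'ossature est correcte : la spécialisation $L=G$ fait dégénérer \eqref{eqn:J-tilde-L-t} en le produit de deux traces (car $R(\Pi)=P$, $\mathcal{L}_G=\{0\}$, $\theta_{G,k}=1$), la décomposition $t=w^0 r$ et la normalisation $R_{\tilde P}(w^0,\sigma)=\identity$ permettent de remplacer $R_{\tilde P}(t,\sigma)$ par $\tilde R_{\tilde P}(\tilde r,\sigma)$ à un scalaire près qui disparaît dans le produit des deux traces, le signe $\epsilon_\sigma(t)=(-1)^{\ell(w^0)}$ regroupe la somme sur $t$ en $|W^0_\sigma|\,i(\tau)$, et la mesure sur $T_{\mathrm{disc},-}(\tilde G)$ absorbe le reste.

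Deux points mériteraient toutefois d'être rendus plus précis pour que l'argument soit complètement étanche. Premièrement, l'identité $\Tr\bigl(\mathcal{I}_{\tilde P}(\sigma,\check f_1)\,\tilde R_{\tilde P}(\tilde r,\sigma)^{-1}\bigr)=\Theta(\tau^\vee,f_1)$ n'est pas immédiate : il faut expliciter le sens de $\tau^\vee$ (c'est-à-dire le passage à la contragrédiente de $\sigma$ et l'inversion de $r$) et vérifier que la dualité des opérateurs d'entrelacement normalisés — qui découle de (\textbf{R2}) — rend cette égalité vraie, non pas seulement « à un scalaire près » mais exactement, puisque ce scalaire doit alors être le même que celui apparaissant dans l'autre trace pour que la compensation soit effective. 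Deuxièmement, le passage de la triple somme sur $(M,\sigma,t)$ à l'intégrale sur $T_{\mathrm{disc},-}(\tilde G)$ recèle une compensation fine : il faut contrôler à la fois le quotient par $W^G_0$ (qui fournit le facteur $|W^M_0|\,|W^G_0|^{-1}$ via le stabilisateur de $(M,\sigma)$), l'extension centrale $Z_\sigma \to \tilde R_\sigma \to R_\sigma$ (qui introduit $|\tilde R_{\sigma,r}|^{-1}$ dans la mesure et $\chi_\sigma$ dans l'équivariance de $\Theta$), et le fait que seuls les triplets \emph{essentiels} contribuent. Vous le reconnaissez vous-même en disant que « l'obstacle principal sera la vérification minutieuse de la compatibilité des constantes combinatoires » — c'est bien là le cœur de l'argument d'Arthur \cite[pp.\,95-96]{Ar93} dont la preuve du papier se borne à citer la référence ; le rédiger complètement serait la valeur ajoutée de votre version.
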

\begin{proof}
  Il suffit de reprendre \cite[pp.95-96]{Ar93}.
\end{proof}

\subsection{La formule des traces locale}
\paragraph{Le côté géométrique}
Soient $x = (x_1,x_2) \in G(F)^2$, $M \in \mathcal{L}(M_0)$. Rappelons que l'on choisit la mesure de Haar sur $\mathfrak{a}^G_M$ de sorte que $\mes(\mathfrak{a}^G_M/\tilde{\mathcal{L}}_M)=1$, où $\tilde{\mathcal{L}}_M := (\tilde{\mathfrak{a}}_{M,F}+\mathfrak{a}_G)/\mathfrak{a}_G$. Les fonctions
$$ v_P(\Lambda, x) := e^{\angles{-H_P(x_2)+H_{\bar{P}}(x_1), \Lambda}}, \qquad P \in \mathcal{P}(M) $$
forment une $(G,M)$-famille, d'où est défini le terme $v_M(x) = v_M(x_1,x_2)$. C'est le volume de l'enveloppe convexe de $\{-H_P(x_2) + H_{\bar{P}}(x_1) : P \in \mathcal{P}(M)\}$ dans $\mathfrak{a}^G_M$, donc est une fonction sur $(M(F) \backslash G(F) /K)^2$.

Soit $\gamma \in M_{G-\text{reg}}(F)$, posons\index[iFT2]{$J_{\tilde{M}}(\gamma, \cdot)$}
\begin{gather}
  J_{\tilde{M}}(\gamma, f) = |D(\gamma)| \iota_M^2 \int_{(A_M(F)^\dagger \backslash G(F))^2} f_1(x_1^{-1} \tilde{\gamma} x_1) f_2(x_2^{-1} \tilde{\gamma} x_2) v_M(x_1, x_2) \dd x_1 \dd x_2 .
\end{gather}

Ce seront les ingrédients dans le côté géométrique de la formule des traces locale.

Un élément $\tilde{\gamma} \in \tilde{M}$ est dit bon si son commutant est l'image réciproque du commutant de $\gamma := \rev(\tilde{\gamma}) \in M(F)$. Cela ne dépend que de la classe de conjugaison de $\gamma$. On définit ainsi le sous-ensemble $\Gamma(M(F))^\text{bon} \subset \Gamma(M(F))$, etc. Observons que $J_{\tilde{M}}(\gamma,f)=0$ sauf si $\gamma \in \Gamma(M(F))^\text{bon}$.

\paragraph{Le côté spectral}
On note $\Pi_{\text{disc},-}(\tilde{G})$ la réunion des $\Pi_\sigma(\tilde{G})$ où $M \in \mathcal{L}(M_0)$, $\sigma \in \Pi_{2,-}(\tilde{M})^t$ pour un $t \in W^G(M)_\text{reg}$. Idem pour tout Lévi de $G$. Soit $\pi = \pi_1^\vee \boxtimes \pi_2$ tel que la $X(\tilde{M})$-orbite de $\pi_i$ rencontre de $\Pi_{\text{disc},-}(\tilde{M})$, pour $i=1,2$. Soient $P,Q \in \mathcal{P}(M)$, on définit
\begin{align*}
  \mathcal{I}_{\tilde{P}}(\pi, f) & := \mathcal{I}_{\tilde{P}}(\pi_1^\vee, f_1) \boxtimes \mathcal{I}_{\tilde{P}}(\pi_2, f_2), \\
  J_{\tilde{Q}|\tilde{P}}(\pi) & := J_{\widetilde{\bar{Q}}|\widetilde{P}}(\pi_1^\vee) \boxtimes J_{\tilde{Q}|\tilde{P}}(\pi_2), \\
  \pi_\Lambda & := (\pi_{1,\Lambda})^\vee \boxtimes \pi_{2, \Lambda}, \quad \Lambda \in \mathfrak{a}_{M,\C}^*, \\
  \mathcal{J}_{\tilde{Q}}(\Lambda, \pi, \tilde{P}) & := J_{\tilde{Q}|\tilde{P}}(\pi)^{-1} J_{\tilde{Q}|\tilde{P}}(\pi_\Lambda).
\end{align*}
En variant $Q$, on montre que les $\mathcal{J}_{\tilde{Q}}(\Lambda, \pi, \tilde{P})$ forment une $(G,M)$-famille à valeurs dans les endomorphismes de $\mathcal{I}_{\tilde{P}}(\pi_1^\vee) \boxtimes \mathcal{I}_{\tilde{P}}(\pi_2)$, méromorphes en $\pi$. D'où les opérateurs $\mathcal{J}_{\tilde{M}}(\pi, \tilde{P})$ méromorphes en $\pi$.

Désormais, nous supposons que $\pi = \pi_1^\vee \boxtimes \pi_2$ avec $\pi_1, \pi_2 \in \Pi_{\text{disc},-}(\tilde{M})$.

\begin{lemma}
  Les coefficients matriciels de $\mathcal{J}_{\tilde{M}}(\pi, \tilde{P})$ sont analytiques. Leurs dérivées sont à croissance modérée pour de telles $\pi_1, \pi_2$.
\end{lemma}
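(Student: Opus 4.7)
The plan is to decompose $\mathcal{J}_{\tilde{M}}(\pi,\tilde{P})$ into a product of a canonical normalizing $(G,M)$-family and a $(G,M)$-family built from normalized intertwining operators, using the machinery of \S\ref{sec:op-entrelacement} and \S 3. Concretely, fix a family of (weak) normalizing factors $r_{\tilde{Q}'|\tilde{Q}}(\cdot)$ as in Définition \ref{def:normalisant-faible}, with associated normalized operators $R_{\tilde{Q}'|\tilde{Q}}(\cdot)$. Set
\begin{align*}
  r_{\tilde{Q}}(\Lambda,\pi,\tilde{P}) &:= r_{\tilde{Q}|\tilde{P}}(\pi_1^\vee)^{-1}r_{\tilde{\bar{Q}}|\tilde{P}}(\pi_{1,-\Lambda}^\vee)\cdot r_{\tilde{Q}|\tilde{P}}(\pi_2)^{-1}r_{\tilde{Q}|\tilde{P}}(\pi_{2,\Lambda}),\\
  \mathcal{R}_{\tilde{Q}}(\Lambda,\pi,\tilde{P}) &:= R_{\tilde{\bar{Q}}|\tilde{P}}(\pi_{1,-\Lambda}^\vee)R_{\tilde{\bar{Q}}|\tilde{P}}(\pi_1^\vee)^{-1}\boxtimes R_{\tilde{Q}|\tilde{P}}(\pi_{2,\Lambda})R_{\tilde{Q}|\tilde{P}}(\pi_2)^{-1},
\end{align*}
for $Q\in\mathcal{P}(M)$. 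These are both $(G,M)$-families (the scalar one with values in $\C$, the operator-valued one with values in $\End(\mathcal{I}_{\tilde{P}}(\pi_1^\vee)\boxtimes\mathcal{I}_{\tilde{P}}(\pi_2))$), and one checks from the definitions that $\mathcal{J}_{\tilde{Q}}(\Lambda,\pi,\tilde{P})=r_{\tilde{Q}}(\Lambda,\pi,\tilde{P})\mathcal{R}_{\tilde{Q}}(\Lambda,\pi,\tilde{P})$. The standard splitting formula for $(G,M)$-families then yields
\[
  \mathcal{J}_{\tilde{M}}(\pi,\tilde{P}) \;=\; \sum_{L\in\mathcal{L}(M)} r^L_{\tilde{M}}(\pi,\tilde{P})\,\mathcal{R}^L_{\tilde{L}}(\pi,\tilde{P}).
\]

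Next I would prove analyticity termwise. For $\mathcal{R}^L_{\tilde{L}}(\pi,\tilde{P})$: since $\pi_1,\pi_2$ are tempered (being in $\Pi_{\text{disc},-}$), condition (\textbf{R7})/the tempered regularity for $R_{\tilde{Q}'|\tilde{Q}}(\pi_\lambda)$ on $i\mathfrak{a}_M^*$, combined with (\textbf{R1}), (\textbf{R2}) (unitarity) and the fact that the $R$-operators satisfy the same cocycle and adjunction relations as the $J$-operators, shows that $\mathcal{R}_{\tilde{Q}}(\Lambda,\pi,\tilde{P})$ is a smooth $(G,M)$-family in a neighborhood of $\Lambda=0$; hence $\mathcal{R}^L_{\tilde{L}}(\pi,\tilde{P})$ is well-defined and smooth in $\pi$. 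For the scalar normalizing $(G,M)$-family $r^L_{\tilde{M}}(\pi,\tilde{P})$: by (\textbf{R4}) it factorizes over coroots, and the standard Arthur cancellation (cf.\ the argument preceding the analogous statement in \cite[Lemma 3.1]{Ar91}) shows that the apparent singularities along hyperplanes $\langle\Lambda,\alpha^\vee\rangle=0$ compensate when one sums over $Q\in\mathcal{P}(M)$ to form $r^L_{\tilde{M}}$, so the result is analytic at $\pi$.

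For the moderate growth of derivatives, one treats the two cases separately. In the non-archimedean case, (\textbf{R6}) says that $r_{\tilde{Q}'|\tilde{Q}}(\pi_\lambda)$ and the matrix coefficients of $R_{\tilde{Q}'|\tilde{Q}}(\pi_\lambda)$ are rational in the variables $q^{-\langle\lambda,\check\alpha\rangle}$, hence uniformly bounded with all derivatives when $\lambda$ varies over a compact subset of $i\mathfrak{a}_M^*$ away from the (finitely many) singular hyperplanes; since $\pi_1\otimes\chi_1,\pi_2\otimes\chi_2$ stays in a compact orbit as $(\chi_1,\chi_2)\in(\Im X(\tilde{M}))^2$ varies, this gives uniform moderate (in fact bounded) growth. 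In the archimedean case, the key input is (\textbf{R8}): it furnishes polynomial bounds $|D(q^L_{\tilde{Q}'|\tilde{Q}}(\pi_\lambda)r^L_{\tilde{Q}'|\tilde{Q}}(\pi_\lambda))^{-1}|\leq C(1+\|\mu_\pi+\lambda\|)^N$; together with the estimates of \cite[Lemma 2.1]{Ar89-IOR1} or \cite[Lemma 3.1]{Ar91} for the unitary factors obtained from (\textbf{R6}) and Cohn's determinant formula, this controls the derivatives in $\lambda$ of each matrix coefficient, and the splitting formula then transfers the estimate to $\mathcal{J}_{\tilde{M}}(\pi_\lambda,\tilde{P})$.

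The main obstacle is the analyticity step: one has to verify carefully that the pole cancellation in the scalar $(G,M)$-family $r^L_{\tilde{M}}(\pi,\tilde{P})$ really happens on all of $\Pi_{\text{disc},-}(\tilde{M})\times\Pi_{\text{disc},-}(\tilde{M})$, not merely on a dense open subset, and similarly that the normalized $R$-operators remain invertible as $\pi_1,\pi_2$ move over the discrete series. Both points follow from (\textbf{R7}) (tempered regularity) applied to the opposite parabolic, via the product decomposition in (\textbf{R4}); this is exactly where the tempered hypothesis on $\pi_1,\pi_2$ enters essentially.
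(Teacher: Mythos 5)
Your strategy — factor $\mathcal{J}_{\tilde{Q}}(\Lambda,\pi,\tilde{P})=r_{\tilde{Q}}(\Lambda,\pi)\,\mathcal{R}_{\tilde{Q}}(\Lambda,\pi,\tilde{P})$, apply the $(G,M)$-family product formula $\mathcal{J}_{\tilde{M}}=\sum_{L\in\mathcal{L}(M)}r^{\tilde{L}}_{\tilde{M}}\mathcal{R}_{\tilde{L}}$, prove analyticity termwise, and obtain moderate growth from (\textbf{R6}) (non-archimedean) respectively (\textbf{R8}) (archimedean) — is exactly the content of Arthur's \cite[Lemma 12.1]{Ar91}, to which the paper's proof simply refers; the route is the same.

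However, your closing paragraph misidentifies the tool that delivers regularity of the scalar family. You correctly observe that the individual $r_\alpha$ may have poles on the unitary spectrum (they occur precisely where $\mu_\alpha$ vanishes, and these points do lie on $i\mathfrak{a}_M^*$), but (\textbf{R7}) does not resolve this: it is a statement about the open cone $\langle\Re\lambda,\alpha^\vee\rangle>0$ and says nothing about $\Re\lambda=0$. Moreover, the paper explicitly works with \emph{weak} normalizing families (Définition \ref{def:normalisant-faible}), and the remark immediately after that definition notes that a complementary pair $r,r^\vee$ cannot both satisfy (\textbf{R7}); since a complementary pair is what is actually fixed on the two copies of $\tilde{G}$, you should not invoke (\textbf{R7}) at all here. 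The correct inputs are (\textbf{R2}), which gives unitarity and hence invertibility of the $R$-operators on $\Pi_{\text{unit}}$, together with the pole cancellation built into the $(G,M)$-family limit, which is what renders $r^{\tilde{L}}_{\tilde{M}}(\pi)$ regular for unitary $\pi$ (a fact the paper records independently of (\textbf{R7}) in the discussion of canonically normalized weighted characters). With that substitution your argument is in order.
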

\begin{proof}
  On reprend \cite[Lemma 12.1]{Ar91}.
\end{proof}

Soient $\pi=\pi_1^\vee \boxtimes \pi_2$ comme ci-dessus et $f=f_1 f_2$ avec $f_1 \in \mathcal{H}_-(\tilde{G})$, $f_2 \in \mathcal{H}_{\asp}(\tilde{G})$, on pose\index[iFT2]{$J_{\tilde{M}}(\pi, \cdot)$}
\begin{gather}
  J_{\tilde{M}}(\pi, f) := \Tr(\mathcal{J}_{\tilde{M}}(\pi, \tilde{P}) \mathcal{I}_{\tilde{P}}(\pi, f)) .
\end{gather}
Nous démontrerons (la Proposition \ref{prop:caractere-pondere-indep-P}) que $J_{\tilde{M}}(\pi, \cdot)$ ne dépend pas du choix $P \in \mathcal{P}(M)$, ce qui justifie la notation.

Si $\tau=(M_1,\sigma,r) \in T_{\text{disc},-}(\tilde{M})$, alors pour tout $\rho \in \Pi(\tilde{R}^M_\sigma, \chi_\sigma)$, la représentation $\pi_\rho$ de $\tilde{M}$ appartient à $\Pi_{\text{disc},-}(\tilde{M})$, donc c'est loisible de poser
\begin{gather}\label{eqn:caractere-pondere-T}
  J_{\tilde{M}}(\tau, f) := \sum_{\rho_1, \rho_2 \in \Pi(\tilde{R}^M_\sigma, \chi_\sigma)} \Tr\rho_1(r) \cdot \Tr\rho_2(r) \cdot J_{\tilde{M}}(\pi_{\rho_1}^\vee \boxtimes \pi_{\rho_2}, f).
\end{gather}

\begin{theorem}\label{prop:formule-traces-locale}\index[iFT2]{$J_\text{geom}, J_\text{spec}$}
  Soit $f=f_1 f_2$ avec $f_1 \in \mathcal{H}_-(\tilde{G})$, $f_2 \in \mathcal{H}_{\asp}(\tilde{G})$. Posons
  \begin{align*}
    J_\mathrm{geom}(f) & := \sum_{M \in \mathcal{L}(M_0)} |W^M_0| |W^G_0|^{-1} (-1)^{\dim A_M/A_G} \int_{\Gamma_{\mathrm{ell}}(M(F))^{\mathrm{bon}}} J_{\tilde{M}}(\gamma, f) \dd\gamma, \\
    J_\mathrm{spec}(f) & := \sum_{M \in \mathcal{L}(M_0)} |W^M_0| |W^G_0|^{-1} (-1)^{\dim A_M/A_G} \int_{T_{\mathrm{disc},-}(\tilde{M})} i^{\tilde{M}}(\tau) J_{\tilde{M}}(\tau, f) \dd\tau.
  \end{align*}
  Alors on a $J_\mathrm{geom}(f) = J_\mathrm{spec}(f)$.
\end{theorem}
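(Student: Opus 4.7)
L'idée directrice est de comparer la distribution \emph{tildée} $\tilde{J}(f)$, dont on a déjà les décompositions géométrique (Proposition \ref{prop:J-geo-tilde}) et spectrale (Proposition \ref{prop:J-spec-tilde}) issues du noyau tronqué, avec l'énoncé cherché. Puisque les §§\ref{sec:LTF-geo}-\ref{sec:LTF-spec} fournissent l'identité
$$ \sum_{M} |W^M_0||W^G_0|^{-1} \int_{\Gamma_\mathrm{ell}(M(F))} \tilde{J}_{\tilde{M}}(\gamma, f) \dd\gamma \;=\; \sum_{L,M,t} |W^M_0||W^G_0|^{-1} |\det(t-1|\mathfrak{a}^L_M)|^{-1} \sum_\sigma \int_{\Im X(\tilde{L})\cdot\sigma} \epsilon_{\sigma'}(t)\, \tilde{J}_{\tilde{L}}(\sigma', t, f) \dd\sigma', $$
il reste à relier d'un côté les poids combinatoires $\tilde{v}_M$ aux volumes $v_M$, et de l'autre côté la description combinatoire \eqref{eqn:J-tilde-L-t} aux caractères pondérés $J_{\tilde{L}}(\tau, \cdot)$ définis via les $(G,L)$-familles. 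Les signes $(-1)^{\dim A_M/A_G}$ dans l'énoncé seront vérifiés en cours de route.

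Pour le côté géométrique, le point-clef est l'identité
$$ \tilde{v}_M(x_1, x_2) \;=\; (-1)^{\dim A_M/A_G}\, v_M(x_1, x_2), $$
qui est une manipulation purement combinatoire de la $(G,M)$-famille associée aux $(H_P(x_1), H_{\bar{P}}(x_2))$; elle s'obtient en comparant la formule \eqref{eqn:v_M-tilde} au développement standard de Arthur du volume de l'enveloppe convexe via les fonctions $\theta_P$, et est traitée dans \cite[\S 6]{Ar91}. Comme $J_{\tilde{M}}(\gamma, f) = 0$ si $\gamma$ n'est pas bon (la définition de $f_1 \otimes f_2$ étant anti-spécifique-spécifique), l'intégrale sur $\Gamma_\mathrm{ell}(M(F))$ se restreint automatiquement à $\Gamma_\mathrm{ell}(M(F))^\mathrm{bon}$. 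Le côté géométrique de $\tilde{J}(f)$ coïncide ainsi avec $J_\mathrm{geom}(f)$.

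Pour le côté spectral, la stratégie est de réorganiser la somme sur les triplets $(L, M, t)$ en une somme sur les paires $(L, \tau)$ avec $\tau \in T_{\mathrm{disc},-}(\tilde{L})$. Le renommage $L \leftrightarrow M$ entre les deux formules étant d'abord effectué, on montre que la limite \eqref{eqn:J-tilde-L-t} définissant $\tilde{J}_{\tilde{L}}(\sigma, t, f)$ n'est autre qu'un caractère pondéré $\mathrm{Tr}(\mathcal{J}_{\tilde{L}}(\pi, \tilde{P})\, \mathcal{I}_{\tilde{P}}(\pi, f))$ modulo un opérateur d'entrelacement de type $R$-groupe. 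On fait alors intervenir la théorie des $R$-groupes de la §\ref{sec:R-groupe}: l'élément $t \in W^M(L)_\mathrm{reg}$ correspondant à un représentant $r \in \tilde{R}_\sigma^M$ permet, via le Théorème \ref{prop:R}, de décomposer la somme sur $t$ selon les représentations $\pi_\rho$ paramétrées par $\rho \in \Pi(\tilde{R}^M_\sigma, \chi_\sigma)$. La propriété 3 rappelée à la fin de §\ref{sec:R-groupe} assure que $\epsilon_\sigma(t) = (-1)^{\ell(w^0)}$ se relie correctement au signe $(-1)^{\dim A_L/A_G}$. Le coefficient $|\det(t-1|\mathfrak{a}^M_L)|^{-1}$ se regroupe pour donner $i^{\tilde{L}}(\tau)$ selon \eqref{eqn:i(tau)}, et les traces des représentations de $\tilde{R}^M_\sigma$ sur $r$ reconstituent la formule \eqref{eqn:caractere-pondere-T} pour $J_{\tilde{L}}(\tau, f)$.

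L'obstacle principal est cette dernière étape: il faut vérifier soigneusement que l'opérateur
$$ J_{\widetilde{P}|\widetilde{R(\Pi)}}(\sigma_\zeta) J_{\widetilde{P}|\widetilde{R(\Pi)}}(\sigma)^{-1} $$
de la formule \eqref{eqn:J-tilde-L-t}, couplé avec son analogue sur $\widetilde{\bar{R}}$ et sommé en $R$, produit exactement la $(G,L)$-famille $\mathcal{J}_{\tilde{Q}}(\Lambda, \pi, \tilde{P})$. C'est un calcul formel mais subtil, dont la démonstration dans le cas connexe se trouve dans \cite[\S 12]{Ar91}; l'adaptation au revêtement ne demande que de suivre méticuleusement les normalisations des mesures et les définitions du \S\ref{sec:op-entrelacement}, en s'appuyant sur la compatibilité de l'extension centrale \eqref{eqn:R-ext} avec le transport de structure par $W^G_0$. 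Une fois cette identification acquise, l'égalité $J_\mathrm{spec}(f) = $ côté spectral de $\tilde{J}(f)$ s'ensuit, et le théorème découle de $\tilde{J}(f) = \tilde{J}_\mathrm{spec}(f)$.
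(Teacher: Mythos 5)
The central combinatorial identity you rely on for the geometric side,
$$\tilde{v}_M(x_1,x_2) = (-1)^{\dim A_M/A_G}\, v_M(x_1,x_2),$$
is false for $F$ non archimédien, and this is where your argument breaks down. When one inspects \eqref{eqn:v_M-tilde}, the presence of the lattice sum over $\mathcal{L}_M/\mathcal{L}_{M,k}$ and of the discretized functions $\theta_{P,k}$ (instead of the Arthur polynomials $\theta_P$) means that $\tilde{v}_M$ is \emph{not} a scalar multiple of the convex-hull volume $v_M$. The correct relation — and the one actually used in the paper — passes through the auxiliary $(G,M)$-famille $c_Q(\Lambda) := |\mathcal{L}_{M_Q}/\mathcal{L}_{M_Q,k}|^{-1} \sum_{X} e^{\angles{\Lambda,X_P}}\,\theta_{\bar Q,k}(\Lambda)^{-1}\theta_{\bar Q}(\Lambda)$ and reads
$$\tilde{v}_M(x_1,x_2) = (-1)^{\dim A_M/A_G} \sum_{Q \in \mathcal{F}(M)} v^Q_M(x_1,x_2)\, c'_Q,$$
where the sum over $Q \in \mathcal{F}(M)$ has genuine contributions from proper $Q$. (Your identity is what this collapses to when $F$ is archimedean, since then $c_Q \equiv 1$; but the theorem is stated for both cases.) As a consequence, $\tilde{J}(f)$ does \emph{not} directly equal $J_\mathrm{geom}(f)$: one gets instead
$$\tilde{J}(f) = \sum_{Q \in \mathcal{F}(M_0)} |W^{M_Q}_0||W^G_0|^{-1}(-1)^{\dim A_{M_Q}/A_G}\, J^{\widetilde{M_Q}}_\mathrm{geom}(f_{\tilde{Q}})\, c'_Q,$$
and the analogous expansion on the spectral side (the functions $c_Q$ reappear in \eqref{eqn:J-tilde-L-t}). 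The proof therefore cannot be a direct identification: it has to run by \emph{récurrence sur $\dim G$}, comparing the two $c'_Q$-expansions, using the inductive hypothesis $J^{\widetilde{M_Q}}_\mathrm{geom} = J^{\widetilde{M_Q}}_\mathrm{spec}$ for $M_Q \neq G$, and isolating the $Q=G$ term thanks to $c'_G \neq 0$. Your proposal omits both the $c_Q$-corrections and the induction, which is precisely where all the work in Arthur's \cite[Proposition 4.1]{Ar91} — and in its adaptation here — is concentrated.

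Your remaining observations are reasonable but peripheral to the actual comparison: the restriction of $\int_{\Gamma_{\mathrm{ell}}(M(F))}$ to $\Gamma_{\mathrm{ell}}(M(F))^{\mathrm{bon}}$ and the reorganization of the spectral side via $R$-groupes are correct, but they concern the definitions of the two sides, not the equality between them. In particular the identification of \eqref{eqn:J-tilde-L-t} with the caractères pondérés $J_{\tilde{L}}(\tau,\cdot)$ also requires passing through the $c_Q$-expansion and the induction, not just the $R$-group bookkeeping you describe.
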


On observe aussi que le terme correspondant à $M=G$ dans $J_\text{spec}(f)$ est exactement $I_\text{disc}(f)$. Désormais, on note $J(f) := J_\mathrm{geom}(f) = J_\mathrm{spec}(f)$.

\begin{proof}
  On itère l'argument pour \cite[Proposition 4.1]{Ar91} avec récurrence sur $\dim G$. Donnons-en une esquisse très brève. On part des deux développements de $\tilde{J}(f)$ fournis par la Proposition \ref{prop:J-geo-tilde} et la Proposition \ref{prop:J-spec-tilde}. Observons d'abord que les termes $\tilde{J}_{\tilde{M}}(\gamma, f)$ du côté géométrique sont similaires à $J_{\tilde{M}}(\gamma, f)$ sauf que la fonction poids est $\tilde{v}_M(x_1, x_2)$ définie dans \eqref{eqn:v_M-tilde} au lieu de $v_M(x_1,x_2)$ lorsque $F$ est non archimédien. Pour passer de l'un à l'autre, on introduit les fonctions
  $$ c_Q(\Lambda) := |\mathcal{L}_{M_Q}/\mathcal{L}_{M_Q,k}|^{-1} \sum_{X \in \mathcal{L}_{M_Q}/\mathcal{L}_{M_Q,k}} e^{\angles{\Lambda, X_P}} \theta_{\bar{Q},k}(\Lambda)^{-1} \theta_{\bar{Q}}(\Lambda) $$
  pour $Q = M_Q U_Q \in \mathcal{F}(M_0)$, où $k \in \Z_{\geq 1}$ est suffisamment divisible.

  Avec les notations dans \cite[\S 4]{Li10a}, on montre que
  $$ \tilde{v}_M(x_1,x_2) = (-1)^{\dim A_M/A_G} \sum_{Q \in \mathcal{F}(M)} v^Q_M(x_1, x_2) c'_Q. $$
  Comme dans \cite{Ar91}, cela entraîne que
  $$ \tilde{J}(f) = \sum_{Q \in \mathcal{F}(M_0)} |W^{M_Q}_0| |W^G_0|^{-1} (-1)^{\dim A_M/A_G} J^{\widetilde{M_Q}}_\text{geom}(f_{\tilde{Q}}) \cdot c'_Q. $$

  Lorsque $F$ est archimédien, nous utilisons la convention dans la Proposition \ref{prop:J-geo-tilde-arch} de sorte que les résultats précédents demeurent valables. En fait, on aura $c_Q(\Lambda)=1$ dans ce cas-là.

  Les fonctions $c_Q$ interviennent aussi dans le côté spectral via l'expression \eqref{eqn:J-tilde-L-t}: on a
  $$ \tilde{J}_{\tilde{L}}(\sigma,t,f) = (-1)^{\dim A_M/A_L} \lim_{\zeta \to 0} \sum_{R \in \mathcal{P}(L)} \tau_{1,\widetilde{\bar{R}}}(\zeta) \tau_{2,\tilde{R}}(\zeta) c_R(\zeta) \theta_R(\zeta)^{-1}. $$
  pour tout $L \in \mathcal{L}(M_0)$. Un raisonnement comme dans \cite[pp.92-94]{Ar91} donne
  $$ \tilde{J}(f) = \sum_{Q \in \mathcal{F}(M_0)} |W^{M_Q}_0| |W^G_0|^{-1} (-1)^{\dim A_M/A_G} J^{\widetilde{M_Q}}_\text{spec}(f_{\tilde{Q}}) \cdot c'_Q. $$

  Comme $c'_G \neq 0$, l'hypothèse de récurrence entraîne que $J_\text{geom}(f) = J_\text{spec}(f)$.
\end{proof}

\subsection{Théorème de Paley-Wiener invariant tempéré}
\paragraph{Transformation de Fourier invariante}
Soit $f \in \mathcal{C}(\tilde{G})$, on note $f_{\tilde{G}}$ la fonction sur $\Pi_{\text{temp}}(\tilde{G})$ donnée par\index[iFT2]{$f_{\tilde{G}}$}
$$ f_{\tilde{G}}(\pi) = \Theta_\pi(f). $$

On note l'image de $f \mapsto f_{\tilde{G}}$ par $I\mathcal{C}(\tilde{G})$\index[iFT2]{$I\mathcal{C}(\tilde{G})$} (étymologie: $I$ = l'adjectif ``invariant''), qui est un sous-espace de l'espace vectoriel de fonctions sur $\Pi_\text{temp}(\tilde{G})$. En se restreignant à $\mathcal{C}_{\asp}(\tilde{G})$ (resp. $\mathcal{C}_-(\tilde{G})$), on obtient l'espace $I\mathcal{C}_{\asp}(\tilde{G})$ (resp. $I\mathcal{C}_-(\tilde{G})$) formé de certaines fonctions sur $\Pi_{\text{temp},-}(\tilde{G})$ (resp. $\Pi_{\text{temp},\asp}(\tilde{G})$). Le théorème de Paley-Wiener invariant tempéré donnera une description de ces espaces comme des espaces vectoriels topologiques pour lesquels $f \mapsto f_{\tilde{G}}$ est ouverte et continue. Les résultats ci-dessous concernant $\mathcal{C}(\tilde{G})$ et $I\mathcal{C}(\tilde{G})$ seront valables si l'on rajoute l'indice $\asp$ (resp. $-$), pour des raisons évidentes.

Expliquons brièvement la méthode de notre démonstration. On factorisera $f \mapsto f_{\tilde{G}}$ en $\mathcal{F}_{\tilde{G}}: \mathcal{C}(\tilde{G}) \to \widehat{\mathcal{C}}(\tilde{G})$ (la ``transformation de Fourier non invariante'') et $\Tr : \widehat{\mathcal{C}}(\tilde{G}) \to I\mathcal{C}(\tilde{G})$. On sait caractériser l'image de $\mathcal{F}_{\tilde{G}}$ (Théorème \ref{prop:PW-op}). Ensuite, on construira une section de $\Tr$ à l'aide de la théorie de $R$-groupes. Il faut éviter l'usage dans \cite{Ar94-PW} de ``multiplicité un'' des $\tilde{K}$-types minimaux dans le cas archimédien; pour cela nous aurons besoin d'une partition de l'unité quelque peu technique (Corollaire \ref{prop:comb-M}).

Soient $M \in \mathcal{L}(M_0)$, $P=MU \in \mathcal{P}(M)$. Comme dans le cas de fonctions $C_c^\infty$, on définit la descente parabolique
\begin{align*}
  \mathcal{C}(\tilde{G}) & \longrightarrow \mathcal{C}(\tilde{M}) \\
  f & \longmapsto f_{\tilde{P}}(\tilde{m}) = \delta_P(m)^{\frac{1}{2}} \int_K \int_{U(F)} f(k^{-1} \tilde{m}u k) \dd u \dd k, \quad \tilde{m} \in \tilde{M} .
\end{align*}
C'est une application linéaire continue bien définie: le cas archimédien est \cite[Lemma 22]{HC66}, pour le cas non archimédien, on utilise la majoration \cite[Proposition II.4.5]{Wa03}. 

On étend la définition de $f_{\tilde{G}}$ par linéarité à des combinaisons linéaires formelles des éléments de $\Pi_\text{temp}(\tilde{G})$. On montre que pour tout $\sigma \in \Pi_\text{temp}(\tilde{M})$, on a
$$ (f_{\tilde{P}})_{\tilde{M}}(\sigma) = f_{\tilde{G}}(\sigma^G) $$
où $\sigma^G$ désigne la somme directe des constituants irréductibles de $\mathcal{I}_{\tilde{P}}(\sigma)$. Il en résulte que $f \mapsto f_{\tilde{P}}$ induit une application linéaire
\begin{align*}
  I\mathcal{C}(\tilde{G}) & \longrightarrow I\mathcal{C}(\tilde{M}), \\
  \varphi & \longmapsto \varphi_{\tilde{M}}
\end{align*}
qui ne dépend que de $\tilde{G}$ et $\tilde{M}$.

\begin{definition}\label{prop:cuspidal}
  On dit que $\varphi \in I\mathcal{C}(\tilde{G})$ est cuspidal si pour tout $M \in \mathcal{L}(M_0)$, on a $\varphi_{\tilde{M}}=0$ si $M \neq G$. On dit que $f \in \mathcal{C}(\tilde{G})$ est cuspidal si son image dans $I\mathcal{C}(\tilde{G})$ l'est.
\end{definition}
Cela étend la définition d'Arthur de cuspidalité pour les fonctions $C_c^\infty$.

\begin{remark}\label{rem:changement-parametres}
  On peut aussi regarder $I\mathcal{C}(\tilde{G})$ comme un espace de fonctions sur $T(\tilde{G})$ grâce au Théorème \ref{prop:R}. Ainsi, on regarde $f_{\tilde{G}}$ comme une fonction sur $T(\tilde{G})$ pour tout $f \in \mathcal{C}(\tilde{G})$.
\end{remark}

\paragraph{Des espaces de Fréchet}
Soient $M, M' \in \mathcal{L}(M_0)$, on note
$$ W(M'|M) := \{w \in W^G_0 : wM = M' \}. $$

Soient $(\sigma, V) \in \Pi(\tilde{M})$, $\tilde{w} \in \tilde{K}$ un représentant de $w \in W(M|M')$ et $P \in \mathcal{P}(M)$, $P' \in \mathcal{P}(M')$. On pose
$$ R_{\tilde{P}'|\tilde{P}}(\tilde{w},\sigma) := A(\tilde{w}) R_{w^{-1}\tilde{P}'|\tilde{P}}(\sigma): \mathcal{I}_{\tilde{P}}(V) \to \mathcal{I}_{\tilde{P}'}(\tilde{w}V). $$

On utilisera l'égalité suivante à plusieurs reprises. Soient $P'' \in \mathcal{P}(M'')$, $P' \in \mathcal{P}(M')$, $P \in \mathcal{P}(M)$, $w_1 \in W(M'|M)$, $w_2 \in W(M''|M')$ et $(\sigma, V) \in \Pi(\tilde{M})$. Alors les propriétés dans la Définition \ref{def:normalisant} entraînent que
\begin{gather}\label{eqn:R-transitivite}
  R_{\tilde{P}''|\tilde{P}}(\tilde{w}_2 \tilde{w}_1, \sigma) = R_{\tilde{P}''|\tilde{P}'}(\tilde{w}_2, \tilde{w}_1 \sigma) R_{\tilde{P}'|\tilde{P}}(\tilde{w}_1, \sigma)
\end{gather}
où les opérateurs $A(\cdot)$ sont ceux défini dans la Proposition \ref{prop:prop-entrelacement}, et les données définissant le $R$-groupe de $\tilde{w}_1 \sigma$ sont obtenues par transport de structure.

L'application $f \mapsto f_{\tilde{G}}$ se factorise comme\index[iFT2]{$\widehat{\mathcal{C}}(\tilde{G})$}
$$\xymatrix{
  \mathcal{C}(\tilde{G}) \ar@{->>}[r]^{\mathcal{F}_{\tilde{G}}} & \widehat{\mathcal{C}}(\tilde{G}) \ar@{->>}[r]^{\Tr} & I\mathcal{C}(\tilde{G}) \\
  f \ar@{|->}[r] & [\pi \mapsto f(\pi)] \ar@{|->}[r] & [\pi \mapsto \Theta_\pi(f)]
}$$
où $\pi \in \Pi_\text{temp}(\tilde{G})$. Le résultat suivant décrivant $\widehat{\mathcal{C}}(\tilde{G})$ est essentiellement une partie de la formule de Plancherel lorsque $F$ est non archimédien; c'est dû à Arthur \cite{Ar75} pour $F$ archimédien (voir aussi \cite[\S 4]{Ar70}). 

\begin{theorem}\label{prop:PW-op}
  L'espace $\widehat{\mathcal{C}}(\tilde{G})$ est formé des opérateurs $\Phi: (P,\sigma) \mapsto \Phi_{\tilde{P}}(\sigma) \in \End_\C(\mathcal{I}_{\tilde{P}}(V))$, où $P=MU \in \mathcal{F}(M_0)$, $(\sigma,V) \in \Pi_2(\tilde{M})$, vérifiant les conditions suivantes.

  \begin{description}
    \item[\textbf{Lissité}] Pour $P, M, \sigma$ comme ci-dessus, $\lambda \mapsto \Phi_{\tilde{P}}(\sigma_\lambda)$ est lisse, où $\lambda \in i\mathfrak{a}_M^*$.
    \item[\textbf{Symétrie}] Soient $M, M' \in \mathcal{L}(M_0)$, $P \in \mathcal{P}(M)$, $P' \in \mathcal{P}(M')$, $w \in W(M'|M)$ et $\tilde{w} \in \tilde{K}$ un représentant de $w$. Alors
      $$ \Phi_{\tilde{P}'}(\tilde{w}\sigma) = R_{\tilde{P}'|\tilde{P}}(\tilde{w}, \sigma) \Phi_{\tilde{P}}(\sigma) R_{\tilde{P}'|\tilde{P}}(\tilde{w}, \sigma)^{-1}. $$
    \item[\textbf{Croissance (non archimédien)}] Supposons $F$ non archimédien, alors $\Phi$ est à support compact.
    \item[\textbf{Croissance (archimédien)}] Supposons $F$ archimédien. Soient $n, m_1, m_2 \in \Z_{>0}$, $M \in \mathcal{L}(M_0)$, $\sigma \in \Pi_2(\tilde{M})$ et $D$ un opérateur différentiel invariant sur $i\mathfrak{a}_M^*$, alors
      $$ \sup_{P,\sigma, \delta_1, \delta_2} \| D (\Gamma_{\delta_2} \Phi_{\tilde{P}}(\sigma) \Gamma_{\delta_1}) \| (1+\|\mu_{\sigma}\|)^n (1+\|\mu_{\delta_1}\|)^{m_1} (1+\|\mu_{\delta_2}\|)^{m_2} < +\infty $$
      où
      \begin{itemize}
        \item $D(S(\sigma)) := \frac{\dd}{\dd \lambda}|_{\lambda=0} D(S(\sigma_\lambda))$ pour $S(\sigma)$ une famille lisse d'opérateurs paramétrée par $\sigma \in \Pi_2(\tilde{M})$;
        \item $\delta_1, \delta_2$ parcourent les $\tilde{K}$-types et $\Gamma_{\delta_1}$, $\Gamma_{\delta_2}$ sont les projecteurs correspondants;
        \item on définit la sous-algèbre de Cartan $\mathfrak{h}$, le caractère infinitesimal $\mu_{\sigma}$ et la norme hermitienne $\|\cdot\|$ comme dans la Définition \ref{def:normalisant} (\textbf{R8}), et $\mu_{\delta_1}, \mu_{\delta_2} \in \mathfrak{h}^*_\C$ sont les plus hauts poids de $\delta_1, \delta_2$ respectivement.
      \end{itemize}
  \end{description}

  Dans le cas non archimédien $\widehat{\mathcal{C}}(\tilde{G})$ est l'espace $C^\infty(\Theta)^{\mathrm{inv}}$ défini dans \S\ref{sec:Plancherel-enonce}, donc est muni d'une topologie. Dans le cas archimédien, les expressions dans la dernière condition définissent des semi-normes pour $\widehat{\mathcal{C}}(\tilde{G})$. En tout cas $\widehat{\mathcal{C}}(\tilde{G})$ est un espace de Fréchet pour lequel $\mathcal{F}_{\tilde{G}}$ est un isomorphisme topologique.
\end{theorem}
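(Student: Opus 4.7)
Le plan est de traiter les deux cas séparément, en s'appuyant dans les deux sur la formule de Plancherel établie en \S\ref{sec:Plancherel-enonce} et sur la théorie des opérateurs d'entrelacement normalisés de \S 3.

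Supposons d'abord $F$ non archimédien. L'énoncé se ramène essentiellement à un reformulation du Théorème \ref{prop:Plancherel}. En effet, la correspondance $\psi \leftrightarrow \Phi$ donnée par
\[
\Phi_{\tilde{P}}(\omega) := d(\omega)^{-1}\,\psi[\mathcal{O},P]_\omega \in L(\omega,\tilde{P}) \hookrightarrow \End(\mathcal{I}_{\tilde{P}}(\omega))
\]
identifie $C^\infty(\Theta)^{\mathrm{inv}}$ à un espace d'opérateurs sur les $\mathcal{I}_{\tilde{P}}(\omega)$. L'équivariance ${}^\circ c_{\tilde{P}'|\tilde{P}}(w,\omega)$ se traduit par la symétrie via $R_{\tilde{P}'|\tilde{P}}(\tilde{w},\omega)$, en s'appuyant sur le lien entre les opérateurs ${}^\circ c$ unitaires et les $R$ normalisés (avec le facteur $c(1,s\omega)^{-1}$ absorbé dans la normalisation choisie). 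La compacité du support dans $\Im X(\tilde{M})$ est automatique puisque les orbites sont des tores compacts, et l'extension à tout $(\sigma,P)$ avec $\sigma \in \Pi_2(\tilde{M})$ non forcément dans une orbite fixée se fait par linéarité. La topologie de $C^\infty(\Theta)^{\mathrm{inv}}$ définie en \S\ref{sec:Plancherel-enonce} coïncide alors avec celle décrite par les conditions de lissité et support compact.

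Supposons maintenant $F$ archimédien. On suit la stratégie d'Arthur \cite{Ar75}. D'abord, que $\mathcal{F}_{\tilde{G}}$ soit à valeurs dans l'espace décrit: la lissité en $\lambda \in i\mathfrak{a}_M^*$ résulte de la lissité des coefficients matriciels de $\mathcal{I}_{\tilde{P}}(\sigma_\lambda, f)$, la symétrie résulte de (\textbf{R3}) dans la Définition \ref{def:normalisant}, et la majoration en les caractères infinitésimaux et les plus hauts poids des $\tilde{K}$-types provient des estimations standard des traces de $\mathcal{I}_{\tilde{P}}(\sigma, f)$ en termes des semi-normes $\nu_r$ de $f$, combinées avec le caractère infinitésimal agissant comme scalaire et l'opérateur de Casimir contrôlant $\|\mu_\sigma\|$. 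La continuité de $\mathcal{F}_{\tilde{G}}$ en découle. Inversement, étant donné $\Phi \in \widehat{\mathcal{C}}(\tilde{G})$, on reconstruit $f$ comme paquet d'ondes
\[
f(\tilde{x}) = \sum_{(\mathcal{O},P) \in \Theta}\!\! \gamma(G|M)|W^M_0||W^G_0|^{-1}|\mathcal{P}(M)|^{-1}\!\!\int_\mathcal{O}\!\! \mu(\omega)\,\Tr\!\bigl(\Phi_{\tilde{P}}(\omega)\,\mathcal{I}_{\tilde{P}}(\omega,\tilde{x}^{-1})\bigr) d\omega,
\]
et il s'agit de vérifier $f \in \mathcal{C}(\tilde{G})$, i.e. des majorations de type $|f(\tilde{x})|\leq c\,\Xi(x)(1+\sigma(x))^{-r}$ pour tout $r$. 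C'est le cœur technique de l'argument.

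Le principal obstacle est précisément cette vérification dans le cas archimédien. L'intégration par parties en les variables $\lambda \in i\mathfrak{a}_M^*$ réduit la majoration de $f$ à celle de certaines intégrales oscillantes pondérées par $\mu(\omega)$ et les $D\Phi_{\tilde{P}}(\omega)$, que la condition de croissance (\textbf{R8}) pour $\mu$ (et son analogue pour $\Phi$) contrôle. Il faut ensuite redescendre aux coefficients de $\mathcal{I}_{\tilde{P}}(\omega,\tilde{x}^{-1})$ via la théorie du terme constant faible de Harish-Chandra, déjà étendue aux revêtements en \S\ref{sec:Plancherel}. Une fois la surjectivité établie, le théorème de l'application ouverte pour les espaces de Fréchet entraîne que $\mathcal{F}_{\tilde{G}}$ est un isomorphisme topologique. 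Puisque les ingrédients --- formule de Plancherel, opérateurs d'entrelacement normalisés, théorie tempérée, fonction $\Xi^{\tilde{G}}$ --- ont tous été adaptés aux revêtements dans les sections précédentes, l'argument d'Arthur \cite{Ar75} s'y transpose sans obstruction nouvelle.
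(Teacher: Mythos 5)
The paper gives no explicit proof of this theorem: it simply remarks beforehand that the non-archimedean case is "essentially a part of the Plancherel formula" and that the archimedean case is due to Arthur \cite{Ar75} (see also \cite[\S 4]{Ar70}), and it adds afterwards that the description is independent of the choice of normalizing factors because the symmetry condition can equivalently be phrased via the ${}^\circ c$-functions. Your sketch follows exactly the same two-pronged strategy, so it is aligned with the paper's intent; the surplus detail you supply (the wave-packet reconstruction and the open-mapping argument) is a reasonable expansion of the citation to Arthur.

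Two small imprecisions are worth flagging, though neither is a genuine gap. First, when you translate between $\widehat{\mathcal{C}}(\tilde{G})$ and $C^\infty(\Theta)^{\text{inv}}$ you write $\Phi_{\tilde{P}}(\omega) := d(\omega)^{-1}\psi[\mathcal{O},P]_\omega$, but the Plancherel section defines $\psi_f[\mathcal{O},P]_\omega = d(\omega)\,\mathcal{I}_{\tilde{P}}(\omega,\check{f})$ with a $\check{\cdot}$, whereas $\mathcal{F}_{\tilde{G}}(f)$ gives $\mathcal{I}_{\tilde{P}}(\omega,f)$; the dictionary therefore passes through the topological involution $f\mapsto\check f$, which should be said explicitly (it is an automorphism of $\mathcal{C}(\tilde{G})$ so the conclusion is unaffected, but a reader following the normalizations will be confused otherwise). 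Second, the equivalence between the $R$-operator form of the symmetry condition and the ${}^\circ c$-equivariance is invoked as if absorbed "dans la normalisation choisie"; this is indeed what the paper claims, but it does rely on the unitarity of the ${}^\circ c$-operators and on (\textbf{R2})/(\textbf{R3}) of the normalizing factors, and is slightly more than a choice of normalization. Stating which properties are used would tighten the argument.
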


Cette description est indépendante de facteurs normalisants. En effet, la condition de symétrie peut aussi s'exprimer en termes des fonctions ${}^\circ c_{\tilde{P}'|\tilde{P}}(w, \sigma)$ dans \S\ref{sec:Plancherel-enonce}.

\begin{definition}\index[iFT2]{$\text{PW}(\tilde{G})$}
  Posons $\text{PW}(\tilde{G})$ l'espace des fonctions
  $$ \varphi: \tilde{T}(\tilde{G}) = \bigsqcup_{L \in \mathcal{L}(M_0)} \tilde{T}_\text{ell}(\tilde{L}) \to \C $$
  vérifiant les conditions suivantes.

  \begin{description}
    \item[\textbf{Lissité}] La fonction $\varphi$ est lisse.
    \item[\textbf{Équivariance}] Pour tout $\tau = (M,\sigma,r) \in \tilde{T}(\tilde{G})$ et $z \in Z_\sigma$, on a $\varphi(z\tau)=\chi_\sigma(z)^{-1} \varphi(\tau)$.
    \item[\textbf{Symétrie}] $\varphi$ se factorise par $T(\tilde{G})$.
    \item[\textbf{Croissance (non archimédien)}] $\varphi$ est à support compact.
    \item[\textbf{Croissance (archimédien)}] Soient $L \in \mathcal{L}(M_0)$, $n \in \Z_{>0}$, $D$ un opérateur différentiel invariant sur $i\mathfrak{a}_L^*$, alors
      $$ \|\varphi\|_{L,D,n} = \sup_{\tau=(M,\sigma,r) \in \tilde{T}_\text{ell}(\tilde{L})} |D \varphi(\tau)| (1+\|\mu_\sigma\|)^n < +\infty $$
    avec les conventions dans le Théorème \ref{prop:PW-op}.
  \end{description}

  Ainsi, l'espace $\text{PW}(\tilde{G})$ devient un espace de Fréchet.
\end{definition}

Dorénavant, on adopte le point de vue de la Remarque \ref{rem:changement-parametres} afin de comparer $I\mathcal{C}(\tilde{G})$ et $\text{PW}(\tilde{G})$. En contemplant le Théorème \ref{prop:R}, on voit que l'application $\Tr: \widehat{\mathcal{C}}(\tilde{G}) \to I\mathcal{C}(\tilde{G})$ devient
$$ T_{\tilde{G}}: \Phi \longmapsto \left[ \varphi: (M,\sigma,r) \mapsto \Tr(\tilde{R}_{\tilde{P}}(r,\sigma) \Phi_{\tilde{P}}(\sigma)) \right] $$
où $P \in \mathcal{P}(M)$ est arbitraire.

\begin{proposition}\label{prop:PW-provisoire}
  On a $I\mathcal{C}(\tilde{G}) \subset \mathrm{PW}(\tilde{G})$. L'application $f \mapsto f_{\tilde{G}}$ se factorise par $\mathrm{PW}(\tilde{G})$ et induit une application linéaire continue $\mathcal{C}(\tilde{G}) \to \mathrm{PW}(\tilde{G})$, ou bien $\widehat{\mathcal{C}}(\tilde{G}) \to \mathrm{PW}(\tilde{G})$ d'après le Théorème \ref{prop:PW-op}.
\end{proposition}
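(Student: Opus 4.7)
The plan is to factorize $f \mapsto f_{\tilde{G}}$ through the Plancherel isomorphism
\[
\mathcal{C}(\tilde{G}) \xrightarrow{\mathcal{F}_{\tilde{G}}} \widehat{\mathcal{C}}(\tilde{G}) \xrightarrow{T_{\tilde{G}}} I\mathcal{C}(\tilde{G}),
\]
and to check that for every $\Phi \in \widehat{\mathcal{C}}(\tilde{G})$ the function
\[
\varphi(\tau) = \Tr\bigl( \tilde{R}_{\tilde{P}}(r, \sigma) \Phi_{\tilde{P}}(\sigma) \bigr), \qquad \tau = (M,\sigma,r) \in \tilde{T}(\tilde{G}),
\]
(with $P \in \mathcal{P}(M)$ any choice) lies in $\mathrm{PW}(\tilde{G})$, continuously in $\Phi$. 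Given Théorème \ref{prop:PW-op}, this reduces the entire proposition to the four defining conditions of $\mathrm{PW}(\tilde{G})$.

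The equivariance under $Z_\sigma$ is immediate from $\tilde{R}_{\tilde{P}}(zr,\sigma)=\chi_\sigma(z)^{-1}\tilde{R}_{\tilde{P}}(r,\sigma)$; the independence of the choice of $P \in \mathcal{P}(M)$ follows formally from the (\textbf{R1})--(\textbf{R3}) properties of the normalized intertwining operators, which in turn give that $\tilde{R}_{\tilde{P}}(r,\sigma)\Phi_{\tilde{P}}(\sigma)$ is intertwined with $\tilde{R}_{\tilde{P}''}(r,\sigma)\Phi_{\tilde{P}''}(\sigma)$ by $R_{\tilde{P}''|\tilde{P}}(\sigma)$. The smoothness in $\tau$ (in particular when $\tau$ moves inside an $\Im X(\tilde{L})$-orbit, where the variable is $\sigma \in \mathcal{O}_\C$) follows from the smoothness of $\sigma \mapsto \Phi_{\tilde{P}}(\sigma)$, combined with the rationality of the normalized operators $R_{\tilde{P}'|\tilde{P}}(\sigma)$ and the analyticity of $\sigma \mapsto \tilde{R}_{\tilde{P}}(r,\sigma)$ at points where $\Im X(\tilde{L})\cdot\sigma$ actually parameterizes elements of $\tilde{T}_{\mathrm{ell}}(\tilde{L})$ (no denominators appear, since the poles of $r$ are offset by those of $J$).

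The hard part, as is already the case in \cite{Ar94-PW}, will be the $W^G_0$-symmetry. Given $w \in W^G_0$ and a representative $\tilde{w} \in \tilde{K}$, one wants
\[
\Tr\bigl(\tilde{R}_{\tilde{P}'}(wrw^{-1}, w\sigma)\,\Phi_{\tilde{P}'}(w\sigma)\bigr) = \Tr\bigl(\tilde{R}_{\tilde{P}}(r,\sigma)\,\Phi_{\tilde{P}}(\sigma)\bigr).
\]
Using the symmetry axiom for $\Phi$ to substitute $\Phi_{\tilde{P}'}(w\sigma) = R_{\tilde{P}'|\tilde{P}}(\tilde{w},\sigma)\Phi_{\tilde{P}}(\sigma)R_{\tilde{P}'|\tilde{P}}(\tilde{w},\sigma)^{-1}$ and the cyclicity of the trace, the identity reduces to
\[
R_{\tilde{P}'|\tilde{P}}(\tilde{w},\sigma)^{-1}\,\tilde{R}_{\tilde{P}'}(wrw^{-1}, w\sigma)\,R_{\tilde{P}'|\tilde{P}}(\tilde{w},\sigma) = \tilde{R}_{\tilde{P}}(r,\sigma),
\]
an identity of operators on $\mathcal{I}_{\tilde{P}}V$. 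This is the content of the transport of structure for $R$-groups: when we transport the extension $1\to Z_\sigma \to \tilde{R}_\sigma \to R_\sigma \to 1$ and the cocycle trivialization $\xi_\sigma$ by $\tilde{w}$ to get the corresponding data for $\tilde{w}\sigma$ (as we have assumed at the end of \S\ref{sec:R-groupe}), the identity follows from the transitivity formula \eqref{eqn:R-transitivite} applied with $w_1 = \tilde{w}$, $w_2$ representing $wrw^{-1}$, and its mirror image. In particular, this step crucially uses the compatibility of all choices (chamber $\mathfrak{a}_\sigma^+$, the splittings $\sigma_w$, the central extension \eqref{eqn:R-ext}) under the Weyl action.

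It remains to check the growth condition and continuity. In the non-archimedean case, the support condition is trivial: $\varphi$ is supported in the finite union of $\Im X(\tilde{L})$-orbits on which $\Phi$ is nonzero. In the archimedean case, writing $\Phi_{\tilde{P}}(\sigma) = \sum_{\delta_1,\delta_2} \Gamma_{\delta_2}\Phi_{\tilde{P}}(\sigma)\Gamma_{\delta_1}$, the trace against $\tilde{R}_{\tilde{P}}(r,\sigma)$, which commutes with $\tilde{K}$ only up to conjugation by $\tilde{w}$, is nevertheless dominated by $\sum_{\delta} \|\Gamma_\delta \Phi_{\tilde{P}}(\sigma) \Gamma_{w^{-1}\delta}\|$; the rapid decay in $\|\mu_{\delta_i}\|$ supplied by the growth axiom in $\widehat{\mathcal{C}}(\tilde{G})$ with $m_1, m_2$ chosen large enough makes the sum converge and produces the bound $\|\varphi\|_{L,D,n} < \infty$. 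Differentiating in $\lambda \in i\mathfrak{a}_L^*$ commutes with the trace and the same estimate applies to $D\varphi$. All estimates are manifestly continuous in the Fréchet topology of $\widehat{\mathcal{C}}(\tilde{G})$, which by Théorème \ref{prop:PW-op} is isomorphic to that of $\mathcal{C}(\tilde{G})$; this yields the asserted continuity, and concludes the factorization $\mathcal{C}(\tilde{G}) \to \mathrm{PW}(\tilde{G})$ through $I\mathcal{C}(\tilde{G})$.
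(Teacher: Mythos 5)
Your proof is correct in outline and does essentially what the paper, which simply writes ``Standard modulo le Théorème \ref{prop:PW-op}'', leaves implicit: factor $f\mapsto f_{\tilde G}$ as $T_{\tilde G}\circ\mathcal F_{\tilde G}$ and check the four defining conditions of $\mathrm{PW}(\tilde G)$ one by one. The verifications of $Z_\sigma$-equivariance, independence of $P$, smoothness, and $W^G_0$-symmetry via the compatibility-of-choices hypothesis of \S\ref{sec:R-groupe} and the transitivity relation \eqref{eqn:R-transitivite} are the right ones.

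One inaccuracy in the archimedean growth estimate: you say that $\tilde R_{\tilde P}(r,\sigma)$ ``commutes with $\tilde K$ only up to conjugation by $\tilde w$'', and accordingly bound the trace by $\sum_\delta\|\Gamma_\delta\Phi_{\tilde P}(\sigma)\Gamma_{w^{-1}\delta}\|$. But $R_{\tilde P}(r,\sigma)=A(\sigma_r)R_{r^{-1}\tilde P|\tilde P}(\sigma)$ is a $\tilde G$-endomorphism of $\mathcal I_{\tilde P}(\sigma)$ (both factors intertwine the right $\tilde G$-action), hence it commutes with all $\tilde K$-projectors $\Gamma_\delta$. So only the diagonal blocks $\Gamma_\delta\Phi_{\tilde P}(\sigma)\Gamma_\delta$ contribute, and the bound is
\[
|\varphi(\tau)|\le\sum_\delta\|\Gamma_\delta\Phi_{\tilde P}(\sigma)\Gamma_\delta\|\,\dim\bigl(\mathcal I_{\tilde P}(V)[\delta]\bigr),
\]
which, with the polynomial bound on $\tilde K$-multiplicities and the growth axiom of $\widehat{\mathcal C}(\tilde G)$ with $m_1=m_2$ large, gives $\|\varphi\|_{L,D,n}<\infty$. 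This is slightly simpler than what you wrote; your stated estimate with off-diagonal blocks $\Gamma_\delta(\cdot)\Gamma_{w^{-1}\delta}$ would in any case also have been controlled by the same growth axiom, so this does not invalidate the conclusion — it is just a misdescription of the mechanism. Everything else is as the paper intends.
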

\begin{proof}
  Standard modulo le Théorème \ref{prop:PW-op}.
\end{proof}

Cette notation est provisoire: nous allons bientôt démontrer que $\text{PW}(\tilde{G})=I\mathcal{C}(\tilde{G})$

\paragraph{Combinatoire}
Nous allons montrer une variante de la partition de l'unité, qui interviendra dans la preuve du théorème de Paley-Wiener.

On note $\mathfrak{a}_0 := \mathfrak{a}_{M_0}$. On fixe une norme euclidienne $W^G_0$-invariante sur $\mathfrak{a}_0$, la distance associée est notée $d(\cdot,\cdot)$; on note aussi $\|\cdot\| := d(\cdot, 0)$. Soient $L \in \mathcal{L}(M_0)$, $\lambda \in \mathfrak{a}_0$, on écrit $\lambda = \lambda^L + \lambda_L$ selon la décomposition orthogonale $\mathfrak{a}_0 = \mathfrak{a}^L_0 \oplus \mathfrak{a}_L$. Pour tout $Q \in \mathcal{P}(L)$, on a la chambre positive associée $\mathfrak{a}_Q^+ \subset \mathfrak{a}_L$ qui vérifie $\overline{\mathfrak{a}_Q^+} = \bigsqcup_{Q' \supset Q} \mathfrak{a}_{Q'}^+$ (elle est de la forme $\mathfrak{a}_Q^+ = \mathfrak{a}^{G,+}_Q \times \mathfrak{a}_G$, et on pose $\mathfrak{a}_G^{G,+} := \{0\}$). Il y a une décomposition en facettes
$$ \mathfrak{a}_0 = \bigsqcup_{Q \in \mathcal{F}(M_0)} \mathfrak{a}_Q^+ . $$

\begin{lemma}\label{prop:comb}
  Supposons fixé un voisinage ouvert $\mathcal{V}^L$ de $0$ dans $\mathfrak{a}^L_0$ pour tout $L \in \mathcal{L}(M_0)$. Alors il existe une famille de fonctions $(\beta^Q)_{Q \in \mathcal{F}(M_0)}$ telle que
  \begin{enumerate}\renewcommand{\labelenumi}{(\arabic{enumi})}
    \item soit $Q=LU \in \mathcal{F}(M_0)$, alors $\beta^Q(\lambda) = \beta^Q(\lambda^G)$ pour tout $\lambda \in \mathfrak{a}_0$ et $\beta^Q|_{\mathfrak{a}^G_0} \in C_c^\infty(\mathfrak{a}^G_0)$, toute dérivée de $\beta^Q$ est bornée;
    \item $\beta^Q(\lambda) \neq 0$ entraîne que $\lambda^L \in \mathcal{V}^L$;
    \item si $\beta^Q(\lambda) \neq 0$, alors $\lambda \in \bigsqcup_{Q' \subset Q} \mathfrak{a}_{Q'}^+$;
    \item $\sum_{Q \in \mathcal{F}(M_0)} \beta^Q = 1$.
  \end{enumerate}\renewcommand{\labelenumi}{\arabic{enumi}}
\end{lemma}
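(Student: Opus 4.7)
The plan is to reduce the problem to a smooth partition of unity construction on $\mathfrak{a}_0^G$ subordinate to an adapted open cover arising from the structure of the Coxeter fan, then to pull back to $\mathfrak{a}_0$ via the orthogonal projection $\lambda \mapsto \lambda^G$, as mandated by condition (1).

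For each $Q = LU \in \mathcal{F}(M_0)$, I will introduce the open star
\[ \mathrm{St}(Q) := \bigsqcup_{Q' \subset Q} \mathfrak{a}_{Q'}^{G,+} \subset \mathfrak{a}_0^G \]
(openness being standard for the Coxeter fan), and the tube restriction
\[ \mathcal{U}^Q := \mathrm{St}(Q) \cap \bigl\{ \lambda \in \mathfrak{a}_0^G : \lambda^L \in \mathcal{V}^L \bigr\}. \]
Both are open in $\mathfrak{a}_0^G$. The family $\{\mathcal{U}^Q\}_{Q \in \mathcal{F}(M_0)}$ is an open cover: for $\lambda \in \mathfrak{a}_0^G$, let $Q_0$ be the unique element with $\lambda \in \mathfrak{a}_{Q_0}^{G,+}$; then $\lambda \in \mathfrak{a}_{M_{Q_0}}$, hence $\lambda^{M_{Q_0}} = 0 \in \mathcal{V}^{M_{Q_0}}$, so $\lambda \in \mathcal{U}^{Q_0}$. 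The partition will then be built as follows: for each $L$ choose a cutoff $\phi^L \in C^\infty(\mathfrak{a}_0^L)$ with $0 \leq \phi^L \leq 1$, supported in $\mathcal{V}^L$, equal to $1$ on a smaller neighborhood of $0$, all derivatives bounded (standard mollification). For each $Q$, build an ``angular'' cutoff $\psi^Q$ on $\mathfrak{a}_0^G$ supported in $\mathrm{St}(Q)$, strictly positive on $\mathfrak{a}_Q^{G,+}$, and radially invariant outside a bounded region; such $\psi^Q$ is obtained by first constructing a partition of unity on the unit sphere $S \subset \mathfrak{a}_0^G$ subordinate to $\{S \cap \mathrm{St}(Q)\}$ (elementary, since $S$ is compact), then extending radially and patching to zero near the origin. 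Set $\tilde\beta^Q(\lambda) := \phi^L(\lambda^L) \psi^Q(\lambda)$, which is supported in $\mathcal{U}^Q$, and normalize $\beta^Q := \tilde\beta^Q / \sum_{Q'} \tilde\beta^{Q'}$, giving $\sum_Q \beta^Q = 1$ with support of $\beta^Q$ in $\mathcal{U}^Q$.

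The hard part will be the uniform boundedness of all derivatives on the non-compact space $\mathfrak{a}_0^G$ and the coherent matching between the near-origin and far-from-origin regimes. Positivity of the denominator follows because at every point at least one $\tilde\beta^{Q_0}$ is bounded away from zero (take $Q_0$ with $\lambda \in \mathfrak{a}_{Q_0}^{G,+}$, so $\lambda^{M_{Q_0}} = 0$ and $\psi^{Q_0}(\lambda) > 0$ by construction); a continuity and compactness argument on each radial slice upgrades this to a uniform lower bound. Bounded derivatives of $\tilde\beta^Q$ result from scale-invariance of $\psi^Q$ at large scale (pullback from the compact sphere) together with boundedness of derivatives of $\phi^L$. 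The transition near the origin, where the radial construction degenerates, will be absorbed into $\beta^G = \phi^G$ acting as a cutoff: on a small neighborhood of $0$ one sets $\beta^G = 1$ and $\beta^Q = 0$ for $Q \neq G$, and beyond this neighborhood the above angular construction takes over, smoothly interpolated by $\phi^G$ whose derivatives are controlled by construction.
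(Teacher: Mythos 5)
Your overall framing — reduce to a smooth partition of unity on $\mathfrak{a}_0^G$ subordinate to the open cover by the sets $\mathcal{U}^Q = \mathrm{St}(Q) \cap \{\lambda : \lambda^L \in \mathcal{V}^L\}$ and then pull back via $\lambda \mapsto \lambda^G$ — is sound and is genuinely different from the paper, which instead builds the $\beta^Q$ recursively by increasing $N(Q) = \dim\mathfrak{a}_L$, each level being of the form $\alpha^L(\lambda^L)\bigl(1-\sum_{N(Q')<N}\beta^{Q'}\bigr)$ truncated to a neighborhood of $\mathfrak{a}_Q^+$, with the $\epsilon$-parameters chosen small enough at each level to make the cutoffs compatible. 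Unfortunately, the explicit construction you propose for the partition does not work.

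The problem is the factored ansatz $\tilde\beta^Q(\lambda) = \phi^L(\lambda^L)\,\psi^Q(\lambda)$ with $\psi^Q$ built from a radially-invariant (far from $0$) angular partition of unity on the sphere. Take a point $\lambda$ in a top-dimensional chamber $\mathfrak{a}_{Q_0}^{G,+}$ ($Q_0$ minimal) with $\|\lambda\| = R$ large and at angular distance $\theta$ from a wall $\mathfrak{a}_{Q_3}^{G,+}$ ($Q_3 \supsetneq Q_0$). The tube cutoff $\phi^{L_3}(\lambda^{L_3})$ is positive only when $\lambda^{L_3} \in \mathcal{V}^{L_3}$, i.e. roughly when $R\theta \lesssim \rho$ (the radius of $\mathcal{V}^{L_3}$), so only for $\theta \lesssim \rho/R$. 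On the other hand the angular cutoff $\psi^{Q_0}$ is supported in $\mathrm{St}(Q_0) = \mathfrak{a}_{Q_0}^{G,+}$ and vanishes at its boundary on the sphere, so $\psi^{Q_0}(\lambda)$ is controlled by some fixed power of $\theta$ independent of $R$. Take $\theta$ just above $\rho/R$: then every $\tilde\beta^Q$ with $Q$ strictly containing $Q_0$ vanishes (their $\phi^{M_Q}$ is killed), and $\tilde\beta^{Q_0}(\lambda) = \psi^{Q_0}(\lambda) \lesssim (\rho/R)^k \to 0$ as $R \to \infty$. So the normalizing sum $\sum_{Q'} \tilde\beta^{Q'}$, while strictly positive, is \emph{not} bounded below — its infimum over $\mathfrak{a}_0^G$ is $0$. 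Consequently the normalized $\beta^Q = \tilde\beta^Q/\sum\tilde\beta^{Q'}$ do not have bounded derivatives, and condition (1), which is precisely what the Paley–Wiener argument needs, fails. The ``continuity and compactness on each radial slice'' step is where the argument silently breaks: scale-invariance of $\psi^Q$ and scale-dependence of the tube constraint $\lambda^L \in \mathcal{V}^L$ pull in opposite directions, and the sphere-level partition of unity cannot see the shrinking angular width of the tube. To repair your approach one would have to couple the angular cutoffs to the tube radii — e.g. make $\psi^{Q_0}$ positive on the full tube neighborhoods of the adjacent walls, not merely on $\mathfrak{a}_{Q_0}^{G,+}$ — which is in effect what the paper's inductive construction arranges through its carefully coordinated sequence $\epsilon_0, \epsilon_1, \dots$ and the combinatorial claims (i)–(v).
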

\begin{proof}
  Pour simplifier la vie, on suppose $G$ semi-simple dans cette démonstration. Pour $Q=LU \in \mathcal{F}(M_0)$, on pose $N(Q) := \dim \mathfrak{a}_L$. On prouve par récurrence sur $N$ que l'on peut trouver des $\beta^Q$ pour $N(Q) \leq N$ vérifiant (1), (2), (3) et vérifiant: il existe $\epsilon_N > 0$ tel que $\sum_{N(Q) \leq N} \beta^Q(\lambda) = 1$ pour tout $\lambda$ tel que $d(\lambda, \bigcup_{N(Q) \leq N} \mathfrak{a}_Q^+) < \epsilon_N$. On suppose $N$ fixé et les fonctions $(\beta^Q)_{N(Q) < N}$ construites.

  On considère un paramètre auxiliaire $\epsilon > 0$ et on suppose que pour tout $L \in \mathcal{L}(M_0)$ avec $\dim \mathfrak{a}_L = N$, on a $\{\lambda^L \in \mathfrak{a}^L_0 : \|\lambda^L\| \leq \epsilon \} \subset \mathcal{V}^L$. Soit $Q = LU \in \mathcal{F}(M_0)$ avec $N(Q)=N$. On choisit $\alpha^L \in C_c^\infty(\mathfrak{a}^L_0)$ telle que
  $$ \alpha^L(\lambda^L) = \begin{cases}1, & \text{si } \|\lambda^L\| < \epsilon/2, \\ 0, & \text{si } \|\lambda^L\| > \epsilon . \end{cases} $$

  On définit la fonction sur $\mathfrak{a}_0$
  $$ \gamma^Q(\lambda) = \alpha^L(\lambda^L) \left( 1 - \sum_{N(Q')<N} \beta^{Q'}(\lambda) \right). $$

  On suppose $\epsilon < \epsilon_{N-1}$. Montrons que
  \begin{trivlist}
  \item[(i) \quad] $d(\lambda, \mathfrak{a}_Q^+) \in ]\epsilon, \epsilon_{N-1}[ \;  \Longrightarrow \; \gamma^Q(\lambda)=0.$

  En effet, soit $\lambda = \lambda^L + \lambda_L$ tel que $d(\lambda, \mathfrak{a}_Q^+) \in ]\epsilon, \epsilon_{N-1}[$. Si $\lambda_L \in \mathfrak{a}_Q^+$, alors $d(\lambda, \mathfrak{a}_Q^+) \leq \|\lambda^L\|$ donc $\|\lambda^L\| > \epsilon$ et $\alpha^L(\lambda^L)=0$. Si $\lambda_L \notin \mathfrak{a}_Q^+$, on fixe $\mu \in \mathfrak{a}_Q^+$ tel que $d(\lambda, \mu) < \epsilon_{N-1}$. Le segment $[\lambda_L, \mu]$ coupe $\partial \overline{\mathfrak{a}_Q^+}$ en un point $\mu'$ et on a $d(\lambda, \mu') \leq d(\lambda,\mu)$. Donc $d(\lambda, \bigcup_{N(Q') < N} \mathfrak{a}_{Q'}^+) < \epsilon_{N-1}$, d'où $1 - \sum_{N(Q') < N} \beta^{Q'}(\lambda) = 0$ d'après l'hypothèse de récurrence. Cela prouve (i).

  On suppose $2\epsilon < \epsilon_{N-1}$. On définit $\beta^Q$ par
  $$ \beta^Q(\lambda) = \begin{cases} \gamma^Q(\lambda), & \text{si } d(\lambda, \mathfrak{a}_Q^+) < 2\epsilon, \\ 0, & \text{sinon.} \end{cases} $$

  Vu (i), $\beta^Q$ vérifie (1). Par définition de $\alpha^L$, (2) est aussi vérifié. Montrons que
  \item[(ii) \quad] pour tout $\epsilon' > 0$, il existe $\epsilon'' > 0$ tels que les conditions
  \begin{gather*}
    d(\lambda, \mathfrak{a}_Q^+) < \epsilon'', \\
    \|\lambda^L\| < \epsilon'', \\
    d\left(\lambda, \bigcup_{N(Q')<N} \mathfrak{a}_{Q'}^+\right) > \epsilon'
  \end{gather*}
  entraînent $\lambda \in \bigcup_{Q' \subset Q} \mathfrak{a}_{Q'}^+$.

  On peut supposer $\lambda \in \mathfrak{a}^G_0$ et on écrit $\lambda = \lambda^L + \sum_{\alpha \in \Delta_Q} x_\alpha \varpi_\alpha$, où $\{\varpi_\alpha : \alpha \in \Delta_Q\} \subset \mathfrak{a}^G_L$ est la base duale de $\Delta_Q$. Soit $\alpha \in \Delta_Q$, l'élément $\mu := \sum_{\alpha' \neq \alpha} x_{\alpha'} \varpi_{\alpha'}$ appartient à $\bigcup_{N(Q') < N} \mathfrak{a}_{Q'}^+$ et $d(\lambda, \mu) = \sqrt{\|\lambda^L\|^2 + x_\alpha^2 \|\varpi_\alpha\|^2}$, d'où
  $$ \sqrt{\epsilon''^2 + x_\alpha^2 \|\varpi_\alpha\|^2} > d(\lambda, \mu) \geq d\left(\lambda, \bigcup_{N(Q') < N} \alpha_{Q'}^+\right) > \epsilon' . $$

  En supposant $\epsilon'' < \epsilon'$, cela entraîne que $|x_\alpha| > \|\varpi_\alpha\|^{-1} \sqrt{\epsilon'^2 - \epsilon''^2}$. Si $x_\alpha < 0$, on a $d(\lambda, \mathfrak{a}_Q^+) > c|x_\alpha|$ où $c$ est une constante positive, d'où $\epsilon'' > c \|\varpi_\alpha\|^{-1} \sqrt{\epsilon'^2 - \epsilon''^2}$. C'est impossible si $\epsilon''$ est assez petit. Donc on a $x_\alpha > \|\varpi_\alpha\|^{-1} \sqrt{\epsilon'^2 - \epsilon''^2}$. On note $\Sigma_0 := \Sigma_{M_0}$ l'ensemble des racines de $A_0$. Il existe $c' > 0$ tel que $|\angles{\alpha, \mu}| < c' \|\mu\|$ pour tout $\mu$ et tout $\alpha \in \Sigma_0$. On note $\Sigma^U_0$ le sous-ensemble de $\Sigma_0$ des racines dans $U$. Pour $\alpha \in \Sigma^U_0$, sa projection sur $\mathfrak{a}_L^*$ est de la forme $\sum_{\alpha' \in \Delta_Q} n_{\alpha'} \alpha'$ avec $n_{\alpha'} \in \Z_{\geq 0}$ et $n_{\alpha'} \geq 1$ pour au moins un $\alpha'$. Alors
  $$ \angles{\alpha, \lambda} = \angles{\alpha, \lambda^L} + \sum_{\alpha' \in \Delta_Q} n_{\alpha'} x_{\alpha'} > -c' \epsilon'' + \|\varpi_\alpha\|^{-1} \sqrt{\epsilon'^2 - \epsilon''^2} . $$
  C'est positif si $\epsilon''$ est suffisamment petit, donc $\angles{\alpha, \lambda} > 0$. Soit $Q' \in \mathcal{F}(M_0)$ tel que $\lambda \in \mathfrak{a}_{Q'}^+$, alors $\{\alpha \in \Sigma_0 : \angles{\alpha, \lambda} > 0\} = \Sigma^{U'}_0$. Donc $\Sigma^U_0 \subset \Sigma^{U'}_0$, ce qui entraîne $Q' \subset Q$. D'où (ii).

  Prouvons que $\beta^Q$ vérifie (3). Si $\beta^Q(\lambda)=0$, alors
  \begin{gather*}
    d(\lambda, \mathfrak{a}_Q^+) < 2\epsilon, \\
    \|\lambda^L\| < \epsilon, \\
    d\left(\lambda, \bigcup_{N(Q')<N} \mathfrak{a}_{Q'}^+\right) > \epsilon_{N-1}
  \end{gather*}
  par notre construction de $\beta^Q$. L'assertion (ii) avec $\epsilon' = \epsilon_{N-1}$ fournit une constante $\epsilon''$. Si l'on suppose que $2\epsilon < \epsilon''$, alors la condition (3) est satisfaite.

  Il reste à établir l'hypothèse de récurrence, ce qui impliquera (4) si elle est satisfaite pour tout $N$:
  \item[(iii) \quad] il existe $\epsilon_N$ tel que $d(\lambda, \bigcup_{N(Q) \leq N} \mathfrak{a}_Q^+) < \epsilon_N$ entraîne $\sum_{N(Q) \leq N} \beta^Q(\lambda)=1$.

  Fixons $\lambda \in \mathfrak{a}_0$ tel que $d(\lambda, \bigcup_{N(Q) \leq N} \mathfrak{a}_Q^+) < \epsilon_N$. Si $\sum_{N(Q') < N} \beta^{Q'}(\lambda)=1$, on a par construction $\beta^Q(\lambda)=0$ pour tout $Q$ avec $N(Q)=N$. Donc $\sum_{N(Q) \leq N} \beta^Q(\lambda)=1$. On peut donc supposer que $\sum_{N(Q') < N} \beta^{Q'}(\lambda) \neq 1$, d'où $d(\lambda, \bigcup_{N(Q') < N} \mathfrak{a}_{Q'}^+) > \epsilon_{N-1}$ par l'hypothèse de récurrence. Montrons d'abord que
  \item[(iv)\quad] il existe au plus un $Q \in \mathcal{F}(M_0)$ avec $N(Q)=N$ et $\beta^Q(\lambda) \neq 0$.

  En effet, supposons qu'il en existe deux, disons $Q_1, Q_2$ avec $Q_i = L_i U_i$, $i=1,2$. Comme ci-dessus, on a $d(\lambda, \mathfrak{a}_{Q_1}^+) < 2\epsilon$, $\|\lambda^{L_1}\| < \epsilon$, $d(\lambda, \bigcup_{N(Q')<N} \mathfrak{a}_{Q'}^+) > \epsilon_{N-1}$. D'autre part on a $d(\lambda, \mathfrak{a}_{Q_2}^+) < 2\epsilon$. Soit $\mu \in \mathfrak{a}_{Q_2}^+$ tel que $d(\lambda,\mu) < 2\epsilon$, alors on a aussi $d(\lambda^{L_1}, \mu^{L_1}) < 2\epsilon$. D'où
  \begin{gather*}
    d(\mu, \mathfrak{a}_{Q_1}^+) \leq d(\lambda, \mu) + d(\lambda, \mathfrak{a}_{Q_1}^+) < 4\epsilon, \\
    \|\mu^{L_1}\| \leq d(\mu^{L_1}, \lambda^{L_1}) + \|\lambda^{L_1}\| < 3\epsilon, \\
    d\left(\mu, \bigcup_{N(Q') < N} \mathfrak{a}_{Q'}^+\right) \geq d\left(\lambda, \bigcup_{N(Q') < N} \mathfrak{a}_{Q'}^+\right) - d(\lambda,\mu) > \epsilon_{N-1} - 2\epsilon.
  \end{gather*}

  De (ii) avec $\epsilon' = \epsilon_{N-1}/2$ se déduit $\epsilon''$. On suppose $\epsilon_{N-1} - 2\epsilon > \frac{1}{2} \epsilon_{N-1}$ et $4\epsilon < \epsilon''$. Alors $\mu \in \bigcup_{Q' \subset Q_1} \mathfrak{a}_{Q'}^+$. Cela contredit l'hypothèse que $\mu \in \mathfrak{a}_{Q_2}^+$. Cela prouve (iv).

  Montrons que
  \item[(v) \quad] pour $\epsilon_N$ suffisamment petit, il existe un $Q \in \mathcal{F}(M_0)$ tel que $N(Q)=N$ et $\beta^Q(\lambda) = 1 - \sum_{N(Q') < N} \beta^{Q'}(\lambda) \neq 0$.

  On suppose $\epsilon_N < \epsilon_{N-1}$. Les conditions
  \begin{gather*}
    d\left(\lambda, \bigcup_{N(Q') \leq N} \mathfrak{a}_Q^+\right) < \epsilon_N, \\
    d\left(\lambda, \bigcup_{N(Q') < N} \mathfrak{a}_{Q'}^+\right) > \epsilon_{N-1}
  \end{gather*}
  entraînent qu'il existe $Q \in \mathcal{F}(M_0)$ avec $N(Q)=N$ et $d(\lambda, \mathfrak{a}_Q^+) < \epsilon_N$. En particulier, $\|\lambda^L\| \leq \epsilon_N$. En prenant $\epsilon_N < \epsilon/2$, on a $\alpha^L(\lambda^L)=1$ et $\beta^Q(\lambda) = \gamma^Q(\lambda) = 1 - \sum_{N(Q') < N} \beta^{Q'}(\lambda) \neq 0$. D'où (v).
  \end{trivlist}

  D'après (iv) et (v), $\sum_{N(Q) \leq N} \beta^Q(\lambda)$ est la somme de $\sum_{N(Q') \leq N} \beta^{Q'}(\lambda)$ et d'un $\beta^Q(\lambda)$ pour un unique $Q \in \mathcal{F}(M_0)$ avec $N(Q)=N$, pour lequel $\beta^Q(\lambda)= 1 - \sum_{N(Q') < N} \beta^{Q'}(\lambda)$. Cela entraîne (iii) et achève la preuve. 
\end{proof}

Donnons-en une généralisation simple.

\begin{corollary}\label{prop:comb-M}
  Soit $M \in \mathcal{L}(M_0)$.  Supposons fixé un voisinage ouvert $\mathcal{V}^L$ de $0$ dans $\mathfrak{a}^L_M$ pour tout $L \in \mathcal{L}(M)$. Alors il existe une famille de fonctions $(\beta^Q)_{Q \in \mathcal{F}(M)}$ telle que
  \begin{enumerate}\renewcommand{\labelenumi}{(\arabic{enumi})}
    \item soit $Q=LU \in \mathcal{F}(M)$, alors $\beta^Q(\lambda) = \beta^Q(\lambda^G)$ pour tout $\lambda \in \mathfrak{a}_M$ et $\beta^Q|_{\mathfrak{a}^G_M} \in C_c^\infty(\mathfrak{a}^G_M)$, toute dérivée de $\beta^Q$ est bornée;
    \item $\beta^Q(\lambda) \neq 0$ entraîne que $\lambda^L \in \mathcal{V}^L$;
    \item si $\beta^Q(\lambda) \neq 0$, alors $\lambda \in \bigsqcup_{M \subset Q' \subset Q} \mathfrak{a}_{Q'}^+$;
    \item $\sum_{Q \in \mathcal{F}(M)} \beta^Q = 1$.
  \end{enumerate}\renewcommand{\labelenumi}{\arabic{enumi}}
\end{corollary}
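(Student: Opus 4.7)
The plan is to obtain Corollary~\ref{prop:comb-M} by transposing the proof of Lemma~\ref{prop:comb} to the ``relative'' setting in which $M_0$ is replaced by $M$. Concretely, I would work in the Euclidean space $\mathfrak{a}_M$ (equipped with the restriction of the chosen $W^G_0$-invariant norm on $\mathfrak{a}_0$), use the facet decomposition $\mathfrak{a}_M = \bigsqcup_{Q \in \mathcal{F}(M)} \mathfrak{a}_Q^+$ together with the incidence relation $\overline{\mathfrak{a}_Q^+} = \bigsqcup_{Q' \in \mathcal{F}(M),\, Q' \supset Q} \mathfrak{a}_{Q'}^+$, and substitute for the ambient root system $\Sigma_{M_0}$ of the lemma the relative root system of $A_M$ in $G$. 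As in the lemma, it is harmless to reduce to the semisimple case since each $\beta^Q$ will depend only on $\lambda^G \in \mathfrak{a}^G_M$.

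With these substitutions, the inductive construction of the lemma goes through line by line. One sets $N(Q) = \dim \mathfrak{a}_L$ for $Q = LU \in \mathcal{F}(M)$, which now ranges from $\dim \mathfrak{a}_G$ (attained at $Q = G$) up to $\dim \mathfrak{a}_M$ (attained at the parabolics with Levi $M$), and proves by induction on $N$ the existence of $\beta^Q$ for $N(Q) \leq N$ satisfying (1)--(3), together with the auxiliary statement that $\sum_{N(Q) \leq N} \beta^Q(\lambda) = 1$ whenever $d(\lambda, \bigcup_{N(Q) \leq N} \mathfrak{a}_Q^+) < \epsilon_N$. At each stage one introduces the cutoffs $\alpha^L \in C_c^\infty(\mathfrak{a}^L_M)$ supported in $\mathcal{V}^L$, defines
\[
\gamma^Q(\lambda) = \alpha^L(\lambda^L)\Bigl( 1 - \sum_{Q' \in \mathcal{F}(M),\, N(Q') < N} \beta^{Q'}(\lambda) \Bigr),
\]
and then cuts off by $d(\lambda, \mathfrak{a}_Q^+) < 2\epsilon$ to obtain $\beta^Q$. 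Steps (i)--(v) of the original proof rely only on Euclidean geometry and on the sign-pattern characterization of facets via $\{\angles{\alpha, \lambda} : \alpha \in \Sigma^G_M\}$ (where $\Sigma^G_M$ denotes the roots of $A_M$ in $G$, playing the role of $\Sigma_0$ restricted to $\mathfrak{a}^G_0$), so they translate verbatim.

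The main --- and essentially only --- organizational point to verify is that the facet decomposition used in the relativized step (ii) is indeed indexed by $\mathcal{F}(M)$: given $\lambda \in \mathfrak{a}_M$, any $Q' \in \mathcal{F}(M_0)$ with $\lambda \in \mathfrak{a}_{Q'}^+$ automatically has Levi $L' \supset M$, since $\mathfrak{a}_{Q'}^+ \subset \mathfrak{a}_{L'}$ and $\mathfrak{a}_{L'} \supset \mathfrak{a}_M$ whenever $\mathfrak{a}_{Q'}^+ \cap \mathfrak{a}_M \neq \emptyset$. Hence condition (3) of the corollary --- namely $\lambda \in \bigsqcup_{M \subset Q' \subset Q} \mathfrak{a}_{Q'}^+$ --- follows directly from the analog of step (3) in the lemma. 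No genuinely new combinatorial input is needed; the ``hard part'' is purely bookkeeping, tracking the consistent replacement of $M_0$, $\mathfrak{a}_0$, $\mathfrak{a}^L_0$, $\Sigma_0$, $\Sigma^U_0$ by their counterparts attached to $M$.
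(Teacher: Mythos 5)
Your plan --- re-running the whole induction of Lemma~\ref{prop:comb} with $M_0$ replaced by $M$, $\mathfrak{a}_0$ by $\mathfrak{a}_M$, and $\Sigma_0$ by the relative root system $\Sigma_M$ of $A_M$ --- is viable, since the argument uses only Euclidean geometry of the facet decomposition $\mathfrak{a}_M = \bigsqcup_{Q \in \mathcal{F}(M)} \mathfrak{a}_Q^+$ and the sign-pattern description of facets, both of which carry over. The paper takes a shortcut: it invokes Lemma~\ref{prop:comb} \emph{once, for $M_0$}, but with neighborhoods chosen as products $\bar{\mathcal{V}}^L := \mathcal{U}^M \times \mathcal{V}^L \subset \mathfrak{a}^M_0 \oplus \mathfrak{a}^L_M = \mathfrak{a}^L_0$ for $L \in \mathcal{L}(M)$ (arbitrary otherwise), and then defines $\beta^Q := \bar{\beta}^Q|_{\mathfrak{a}_M}$. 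The point is that $\bar{\beta}^Q$ vanishes on $\mathfrak{a}_M$ when $Q \notin \mathcal{F}(M)$: by property~(3) of the lemma, $\bar{\beta}^Q(\lambda) \neq 0$ forces $\lambda \in \mathfrak{a}_{Q'}^+$ for some $Q' \subset Q$ in $\mathcal{F}(M_0)$, and if $\lambda \in \mathfrak{a}_M$ the decomposition $\mathfrak{a}_M = \bigsqcup_{Q'' \in \mathcal{F}(M)} \mathfrak{a}_{Q''}^+$ forces $Q' \in \mathcal{F}(M)$, whence $Q \in \mathcal{F}(M)$. With this, (3) and (4) follow and (1), (2) are inherited directly. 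In short, you redo a several-page induction; the paper re-uses it via a careful choice of neighborhoods.

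One correction to your closing paragraph: you wrote $\mathfrak{a}_{L'} \supset \mathfrak{a}_M$, but the desired containment $L' \supset M$ is equivalent to $\mathfrak{a}_{L'} \subset \mathfrak{a}_M$, not the reverse. Moreover, asserting that $\mathfrak{a}_{L'}$ contains $\mathfrak{a}_M$ whenever $\mathfrak{a}_{Q'}^+$ meets $\mathfrak{a}_M$ is exactly the claim you are trying to justify, so the reasoning as given is circular. What actually proves $L' \supset M$ is that $\mathfrak{a}_{L'}$ is the intersection of the root hyperplanes in $\mathfrak{a}_0$ passing through $\lambda$; since $\lambda \in \mathfrak{a}_M$ lies on the hyperplane of every root of $A_0$ in $M$, those hyperplanes all participate, giving $\mathfrak{a}_{L'} \subset \mathfrak{a}_M$.
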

\begin{proof}
  On a la décomposition $\mathfrak{a}_0 = \mathfrak{a}^M_0 \oplus \mathfrak{a}_M$. On choisit un voisinage ouvert $\mathcal{U}^M$ de $0$ dans $\mathfrak{a}^M_0$. Soit $L \in \mathcal{L}(M_0)$. Si $L \notin \mathcal{L}(M)$, on choisit un voisinage quelconque $\bar{\mathcal{V}}^L$ de $0$ dans $\mathfrak{a}^L_0$; si $L \in \mathcal{L}(M)$, alors $\mathfrak{a}^L_0 = \mathfrak{a}^M_0 \oplus \mathfrak{a}^L_M$ et on prend le voisinage $\bar{\mathcal{V}}^L = \mathcal{U}^M \times \mathcal{V}^L$ de $0$ dans $\mathfrak{a}^L_0$. Alors le Lemme \ref{prop:comb} fournit des fonctions $\bar{\beta}^Q$ adaptées aux voisinages $\bar{\mathcal{V}}^Q$. On note $\beta^Q := \bar{\beta}^Q|_{\mathfrak{a}_M}$ pour tout $Q \in \mathcal{F}(M)$, alors les propriétés (1) et (2) sont automatiquement satisfaites.

  Supposons que $\lambda \in \mathfrak{a}_M$, $Q \in \mathcal{F}(M_0)$ et $\bar{\beta}^Q(\lambda) \neq 0$, alors il existe $Q' \in \mathcal{F}(M_0)$, $Q' \subset Q$ tel que $\lambda \in \mathfrak{a}_{Q'}^+$. Or la décomposition $\mathfrak{a}_M = \bigsqcup_{Q'' \in \mathcal{F}(M)} \mathfrak{a}_{Q''}^+$ entraîne que $Q' \in \mathcal{F}(M)$, donc $Q \in \mathcal{F}(M)$. Les propriétés (3) et (4) en résultent.
\end{proof}

\begin{remark}\label{rem:comb}
  Le Corollaire \ref{prop:comb-M} est aussi valable pour la décomposition dans l'espace dual
  $$ \mathfrak{a}_0^* = \bigsqcup_{Q \in \mathcal{F}(M_0)} \mathfrak{a}_Q^{*,+} $$
  ou pour
  $$ i\mathfrak{a}_0^* = \bigsqcup_{Q \in \mathcal{F}(M_0)} i\mathfrak{a}_Q^{*,+}. $$
\end{remark}

\paragraph{Démonstration du Théorème de Paley-Wiener invariant}
Soient $M \in \mathcal{L}(M_0)$ et $\sigma \in \Pi_2(\tilde{M})$. Pour $P \in \mathcal{P}(M)$, $w \in W(M|M)$ et $\tilde{w} \in \tilde{K}$ un représentant, on note
$$ R_{\tilde{P}}(\tilde{w}, \sigma) := R_{\tilde{P}|\tilde{P}}(\tilde{w}, \sigma). $$

Le résultat suivant jouera un rôle crucial dans notre preuve.

\begin{lemma}\label{prop:R-indep-lambda}
  Soient $L \in \mathcal{L}(M)$, $P \in \mathcal{P}(M)$ et $Q \in \mathcal{P}(L)$ tels que $Q \supset P$, alors pour tout $\lambda \in i\mathfrak{a}_L^*$ et tout $w \in W^L_0 \cap W(M|M)$ avec représentant $\tilde{w} \in \tilde{K} \cap \tilde{L}$, on a
  $$ R_{\tilde{P}}(\tilde{w}, \sigma_\lambda) = R_{\tilde{P}}(\tilde{w}, \sigma). $$

  Par conséquent, on a $R^L_\sigma = R^L_{\sigma_\lambda}$.
\end{lemma}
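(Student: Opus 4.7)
The plan is to reduce the assertion to the rank-one, $\tilde{L}$-internal case via the axioms of normalizing factors, and then to observe that a twist by $\lambda \in i\mathfrak{a}_L^*$ is "invisible" from the point of view of such rank-one data.

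First I would exploit that $w \in W^L_0$ normalizes $L$ and fixes $\mathfrak{a}_L$ pointwise, so $w^{-1}Q = Q$ and therefore $w^{-1}P \subset Q$. Property (\textbf{R5}) applied to the chain $P, w^{-1}P \subset Q$ then gives the factorisation
\[
  R_{w^{-1}\tilde{P}|\tilde{P}}(\sigma_\lambda) \;=\; \mathcal{I}_{\tilde{Q}}\bigl( R^{\tilde{L}}_{w^{-1}\tilde{P}\cap\tilde{L}\,|\,\tilde{P}\cap\tilde{L}}(\sigma_\lambda) \bigr),
\]
and the analogous factorisation for $\sigma$. Since $A(\tilde{w})$ is left translation by $\tilde{w}^{-1}$ on function spaces and is independent of the representation, it is enough to prove that the two $\tilde{L}$-inner operators $R^{\tilde{L}}_{w^{-1}\tilde{P}_L|\tilde{P}_L}(\sigma)$ and $R^{\tilde{L}}_{w^{-1}\tilde{P}_L|\tilde{P}_L}(\sigma_\lambda)$ agree, once we identify their source and target spaces in a canonical way. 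Writing $\chi_\lambda$ for the character of $\tilde{L}$ arising from $\lambda \in i\mathfrak{a}_L^* \hookrightarrow X(\tilde{L})$, one has $\sigma_\lambda = \sigma \otimes (\chi_\lambda|_{\tilde{M}})$, and because $\chi_\lambda$ is trivial on every compact subgroup of $\tilde{L}$, the underlying $\tilde{K}\cap\tilde{L}$-modules of $\mathcal{I}^{\tilde{L}}_{\tilde{P}_L}(\sigma)$ and $\mathcal{I}^{\tilde{L}}_{\tilde{P}_L}(\sigma_\lambda)$ coincide; similarly for $w^{-1}\tilde{P}_L$.

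Next, under this identification of underlying spaces, the unnormalised operator $J^{\tilde{L}}_{w^{-1}\tilde{P}_L|\tilde{P}_L}(\sigma_\lambda)$ equals $J^{\tilde{L}}_{w^{-1}\tilde{P}_L|\tilde{P}_L}(\sigma)$: the intertwining integral is taken over a unipotent subgroup of $\tilde{L}$, on which $\chi_\lambda$ is identically $1$, so the integrands match pointwise. For the normalising factor, property (\textbf{R4}) reduces
\[
 r^{\tilde{L}}_{w^{-1}\tilde{P}_L|\tilde{P}_L}(\sigma_\lambda) \;=\; \prod_{\alpha} r^{\tilde{M}_\alpha}_{\tilde{\bar{P}}_\alpha|\tilde{P}_\alpha}(\sigma_\lambda)
\]
to a product over $\alpha \in (\Sigma^L_{w^{-1}P})^{\mathrm{red}} \cap (\Sigma^L_{\bar P})^{\mathrm{red}}$; each such $\alpha$ lies in $\Sigma^L_M$, so $\alpha^\vee \in \mathfrak{a}^L_M$ and $\langle \lambda, \alpha^\vee\rangle = 0$. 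In the rank-one situation inside $\tilde{M}_\alpha$, the difference between $\sigma$ and $\sigma_\lambda$ is thus a twist by a character of $\tilde{M}_\alpha$ itself (not a motion in the transversal direction $X'_\alpha$); by the same character-twist argument applied to the two factors of $\mu^{\tilde{M}_\alpha} = (J^{\tilde{M}_\alpha})^{-1}(J^{\tilde{M}_\alpha})^{-1}$, the Plancherel function is unchanged, and hence so is the rank-one factor constructed from it in sections \ref{sec:normalisation-arch} and 3.3. Combining gives $r^{\tilde{L}}_{w^{-1}\tilde{P}_L|\tilde{P}_L}(\sigma_\lambda)=r^{\tilde{L}}_{w^{-1}\tilde{P}_L|\tilde{P}_L}(\sigma)$, whence the normalised operators agree. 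Applying $\mathcal{I}_{\tilde{Q}}$ and then $A(\tilde{w})$ finishes the main assertion.

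The consequence $R^L_\sigma = R^L_{\sigma_\lambda}$ is then formal: on the one hand $W^L_{\sigma_\lambda} = W^L_\sigma$ because $w\sigma_\lambda = (w\sigma)_{w\lambda} = (w\sigma)_\lambda$ (since $w$ fixes $\mathfrak{a}_L$ pointwise), so $w\sigma_\lambda \simeq \sigma_\lambda \iff w\sigma \simeq \sigma$; on the other hand, the criterion defining $W^{L,0}_\sigma$ inside $W^L_\sigma$ is the scalarity of $R_{\tilde{P}}(\tilde{w}, \sigma)$, which by the main assertion of the lemma is equivalent to the scalarity of $R_{\tilde{P}}(\tilde{w}, \sigma_\lambda)$. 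The main obstacle I expect is the careful bookkeeping in the second paragraph: matching the abstract identification of induced representations $\mathcal{I}^{\tilde{L}}_{\tilde{P}_L}(\sigma_\lambda) \cong \mathcal{I}^{\tilde{L}}_{\tilde{P}_L}(\sigma) \otimes \chi_\lambda$ with the tautological identification of their underlying $\tilde{K}\cap\tilde{L}$-modules, so that the equality $J^{\tilde{L}}(\sigma_\lambda)=J^{\tilde{L}}(\sigma)$ really is an equality of operators and not just an equality modulo a $\chi_\lambda$-twist that one would then have to cancel with $A(\tilde{w})$.
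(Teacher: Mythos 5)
Your proof is correct, and it reaches the same conclusion as the paper, but it takes a longer route to the key step. After the reduction via (\textbf{R5}) — which you and the paper carry out identically — the paper simply invokes (\textbf{R6}): the $\tilde{K}\cap\tilde{L}$-finite matrix coefficients of $R^{\tilde{L}}_{\tilde{P}'\mid\tilde{P}}(\sigma_\lambda)$ are rational functions of the variables $\langle\lambda,\alpha^\vee\rangle$ with $\alpha\in\Delta^L_P$, and every one of these vanishes identically for $\lambda\in i\mathfrak{a}_L^*$, so $R^{\tilde{L}}(\sigma_\lambda)=R^{\tilde{L}}(\sigma)$ immediately. You instead split the $\tilde{L}$-inner operator into its unnormalised part $J^{\tilde{L}}$ and its normalising factor $r^{\tilde{L}}$, prove directly that the intertwining integral is unaffected by the $\chi_\lambda$-twist, and then reduce $r^{\tilde{L}}$ to rank one via (\textbf{R4}) and invoke the constructions of \S\S 3.2--3.3. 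In effect you re-derive the relevant special case of (\textbf{R6}); this is a legitimate verification, and the ``bookkeeping'' you flag at the end (that the tautological identification of underlying $\tilde{K}\cap\tilde{L}$-modules is the same as the multiplication-by-$\chi_\lambda$ identification because $\chi_\lambda$ is trivial on $\tilde{K}\cap\tilde{L}$) is exactly the point one must check, and you have it right. The trade-off is that your argument for the factor $r^{\tilde{L}}$ leans on the specific construction of rank-one normalising factors via the $\mu$-function, whereas (\textbf{R6}) is an axiom that any admissible family must satisfy — so the paper's argument is both shorter and applies uniformly without unpacking the construction. Your derivation of the consequence $R^L_\sigma=R^L_{\sigma_\lambda}$ from the main equality and the definition of the $R$-group is the same as the paper's.
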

\begin{proof}
  D'après (\textbf{R5}) dans la Définition \ref{def:normalisant}, on a
  $$ R_{\tilde{P}}(\tilde{w}, \sigma_\lambda) = A(\tilde{w}) \circ \mathcal{I}_{\tilde{Q}}(R^{\tilde{L}}_{w^{-1}\tilde{P} \cap \tilde{L}|\tilde{P} \cap \tilde{L}}(\sigma_\lambda)). $$
  Cela est indépendant de $\lambda \in i\mathfrak{a}_L^*$ d'après (\textbf{R6}). L'assertion concernant $R^L_\sigma$ en découle d'après la définition du $R$-groupe.
\end{proof}

\begin{theorem}\label{prop:PW-invariant}
  On a $\mathrm{PW}(\tilde{G})=I\mathcal{C}(\tilde{G})$. L'application linéaire continue surjective $\mathcal{C}(\tilde{G}) \to I\mathcal{C}(\tilde{G})$ admet une section continue; en particulier, c'est une application ouverte.
\end{theorem}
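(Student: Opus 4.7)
L'inclusion $I\mathcal{C}(\tilde{G}) \subset \mathrm{PW}(\tilde{G})$ étant déjà établie (Proposition \ref{prop:PW-provisoire}), et l'application $\mathcal{F}_{\tilde{G}}: \mathcal{C}(\tilde{G}) \rightiso \widehat{\mathcal{C}}(\tilde{G})$ étant un isomorphisme topologique (Théorème \ref{prop:PW-op}), il suffit de construire une application linéaire continue $\mathrm{PW}(\tilde{G}) \to \widehat{\mathcal{C}}(\tilde{G})$, $\varphi \mapsto \Phi$, qui soit une section de $T_{\tilde{G}}: \Phi \mapsto [(M,\sigma,r) \mapsto \Tr(\tilde{R}_{\tilde{P}}(r,\sigma)\Phi_{\tilde{P}}(\sigma))]$.

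Le plan est de construire $\Phi$ orbite par orbite. Fixons $M \in \mathcal{L}(M_0)$ et une $\Im X(\tilde{M})$-orbite $\mathcal{O} \subset \Pi_2(\tilde{M})$, ainsi qu'un point base $\sigma \in \mathcal{O}$; l'orbite est paramétrée par $\lambda \in i\mathfrak{a}_M^*$ modulo un sous-groupe fini. Pour $\lambda$ en position très générale dans $i\mathfrak{a}_M^*$, le $R$-groupe $\tilde{R}_{\sigma_\lambda}$ coïncide avec $\tilde{R}_\sigma$, et il serait naturel de poser
\[
  \Phi_{\tilde{P}}(\sigma_\lambda) = |\tilde{R}_\sigma/Z_\sigma|^{-1} \sum_{r \in \tilde{R}_\sigma/Z_\sigma} \varphi(M,\sigma_\lambda,r) \tilde{R}_{\tilde{P}}(r,\sigma_\lambda)^{-1},
\]
l'inversion de Fourier sur le groupe fini $\tilde{R}_\sigma$ entraînant alors $T_{\tilde{G}}(\Phi)(M,\sigma_\lambda,r) = \varphi(M,\sigma_\lambda,r)$ (compte tenu de $\tilde{R}_{\tilde{P}}(zr,\sigma_\lambda) = \chi_\sigma(z)^{-1}\tilde{R}_{\tilde{P}}(r,\sigma_\lambda)$). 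Mais $\tilde{R}_{\sigma_\lambda}$ \emph{saute} lorsque $\lambda$ traverse les faces du complexe de chambres de $i\mathfrak{a}_M^*$; cette formule globale ne peut donc être $C^\infty$.

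L'idée centrale est de localiser la construction selon la décomposition en facettes $i\mathfrak{a}_M^* = \bigsqcup_{Q \in \mathcal{F}(M)} i\mathfrak{a}_Q^{*,+}$. Pour $Q = LU \in \mathcal{F}(M)$, le Lemme \ref{prop:R-indep-lambda} assure que $\tilde{R}^L_{\sigma_\lambda} = \tilde{R}^L_\sigma$ et $R^L_{\tilde{P}}(\tilde{w},\sigma_\lambda) = R^L_{\tilde{P}}(\tilde{w},\sigma)$ pour tout $\lambda \in i\mathfrak{a}_L^*$. On choisira, grâce au Corollaire \ref{prop:comb-M} (dans la version de la Remarque \ref{rem:comb} pour $i\mathfrak{a}_M^*$), une partition de l'unité $(\beta^Q)_{Q \in \mathcal{F}(M)}$ subordonnée à cette décomposition, avec les voisinages $\mathcal{V}^L \subset i\mathfrak{a}^{L,*}_M$ choisis suffisamment petits pour qu'en tout point $\lambda$ avec $\beta^Q(\lambda) \neq 0$ la projection $\lambda^L$ soit assez petite pour que le $R$-groupe $\tilde{R}_{\sigma_\lambda}$ contienne $\tilde{R}^L_\sigma$. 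On posera alors
\[
  \Phi^Q_{\tilde{P}}(\sigma_\lambda) := |\tilde{R}^L_\sigma/Z_\sigma|^{-1} \sum_{r \in \tilde{R}^L_{\sigma,\mathrm{reg}}/Z_\sigma} \varphi(M,\sigma_\lambda,r) \tilde{R}_{\tilde{P}}(r,\sigma_\lambda)^{-1},
\]
et finalement $\Phi_{\tilde{P}}(\sigma_\lambda) := \sum_{Q \in \mathcal{F}(M)} \beta^Q(\lambda)\, \Phi^Q_{\tilde{P}}(\sigma_\lambda)$. La sommation $\sum_Q \beta^Q = 1$ et la décomposition $\tilde{T}(\tilde{G}) = \bigsqcup_L \tilde{T}_{\mathrm{ell}}(\tilde{L})$ garantiront que $T_{\tilde{G}}(\Phi)(M,\sigma_\lambda,r) = \varphi(M,\sigma_\lambda,r)$ en tout $r \in \tilde{R}_{\sigma_\lambda,\mathrm{reg}}$, donc partout sur $\tilde{T}(\tilde{G})$ par équivariance.

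Il reste à vérifier les trois conditions du Théorème \ref{prop:PW-op}. La \emph{lissité} sera immédiate car chaque $\beta^Q$ est $C^\infty$ et, sur le support de $\beta^Q$, les opérateurs $\tilde{R}_{\tilde{P}}(r,\sigma_\lambda)$ pour $r \in \tilde{R}^L_\sigma$ dépendent de $\lambda$ uniquement par la composante $\lambda^L$ qui paramétrise de manière régulière. La \emph{symétrie} se déduira d'une part de la propriété $\varphi(w\tau) = \varphi(\tau)$ pour $w \in W^G_0$, d'autre part de la compatibilité de notre partition de l'unité et des opérateurs $R$-normalisés au transport de structure (cf.~\eqref{eqn:R-transitivite}); on effectuera cette construction de manière uniforme sur chaque classe de $W^G_0$-équivalence d'orbites en choisissant les $\beta^Q$ de façon compatible. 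La \emph{condition de croissance} est le point le plus délicat: dans le cas non archimédien elle est automatique car $\varphi$ est à support compact; dans le cas archimédien, elle découle de la condition de croissance sur $\varphi$, combinée aux propriétés (\textbf{R8}) des facteurs normalisants et au fait que les dérivées de $\beta^Q$ sont bornées. L'obstacle principal sera en effet la vérification de la croissance archimédienne: il s'agira de contrôler les dérivées de $\Phi_{\tilde{P}}(\sigma_\lambda)$ en termes des $\tilde{K}$-types $\delta_1, \delta_2$, alors qu'Arthur \cite{Ar94-PW} utilise la multiplicité $1$ des $\tilde{K}$-types minimaux. C'est précisément pour contourner cet outil indisponible sur les revêtements que la partition de l'unité ci-dessus, combinée à la majoration (\textbf{R8}), fournira un remplaçant; les arguments de troncature suivront de près \cite[\S 3]{Ar94-PW} mais en remplaçant systématiquement les projecteurs sur les $\tilde{K}$-types minimaux par les facteurs $\beta^Q$. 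La continuité de $\varphi \mapsto \Phi$ étant évidente par construction linéaire, on aura bien une section continue, d'où l'ouverture de $\mathcal{C}(\tilde{G}) \to I\mathcal{C}(\tilde{G})$.
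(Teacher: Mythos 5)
Votre stratégie globale coïncide avec celle du texte : localiser en $\lambda$ par la partition de l'unité $(\beta^Q)$ du Corollaire~\ref{prop:comb-M}, utiliser le Lemme~\ref{prop:R-indep-lambda} pour fixer le $R$-groupe $\tilde{R}^L_\sigma$ sur chaque morceau, symétriser sous $W^G_0$, puis vérifier la croissance via (\textbf{R8}). Mais la formule centrale que vous proposez,
$\Phi^Q_{\tilde{P}}(\sigma_\lambda) = |\tilde{R}^L_\sigma/Z_\sigma|^{-1} \sum_{r} \varphi(M,\sigma_\lambda,r)\, \tilde{R}_{\tilde{P}}(r,\sigma_\lambda)^{-1}$,
est erronée et ne peut fonctionner, pour deux raisons concrètes. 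Premièrement, les $\tilde{R}_{\tilde{P}}(r,\sigma_\lambda)$ sont des opérateurs unitaires sur l'espace de dimension infinie $\mathcal{I}_{\tilde{P}}(V)$; la quantité $T_{\tilde{G}}(\Phi)(M,\sigma_\lambda,r') = \Tr\bigl(\tilde{R}_{\tilde{P}}(r',\sigma_\lambda)\,\Phi_{\tilde{P}}(\sigma_\lambda)\bigr)$ ferait alors intervenir $\Tr\bigl(\tilde{R}_{\tilde{P}}(r'r^{-1},\sigma_\lambda)\bigr)$ sur tout $\mathcal{I}_{\tilde{P}}(V)$, qui diverge : «~l'inversion de Fourier sur le groupe fini $\tilde{R}_\sigma$~» n'a pas de sens pour une représentation de dimension infinie. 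Deuxièmement, même si l'on pouvait définir les traces, les coefficients $\Gamma_{\delta_2}\Phi_{\tilde{P}}(\sigma)\Gamma_{\delta_1}$ d'un opérateur unitaire ne décroissent pas en $\mu_{\delta_i}$, donc la condition de croissance archimédienne du Théorème~\ref{prop:PW-op} serait violée.

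Le texte règle ces deux problèmes simultanément au moyen de l'opérateur $D^Q_P(r,\sigma)$ : celui-ci fait intervenir la multiplication par $\mathcal{I}_{\tilde{P}}(\sigma,f_{M,\sigma})$, où $f_{M,\sigma}$ projette sur un ensemble fini $K(M,\sigma)$ de $\tilde{K}$-types \emph{minimaux} (rendant l'opérateur de rang fini et localisé en $K$-type, ce qui donne la décroissance via~\eqref{eqn:K-type-majoration}), puis une pondération par $\deg(\rho^L)\cdot\bigl(\dim\pi(\sigma,\rho^L)[K(M,\sigma)]\bigr)^{-1}$ sur chaque facteur isotypique $E_P(\sigma,\rho^L)$ — cette pondération est précisément ce qui remplace l'hypothèse de multiplicité~$1$ des $\tilde{K}$-types minimaux et produit la relation de Schur~\eqref{eqn:D^Q_P} sans cette hypothèse. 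Votre affirmation selon laquelle la partition de l'unité «~remplace~» les projecteurs sur les $\tilde{K}$-types minimaux confond donc deux localisations indépendantes, toutes deux nécessaires : les $\beta^Q$ localisent en $\lambda$, tandis que $f_{M,\sigma}$ localise en $K$-type; la théorie des $K$-types minimaux n'est pas contournée, seule la multiplicité~$1$ l'est. Enfin, notez que $\varphi(M,\sigma_\lambda,r)$ n'est en général pas défini lorsque $\beta^Q(\lambda)\neq 0$ et $r\in\tilde{R}^L_\sigma$, car $(M,\sigma_\lambda,r)$ n'appartient alors pas forcément à $\tilde{T}(\tilde{G})$ (il faut $\lambda\in i\mathfrak{a}_{L(r)}^*$); le texte évalue en $\lambda_{L(r)}$ pour cette raison, et somme sur tout $R^L_\sigma$ plutôt que sur les seuls éléments réguliers, ce qui est indispensable à l'orthogonalité.
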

\begin{proof}
  Vu la Proposition \ref{prop:PW-provisoire}, on se ramène à montrer que $T_{\tilde{G}}: \widehat{\mathcal{C}}(\tilde{G}) \to \mathrm{PW}(\tilde{G})$ admet une section continue $\varphi \mapsto \Phi$. La preuve du cas non archimédien est identique à celle de \cite{Ar94-PW}. Supposons donc $F=\R$ dans ce qui suit. La preuve suivante est une variante de celle d'Arthur qui évite l'usage de multiplicité $1$ de $\tilde{K}$-types minimaux.

  Soient $M$, $\sigma$ comme ci-dessus. Comme $F=\R$, on peut écrire de façon unique $\sigma = (\sigma^1)_{\lambda(\sigma)}$ où $\sigma^1$ se factorise par $\tilde{M}^1$ et $\lambda(\sigma) \in i\mathfrak{a}_M^*$. Soit $k \in \tilde{K}$ un représentant d'un élément dans $W^G_0$. La condition $k\sigma \simeq \sigma$ équivaut à $k\sigma^1 \simeq \sigma^1$ et $k\lambda(\sigma)=\lambda(\sigma)$. Si $\lambda(\sigma) \in i\mathfrak{a}_Q^{*,+}$ avec $Q = LU \in \mathcal{F}(M)$, la condition $k\lambda(\sigma)=\lambda(\sigma)$ équivaut à $k \in \tilde{L}$. Donc $W_\sigma = W^L_{\sigma^1}$ et, pour $P$, $Q$, $r, \tilde{r}$ comme dans le Lemme \ref{prop:R-indep-lambda} (on remplace le symbole $w$ par $r$ pour souligner le rôle du $R$-groupe), on a $R_{\tilde{P}}(\tilde{r},\sigma)=R_{\tilde{P}}(\tilde{r}, \sigma^1)$ et $R_\sigma = R^L_{\sigma^1}$.

  Dorénavant, on conserve la notation $\sigma$ pour un élément de $\Pi_2(\tilde{M})$ tel que $\lambda(\sigma)=0$, autrement dit $\sigma$ se factorise par $\tilde{M}^1$. On écrit tout élément de $\Pi_2(\tilde{M})$ sous la forme $\sigma_\lambda$ avec $\lambda \in i\mathfrak{a}_M^*$.

  On fixe un ensemble fini de $\tilde{K}$-types, noté $K(M,\sigma)$, tel que
  \begin{itemize}
    \item chaque élément de $K(M,\sigma)$ est un $\tilde{K}$-type minimal (voir \cite[Chapter X]{KV95})\index[iFT2]{$\tilde{K}$-type minimal} d'un $\pi_\rho$, où $\rho \in \Pi(\tilde{R}_\sigma, \chi_\sigma)$;
    \item pour tout $\rho \in \Pi(\tilde{R}_\sigma, \chi_\sigma)$, $\pi_\rho$ contient au moins un $\tilde{K}$-type minimal appartenant à $K(M,\sigma)$;
    \item pour tout $\tilde{w} \in N_{\tilde{K}}(\widetilde{M_0})$, $K(wM, \tilde{w}\sigma)=K(M,\sigma)$.
  \end{itemize}

  Alors il existe des constantes $A, B > 0$, qui ne dépendent que de $\tilde{G}$, $\tilde{M}$ et de la norme $\|\cdot\|$ sur $\mathfrak{h}_\C$, telles que pour tout $\delta \in K(M,\sigma)$, on a
  \begin{gather}\label{eqn:K-type-majoration}
    \|\mu_\delta \| \leq A \|\mu_\sigma\| + B.
  \end{gather}
  En effet, le cas $\tilde{G}$ connexe est \cite[Theorem 10.26]{KV95}; le cas général en résulte sans peine, cf. la remarque dans \cite[p.642]{KV95}.

  On fixe $f_{M,\sigma} \in C^\infty(\tilde{K})$ qui agit comme la projection sur les $\tilde{K}$-types appartenant à $K(M,\sigma)$. Soient $P \in \mathcal{P}(M)$, $Q=LU \in \mathcal{F}(M)$ avec $Q \supset P$. Le Théorème \ref{prop:R} donne
  $$ \mathcal{I}_{\tilde{P}}(\sigma) = \bigoplus_{\rho \in \Pi(\tilde{R}_\sigma, \chi_\sigma)} \rho^\vee \boxtimes \pi_\rho = \bigoplus_{\rho^L \in \Pi(\tilde{R}^L_\sigma, \chi_\sigma)} E_P(\sigma, \rho^L) $$
  où $E_P(\sigma, \rho^L) = (\rho^L)^\vee \boxtimes \pi(\sigma, \rho^L)$ avec
  \begin{align*}
    \pi(\sigma, \rho^L) & = \bigoplus_{\rho \in \Pi(\tilde{R}_\sigma, \chi_\sigma)} \text{mult}((\rho^L)^\vee, \rho^\vee) \pi_\rho, \\
    \text{mult}((\rho^L)^\vee, \rho^\vee) & := \dim_\C \Hom_{\tilde{R}^L_\sigma}((\rho^L)^\vee, \rho^\vee).
  \end{align*}
  Autrement dit, on regroupe les termes selon les facteurs irréductibles de $\rho|_{\tilde{R}^L_\sigma}$.

  Soit $r \in \tilde{R}^L_\sigma$. On définit l'opérateur de $\mathcal{I}_{\tilde{P}}(\sigma)$:
  \begin{align*}
    D^Q_P(r, \sigma) & := |R^L_\sigma|^{-1} \sum_{\rho^L \in \Pi(\tilde{R}^L_\sigma, \chi_\sigma)} \deg(\rho^L) (\dim \pi(\sigma, \rho^L)[K(M,\sigma)])^{-1} \cdot \\
    & \cdot (\tilde{R}_{\tilde{P}}(r, \sigma) \mathcal{I}_{\tilde{P}}(\sigma, f_{M,\sigma}))|_{E_P(\sigma, \rho^L)}
  \end{align*}
  où $\pi(\sigma, \rho^L)[K(M,\sigma)]$ signifie le sous-espace de $\pi(\sigma, \rho^L)$ se transformant par des $\tilde{K}$-types dans $K(M,\sigma)$. Montrons que pour $r, r' \in \tilde{R}^L_\sigma$, on a
  \begin{gather}\label{eqn:D^Q_P}
    \Tr \left(\tilde{R}_{\tilde{P}}(r', \sigma) D^Q_P(r^{-1}, \sigma)\right) =
    \begin{cases}
      \chi_\sigma(z), & \text{si } r=r'z, z \in Z_\sigma, \\
      0, & \text{sinon}.
    \end{cases}
  \end{gather}
  En effet, $\Tr(\tilde{R}_{\tilde{P}}(r', \sigma) D^Q_P(r^{-1}, \sigma))$ est égal à $|R^L_\sigma|^{-1} \sum_{\rho^L} \deg(\rho^L) \Tr(\rho^L)^\vee(r'r^{-1})$ d'après notre choix de $f_{M,\sigma}$.

  Pour $\lambda \in i\mathfrak{a}_M^*$, on définit l'opérateur
  $$ E^Q_P(\sigma, \lambda) := \sum_{w \in W^L_0/W^M_0} \sum_{\substack{P' \in \mathcal{P}(wM) \\ P' \subset Q}} R_{\tilde{P}'|\tilde{P}}(\tilde{w}, \sigma_\lambda)^{-1} R_{\tilde{P}'|\tilde{P}}(\tilde{w}, \sigma) $$
  où $\tilde{w} \in \tilde{K} \cap \tilde{L}$  est un représentant quelconque de $w$. Cela ne dépend que de $\lambda^L$ d'après le Lemme \ref{prop:R-indep-lambda} et est une homothétie non nulle pour $\lambda^L=0$. Donc $E^Q_P(\sigma, \lambda)$ est inversible et lisse pour $\lambda^L$ dans un voisinage de $0$ dans $i\mathfrak{a}_M^{L,*}$.

  Soient $w \in W^L_0$ avec représentant $\tilde{w} \in \tilde{K} \cap \tilde{L}$, et $P' \in \mathcal{P}(w M)$ tel que $P' \subset  Q$. Montrons que
  \begin{gather}\label{eqn:E^Q_P}
    R_{\tilde{P}'|\tilde{P}}(\tilde{w}, \sigma_\lambda) E^Q_P(\sigma, \lambda) = E^Q_{P'}(\tilde{w} \sigma, w\lambda) R_{\tilde{P}'|\tilde{P}}(\tilde{w}, \sigma), \quad \lambda \in i\mathfrak{a}_M^*.
  \end{gather}
  En effet,
  \begin{align*}
    R_{\tilde{P}'|\tilde{P}}(\tilde{w}, \sigma_\lambda) E^Q_P(\sigma, \lambda) R_{\tilde{P}'|\tilde{P}}(\tilde{w}, \sigma)^{-1} & = \sum_{w_1, P_1} R_{\tilde{P}'|\tilde{P}}(\tilde{w}, \sigma_\lambda) R_{\tilde{P}_1|\tilde{P}}(\tilde{w}_1, \sigma_\lambda)^{-1} R_{\tilde{P}_1|\tilde{P}}(\tilde{w}_1, \sigma) R_{\tilde{P}'|\tilde{P}}(\tilde{w}, \sigma)^{-1} \\
    & = \sum_{w_1, P_1} R_{\tilde{P}_1|\tilde{P}'}(\tilde{w}_1 \tilde{w}^{-1}, \tilde{w} \sigma_{w\lambda})^{-1} R_{\tilde{P}_1|\tilde{P}'}(\tilde{w}_1 \tilde{w}^{-1}, \tilde{w} \sigma_\lambda)
  \end{align*}
  où on utilise la propriété $R_{\tilde{P}_1|\tilde{P}'}(\tilde{w}_1 \tilde{w}^{-1}, \tilde{w} \sigma_{w\lambda}) R_{\tilde{P}'|\tilde{P}}(\tilde{w}, \sigma_\lambda) = R_{\tilde{P}_1|\tilde{P}}(\tilde{w}_1, \sigma_\lambda)$, qui résulte de \eqref{eqn:R-transitivite}. On en déduit \eqref{eqn:E^Q_P} en remplaçant $\tilde{w}_1$ par $\tilde{w}_1 \tilde{w}$ et $M$ par $wM$.

  Pour tout $\lambda \in i\mathfrak{a}_M^*$ avec $\lambda^L$ proche de $0$, on pose
  \begin{gather}
    D^Q_P(r, \sigma, \lambda) := E^Q_P(\sigma, \lambda) D^Q_P(r,\sigma) E^Q_P(\sigma, \lambda)^{-1}, \quad r \in \tilde{R}^L_\sigma.
  \end{gather}

  Pour $P \in \mathcal{P}(M)$, $Q = LU \in \mathcal{F}(M)$, $r \in \tilde{R}^L_\sigma$, posons
  \begin{multline*}
    S^Q_P(r, \sigma, \lambda) = |W^G_0|^{-1} \sum_{w \in W^G_0} |\{P' \in \mathcal{P}(wM) : P' \subset wQ\}|^{-1}  \\
    \sum_{\substack{P' \in \mathcal{P}(wM) \\ P' \subset wQ}} R_{\tilde{P}'|\tilde{P}}(\tilde{w}, \sigma_\lambda)^{-1} D^{wQ}_{P'}(wrw^{-1}, \tilde{w}\sigma, w\lambda) R_{\tilde{P}'|\tilde{P}}(\tilde{w},\sigma_\lambda)
  \end{multline*}
  où $\tilde{w} \in \tilde{K}$ est un représentant de $w$. Cet opérateur est bien défini et lisse en $\lambda$ pour $\lambda^L$ proche de $0$. Pour $w \in W^G_0$, $\tilde{w} \in \tilde{K}$ un représentant et $P' \in \mathcal{P}(wM)$, l'argument pour \eqref{eqn:E^Q_P} donne
  \begin{gather}\label{eqn:S^Q_P-symetrie}
    S^{wQ}_{P'}(wrw^{-1}, \tilde{w}\sigma, w\lambda) = R_{\tilde{P}'|\tilde{P}}(\tilde{w}, \sigma_\lambda) S^Q_P(r,\sigma,\lambda)  R_{\tilde{P}'|\tilde{P}}(\tilde{w}, \sigma_\lambda)^{-1}.
  \end{gather}

  On choisit des fonctions $\beta^Q$ sur $i\mathfrak{a}_M^*$ fournies par le Lemme \ref{prop:comb-M} (et aussi par la Remarque \ref{rem:comb}) adaptées aux voisinages où les opérateurs $S^Q_P(r,\sigma, \lambda)$ sont bien définis. On peut aussi supposer que $\beta^{wQ}(w\lambda)=\beta^Q(\lambda)$ pour tout $Q$ et tout $w \in W^G_0$.

  Soit $(M,\sigma_\lambda, r) \mapsto \varphi(M,\sigma, \lambda,r) = \varphi(M,\sigma_\lambda,r)$ une fonction dans $\text{PW}(\tilde{G})$. Pour $r \in \tilde{R}_\sigma$, on note $L(r)$ le Lévi contenant $M$ tel que $r \in \tilde{R}^{L(r)}_{\sigma,\text{reg}}$. Étant donnés $M$, $\sigma_\lambda$ et $P \in \mathcal{P}(M)$, on définit l'opérateur
  $$ \Phi_{\tilde{P}}(\sigma_\lambda) = \sum_{Q=LU \in \mathcal{F}(M)} \beta^Q(\lambda) \sum_{r \in R^L_\sigma} S^Q_P(r^{-1}, \sigma, \lambda) \varphi(M, \sigma, \lambda_{L(r)}, r). $$

  En fait, il faut prendre un relèvement de $r$ dans $\tilde{R}^L_\sigma$ dans la somme. Vu l'équivariance de $S^Q_P$ et $\varphi$ sous $Z_\sigma$, cela n'affecte pas la définition.

  Montrons que $\Phi := (\Phi_{\tilde{P}})_{P \in \mathcal{F}(M_0)}$ appartient à $\mathcal{C}(\tilde{G})$. La lissité en $\lambda$ est évidente. Pour $w \in W^G_0$ avec représentant $\tilde{w} \in \tilde{K}$ et $P' \in \mathcal{P}(wM)$, on vérifie
  $$ \Phi_{\tilde{P}'}(\tilde{w}\sigma_{w\lambda}) = R_{\tilde{P}'|\tilde{P}}(\tilde{w},\sigma_\lambda) \Phi_{\tilde{P}}(\sigma_\lambda)  R_{\tilde{P}'|\tilde{P}}(\tilde{w},\sigma_\lambda)^{-1} $$
  à l'aide de \eqref{eqn:S^Q_P-symetrie}. Cela permet de vérifier la condition de symétrie car $\beta^{wQ}(w\lambda)=\beta^Q(\lambda)$ pour tout $Q$.

  La condition de croissance résulte des faits suivants.
  \begin{itemize}
    \item Pour tout $Q \in \mathcal{F}(M)$, toute dérivée de $\beta^Q$ est bornée.
    \item La condition de croissance satisfaite par $\varphi$.
    \item Les coefficients matriciels $\tilde{K}$-finis des opérateurs d'entrelacement normalisés sont à croissance modérée (cf. (\textbf{R6}) dans la Définition \ref{def:normalisant}).
    \item La majoration \eqref{eqn:K-type-majoration}.
  \end{itemize}
  Vu les définitions des semi-normes définissant les espaces de Fréchet en question, ces arguments impliquent aussi la continuité de l'application $\varphi \mapsto \Phi$.

  Il reste à montrer que $\varphi \mapsto \Phi$ est une section de $T_{\tilde{G}}$. Soient $P \in \mathcal{P}(M)$ et $r' \in \tilde{R}_{\sigma_\lambda}$ Calculons d'abord $\Tr(\tilde{R}_{\tilde{P}}(r', \sigma_\lambda) \Phi_{\tilde{P}}(\sigma_\lambda))$. L'hypothèse sur $r'$ implique $\lambda \in i\mathfrak{a}_{L(r')}^*$, donc $\lambda \in i\mathfrak{a}_{Q(\lambda)}^{*,+}$ avec $Q(\lambda)=L(\lambda)U(\lambda)$ et $L(r') \subset L(\lambda)$. Pour $Q=LU \in \mathcal{F}(M)$, si $\beta^Q(\lambda) \neq 0$ alors $Q(\lambda) \subset Q$, donc $L(r') \subset L(\lambda) \subset L$. Quitte à changer $P$, ce qui est loisible d'après la symétrie de $\Phi$, le Lemme \ref{prop:R-indep-lambda} affirme que $r' \in \tilde{R}^{L(r')}_{\sigma_\lambda} = \tilde{R}^{L(r')}_\sigma \subset \tilde{R}^L_\sigma$.

  Les égalités \eqref{eqn:D^Q_P}, \eqref{eqn:E^Q_P} et la définition de $S^Q_P(r^{-1}, \sigma, \lambda)$ entraînent que pour tout $r \in \tilde{R}^L_{\sigma}$, on a
  \begin{gather}\label{eqn:S^Q_P-tr}
    \Tr \left(\tilde{R}_{\tilde{P}}(r', \sigma_\lambda) S^Q_P(r^{-1}, \sigma, \lambda)\right) =
    \begin{cases}
       \chi_\sigma(z), & \text{si } r=r'z, z \in Z_\sigma, \\
       0, & \text{sinon}.
    \end{cases}
  \end{gather}
  Donc
  \begin{align*}
    \sum_{r \in R^L_\sigma} \Tr \left( \tilde{R}_{\tilde{P}}(r', \sigma_\lambda) S^Q_P(r^{-1}, \sigma, \lambda) \right) \varphi(M,\sigma,\lambda_{L(r')}, r) = \varphi(M, \sigma, \lambda_{L(r)}, r') = \varphi(M, \sigma, \lambda, r')
  \end{align*}
  car $\lambda \in i\mathfrak{a}_{L(r')}^*$. D'où

  $$ \Tr\left(\tilde{R}_{\tilde{P}}(r', \sigma_\lambda) \Phi_{\tilde{P}}(\sigma_\lambda)\right) = \sum_{Q \in \mathcal{F}(M)} \beta^Q(\lambda) \varphi(M, \sigma, \lambda, r') = \varphi(M,\sigma,\lambda,r'). $$

 Ce qu'il fallait démontrer.
\end{proof}

\subsection{Caractères pondérés tempérés}
\paragraph{Caractères pondérés tempérés de $\tilde{M}$}
Fixons des familles de facteurs normalisants faibles. Soient $M \in \mathcal{L}(M_0)$ et $\pi$ une représentation admissible irréductible de $\tilde{M}$ dont la $X(\tilde{M})$-orbite contient une représentation unitaire spécifique. Soit $P \in \mathcal{P}(M)$, introduisons les $(G,M)$-familles suivantes des opérateurs méromorphes en $\pi$ (sur la $X(\tilde{M})$-orbite en question)
\begin{align*}
  \mathcal{J}_{\tilde{Q}}(\Lambda, \pi, \tilde{P}) & := J_{\tilde{Q}|\tilde{P}}(\pi)^{-1} J_{\tilde{Q}|\tilde{P}}(\pi_\Lambda), \\
  \mathcal{R}_{\tilde{Q}}(\Lambda, \pi, \tilde{P}) & := R_{\tilde{Q}|\tilde{P}}(\pi)^{-1} R_{\tilde{Q}|\tilde{P}}(\pi_\Lambda), \quad Q \in \mathcal{P}(M), \; \Lambda \in \mathfrak{a}_M^* .
\end{align*}

Alors $\mathcal{R}_{\tilde{M}}(\pi, \tilde{P})$ est régulier si $\pi$ est unitaire, d'après (\textbf{R2}). Définissons le caractère pondéré
$$ J_{\tilde{M}}^r(\pi, f) := \Tr(\mathcal{R}_{\tilde{M}}(\pi, \tilde{P}) \mathcal{I}_{\tilde{P}}(\pi, f)), \quad f \in \mathcal{H}_{\asp}(\tilde{G}). $$
C'est facile de voir qu'elle ne dépend pas du choix de $P$ à l'aide de (\textbf{R1}). C'est une distribution tempérée en $f$ si $\pi$ est tempérée (voir \cite[p.175]{Ar94}).

Posons aussi $\mu_{\tilde{P}|\tilde{Q}}(\pi) := \prod_\alpha \mu_\alpha(\pi)$ où $\alpha$ parcourt $\Sigma_Q^\text{red} \cap \Sigma_{\bar{P}}^\text{red}$, et les $(G,M)$-familles
\begin{align*}
  \mu_{\tilde{Q}}(\Lambda, \pi, \tilde{P}) & := \mu_{\tilde{Q}|\tilde{P}}(\pi)^{-1} \mu_{\tilde{Q}|\tilde{P}}(\pi_{\frac{1}{2}\Lambda}), \\
  \mathcal{M}_{\tilde{Q}}(\Lambda, \pi, \tilde{P}) & := \mu_{\tilde{Q}}(\Lambda, \pi, \tilde{P}) \mathcal{J}_{\tilde{Q}}(\Lambda, \pi, \tilde{P}).
\end{align*}

Le caractère pondéré canoniquement normalisé est défini comme\index[iFT2]{$J_{\tilde{M}}(\pi, \cdot)$}
$$ J_{\tilde{M}}(\pi, f) := \Tr(\mathcal{M}_{\tilde{M}}(\pi, \tilde{P}) \mathcal{I}_{\tilde{P}}(\pi, f)), \quad f \in \mathcal{H}_{\asp}(\tilde{G}). $$

Comme dans le cas de groupes réductifs \cite{Ar98}, on utilise les propriétés des fonctions $\mu$ et des facteurs normalisants pour démontrer que
\begin{itemize}
  \item $J_{\tilde{M}}(\pi, \cdot)$ ne dépend pas du choix de $P$, cf. ci-dessous;
  \item $J_{\tilde{M}}(\pi, f)$ est régulier si $\pi$ est unitaire;
  \item définissons la $(G,M)$-famille
    $$ r_{\tilde{Q}}(\Lambda, \pi) := r_{\widetilde{\bar{Q}}|\widetilde{Q}}(\pi)^{-1} r_{\widetilde{\bar{Q}}|\widetilde{Q}}(\pi_{\frac{1}{2}\Lambda}), \quad Q \in \mathcal{P}(M), $$
    d'où les termes $r^{\tilde{R}}_{\tilde{M}}(\pi)$ définis comme dans \cite[\S 4]{Li10a}, où $R \in \mathcal{F}(M)$; on montre que $r^{\tilde{R}}_{\tilde{M}}(\pi)$ ne dépend que de $\pi$ et de la composante de Lévi de $R$, donc c'est loisible d'écrire $r^{\tilde{L}}_{\tilde{M}}(\pi)$ où $L \in \mathcal{L}(M)$;
  \item on a l'identité
    $$ J_{\tilde{M}}(\pi, f) = \sum_{L \in \mathcal{L}(M)} r^{\tilde{L}}_{\tilde{M}}(\pi) J_{\tilde{L}}^r(\pi^L, f) $$
    où $\pi$ est en position générale de sorte que $\pi^L := \mathcal{I}^{\tilde{L}}_{\tilde{M}}(\pi)$ est irréductible;
  \item $r^{\tilde{L}}_{\tilde{M}}(\pi)$ est régulier si $\pi$ est unitaire, pour tout $L \in \mathcal{L}(M)$.
\end{itemize}

Par exemple, prouvons l'indépendance de $P$ de $J_{\tilde{M}}(\pi, \cdot)$. On peut supposer $\pi$ unitaire. Vu la propriété
$$ J_{\tilde{M}}(\pi, f) = \sum_{L \in \mathcal{L}(M)} r^{\tilde{L}}_{\tilde{M}}(\pi) J_{\tilde{L}}^r(\pi^L, f),$$
on conclut en utilisant le résultat suivant, ce qui sera utile plus tard.

\begin{lemma}\label{prop:indep-P-1copie}
  Soient $P, P' \in \mathcal{P}(M)$. On a
  $$ \mathcal{R}_{\tilde{M}}(\pi, \tilde{P}) = R_{\tilde{P}'|\tilde{P}}(\pi)^{-1} \mathcal{R}_{\tilde{M}}(\pi, \tilde{P}') R_{\tilde{P}'|\tilde{P}}(\pi).$$

  Par conséquent, la distribution $J^r_{\tilde{M}}(\pi, \cdot)$ est indépendante du choix de $P$.
\end{lemma}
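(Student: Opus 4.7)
The plan is to reduce the identity to a formal manipulation of $(G,M)$-families together with the cocycle relation \textbf{(R1)} for normalized intertwining operators.

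First I would use \textbf{(R1)}: for any $Q \in \mathcal{P}(M)$ we have
$R_{\tilde{Q}|\tilde{P}}(\pi) = R_{\tilde{Q}|\tilde{P}'}(\pi) R_{\tilde{P}'|\tilde{P}}(\pi)$
and likewise with $\pi_\Lambda$ in place of $\pi$. Substituting into the definition of $\mathcal{R}_{\tilde{Q}}(\Lambda,\pi,\tilde{P})$ yields, after regrouping,
\begin{equation*}
\mathcal{R}_{\tilde{Q}}(\Lambda,\pi,\tilde{P}) = \bigl[R_{\tilde{P}'|\tilde{P}}(\pi)^{-1}\,\mathcal{R}_{\tilde{Q}}(\Lambda,\pi,\tilde{P}')\,R_{\tilde{P}'|\tilde{P}}(\pi)\bigr]\cdot\bigl[R_{\tilde{P}'|\tilde{P}}(\pi)^{-1} R_{\tilde{P}'|\tilde{P}}(\pi_\Lambda)\bigr].
\end{equation*}
The factor in the first bracket, say $\tilde{c}_Q(\Lambda)$, is the conjugate of the $(G,M)$-family $\mathcal{R}_{\tilde{Q}}(\Lambda,\pi,\tilde{P}')$ by a constant operator, hence remains a $(G,M)$-family; its $M$-term is $\tilde{c}_M = R_{\tilde{P}'|\tilde{P}}(\pi)^{-1}\,\mathcal{R}_{\tilde{M}}(\pi,\tilde{P}')\,R_{\tilde{P}'|\tilde{P}}(\pi)$. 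The factor in the second bracket, $d(\Lambda) := R_{\tilde{P}'|\tilde{P}}(\pi)^{-1} R_{\tilde{P}'|\tilde{P}}(\pi_\Lambda)$, does not depend on $Q$ and, thanks to \textbf{(R2)} applied to the unitary $\pi$, is regular at $\Lambda=0$ with $d(0)=\identity$.

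Next I would pass to the $(G,M)$-family limit. Since $d(\Lambda)$ is independent of $Q$, it factors out of the combinatorial sum:
\begin{equation*}
\sum_{Q \in \mathcal{P}(M)} \mathcal{R}_{\tilde{Q}}(\Lambda,\pi,\tilde{P})\,\theta_Q(\Lambda)^{-1} = \Bigl(\sum_{Q \in \mathcal{P}(M)} \tilde{c}_Q(\Lambda)\,\theta_Q(\Lambda)^{-1}\Bigr)\, d(\Lambda).
\end{equation*}
Taking $\Lambda \to 0$, the parenthesis converges to $\tilde{c}_M$ and $d(\Lambda) \to \identity$, giving the announced identity
\begin{equation*}
\mathcal{R}_{\tilde{M}}(\pi,\tilde{P}) = R_{\tilde{P}'|\tilde{P}}(\pi)^{-1}\,\mathcal{R}_{\tilde{M}}(\pi,\tilde{P}')\,R_{\tilde{P}'|\tilde{P}}(\pi).
\end{equation*}

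Finally, the independence of $J^r_{\tilde{M}}(\pi,\cdot)$ on $P$ follows from cyclicity of the trace together with the intertwining property $R_{\tilde{P}'|\tilde{P}}(\pi)\,\mathcal{I}_{\tilde{P}}(\pi,f) = \mathcal{I}_{\tilde{P}'}(\pi,f)\,R_{\tilde{P}'|\tilde{P}}(\pi)$ (which is just the definition of a normalized intertwining operator applied to $f$). Combining this with the conjugation formula above immediately gives $\Tr(\mathcal{R}_{\tilde{M}}(\pi,\tilde{P})\mathcal{I}_{\tilde{P}}(\pi,f)) = \Tr(\mathcal{R}_{\tilde{M}}(\pi,\tilde{P}')\mathcal{I}_{\tilde{P}'}(\pi,f))$. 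There is no serious obstacle here; the only point requiring care is the justification that $d(\Lambda)$ is regular at $\Lambda=0$, which is exactly the content of \textbf{(R2)} for tempered (in particular unitary) $\pi$.
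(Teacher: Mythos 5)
Your proof is correct and follows essentially the same route as the paper: both start from the cocycle relation \textbf{(R1)} applied to $\mathcal{R}_{\tilde{Q}}(\Lambda,\pi,\tilde{P})$, observe that the $\Lambda$-dependent factor $R_{\tilde{P}'|\tilde{P}}(\pi_\Lambda)$ is independent of $Q$ and hence pulls out of the $(G,M)$-family limit, and conclude via cyclicity of the trace and the intertwining property of $R_{\tilde{P}'|\tilde{P}}(\pi)$. The only cosmetic difference is that you insert $R_{\tilde{P}'|\tilde{P}}(\pi)R_{\tilde{P}'|\tilde{P}}(\pi)^{-1}$ to display the conjugated family $\tilde{c}_Q$ and the correction $d(\Lambda)$ as two separate factors, and you explicitly flag \textbf{(R2)} as the reason $d(\Lambda)\to\identity$ — both sound moves, matching what the paper leaves implicit.
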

\begin{proof}
  Fixons $P \in \mathcal{P}(M)$. Soit $f \in \mathcal{H}_{\asp}(\tilde{G})$. Pour $P' \in \mathcal{P}(M)$, on a
  \begin{align*}
    \mathcal{R}_{\tilde{Q}}(\Lambda, \pi, \tilde{P}) &= (R_{\tilde{Q}|\tilde{P}'}(\pi) R_{\tilde{P}'|\tilde{P}}(\pi))^{-1} R_{\tilde{Q}|\tilde{P}'}(\pi_\Lambda) R_{\tilde{P}'|\tilde{P}}(\pi_\Lambda) \\
    & = R_{\tilde{P}'|\tilde{P}}(\pi)^{-1} \mathcal{R}_{\tilde{Q}}(\Lambda, \pi, \tilde{P}') R_{\tilde{P}'|\tilde{P}}(\pi_\Lambda).
  \end{align*}
  Comme
  $$ \mathcal{R}_{\tilde{M}}(\pi, \tilde{P}) = \lim_{\Lambda \to 0} \sum_{Q \in \mathcal{P}(M)} \mathcal{R}_{\tilde{Q}}(\Lambda, \pi, \tilde{P}) \theta_Q(\Lambda)^{-1} $$
  (resp. $P'$ au lieu de $P$), cela prouve que $\mathcal{R}_{\tilde{M}}(\pi, \tilde{P}) = R_{\tilde{P}'|\tilde{P}}(\pi)^{-1} \mathcal{R}_{\tilde{M}}(\pi, \tilde{P}') R_{\tilde{P}'|\tilde{P}}(\pi)$. Il en résulte que
  \begin{align*}
    \mathcal{R}_{\tilde{M}}(\pi, \tilde{P}) \mathcal{I}_{\tilde{P}}(\pi, f) & = R_{\tilde{P}'|\tilde{P}}(\pi)^{-1} \mathcal{R}_{\tilde{M}}(\pi, \tilde{P}') R_{\tilde{P}'|\tilde{P}}(\pi) \mathcal{I}_{\tilde{P}}(\pi, f) \\
    & = R_{\tilde{P}'|\tilde{P}}(\pi)^{-1} \mathcal{R}_{\tilde{M}}(\pi, \tilde{P}') \mathcal{I}_{\tilde{P}'}(\pi, f) R_{\tilde{P}'|\tilde{P}}(\pi).
  \end{align*}
  Donc $J^r_{\tilde{M}}(\pi, f) = \Tr(\mathcal{R}_{\tilde{M}}(\pi, \tilde{P}) \mathcal{I}_{\tilde{P}}(\pi, f)) = \Tr( \mathcal{R}_{\tilde{M}}(\pi, \tilde{P}') \mathcal{I}_{\tilde{P}'}(\pi, f))$.
\end{proof}

Soient $M \in \mathcal{L}(M_0)$ et $f \in \mathcal{C}_{\asp}(\tilde{G})$, définissons les fonctions sur $\Pi_{\text{temp},-}(\tilde{G})$
\begin{align*}
  \phi^r_{\tilde{M}}(f, \pi) & := J_{\tilde{M}}^r(\pi, f), \quad \pi \in \Pi_{\text{temp},-}(\tilde{G}), \\
  \phi_{\tilde{M}}(f, \pi) & := J_{\tilde{M}}(\pi, f), \quad \pi \in \Pi_{\text{temp},-}(\tilde{G}).
\end{align*}
Alors on a
\begin{gather}\label{eqn:phi^r-phi}
  \phi_{\tilde{M}}(f, \pi) = \sum_{L \in \mathcal{L}(M)} r^{\tilde{L}}_{\tilde{M}}(\pi) \phi^r_{\tilde{L}}(f, \pi^L).
\end{gather}
Ici, il se peut que $\pi^L$ soit réductible; dans ce cas-là on définit $\phi^r_{\tilde{L}}(f, \pi^L)$ comme une somme de $\phi^r_{\tilde{L}}(f, \cdot)$ évalué en les constituants irréductibles (cf. \cite[\S 3]{Ar98}). Remarquons aussi que $\phi_{\tilde{G}}(f) = f_{\tilde{G}}$. Le fait suivant est crucial.

\begin{theorem}[Cf. {\cite[p.179]{Ar94}} et {\cite[Corollary 9.2]{Ar81}}]\label{prop:phi-local}\index[iFT2]{$\phi_{\tilde{M}}$}
  L'application $\phi_{\tilde{M}}^r$ (resp. $\phi_{\tilde{M}}$) induit une application linéaire continue $\mathcal{C}_{\asp}(\tilde{G}) \to I\mathcal{C}_{\asp}(\tilde{M})$.
\end{theorem}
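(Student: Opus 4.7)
Le plan est de démontrer d'abord l'énoncé pour $\phi^r_{\tilde{M}}$, puis d'en déduire celui pour $\phi_{\tilde{M}}$ grâce à la formule \eqref{eqn:phi^r-phi} et à la régularité des fonctions $r^{\tilde{L}}_{\tilde{M}}(\pi)$. Pour $\phi^r_{\tilde{M}}$, il suffit, d'après le Théorème \ref{prop:PW-invariant}, de vérifier que $\phi^r_{\tilde{M}}(f, \cdot)$ s'interprète comme un élément de $\mathrm{PW}(\tilde{M})$ lorsqu'on la regarde comme une fonction sur $T_{\asp}(\tilde{M})$ (via la Remarque \ref{rem:changement-parametres}), et que $f \mapsto \phi^r_{\tilde{M}}(f, \cdot)$ est continue pour les topologies définies dans \S\ref{sec:Plancherel}. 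Soit donc $\tau = (L_1, \sigma, r) \in T_{\asp}(\tilde{M})$ avec $L_1 \in \mathcal{L}^M(M_0)$, $\sigma \in \Pi_{2,\asp}(\tilde{L}_1)$, et $r \in \tilde{R}^M_\sigma$; en vertu du Théorème \ref{prop:R} appliqué à $\tilde{M}$, la valeur en $\tau$ vaut, pour tout $\rho \in \Pi(\tilde{R}^M_\sigma, \chi_\sigma)$, un poids de la forme $\Tr\rho^\vee(r)$ fois $J^r_{\tilde{M}}(\pi_\rho, f)$ sommés sur $\rho$, où $\pi_\rho \in \Pi_\sigma(\tilde{M})$.

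Je vérifierais les conditions de $\mathrm{PW}(\tilde{M})$ une par une. La lissité en $\lambda \in i\mathfrak{a}^M_{L_1,\ast}$ (lorsque l'on remplace $\sigma$ par $\sigma \otimes \chi$) résulte de l'unitarité des opérateurs normalisés $R_{\tilde{Q}'|\tilde{Q}}(\sigma)$ pour $\sigma$ unitaire (\textbf{R2}), qui implique la lissité de $\mathcal{R}_{\tilde{M}}(\sigma_\lambda, \tilde{P})$, combinée à la lissité de $\mathcal{I}_{\tilde{P}}(\sigma_\lambda, f)$ déduite du fait que $f \in \mathcal{C}_{\asp}(\tilde{G})$. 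L'équivariance sous $Z_\sigma$ provient directement de la propriété $\tilde{R}_{\tilde{P}}(z r, \sigma) = \chi_\sigma(z)^{-1} \tilde{R}_{\tilde{P}}(r, \sigma)$. La symétrie sous $W^M_0$ s'obtient en utilisant le transport de structure (\textbf{R3}) des opérateurs d'entrelacement et la transitivité \eqref{eqn:R-transitivite} des facteurs $R_{\tilde{P}'|\tilde{P}}(\tilde{w}, \sigma)$, exactement comme dans \cite[\S 6]{Ar94} pour les groupes réductifs.

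L'obstacle principal sera la condition de croissance. Dans le cas non archimédien, $f \in \mathcal{C}_{\asp}(\tilde{G}) = \bigcup_H \mathcal{C}_{\asp,H}(\tilde{G})$ est bi-invariante par un sous-groupe ouvert compact $H \subset \tilde{G}$, d'où l'annulation de $\mathcal{I}_{\tilde{P}}(\sigma, f)$ hors du support des $\sigma$ admettant des vecteurs $H$-invariants après induction parabolique; cela donne le support compact de $\phi^r_{\tilde{M}}(f, \cdot)$ sur $T_{\asp}(\tilde{M})$ par un argument de Bernstein standard. Dans le cas archimédien, il faut contrôler, pour tout opérateur différentiel invariant $D$ sur $i\mathfrak{a}_{L_1}^*$ et tout $n \geq 1$, l'expression $\sup_\sigma |D \phi^r_{\tilde{M}}(f, \tau)|(1 + \|\mu_\sigma\|)^n$. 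Pour cela, on écrit $\mathcal{R}_{\tilde{M}}(\pi, \tilde{P}) \mathcal{I}_{\tilde{P}}(\pi, f)$ selon la décomposition en $\tilde{K}$-types, et l'on applique la décroissance rapide en $\delta$ et $\mu_\pi$ des coefficients de $\mathcal{I}_{\tilde{P}}(\pi, f)$ (Théorème \ref{prop:PW-op}) combinée à la croissance modérée en $\mu_\pi$ des coefficients $\tilde{K}$-finis de $\mathcal{R}_{\tilde{M}}(\pi, \tilde{P})$, cette dernière résultant de (\textbf{R6}) et (\textbf{R8}) ainsi que du développement en $(G,M)$-famille; c'est le cœur de l'argument d'Arthur \cite[pp.178--180]{Ar94} dont la majoration s'adapte sans modification substantielle aux revêtements puisque la Définition \ref{def:normalisant} porte les mêmes bornes asymptotiques. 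La continuité de $f \mapsto \phi^r_{\tilde{M}}(f, \cdot)$ se déduit de ces majorations en comparant les semi-normes en jeu.

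Finalement, l'énoncé pour $\phi_{\tilde{M}}$ découle de \eqref{eqn:phi^r-phi}. En effet, la fonction $\pi \mapsto \phi^r_{\tilde{L}}(f, \pi^L)$ sur $\Pi_{\text{temp},-}(\tilde{M})$ est obtenue de $\phi^r_{\tilde{L}}(f, \cdot) \in I\mathcal{C}_{\asp}(\tilde{L})$ par l'opération de descente duale à $g \mapsto g_{\tilde{Q}}$ (pour $Q \in \mathcal{P}^L(M)$), qui préserve $I\mathcal{C}_{\asp}$ et est continue (comme vu dans la discussion précédant la Définition \ref{prop:cuspidal}). Les fonctions $\pi \mapsto r^{\tilde{L}}_{\tilde{M}}(\pi)$ sont régulières et à dérivées à croissance modérée sur les orbites unitaires (conséquence de la régularité des fonctions $\mu$ et des propriétés (\textbf{R2}), (\textbf{R8}) des facteurs normalisants), ce qui entraîne que la multiplication par $r^{\tilde{L}}_{\tilde{M}}$ préserve $\mathrm{PW}(\tilde{M}) = I\mathcal{C}_{\asp}(\tilde{M})$ et y est continue. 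En sommant sur $L \in \mathcal{L}(M)$, on obtient la continuité de $\phi_{\tilde{M}}: \mathcal{C}_{\asp}(\tilde{G}) \to I\mathcal{C}_{\asp}(\tilde{M})$.
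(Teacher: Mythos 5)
Votre preuve suit essentiellement la même route que le texte : pour $\phi^r_{\tilde{M}}$ on s'appuie sur le Théorème \ref{prop:PW-invariant} pour identifier $I\mathcal{C}_{\asp}(\tilde{M})$ avec $\mathrm{PW}(\tilde{M})$ et on reprend les majorations d'Arthur, puis pour $\phi_{\tilde{M}}$ on utilise \eqref{eqn:phi^r-phi} et le contrôle des $r^{\tilde{L}}_{\tilde{M}}(\pi_\lambda)$ via (\textbf{R8}). Vous remplissez plus de détails que la preuve du texte, qui est surtout une citation d'Arthur ; seule imprécision mineure : la lissité en $\lambda$ de $\mathcal{R}_{\tilde{M}}(\sigma_\lambda, \tilde{P})$ ne résulte pas de l'unitarité seule (qui donne seulement la régularité, i.e.\ l'absence de pôles) mais de la rationalité/analyticité garantie par (\textbf{R6}).
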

\begin{proof}
  Pour l'application $\phi_{\tilde{M}}^r$, l'outil essentiel dans l'argument d'Arthur est le théorème de Paley-Wiener invariant caractérisant $I\mathcal{C}(\tilde{M})$, qui est déjà établi pour les revêtements. Pour $\phi_{\tilde{M}}$, vu \eqref{eqn:phi^r-phi}, il suffit de majorer $r^{\tilde{L}}_{\tilde{M}}(\pi_\lambda)$ et ses dérivées en $\lambda$. Cela découle de (\textbf{R8}), cf. \cite[Lemma 3.1]{Ar98}.
\end{proof}

\paragraph{Les distributions dans la formule des traces locale}
Maintenant, plaçons-nous dans le formalisme de la formule des traces locale. Les distributions en question vivent sur $\tilde{G} \times \tilde{G}$. Nous fixons une famille de facteurs normalisants faibles $r^\vee$ (resp. $r$) dans la première (resp. la deuxième) copie de $\tilde{G}$, telles que $r^\vee$ et $r$ sont complémentaires (cf. la Définition \ref{def:normalisant-faible}). Néanmoins on utilise la même lettre $R$ (resp. $J_{\tilde{M}}^r$) pour désigner les opérateurs d'entrelacement normalisés (resp. les caractères pondérés normalisés) dans chaque copie, puisqu'il n'y aura aucune confusion à craindre.

Prenons $\pi = \pi_1^\vee \boxtimes \pi_2 \in \Pi_{\text{unit},\asp}(\tilde{M}) \times \Pi_{\text{unit},-}(\tilde{M})$. Rappelons que le caractère pondéré non normalisé $J_{\tilde{M}}(\pi, \cdot)$ est défini à l'aide de la $(G,M)$-famille $\{\mathcal{J}_{\tilde{Q}}(\Lambda, \pi, \tilde{P}) : Q \in \mathcal{P}(M)\}$. Définissons ses avatars en posant
\begin{align*}
  R_{\tilde{Q}|\tilde{P}}(\pi) & := R_{\widetilde{\bar{Q}}|\widetilde{P}}(\pi_1^\vee) \boxtimes R_{\tilde{Q}|\tilde{P}}(\pi_2), \\
  \mathcal{R}_{\tilde{Q}}(\Lambda, \pi, \tilde{P}) & := R_{\tilde{Q}|\tilde{P}}(\pi)^{-1} R_{\tilde{Q}|\tilde{P}}(\pi_\Lambda), \\
  J_{\tilde{M}}^r(\pi, f) & := \Tr(\mathcal{R}_{\tilde{M}}(\pi, \tilde{P}) \mathcal{I}_{\tilde{P}}(\pi, f));
\end{align*}
et
\begin{align*}
  \mu_{\tilde{Q}}(\Lambda, \pi, \tilde{P}) & := \mu_{\widetilde{\bar{Q}}}(\Lambda, \pi_1^\vee, \tilde{P}) \mu_{\widetilde{Q}}(\Lambda, \pi_2, \tilde{P}), \\
  \mathcal{M}_{\tilde{Q}}(\Lambda, \pi, \tilde{P}) & := \mu_{\tilde{Q}}(\Lambda, \pi, \tilde{P}) \mathcal{J}_{\tilde{Q}}(\Lambda, \pi, \tilde{P}), \\
  J_{\tilde{M}}^\mu(\pi, f) & := \Tr(\mathcal{M}_{\tilde{M}}(\pi, \tilde{P}) \mathcal{I}_{\tilde{P}}(\pi, f));
\end{align*}
pour $f = f_1 f_2$ avec $f_1 \in \mathcal{H}_-(\tilde{G})$, $f_2 \in \mathcal{H}_{\asp}(\tilde{G})$.

Supposons désormais qu'il existe $\Lambda \in \mathfrak{a}_{M,\C}^*$, $M_1 \in \mathcal{L}^M(M_0)$ et $\sigma \in \Pi_{2,-}(\widetilde{M_1})$ avec $\pi_{1,\Lambda}, \pi_{2,\Lambda} \in \Pi_\sigma(\tilde{M})$. Cela inclut les représentations qui interviennent dans la formule des traces locale.

\begin{lemma}\label{prop:R=J=M}
  On a $\mathcal{R}_{\tilde{M}}(\pi, \tilde{P}) = \mathcal{J}_{\tilde{M}}(\pi, \tilde{P}) = \mathcal{M}_{\tilde{M}}(\pi, \tilde{P})$, d'où $J_{\tilde{M}}^r(\pi, \cdot) = J_{\tilde{M}}(\pi, \cdot) = J_{\tilde{M}}^\mu(\pi, \cdot)$.
\end{lemma}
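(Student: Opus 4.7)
The plan is to realise $\mathcal{R}_{\tilde{Q}}$ and $\mathcal{M}_{\tilde{Q}}$ as scalar multiples of $\mathcal{J}_{\tilde{Q}}$ and invoke a straightforward property of $(G,M)$-families. Explicitly, writing $R=r^{-1}J$ in both slots and using the complementarity $r^\vee_{\tilde{P}'|\tilde{P}}=r_{\tilde{P}|\tilde{P}'}$, one gets
\[
\mathcal{R}_{\tilde{Q}}(\Lambda,\pi,\tilde{P}) \;=\; a_Q(\Lambda)\,\mathcal{J}_{\tilde{Q}}(\Lambda,\pi,\tilde{P}), \qquad a_Q(\Lambda) := \frac{\tilde{r}_{\tilde{Q}|\tilde{P}}(\pi)}{\tilde{r}_{\tilde{Q}|\tilde{P}}(\pi_\Lambda)},
\]
with $\tilde{r}_{\tilde{Q}|\tilde{P}}(\pi):=r^\vee_{\widetilde{\bar{Q}}|\widetilde{P}}(\pi_1^\vee)\,r_{\tilde{Q}|\tilde{P}}(\pi_2)$, and by construction
\[
\mathcal{M}_{\tilde{Q}}(\Lambda,\pi,\tilde{P}) \;=\; b_Q(\Lambda)\,\mathcal{J}_{\tilde{Q}}(\Lambda,\pi,\tilde{P}), \qquad b_Q(\Lambda) := \frac{\tilde{\mu}_{\tilde{Q}|\tilde{P}}(\pi_{\Lambda/2})}{\tilde{\mu}_{\tilde{Q}|\tilde{P}}(\pi)},
\]
with $\tilde{\mu}_{\tilde{Q}|\tilde{P}}(\pi):=\mu_{\widetilde{\bar{Q}}|\widetilde{P}}(\pi_1^\vee)\,\mu_{\tilde{Q}|\tilde{P}}(\pi_2)$. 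If the scalar families $a_Q$ and $b_Q$ turn out to be \emph{independent of $Q\in\mathcal{P}(M)$}, then the elementary identity $(c d)_{\tilde{M}}=c(0)\,d_{\tilde{M}}$ for a scalar $(G,M)$-family $c$ constant in $Q$ (which follows immediately from $c_Q(\Lambda)\equiv c(\Lambda)$ factoring out of $\lim_{\Lambda\to 0}\sum_Q c_Q(\Lambda)\mathcal{J}_{\tilde{Q}}(\Lambda)\theta_Q(\Lambda)^{-1}$) gives $\mathcal{R}_{\tilde{M}}=\mathcal{J}_{\tilde{M}}=\mathcal{M}_{\tilde{M}}$, and the equality of distributions $J^r_{\tilde{M}}(\pi,\cdot)=J_{\tilde{M}}(\pi,\cdot)=J^\mu_{\tilde{M}}(\pi,\cdot)$ follows by contracting with $\mathcal{I}_{\tilde{P}}(\pi,f)$.

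The substantive step is therefore the $Q$-independence of $a_Q$ and $b_Q$. Applying (\textbf{R4}) in each slot decomposes both $\tilde{r}_{\tilde{Q}|\tilde{P}}$ and $\tilde{\mu}_{\tilde{Q}|\tilde{P}}$ into products of rank-one factors indexed by roots $\alpha\in\Sigma^{\mathrm{red}}_{\bar{Q}}\cap\Sigma^{\mathrm{red}}_{\bar{P}}$ (coming from $\pi_1^\vee$) and $\alpha\in\Sigma^{\mathrm{red}}_Q\cap\Sigma^{\mathrm{red}}_{\bar{P}}$ (coming from $\pi_2$), which partition $\Sigma^{\mathrm{red}}_{\bar{P}}$; the ratio $a_Q$ is then a product of root-wise quantities whose $Q$-dependence reduces to asking that, for each $\alpha\in\Sigma^{\mathrm{red}}_M(G)$, the combined twist-ratio
\[
\frac{r^{\tilde{M}_\alpha}(\pi_{1,\Lambda}^\vee)}{r^{\tilde{M}_\alpha}(\pi_1^\vee)}\cdot\frac{r^{\tilde{M}_\alpha}(\pi_2)}{r^{\tilde{M}_\alpha}(\pi_{2,\Lambda})} \;=\; 1,
\]
and analogously for $\mu_\alpha$. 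The hypothesis that $\pi_{1,\Lambda}$ and $\pi_{2,\Lambda}$ both lie in $\Pi_\sigma(\tilde{M})$ with $\sigma\in\Pi_{2,-}(\widetilde{M_1})$, $M_1\subset M$, is the crucial input: via the induction compatibility (\textbf{R5}) applied to the chain $\widetilde{M_{1,\alpha}}\subset\tilde{M}_\alpha$ and the analogous compatibility of $\mu_\alpha$, the rank-one factors $r^{\tilde{M}_\alpha}$ and $\mu_\alpha$ depend only on $\sigma$ and on the projection of the twisting parameter onto the $\alpha$-axis. The duality shift $(\pi_{1,\Lambda})^\vee=(\pi_1^\vee)_{-\Lambda}$ versus $\pi_{2,\Lambda}$ then produces opposite shifts along the $\alpha$-axis, which cancel upon taking the product, after absorbing the duality via (\textbf{R2}).

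The main obstacle is precisely this rank-one book-keeping: tracking simultaneously how complementarity $r^\vee\leftrightarrow r$, the duality $\pi\mapsto\pi^\vee$ (which shifts $\Lambda\mapsto -\Lambda$ on the orbit), property (\textbf{R2}) relating $r_{\tilde{P}'|\tilde{P}}(\pi_\lambda)^*$ to $r_{\tilde{P}|\tilde{P}'}(\pi_{-\bar{\lambda}})$, and the induction step (\textbf{R5}) combine to yield the asserted cancellation. Once this is verified at each $\tilde{M}_\alpha$, the $Q$-independence of $a_Q$ and $b_Q$ is automatic, and the lemma follows. The same analysis yields the $\mu$-telescoping for $b_Q$, since under the hypothesis $\mu_\alpha(\pi_1^\vee)=\mu_\alpha(\pi_2)$ (invariance of Plancherel density under the $R_\sigma$-action and under $\pi\mapsto\check{\pi}$) reduces the computation to the trivial statement that the combined twist-ratio of a common function of $\sigma$ cancels.
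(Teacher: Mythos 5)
Your plan is essentially the paper's: factor $\mathcal{R}_{\tilde{Q}}$ and $\mathcal{M}_{\tilde{Q}}$ as scalar $(G,M)$-families times $\mathcal{J}_{\tilde{Q}}$, show the scalars are independent of $Q$ using (\textbf{R2}), complementarity $r^\vee\leftrightarrow r$, (\textbf{R4}), and the fact that $\pi_1,\pi_2$ both give the same normalizing factors as $\sigma^M$, then pull the constant (which equals $1$ at $\Lambda=0$) out of the limit. The paper carries out the cancellation by chaining identities at the level of full parabolic labels, concluding that $r^\vee_{\widetilde{\bar Q}|\widetilde P}(\pi_1^\vee)\,r_{\widetilde Q|\widetilde P}(\pi_2)=r_{\widetilde{\bar P}|\widetilde P}(\sigma^M)$ directly, whereas you sketch a per-root formulation of the same cancellation without fully writing it out; the substance is identical.
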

\begin{proof}
  Montrons la première égalité. Considérons d'abord le cas $\pi_1, \pi_2 \in \Pi_\sigma(\tilde{M})$, alors
  \begin{align*}
    r_{\tilde{\bar{Q}}|\tilde{P}}^\vee(\pi_1^\vee) & = r_{\tilde{\bar{Q}}|\tilde{P}}^\vee(\sigma^{M,\vee}) & \\
    & = r_{\tilde{P}|\tilde{\bar{Q}}}^\vee(\sigma^M) & \text{par (\textbf{R2})} \\
    & = r_{\tilde{\bar{Q}}|\tilde{P}}(\sigma^M) & \text{car $r^\vee$ et $r$ sont complémentaires} \\
    & = r_{\tilde{\bar{P}}|\tilde{Q}}(\sigma^M) & \text{par (\textbf{R4})} .
  \end{align*}
  où $\sigma^M = \mathcal{I}^{\tilde{M}}_{\tilde{M}_1}(\sigma)$. D'autre part $r_{\tilde{Q}|\tilde{P}}(\pi_2) = r_{\tilde{Q}|\tilde{P}}(\sigma^M)$. Leur produit est donc égal à $r_{\tilde{\bar{P}}|\tilde{P}}(\sigma^M)$, qui est indépendant de $Q$. Idem si $\pi$ est remplacé par $\pi_\Lambda$ et $\sigma$ est remplacé par $\sigma_\Lambda$.

  Donc $\mathcal{R}_{\tilde{M}}(\pi, \tilde{P})$ et $\mathcal{J}_{\tilde{M}}(\pi, \tilde{P})$ ne différent qu'à la constante
  $$ \lim_{\Lambda \to 0} r_{\tilde{\bar{P}}|\tilde{P}}(\sigma^M)^{-1} r_{\tilde{\bar{P}}|\tilde{P}}(\sigma^M_\Lambda) = 1 .$$

  Le même argument permet de montrer la deuxième égalité; il suffit d'observer que les propriétés des fonctions $\mu$ entraînent
  \begin{align*}
    \mu_{\tilde{\bar{Q}}|\tilde{P}}(\pi_1^\vee) & = \mu_{\tilde{\bar{Q}}|\tilde{P}}(\sigma^{M,\vee}) = \mu_{\tilde{\bar{Q}}|\tilde{P}}(\sigma^M) = \mu_{\tilde{\bar{P}}|\tilde{Q}}(\sigma^M), \\
    \mu_{\tilde{Q}|\tilde{P}}(\pi_2) & = \mu_{\tilde{Q}|\tilde{P}}(\sigma^M).
  \end{align*}
\end{proof}

\begin{lemma}\label{prop:caractere-pondere-indep-P}
  La distribution $J_{\tilde{M}}(\pi, \cdot)$ ne dépend pas de $P \in \mathcal{P}(M)$.
\end{lemma}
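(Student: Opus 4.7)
The plan is to reduce the assertion to the analogous independence for $J^r_{\tilde{M}}(\pi,\cdot)$ via the preceding Lemma \ref{prop:R=J=M}, and then to repeat the argument of Lemma \ref{prop:indep-P-1copie} componentwise on the tensor-product intertwining operator $R_{\tilde{Q}|\tilde{P}}(\pi) = R_{\widetilde{\bar{Q}}|\widetilde{P}}(\pi_1^\vee) \boxtimes R_{\tilde{Q}|\tilde{P}}(\pi_2)$. Since the hypothesis on $\pi$ furnishes a unitary point $\sigma$ in the $X(\tilde{M})$-orbit, condition (\textbf{R2}) guarantees that all normalized intertwining operators appearing below are regular at $\pi$ and at $\pi_\Lambda$ for $\Lambda$ in a neighborhood of $0$.

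Fix $P,P' \in \mathcal{P}(M)$. First I would apply (\textbf{R1}) separately in each copy: $R_{\widetilde{\bar{Q}}|\widetilde{P}}(\pi_1^\vee) = R_{\widetilde{\bar{Q}}|\widetilde{\bar{P}'}}(\pi_1^\vee)\, R_{\widetilde{\bar{P}'}|\widetilde{P}}(\pi_1^\vee)$ and $R_{\tilde{Q}|\tilde{P}}(\pi_2) = R_{\tilde{Q}|\tilde{P}'}(\pi_2)\, R_{\tilde{P}'|\tilde{P}}(\pi_2)$, and tensor the two factorizations. By definition of $R_{\tilde{P}'|\tilde{P}}(\pi) := R_{\widetilde{\bar{P}'}|\widetilde{P}}(\pi_1^\vee) \boxtimes R_{\tilde{P}'|\tilde{P}}(\pi_2)$, this yields the clean identity
$$ R_{\tilde{Q}|\tilde{P}}(\pi) = R_{\tilde{Q}|\tilde{P}'}(\pi)\, R_{\tilde{P}'|\tilde{P}}(\pi), $$
and similarly with $\pi$ replaced by $\pi_\Lambda$. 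Substituting into the definition of $\mathcal{R}_{\tilde{Q}}(\Lambda,\pi,\tilde{P})$ gives
$$ \mathcal{R}_{\tilde{Q}}(\Lambda,\pi,\tilde{P}) = R_{\tilde{P}'|\tilde{P}}(\pi)^{-1}\, \mathcal{R}_{\tilde{Q}}(\Lambda,\pi,\tilde{P}')\, R_{\tilde{P}'|\tilde{P}}(\pi_\Lambda). $$
The flanking factors are independent of $Q$, so one may pass to the $(G,M)$-limit under the formula $\mathcal{R}_{\tilde{M}}(\pi,\tilde{P}) = \lim_{\Lambda \to 0} \sum_Q \mathcal{R}_{\tilde{Q}}(\Lambda,\pi,\tilde{P})\theta_Q(\Lambda)^{-1}$. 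The regularity of $\Lambda \mapsto R_{\tilde{P}'|\tilde{P}}(\pi_\Lambda)$ at $\Lambda=0$ gives
$$ \mathcal{R}_{\tilde{M}}(\pi,\tilde{P}) = R_{\tilde{P}'|\tilde{P}}(\pi)^{-1}\, \mathcal{R}_{\tilde{M}}(\pi,\tilde{P}')\, R_{\tilde{P}'|\tilde{P}}(\pi). $$

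Finally, for $f = f_1 f_2$ with $f_1 \in \mathcal{H}_-(\tilde{G})$, $f_2 \in \mathcal{H}_{\asp}(\tilde{G})$, the operator $R_{\tilde{P}'|\tilde{P}}(\pi)$ intertwines $\mathcal{I}_{\tilde{P}}(\pi,f)$ with $\mathcal{I}_{\tilde{P}'}(\pi,f)$ in each component, so $R_{\tilde{P}'|\tilde{P}}(\pi)\, \mathcal{I}_{\tilde{P}}(\pi,f) = \mathcal{I}_{\tilde{P}'}(\pi,f)\, R_{\tilde{P}'|\tilde{P}}(\pi)$. Taking the trace and using its cyclicity yields $J^r_{\tilde{M}}(\pi,f) = \Tr(\mathcal{R}_{\tilde{M}}(\pi,\tilde{P})\mathcal{I}_{\tilde{P}}(\pi,f)) = \Tr(\mathcal{R}_{\tilde{M}}(\pi,\tilde{P}')\mathcal{I}_{\tilde{P}'}(\pi,f))$. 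Combining with Lemma \ref{prop:R=J=M} gives the desired $P$-independence of $J_{\tilde{M}}(\pi,\cdot)$. The only technical point to watch is the compatibility of the componentwise (\textbf{R1}) identities with the opposite convention used in the first copy (the use of $\bar{Q}$ and $\bar{P}'$ rather than $Q$ and $P'$); this is precisely what makes the tensor-factorization $R_{\tilde{Q}|\tilde{P}}(\pi) = R_{\tilde{Q}|\tilde{P}'}(\pi) R_{\tilde{P}'|\tilde{P}}(\pi)$ consistent as written.
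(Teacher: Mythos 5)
Your overall strategy is exactly the paper's (reduce to $J^r_{\tilde M}$ via Lemma \ref{prop:R=J=M}, then repeat Lemma \ref{prop:indep-P-1copie} componentwise), but there is a genuine slip in the handling of the bars, and your ``clean identity'' does not actually hold as written.

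By the paper's convention, $R_{\tilde Q|\tilde P}(\pi)=R_{\widetilde{\bar Q}|\widetilde P}(\pi_1^\vee)\boxtimes R_{\tilde Q|\tilde P}(\pi_2)$; substituting $Q=P'$ gives $R_{\tilde P'|\tilde P}(\pi)=R_{\widetilde{\bar P'}|\widetilde P}(\pi_1^\vee)\boxtimes R_{\tilde P'|\tilde P}(\pi_2)$, which has target $\mathcal{I}_{\widetilde{\bar P'}}(\pi_1^\vee)\boxtimes\mathcal{I}_{\tilde P'}(\pi_2)$. The source of $R_{\tilde Q|\tilde P'}(\pi)=R_{\widetilde{\bar Q}|\widetilde{P'}}(\pi_1^\vee)\boxtimes R_{\tilde Q|\tilde P'}(\pi_2)$ is $\mathcal{I}_{\widetilde{P'}}(\pi_1^\vee)\boxtimes\mathcal{I}_{\tilde P'}(\pi_2)$; the first factors disagree, so the product $R_{\tilde Q|\tilde P'}(\pi)\,R_{\tilde P'|\tilde P}(\pi)$ does not compose. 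What your application of (\textbf{R1}) with intermediate $\bar P'$ actually produces is $R_{\tilde Q|\tilde P}(\pi)=\bigl[R_{\widetilde{\bar Q}|\widetilde{\bar P'}}(\pi_1^\vee)\boxtimes R_{\tilde Q|\tilde P'}(\pi_2)\bigr]R_{\tilde P'|\tilde P}(\pi)$, and the bracket is not the paper's $R_{\tilde Q|\tilde P'}(\pi)$: your closing remark that the opposite convention ``makes the tensor-factorization consistent as written'' is exactly where the argument quietly goes wrong.

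The repair is small: apply (\textbf{R1}) in the first copy with intermediate parabolic $P'$ (unbarred), i.e. $R_{\widetilde{\bar Q}|\widetilde P}(\pi_1^\vee)=R_{\widetilde{\bar Q}|\widetilde{P'}}(\pi_1^\vee)\,R_{\widetilde{P'}|\widetilde P}(\pi_1^\vee)$. Tensoring with the second-copy identity and keeping track of the twist (so that $\pi_\Lambda$ has $(\pi_1^\vee)_{-\Lambda}$ in the first component) yields
\[
R_{\tilde Q|\tilde P}(\pi_\Lambda)=R_{\tilde Q|\tilde P'}(\pi_\Lambda)\,D(\Lambda),\qquad D(\Lambda):=R_{\widetilde{P'}|\widetilde P}\bigl((\pi_1^\vee)_{-\Lambda}\bigr)\boxtimes R_{\tilde P'|\tilde P}(\pi_{2,\Lambda}),
\]
with the left factor being the paper's $R_{\tilde Q|\tilde P'}$. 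Here $D(\Lambda)$ is independent of $Q$, $D(0)$ is a genuine normalized intertwiner $\mathcal{I}_{\tilde P}(\pi)\to\mathcal{I}_{\tilde P'}(\pi)$ (no bar mismatch), and the rest of your argument — substitute, pull $D$ out of the $\Lambda$-limit to get $\mathcal{R}_{\tilde M}(\pi,\tilde P)=D(0)^{-1}\mathcal{R}_{\tilde M}(\pi,\tilde P')D(0)$, then use $D(0)\,\mathcal{I}_{\tilde P}(\pi,f)\,D(0)^{-1}=\mathcal{I}_{\tilde P'}(\pi,f)$ and cyclicity of trace — goes through verbatim. Note that the $Q$-independent conjugating operator is $D$, not $R_{\tilde P'|\tilde P}(\pi)$; identifying the two is precisely the bug.
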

\begin{proof}
  Grâce au Lemme \ref{prop:R=J=M}, il suffit de montrer que $J_{\tilde{M}}^r(\pi, \cdot)$ ne dépend pas de $P$. La preuve est similaire au cas d'une seule copie de $\tilde{G}$, cf. la démonstration du Lemme \ref{prop:indep-P-1copie}.
\end{proof}

Soient $\gamma \in \Gamma_\text{ell}(M(F))^\text{bon}$, $\tilde{\gamma} \in \rev^{-1}(\gamma)$ et $g \in \mathcal{H}_{\asp}(\tilde{G})$ (resp. $g \in \mathcal{H}_{-}(\tilde{G})$). On sait définir l'intégrale orbitale pondérée $J_{\tilde{M}}(\tilde{\gamma}, g)$. Cf. \cite[\S 6.3]{Li10a}. Pour ce faire, il faut fixer une mesure de Haar sur $M_\gamma(F)$. On choisit la mesure telle que $\mes(M_\gamma(F)/A_M(F))=1$.

\begin{lemma}\label{prop:pondere-scindage}
  Avec le formalisme de \cite[\S 4.2]{Li10a}, on a
  \begin{align*}
    J_{\tilde{M}}(\pi, f) & = \sum_{L_1, L_2 \in \mathcal{L}(M)} d^G_M(L_1, L_2) J^{\tilde{L}_1}_{\tilde{M}}(\pi_1^\vee, f_{1,\widetilde{Q_1}}) J^{\tilde{L}_2}_{\tilde{M}}(\pi_2, f_{2,\widetilde{Q_2}}) \\
    & = \sum_{L_1, L_2 \in \mathcal{L}(M)} d^G_M(L_1, L_2) J^{\tilde{L}_1, r}_{\tilde{M}}(\pi_1^\vee, f_{1,\widetilde{Q_1}}) J^{\tilde{L}_2, r}_{\tilde{M}}(\pi_2, f_{2,\widetilde{Q_2}})
  \end{align*}
  où $L_i \mapsto Q_i \in \mathcal{P}(L_i)$, $i=1,2$, est associé à un choix de $\xi \in \{(H,-H) : H \in \mathfrak{a}_M \}$ en position générale.

  D'autre part, on a
  $$ J_{\tilde{M}}(\gamma, f) = \sum_{L_1, L_2 \in \mathcal{L}(M)} d^G_M(L_1, L_2) J^{\tilde{L}_1}_{\tilde{M}}(\tilde{\gamma}, f_{\widetilde{Q_1}}) J^{\tilde{L}_2}_{\tilde{M}}(\tilde{\gamma}, f_{\widetilde{Q_2}}) $$
  pour tout $\gamma \in \Gamma_{\mathrm{ell}}(M(F))^{\mathrm{bon}}$ qui est $G$-régulier, et $\tilde{\gamma} \mapsto \gamma$ quelconque.
\end{lemma}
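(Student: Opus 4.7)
Les trois identités suivent de la formule de scindage pour le produit de deux $(G,M)$-familles, rappelée dans \cite[\S 4.2]{Li10a}. Le principe est qu'à chaque fois, le poids sur $\tilde{G} \times \tilde{G}$ se fabrique à partir d'une $(G,M)$-famille qui se factorise comme produit d'une $(G,M)$-famille attachée à la première copie de $\tilde{G}$ et d'une autre attachée à la deuxième. La formule de scindage réorganise alors la $M$-composante en une somme double indexée par $(L_1, L_2) \in \mathcal{L}(M)^2$.

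Pour la première identité, on part de la factorisation immédiate
\[ \mathcal{M}_{\tilde{Q}}(\Lambda, \pi, \tilde{P}) = \mathcal{M}_{\widetilde{\bar{Q}}}(\Lambda, \pi_1^\vee, \tilde{P}) \boxtimes \mathcal{M}_{\tilde{Q}}(\Lambda, \pi_2, \tilde{P}), \]
où le premier facteur est vu comme $(G,M)$-famille en $Q$ via l'involution $Q \mapsto \bar{Q}$ de $\mathcal{P}(M)$, qui préserve la condition. Notons $c_1(Q,\Lambda)$ et $c_2(Q,\Lambda)$ ces deux $(G,M)$-familles d'opérateurs. En les composant respectivement avec $\mathcal{I}_{\tilde{P}}(\pi_1^\vee, f_1)$ et $\mathcal{I}_{\tilde{P}}(\pi_2, f_2)$ puis en prenant la trace, on obtient deux $(G,M)$-familles scalaires $\tilde{c}_1, \tilde{c}_2$ dont le produit évalué en sa $M$-composante donne précisément $J_{\tilde{M}}(\pi, f) = (\tilde{c}_1 \tilde{c}_2)_M$. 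La formule de scindage d'Arthur fournit alors
\[ J_{\tilde{M}}(\pi, f) = \sum_{L_1, L_2 \in \mathcal{L}(M)} d^G_M(L_1, L_2)\, \tilde{c}_{1,M}^{Q_1}\, \tilde{c}_{2,M}^{Q_2}. \]
La descente parabolique usuelle pour les caractères pondérés (cf. \cite{Ar98}) identifie $\tilde{c}_{i,M}^{Q_i}$ à $J^{\tilde{L}_i}_{\tilde{M}}(\pi_i, f_{i,\widetilde{Q_i}})$, avec la convention $\pi_1^\vee$ pour $i=1$. La deuxième identité en résulte grâce au Lemme \ref{prop:R=J=M}, qui assure $J^{\tilde{L}_i}_{\tilde{M}} = J^{\tilde{L}_i, r}_{\tilde{M}}$ dans notre cadre.

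La troisième identité se traite de façon parfaitement analogue, mais avec des $(G,M)$-familles scalaires: on factorise
\[ v_P(\Lambda, x_1, x_2) = e^{\angles{H_{\bar{P}}(x_1), \Lambda}} \cdot e^{-\angles{H_P(x_2), \Lambda}}, \]
d'où $v_M(x_1, x_2) = \sum_{L_1, L_2} d^G_M(L_1, L_2)\, v_M^{Q_1}(x_1)\, v_M^{Q_2}(x_2)$ par le même scindage. En substituant dans la définition de $J_{\tilde{M}}(\gamma, f)$ et en appliquant, pour chaque $i$, la descente parabolique usuelle des intégrales orbitales pondérées par rapport à $Q_i$ (via la décomposition d'Iwasawa qui convertit $\int_{A_M(F)^\dagger \backslash G(F)} f_i(x_i^{-1} \tilde{\gamma} x_i) v_M^{Q_i}(x_i) \dd x_i$ en $J^{\tilde{L}_i}_{\tilde{M}}(\tilde{\gamma}, f_{i,\widetilde{Q_i}})$), on obtient l'identité cherchée.

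La difficulté est combinatoire plutôt qu'analytique: il faut vérifier que la réindexation $Q \mapsto \bar{Q}$ préserve la condition de $(G,M)$-famille (conséquence directe de (\textbf{R1}) pour les opérateurs d'entrelacement et des symétries parallèles pour $\mu$ et pour $H_Q$), et tracer proprement la descente parabolique qui convertit les composantes partielles $\tilde{c}_{i,M}^{Q_i}$ en caractères pondérés ou en intégrales orbitales pondérés sur les sous-Lévi $\tilde{L}_i$. Toutes les propriétés analytiques requises (convergence, régularité sur les représentations unitaires, indépendance du parabolique via le Lemme \ref{prop:caractere-pondere-indep-P}) sont déjà acquises; les arguments combinatoires d'Arthur dans le cas des groupes réductifs connexes se transposent sans modification au revêtement.
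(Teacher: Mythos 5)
Votre preuve est correcte et suit essentiellement la même approche que celle du papier : formule de scindage des $(G,M)$-familles, identification des composantes via la descente parabolique d'Arthur (Lemma 7.1 de \cite{Ar81}), et usage du Lemme \ref{prop:R=J=M} ainsi que de l'indépendance du parabolique (Lemme \ref{prop:caractere-pondere-indep-P}). Les seules différences sont cosmétiques : vous formez d'abord des $(G,M)$-familles scalaires en prenant la trace puis appliquez le scindage, alors que le papier scinde d'abord la famille opérateurs $(\mathcal{R}_{\tilde{Q}}(\Lambda,\pi,\tilde{P}))_Q$ (après avoir invoqué le Lemme \ref{prop:R=J=M} pour passer de $\mathcal{M}$ à $\mathcal{R}$) avant de prendre la trace — deux chemins équivalents puisque la prise de $M$-composante commute à la trace contre un opérateur fixé.
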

\begin{proof}
  Pour l'assertion spectrale, les cas de $J_{\tilde{M}}(\pi, \cdot)$ et de $J^r_{\tilde{M}}(\pi, \cdot)$ sont équivalents d'après le Lemme \ref{prop:R=J=M}. On applique la formule de scindage \cite[\S 4.2]{Li10a} à la $(G,M)$-famille $(\mathcal{R}_{\tilde{Q}}(\Lambda, \pi, \tilde{P}))_{Q \in \mathcal{P}(M)}$:
  $$ \mathcal{R}_{\tilde{M}}(\pi, \tilde{P}) = \sum_{L_1, L_2 \in \mathcal{L}(M)} d^G_M(L_1,L_2) \mathcal{R}^{Q_1}_{\tilde{M}}(\pi_1^\vee, \tilde{P}) \boxtimes \mathcal{R}^{Q_2}_{\tilde{M}}(\pi_2, \tilde{P}). $$
  Maintenant on peut reprendre l'argument standard d'Arthur (cf. la démonstration de \cite[Lemma 7.1]{Ar81}) pour obtenir la formule de $J^r_{\tilde{M}}(\pi,f)$, puisque pour chaque $L_i$ dans la somme ($i=1,2$), on peut remplacer $P$ par un parabolique contenu dans $Q_i$ d'après un argument analogue à celui de la preuve du Lemme \ref{prop:indep-P-1copie}.

  L'assertion géométrique résulte de la formule de scindage appliquée à l'ensemble $(G,M)$-orthogonal définissant la fonction poids $v_M(x_1,x_2)$.
\end{proof}

On définit l'espace de Schwartz-Harish-Chandra $\mathcal{C}(\tilde{G} \times \tilde{G})$ et on note $\mathcal{C}_{\asp}(\tilde{G} \times \tilde{G})$ son sous-espace des fonctions anti-spécifiques sous l'action de $\bmu_m$ via l'immersion anti-diagonale $\noyau \mapsto (\noyau^{-1}, \noyau)$. Les fonctions test $f = f_1 f_2$ considérées jusqu'à présent appartiennent à $\mathcal{C}_{\asp}(\tilde{G} \times \tilde{G})$.\index[iFT2]{$\mathcal{C}_{\asp}(\tilde{G} \times \tilde{G})$}

\begin{proposition}\label{prop:dist-temperee}
  Les distributions $J_{\tilde{M}}(\gamma, \cdot)$ et $J_{\tilde{M}}(\pi, \cdot)$ dans la formule des traces locale (le Théorème \ref{prop:formule-traces-locale}) se prolongent de façon unique en des formes linéaires continues sur $\mathcal{C}_{\asp}(\tilde{G} \times \tilde{G})$. De plus, la formule des traces locale (Théorème \ref{prop:formule-traces-locale}) demeure valable pour les fonctions test dans $\mathcal{C}_-(\tilde{G}) \oplus \mathcal{C}_{\asp}(\tilde{G})$.
\end{proposition}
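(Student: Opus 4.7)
La démarche est de se ramener, via les formules de scindage du Lemme \ref{prop:pondere-scindage}, à la continuité des objets à une variable. Pour $f = f_1 f_2$ avec $f_i$ dans le Schwartz-Harish-Chandra approprié, ledit lemme factorise
\begin{align*}
  J_{\tilde{M}}(\gamma, f_1 f_2) & = \sum_{L_1, L_2 \in \mathcal{L}(M)} d^G_M(L_1,L_2)\, J^{\tilde{L}_1}_{\tilde{M}}(\tilde{\gamma}, f_{1,\widetilde{Q_1}})\, J^{\tilde{L}_2}_{\tilde{M}}(\tilde{\gamma}, f_{2,\widetilde{Q_2}}), \\
  J_{\tilde{M}}(\pi_1^\vee \boxtimes \pi_2, f_1 f_2) & = \sum_{L_1, L_2 \in \mathcal{L}(M)} d^G_M(L_1,L_2)\, J^{\tilde{L}_1, r}_{\tilde{M}}(\pi_1^\vee, f_{1,\widetilde{Q_1}})\, J^{\tilde{L}_2, r}_{\tilde{M}}(\pi_2, f_{2,\widetilde{Q_2}}),
\end{align*}
et la descente parabolique $f \mapsto f_{\tilde{Q}}$ est continue de $\mathcal{C}(\tilde{G})$ dans $\mathcal{C}(\tilde{M})$ dans le cas non archimédien \cite[Proposition II.4.5]{Wa03} comme dans le cas archimédien \cite[Lemma 22]{HC66}. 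Il suffit donc d'établir que chaque facteur à une variable se prolonge en forme linéaire continue sur $\mathcal{C}_{\asp}(\tilde{G})$, resp. sur $\mathcal{C}_-(\tilde{G})$.

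Pour les facteurs spectraux, c'est exactement l'énoncé du Théorème \ref{prop:phi-local}: les applications $\phi^r_{\tilde{L}}$ et $\phi_{\tilde{L}}$ envoient continûment $\mathcal{C}_{\asp}(\tilde{G})$ dans $I\mathcal{C}_{\asp}(\tilde{L})$, donc pour tout $\pi$ tempérée les distributions $f \mapsto J^{\tilde{L}, r}_{\tilde{M}}(\pi, f)$ sont tempérées. Pour les facteurs géométriques, on doit établir que $f \mapsto J^{\tilde{L}}_{\tilde{M}}(\tilde{\gamma}, f)$ est tempérée. La fonction poids $v^L_M(x)$ est majorée polynomialement en $\sigma(x)$ sur le support des intégrales en question; combiné avec la décroissance Schwartz de $|f(\tilde{x})| \Xi^{\tilde{G}}(x)^{-1} (1+\sigma(x))^N$ et les résultats d'intégrabilité de \cite[\S II]{Wa03}, on en déduit la continuité voulue. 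Cet argument se transpose sans modification au revêtement puisque tous les ingrédients analytiques ($\Xi^{\tilde{G}}$, la hauteur $\|\cdot\|_{\tilde{G}}$, la formule de Weyl sur $\widetilde{A_M}^\dagger \backslash \tilde{G}$) sont déjà mis en place au \S\ref{sec:Plancherel}.

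Le passage des formes bilinéaires séparément continues sur $\mathcal{C}_-(\tilde{G}) \times \mathcal{C}_{\asp}(\tilde{G})$ à des formes linéaires continues sur $\mathcal{C}_{\asp}(\tilde{G} \times \tilde{G})$ s'effectue grâce à la nuclearité des espaces de Schwartz-Harish-Chandra (héritée du cas des groupes réductifs connexes par le choix d'une section locale), qui donne $\mathcal{C}_{\asp}(\tilde{G} \times \tilde{G}) = \mathcal{C}_-(\tilde{G}) \hat{\otimes} \mathcal{C}_{\asp}(\tilde{G})$, et à la densité de $\mathcal{H}_-(\tilde{G}) \otimes \mathcal{H}_{\asp}(\tilde{G})$ dans ce produit tensoriel topologique. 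L'unicité du prolongement en résulte. Pour la deuxième assertion, l'identité $J_{\text{geom}}(f) = J_{\text{spec}}(f)$ est déjà établie pour $f \in \mathcal{H}_-(\tilde{G}) \otimes \mathcal{H}_{\asp}(\tilde{G})$ par le Théorème \ref{prop:formule-traces-locale}; par continuité séparée et densité elle se prolonge à $\mathcal{C}_-(\tilde{G}) \otimes \mathcal{C}_{\asp}(\tilde{G})$, et donc à $\mathcal{C}_{\asp}(\tilde{G} \times \tilde{G})$ tout entier.

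L'obstacle principal est la continuité tempérée des intégrales orbitales pondérées à une variable, qui nécessite un contrôle précis des fonctions poids $v^L_M$ en termes de $\sigma$ combiné aux majorations standard de $\Xi^{\tilde{G}}$. Bien que cet argument soit purement analytique et insensible à la présence du revêtement central par $\bmu_m$, il convient de vérifier soigneusement que toutes les estimations du cas classique (notamment celles utilisées dans \cite[\S 12]{Ar91}) demeurent valables après le passage à $\tilde{G}$; ce contrôle reste toutefois routinier vu les préparatifs du \S\ref{sec:Plancherel}.
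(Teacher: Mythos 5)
Votre schéma suit la même architecture que celui du texte — réduction à une variable par la formule de scindage (Lemme \ref{prop:pondere-scindage}), continuité du côté spectral par le Théorème \ref{prop:phi-local}, tempérance des intégrales orbitales pondérées à une variable pour le côté géométrique, puis passage aux fonctions de Schwartz par densité. Mais le point décisif — la tempérance de $J^{\tilde{L}}_{\tilde{M}}(\tilde{\gamma},\cdot)$ sur $\mathcal{C}_{\asp}(\tilde{L})$ — est traité par affirmation (``ce contrôle reste toutefois routinier'') plutôt que par argument, et c'est précisément là que le texte apporte l'ingrédient qui manque à votre proposition.

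L'observation du texte est la majoration ponctuelle $|J_{\tilde{M}}(\tilde{\gamma},f)| \leq J_M(\gamma,|f|)$~: la fonction $|f|$ est $\bmu_m$-invariante, donc descend en une fonction de Schwartz-Harish-Chandra sur $G(F)$, et la majoration d'Arthur \cite[(5.7)]{Ar94} s'applique alors telle quelle à $J_M(\gamma,|f|)$, ce qui donne la borne voulue sur le revêtement sans reprendre aucune estimation. Ce n'est pas seulement une économie~: la transposition directe des estimations du cas classique dans le cas non archimédien ferait intervenir le théorème de finitude de Howe sur le revêtement, que le texte, grâce à cette réduction, n'a pas à invoquer ici — point qu'il souligne explicitement. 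Votre affirmation que les estimations ``se transposent sans modification'' élude donc une vraie difficulté au lieu de la résoudre. Le reste de votre esquisse (scindage, continuité de $\phi^r_{\tilde{L}}$, densité de $\mathcal{H}_-(\tilde{G})\otimes\mathcal{H}_{\asp}(\tilde{G})$ dans $\mathcal{C}_{\asp}(\tilde{G}\times\tilde{G})$) correspond bien à l'argument d'Arthur repris par le texte.
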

\begin{proof}
  Pareille que \cite[p.189]{Ar94}. Cependant, pour la part concernant le Théorème \ref{prop:formule-traces-locale} il convient de remarquer que la majoration d'Arthur \cite[(5.7)]{Ar94}
  $$ |J_{\tilde{M}}(\tilde{\gamma},f)| \leq \nu_r(f) (1+|\log |D(\gamma)||)^{\dim \mathfrak{a}^G_M} (1 + \|H_M(\gamma)\|)^{-n}, \quad f \in \mathcal{C}_{\asp}(\tilde{G}) \cup \mathcal{C}_-(\tilde{G}) $$
  où $n \in \Z_{>0}$ est arbitraire et $r = r(n) \in \R$ dépend de $n$, est encore valable pour des raisons triviales. En effet, $|J_{\tilde{M}}(\tilde{\gamma},f)| \leq J_M(\gamma, |f|)$; on applique alors la majoration d'Arthur à $J_M(\gamma, |f|)$ et on note que la borne cherchée ne dépend que de $|f|$ et $\gamma$. Par conséquent, on n'a pas encore besoin du théorème de finitude de Howe sur les revêtements dans le cas non archimédien.
\end{proof}

On est en mesure de définir les applications $\phi_{\tilde{M}}$ pour la formule des traces locale.\index[iFT2]{$\phi_{\tilde{M}}$}

\begin{align*}
  \phi_{\tilde{M}}: & \mathcal{C}_{\asp}(\tilde{G} \times \tilde{G}) \longrightarrow I\mathcal{C}_{\asp}(\tilde{M} \times \tilde{M}) \\
  & f \longmapsto [\pi \mapsto J_{\tilde{M}}^\mu(\pi, f)]
\end{align*}
où $\pi = \pi_1^\vee \boxtimes \pi_2 \in \Pi_{\text{temp},\asp}(\tilde{M}) \times \Pi_{\text{temp},-}(\tilde{M})$. C'est sous-entendu que $\pi \mapsto J_{\tilde{M}}^\mu(\pi, f)$ appartient à $I\mathcal{C}_{\asp}(\tilde{M} \times \tilde{M})$, un fait qui est inclus dans le résultat suivant.

\begin{theorem}
  L'application $\phi_{\tilde{M}}$ est bien définie, linéaire et continue.
\end{theorem}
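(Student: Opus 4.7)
La stratégie est de ramener le problème au cas d'une seule copie de $\tilde{G}$ traité par le Théorème \ref{prop:phi-local}. D'abord, vu la Proposition \ref{prop:dist-temperee}, $J^\mu_{\tilde{M}}(\pi, \cdot)$ se prolonge en une forme linéaire continue sur $\mathcal{C}_{\asp}(\tilde{G} \times \tilde{G})$; il suffit donc d'établir que son image est dans $I\mathcal{C}_{\asp}(\tilde{M} \times \tilde{M})$ pour les fonctions test de la forme $f = f_1 f_2$ avec $f_1 \in \mathcal{C}_-(\tilde{G})$, $f_2 \in \mathcal{C}_{\asp}(\tilde{G})$, puis d'étendre par densité. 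Pour de telles $f$ et pour $\pi = \pi_1^\vee \boxtimes \pi_2$ dans le domaine pertinent, le Lemme \ref{prop:R=J=M} donne $J^\mu_{\tilde{M}}(\pi, f) = J^r_{\tilde{M}}(\pi, f)$, ce qui nous permet de travailler avec la version $r$-normalisée.

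Ensuite, on applique la formule de scindage du Lemme \ref{prop:pondere-scindage}:
$$ J^r_{\tilde{M}}(\pi, f) = \sum_{L_1, L_2 \in \mathcal{L}(M)} d^G_M(L_1, L_2) J^{\tilde{L}_1, r}_{\tilde{M}}(\pi_1^\vee, f_{1,\widetilde{Q_1}}) J^{\tilde{L}_2, r}_{\tilde{M}}(\pi_2, f_{2,\widetilde{Q_2}}). $$
Dans chaque terme de la somme, chaque facteur est un caractère pondéré $r$-normalisé dans une seule copie de $\tilde{L}_i$. Le Théorème \ref{prop:phi-local} appliqué à $\tilde{L}_i$ au lieu de $\tilde{G}$ ($i=1,2$) affirme que $g \mapsto \bigl[\pi_i \mapsto J^{\tilde{L}_i, r}_{\tilde{M}}(\pi_i, g)\bigr]$ définit une application linéaire continue $\mathcal{C}(\tilde{L}_i) \to I\mathcal{C}(\tilde{M})$. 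Combinée avec la continuité de la descente parabolique $f_i \mapsto f_{i,\widetilde{Q_i}}$, $\mathcal{C}(\tilde{G}) \to \mathcal{C}(\tilde{L}_i)$, elle montre que chaque facteur fournit un élément de $I\mathcal{C}(\tilde{M})$ dépendant continument de $f_i$.

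Le produit extérieur de deux tels éléments (l'un anti-spécifique, l'autre spécifique sous $\bmu_m$) s'identifie à un élément de $I\mathcal{C}_{\asp}(\tilde{M} \times \tilde{M})$ par définition de ce dernier espace, et la somme finie sur $(L_1, L_2)$ préserve cette propriété ainsi que la continuité. Cela démontre le théorème sur le sous-espace dense $\mathcal{C}_-(\tilde{G}) \otimes \mathcal{C}_{\asp}(\tilde{G})$. L'extension finale à $\mathcal{C}_{\asp}(\tilde{G} \times \tilde{G})$ procède en observant que $I\mathcal{C}_{\asp}(\tilde{M} \times \tilde{M})$ est un sous-espace fermé (voire quotient) de l'espace de Fréchet $\text{PW}(\tilde{M} \times \tilde{M})$ fourni par le Théorème \ref{prop:PW-invariant} appliqué à $\tilde{M} \times \tilde{M}$, ce qui permet la prolongation continue.

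L'obstacle principal sera double. D'abord, il faut justifier proprement que l'image du produit extérieur appartient bien à $I\mathcal{C}_{\asp}(\tilde{M} \times \tilde{M})$ en tant qu'image d'une fonction de Schwartz-Harish-Chandra sur $\tilde{M} \times \tilde{M}$; cela repose sur le théorème de Paley-Wiener invariant (Théorème \ref{prop:PW-invariant}) qui identifie $I\mathcal{C}$ à un espace de fonctions $\text{PW}$ explicite, compatible aux produits extérieurs. Ensuite, dans le cas archimédien, il faudra contrôler la croissance des divers facteurs en $\pi_1, \pi_2$ afin de vérifier les semi-normes de type Schwartz — mais cela est automatique une fois qu'on dispose du Théorème \ref{prop:phi-local}, lequel encapsule déjà la majoration (\textbf{R8}) des facteurs normalisants archimédiens.
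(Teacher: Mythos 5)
Your route via the splitting formula is genuinely different from what the paper intends: the paper's one-line proof (``Pareil que le cas avec une seule copie'') signals that one should rerun Arthur's Paley--Wiener argument directly for the pair $\tilde{M}\times\tilde{M}$, verifying the three conditions of the Théorème \ref{prop:PW-op} on the double copy, rather than factoring through single copies. Both roads can be made to work, but yours as written has a concrete gap.

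The gap is the first step: you invoke le Lemme \ref{prop:R=J=M} to replace $J^\mu_{\tilde{M}}$ by $J^r_{\tilde{M}}$. That lemma holds only under the standing hypothesis that there exist $\Lambda$, $M_1\subset M$ and $\sigma\in\Pi_{2,-}(\widetilde{M_1})$ with $\pi_{1,\Lambda},\pi_{2,\Lambda}\in\Pi_\sigma(\tilde{M})$; the identity $J^\mu=J^r$ relies on the cancellation $r^\vee_{\widetilde{\bar{Q}}|\tilde{P}}(\pi_1^\vee)\cdot r_{\tilde{Q}|\tilde{P}}(\pi_2)=r_{\widetilde{\bar{P}}|\tilde{P}}(\sigma^M)$, which becomes $Q$-dependent once $\pi_1$ and $\pi_2$ are unrelated. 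Yet the map $\phi_{\tilde{M}}$ is defined on all of $\Pi_{\text{temp},\asp}(\tilde{M})\times\Pi_{\text{temp},-}(\tilde{M})$, so you cannot restrict to that sub-locus without further argument. The fix is to skip $J^r$ altogether: since $\mathcal{M}_{\tilde{Q}}(\Lambda,\pi,\tilde{P})=\mathcal{M}_{\widetilde{\bar{Q}}}(\Lambda,\pi_1^\vee,\tilde{P})\boxtimes\mathcal{M}_{\tilde{Q}}(\Lambda,\pi_2,\tilde{P})$ is a $\boxtimes$-product of $(G,M)$-families for every tempered $\pi_1,\pi_2$, the splitting formula of \cite[\S 4.2]{Li10a} applies directly to $\mathcal{M}$ and yields a scindage of $J^\mu_{\tilde{M}}(\pi,f)$ in terms of single-copy $\mu$-normalized weighted characters, each of which is covered by le Théorème \ref{prop:phi-local} (which you cite anyway). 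The cited Lemme \ref{prop:pondere-scindage} is itself stated under the restrictive hypothesis, so you need this small extension of it, but the argument is identical.

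Two further remarks. First, the density step is circular as phrased: saying that ``l'extension finale procède par prolongation continue'' presupposes the very continuity you are establishing. What one should say is that the scindage gives a continuity estimate for the bilinear map $(f_1,f_2)\mapsto\phi_{\tilde{M}}(f_1f_2)$ into the closed subspace $I\mathcal{C}_{\asp}(\tilde{M}\times\tilde{M})\subset\text{PW}(\tilde{M}\times\tilde{M})$ of the Fréchet (resp.\ LF) space, after which the universal property of the completed tensor product $\mathcal{C}_{\asp}(\tilde{G}\times\tilde{G})\simeq\mathcal{C}_-(\tilde{G})\hat\otimes\mathcal{C}_{\asp}(\tilde{G})$ does the rest. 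Second, what each approach ``buys'': yours, once repaired, reduces the double-copy statement to an already-established theorem and a combinatorial identity, at the cost of the completed-tensor-product machinery; the paper's route makes the double-copy case a clone of the single-copy proof and avoids density, but requires re-checking the growth and symmetry estimates of le Théorème \ref{prop:PW-op} on $\tilde{M}\times\tilde{M}$.
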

\begin{proof}
  Pareil que le cas avec une seule copie de $\tilde{G}$.
\end{proof}

\begin{remark}
  Vu la théorie de $R$-groupe \S\ref{sec:R-groupe}, on peut aussi changer le paramétrage et définir les caractères pondérés $J_{\tilde{M}}(\tau,\cdot)$ ou $J_{\tilde{M}}^\mu(\tau_1^\vee \boxtimes \tau_2)$, où $\tau, \tau_1, \tau_2 \in T_-(\tilde{M})$. Les caractères pondérés ``en $\pi$'' et ``en $\tau$'' s'expriment réciproquement à l'aide du Théorème \ref{prop:R}.
\end{remark}

\subsection{La formule des traces locale invariante}

\begin{definition}\index[iFT2]{forme linéaire supportée par un espace}
  Soient $V, V'$ des espaces vectoriels topologiques et $\phi: V \to V'$ une application linéaire continue. On dit qu'une forme linéaire continue $I: V \to \C$ est supportée par $V'$ si $I|_{\Ker\phi}=0$. Dans ce cas-là, la forme linéaire continue induite $\Im(\phi) \to \C$ est notée $\hat{I}$.
\end{definition}

Dans cet article, cette notion sera appliquée aux cas suivants.
\begin{enumerate}
  \item $V = \mathcal{C}_{\asp}(\tilde{G})$, $V' = I\mathcal{C}_{\asp}(\tilde{G})$ et $\phi = \phi_{\tilde{G}}$;
  \item $V = \mathcal{C}_{\asp}(\tilde{G} \times \tilde{G})$, $V' = I\mathcal{C}_{\asp}(\tilde{G} \times \tilde{G})$ et $\phi = \phi_{\tilde{G}}$.
\end{enumerate}

En tout cas $\phi$ est surjectif. Afin d'établir la formule des traces locale invariante, raisonnons par récurrence sur $\dim G$ avec deux hypothèses qui seront établies dans le Corollaire \ref{prop:dist-support}. Dans ce qui suit, $\gamma$ signifie un élément dans $\Gamma_{G-\text{reg,ell}}(M(F))^\text{bon}$ et $\tilde{\gamma} \in \rev^{-1}(\gamma)$ est quelconque.

\begin{hypothesis}\label{hyp:support-caractere}\index[iFT2]{$I_{\tilde{M}}(\tilde{\gamma}, \cdot)$}
  On a défini les distributions spécifiques $I^{\tilde{L}}_{\tilde{M}}(\tilde{\gamma}, \cdot): \mathcal{C}_{\asp}(\tilde{L}) \to \C$. Elles sont supportées par $I\mathcal{C}_{\asp}(\tilde{L})$ pour tout $L \in \mathcal{L}(M_0)$, $L \neq G$. On définit
  $$ I_{\tilde{M}}(\tilde{\gamma}, f) := J_{\tilde{M}}(\gamma, f) - \sum_{L \in \mathcal{L}(M), L \neq G} \hat{I}^{\tilde{L}}_{\tilde{M}}(\gamma, \phi_{\tilde{L}}(f)). $$
\end{hypothesis}

\begin{hypothesis}\label{hyp:support-caractere-double}
  On a défini les distributions spécifiques $I^{\tilde{L}}_{\tilde{M}}(\gamma, \cdot): \mathcal{C}_{\asp}(\tilde{L} \times \tilde{L}) \to \C$. Elles sont supportées par $I\mathcal{C}_{\asp}(\tilde{L} \times \tilde{L})$ pour tout $L \in \mathcal{L}(M_0)$, $L \neq G$. On définit
  \begin{align*}
    I_{\tilde{M}}(\gamma, f) & := J_{\tilde{M}}(\gamma, f) - \sum_{L \in \mathcal{L}(M), L \neq G} \hat{I}^{\tilde{L}}_{\tilde{M}}(\gamma, \phi_{\tilde{L}}(f)), \\
    I(f) & := J(f) - \sum_{L \in \mathcal{L}(M_0), L \neq G} |W^L_0| |W^G_0|^{-1} (-1)^{\dim A_L/A_G} \hat{I}^{\tilde{L}}(\phi_{\tilde{L}}(f)).
  \end{align*}
\end{hypothesis}

La définition ci-dessus de $I(f)$ est loisible car on vérifie que $I(f)$ est égal à\index[iFT2]{$I, I_\text{geom}$}
\begin{gather}\label{eqn:I_geom}
  I_\text{geom}(f) := \sum_{M \in \mathcal{L}(M_0)} |W^M_0| |W^G_0|^{-1} (-1)^{\dim A_M/A_G} \int_{\Gamma_{G-\text{reg,ell}}(M(F))^\text{bon}} I_{\tilde{M}}(\gamma, f) \dd\gamma
\end{gather}
à l'aide du Théorème \ref{prop:formule-traces-locale} et de la définition de $I_{\tilde{M}}(\gamma, \cdot)$; donc $I(\cdot)$ est spécifique supportée par $I\mathcal{C}_{\asp}(\tilde{G} \times \tilde{G})$ si chaque $I_{\tilde{M}}(\tilde{\gamma}, \cdot)$ l'est, et c'est satisfait si l'on remplace $G$ par $L \in \mathcal{L}(M_0)$, $L \neq G$.

Tout d'abord, on voit que $I^{\tilde{M}}_{\tilde{M}}(\cdots) = J^{\tilde{M}}_{\tilde{M}}(\cdots)$. Cela permet de déterminer les distributions $I_{\tilde{M}}$ par récurrence modulo les deux hypothèses ci-dessus. Nous pouvons énoncer la formule des traces locale invariante maintenant. Rappelons que nous avons défini $I_\text{geom}$ et $I_\text{disc}$ dans \eqref{eqn:I_geom} et \eqref{eqn:I_disc}, respectivement.

\begin{lemma}\label{prop:I-scindage}
  Soient $M \in \mathcal{L}(M_0)$, $\gamma \in \Gamma_{G-\mathrm{reg,ell}}(M(F))^{\mathrm{bon}}$. Avec le formalisme de \cite[\S 4.2]{Li10a}, on a
  \begin{gather*}
    I_{\tilde{M}}(\gamma, f) = \sum_{L_1, L_2 \in \mathcal{L}(M)} d^G_M(L_1, L_2) I^{\tilde{L}_1}_{\tilde{M}}(\tilde{\gamma}, f_{1,\widetilde{Q_1}}) I^{\tilde{L}_2}_{\tilde{M}}(\tilde{\gamma}, f_{2,\widetilde{Q_2}}).
  \end{gather*}
  où $L_i \mapsto Q_i \in \mathcal{P}(L_i)$, $i=1,2$, est associé à un choix de $\xi \in \{(H,-H) : H \in \mathfrak{a}_M \}$ en position générale.
\end{lemma}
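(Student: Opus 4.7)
Je propose d'établir la formule par récurrence sur $\dim G$ (avec $M$ variable). Le cas initial $G = M$ est immédiat: la somme de droite se réduit au terme $L_1 = L_2 = G$ (seul terme pour lequel $d^G_G(L_1, L_2) \neq 0$, valant alors $1$), la somme de correction $\sum_{L \neq G}$ dans la définition de $I_{\tilde M}$ est vide, et $v_G \equiv 1$. La formule voulue se réduit donc à
$$J_{\tilde G}(\gamma, f_1 f_2) = J^{\tilde G}_{\tilde G}(\tilde\gamma, f_1) \cdot J^{\tilde G}_{\tilde G}(\tilde\gamma, f_2),$$
conséquence immédiate de $I_{\tilde{G}} = J_{\tilde G}$ et de la factorisation $f = f_1 f_2$.

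Pour le pas de récurrence ($M \subsetneq G$), je partirais de la définition
$$I_{\tilde M}(\gamma, f) = J_{\tilde M}(\gamma, f) - \sum_{L \in \mathcal{L}(M), L\neq G} \hat I^{\tilde L}_{\tilde M}(\gamma, \phi_{\tilde L}(f))$$
et je développerais chaque terme à l'aide du Lemme \ref{prop:pondere-scindage}. D'une part, appliquer la formule de scindage géométrique à $J_{\tilde M}(\gamma, f)$. D'autre part, à chaque $\phi_{\tilde L}(f)(\pi) = J^\mu_{\tilde L}(\pi, f)$ appliquer la formule de scindage spectrale du même lemme, pour écrire $\phi_{\tilde L}(f)$ comme combinaison linéaire indexée par $\mathcal{L}(L)^2$ à coefficients $d^G_L(L_1, L_2)$ d'éléments de la forme $\phi^{\tilde L_1}_{\tilde L}(f_{1, \widetilde{Q_1}}) \otimes \phi^{\tilde L_2}_{\tilde L}(f_{2, \widetilde{Q_2}})$. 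Faire passer $\hat I^{\tilde L}_{\tilde M}(\gamma, \cdot)$ à travers cette combinaison est licite, car $I^{\tilde L}_{\tilde M}$ est supportée par $I\mathcal{C}_{\asp}$ d'après l'Hypothèse \ref{hyp:support-caractere-double}. L'hypothèse de récurrence appliquée à chaque Levi $L \subsetneq G$ fournit alors un nouveau scindage pour $I^{\tilde L}_{\tilde M}$ indexé par $\mathcal{L}^L(M)^2$.

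La combinaison de ces trois développements produit une somme triple indexée par des quintuples $(L, L_1, L_2, L'_1, L'_2)$ avec $L \in \mathcal{L}(M) \setminus \{G\}$, $L_i \in \mathcal{L}(L)$ et $L'_i \in \mathcal{L}^L(M)$, à laquelle s'ajoute le développement géométrique de $J_{\tilde M}(\gamma, f)$ indexé directement par $\mathcal{L}(M)^2$. En réindexant par $(L_1, L_2) \in \mathcal{L}(M)^2$ et en sommant sur les $L$ intermédiaires, l'outil décisif est l'identité combinatoire d'Arthur pour les coefficients $d^G_M$ sous la chaîne $M \subset L \subset G$ (cf. \cite[\S 7]{Ar91}). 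Cette identité, jointe à la compatibilité de la descente parabolique $f \mapsto f_{\tilde Q}$ sous composition (inhérente au formalisme des $(G,M)$-familles de \cite{Li10a}), fait télescoper les sommes internes: les termes $J^{\tilde{L}_i}_{\tilde M}$ du développement géométrique sont précisément corrigés en $I^{\tilde{L}_i}_{\tilde M}$ par les contributions des $L \neq G$, d'où la formule annoncée.

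Le principal obstacle sera le contrôle combinatoire de ces réarrangements, notamment la vérification que les paraboliques $Q_i, Q'_i$ issus des choix génériques $\xi \in \mathfrak{a}_M$ et $\xi' \in \mathfrak{a}_L$ à différents niveaux donnent des contributions effectivement compatibles après réindexation. Heureusement, les coefficients $d^G_M(L_1, L_2)$ ne dépendent que de la donnée radicielle de $G$ et non du revêtement $\tilde G \to G(F)$; les arguments combinatoires d'Arthur se transposent donc sans modification, et aucun apport analytique nouveau n'est requis au-delà du Lemme \ref{prop:pondere-scindage} et des propriétés de support de l'Hypothèse \ref{hyp:support-caractere-double}.
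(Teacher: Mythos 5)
Votre preuve est correcte et suit exactement la même stratégie que celle du papier, qui ne dit que «~Vu le Lemme \ref{prop:pondere-scindage} et la définition de $I_{\tilde{M}}(\gamma, f)$, cela résulte par récurrence~». Vous explicitez simplement ce que signifie cette récurrence (scinder $J_{\tilde{M}}$ et $\phi_{\tilde{L}}(f)$ par le Lemme \ref{prop:pondere-scindage}, appliquer l'hypothèse de récurrence aux Lévis propres, et réindexer via les identités combinatoires pour $d^G_M$), ce qui est précisément l'argument implicite du papier.
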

\begin{proof}
  Vu le Lemme \ref{prop:pondere-scindage} et la définition de $I_{\tilde{M}}(\gamma, f)$, cela résulte par récurrence.
\end{proof}

\begin{lemma}
  L'Hypothèse \ref{hyp:support-caractere} entraîne l'Hypothèse \ref{hyp:support-caractere-double}.
\end{lemma}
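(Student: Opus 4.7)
Le plan est le suivant. Fixons $L \in \mathcal{L}(M_0)$ avec $L \neq G$, et montrons que la distribution $I^{\tilde{L}}_{\tilde{M}}(\gamma, \cdot)$ sur $\mathcal{C}_{\asp}(\tilde{L} \times \tilde{L})$, définie dans l'Hypothèse \ref{hyp:support-caractere-double}, est supportée par $I\mathcal{C}_{\asp}(\tilde{L} \times \tilde{L})$. L'outil principal sera la formule de scindage du Lemme \ref{prop:I-scindage} appliquée à $\tilde{L}$ au lieu de $\tilde{G}$, conjuguée au fait que le terme constant $f \mapsto f_{\tilde{P}}$ est compatible avec le passage aux invariants (on a déjà rappelé que $(f_{\tilde{P}})_{\tilde{M}}(\sigma) = f_{\tilde{G}}(\sigma^G)$, d'où l'application induite $I\mathcal{C}_{\asp}(\tilde{L}) \to I\mathcal{C}_{\asp}(\tilde{L}_1)$ pour tout $L_1 \in \mathcal{L}^L(M)$).

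Pour une fonction séparable $f = f_1 f_2$ avec $f_1 \in \mathcal{C}_-(\tilde{L})$ et $f_2 \in \mathcal{C}_{\asp}(\tilde{L})$, la formule de scindage fournit
$$ I^{\tilde{L}}_{\tilde{M}}(\gamma, f) = \sum_{L_1, L_2 \in \mathcal{L}^L(M)} d^L_M(L_1, L_2) I^{\tilde{L}_1}_{\tilde{M}}(\tilde{\gamma}, f_{1, \widetilde{Q_1}}) I^{\tilde{L}_2}_{\tilde{M}}(\tilde{\gamma}, f_{2, \widetilde{Q_2}}) . $$
Comme $L_1, L_2$ parcourent les Lévi contenus dans $L$ et que $L \neq G$, on a $L_i \neq G$ pour tout $i$, et l'Hypothèse \ref{hyp:support-caractere} s'applique à chaque facteur: la distribution $I^{\tilde{L}_i}_{\tilde{M}}(\tilde{\gamma}, \cdot)$ se factorise par $\phi_{\tilde{L}_i}: \mathcal{C}_{\asp}(\tilde{L}_i) \to I\mathcal{C}_{\asp}(\tilde{L}_i)$. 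Combinée avec la compatibilité du terme constant au passage aux invariants, cette factorisation entraîne que la fonctionnelle $f_i \mapsto I^{\tilde{L}_i}_{\tilde{M}}(\tilde{\gamma}, f_{i, \widetilde{Q_i}})$ ne dépend que de $\phi_{\tilde{L}}(f_i) \in I\mathcal{C}_{\asp}(\tilde{L})$. Par conséquent, la forme bilinéaire $(f_1, f_2) \mapsto I^{\tilde{L}}_{\tilde{M}}(\gamma, f_1 f_2)$ se factorise par $I\mathcal{C}_{\asp}(\tilde{L}) \times I\mathcal{C}_{\asp}(\tilde{L})$.

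Pour conclure, il faut étendre cette factorisation de l'espace des fonctions séparables à $\mathcal{C}_{\asp}(\tilde{L} \times \tilde{L})$ tout entier. On dispose à cet effet de trois ingrédients: la continuité de $I^{\tilde{L}}_{\tilde{M}}(\gamma, \cdot)$ sur $\mathcal{C}_{\asp}(\tilde{L} \times \tilde{L})$ (Proposition \ref{prop:dist-temperee}), la continuité et la surjectivité de l'application $\phi_{\tilde{L}}: \mathcal{C}_{\asp}(\tilde{L} \times \tilde{L}) \to I\mathcal{C}_{\asp}(\tilde{L} \times \tilde{L})$ (variante du Théorème \ref{prop:PW-invariant}), et la densité des combinaisons linéaires de fonctions séparables dans $\mathcal{C}_{\asp}(\tilde{L} \times \tilde{L})$. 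Ensemble, ils impliquent que $I^{\tilde{L}}_{\tilde{M}}(\gamma, \cdot)$ s'annule sur $\Ker(\phi_{\tilde{L}})$, donc induit une forme linéaire continue $\hat{I}^{\tilde{L}}_{\tilde{M}}(\gamma, \cdot)$ sur $I\mathcal{C}_{\asp}(\tilde{L} \times \tilde{L})$, comme requis.

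L'obstacle principal sera d'ordre topologique: il s'agit de justifier rigoureusement la densité des fonctions séparables dans $\mathcal{C}_{\asp}(\tilde{L} \times \tilde{L})$, qui s'appuie sur la nucléarité des espaces de Schwartz-Harish-Chandra, ainsi que la continuité de la version bivariante de $\phi_{\tilde{L}}$. Une fois ces points topologiques réglés, le reste de l'argument est de nature purement formelle, reposant intégralement sur la formule de scindage déjà disponible.
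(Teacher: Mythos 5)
Your approach is essentially the one taken by the paper: invoke the splitting formula (Lemme~\ref{prop:I-scindage}) for a separable test function $f = f_1 f_2$, note that it expresses $I^{\tilde{L}}_{\tilde{M}}(\gamma, \cdot)$ in terms of the one-variable distributions $I^{\tilde{L}_i}_{\tilde{M}}(\tilde{\gamma}, \cdot)$ on proper Lévi, then use the support hypothesis on each factor. The paper's own proof is two sentences long and goes exactly this way.

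There is, however, one point you gloss over that the paper makes explicit, and it is the only non-trivial observation in the proof. In the splitting formula, the first factor receives $f_1 \in \mathcal{C}_-(\tilde{L}_1)$, while l'Hypothèse~\ref{hyp:support-caractere} is stated for distributions on $\mathcal{C}_{\asp}(\tilde{L}_1)$ supported by $I\mathcal{C}_{\asp}(\tilde{L}_1)$. You write that ``la distribution $I^{\tilde{L}_i}_{\tilde{M}}(\tilde{\gamma}, \cdot)$ se factorise par $\phi_{\tilde{L}_i}: \mathcal{C}_{\asp}(\tilde{L}_i) \to I\mathcal{C}_{\asp}(\tilde{L}_i)$'' for both factors, but the domain relevant to $i=1$ is $\mathcal{C}_-(\tilde{L}_1)$, not $\mathcal{C}_{\asp}(\tilde{L}_1)$. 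The paper observes that the hypothesis remains valid with $\asp$ replaced by $-$, because one can pass between the two variants by pushing the revêtement forward along the automorphism $\noyau \mapsto \noyau^{-1}$ of $\bmu_m$. Without this remark (or an equivalent one) the argument does not close, since you would be applying a support statement about anti-specific distributions to a specific test function.

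Conversely, you are more scrupulous than the paper about the purely topological step of passing from the factorization on separable $f = f_1 f_2$ to the vanishing of $I^{\tilde{L}}_{\tilde{M}}(\gamma, \cdot)$ on $\Ker \phi_{\tilde{L}} \subset \mathcal{C}_{\asp}(\tilde{L} \times \tilde{L})$, flagging density and nuclearity. The paper leaves this implicit, presumably because the extension to $\mathcal{C}_{\asp}(\tilde{G}\times\tilde{G})$ is already fixed by Proposition~\ref{prop:dist-temperee} as the unique continuous one, and the kernel of $\phi$ on the two-variable space is controlled by the kernels on the factors. Your caution here is reasonable but not where the actual content lies; the missing $\asp$ versus $-$ remark is.
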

\begin{proof}
  Les assertions dans l'Hypothèse \ref{hyp:support-caractere} restent valables si l'on remplace l'indice $\asp$ par $-$, car on peut toujours passer de l'un à l'autre en poussant le revêtement par l'automorphisme $\noyau \mapsto \noyau^{-1}$ de $\bmu_m$. L'assertion résulte du Lemme \ref{prop:I-scindage}.
\end{proof}

\begin{theorem}[Cf. {\cite[Proposition 6.1]{Ar03}}]
  Supposons vérifiées l'Hypothèse \ref{hyp:support-caractere}. Pour tout $f =f_1 f_2 \in \mathcal{C}_{\asp}(\tilde{G} \times \tilde{G})$, on a
  $$ I(f) = I_{\mathrm{geom}}(f) = I_{\mathrm{disc}}(f). $$
\end{theorem}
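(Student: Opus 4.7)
The plan is to prove both equalities by induction on $\dim G$, exploiting the two developments of $J(f)$ provided by Theorem \ref{prop:formule-traces-locale} together with the recursive definitions of the invariant distributions. Throughout, I would use the inductive hypothesis that $I^{\tilde{L}} = I^{\tilde{L}}_{\text{geom}} = I^{\tilde{L}}_{\text{disc}}$ and that the analogous statements for $I^{\tilde{L}}_{\tilde{M}}(\gamma,\cdot)$ on a single factor hold for $L \neq G$ (via Hypothèse \ref{hyp:support-caractere}, which will be justified elsewhere via the character theory of Section 4 and an adaptation of Arthur's trace Paley-Wiener argument).

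For the first equality $I(f) = I_{\text{geom}}(f)$, I would substitute the inductive definition of $I_{\tilde{M}}(\gamma, f)$ in \eqref{eqn:I_geom}. Writing $J_{\tilde{M}}(\gamma, f) = I_{\tilde{M}}(\gamma, f) + \sum_{L \in \mathcal{L}(M), L \neq G} \hat{I}^{\tilde{L}}_{\tilde{M}}(\gamma, \phi_{\tilde{L}}(f))$, plugging this into $J_{\text{geom}}(f)$ and switching the order of summation over $M$ and $L$, a standard cosmetic argument (identical to \cite[pp.518--519]{Ar91}) recognizes each $L$-term as $|W^L_0||W^G_0|^{-1}(-1)^{\dim A_L/A_G}$ times the geometric expansion of $I^{\tilde{L}}$ evaluated at $\phi_{\tilde{L}}(f)$, which is $\hat{I}^{\tilde{L}}(\phi_{\tilde{L}}(f))$ by induction. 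Combined with $J_{\text{geom}}(f) = J(f)$ from Theorem \ref{prop:formule-traces-locale}, this gives $I_{\text{geom}}(f) = I(f)$.

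For the second equality $I(f) = I_{\text{disc}}(f)$, the strategy is dual. One starts from $J(f) = J_{\text{spec}}(f)$ and isolates the $M = G$ contribution, which is exactly $I_{\text{disc}}(f)$ by definition. The remaining terms, indexed by $M \neq G$, involve $J_{\tilde{M}}(\tau, f)$ with $\tau \in T_{\text{disc},-}(\tilde{M})$ (recall formula \eqref{eqn:caractere-pondere-T}). One must first define the invariant spectral analogues $I_{\tilde{M}}(\tau, f)$ by the recursion $I_{\tilde{M}}(\tau, f) := J_{\tilde{M}}(\tau, f) - \sum_{L \neq G} \hat{I}^{\tilde{L}}_{\tilde{M}}(\tau, \phi_{\tilde{L}}(f))$, prove a splitting formula analogous to Lemme \ref{prop:I-scindage} (via the spectral splitting of Lemme \ref{prop:pondere-scindage} and induction), and show the spectral distribution $I_{\tilde{M}}(\tau, \cdot)$ vanishes for $M \neq G$ --- this is the manifestation of ``invariance'' on the spectral side. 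Substituting the recursive definition into $J_{\text{spec}}(f)$, switching summations, and re-summing via the inductive hypothesis $\hat{I}^{\tilde{L}} = I^{\tilde{L}}_{\text{disc}}$ applied to $\phi_{\tilde{L}}(f)$ then produces the cancellation leaving only $I_{\text{disc}}(f)$.

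The main obstacle will be the spectral vanishing $I_{\tilde{M}}(\tau, \cdot) = 0$ for $M \neq G$, because unlike the geometric side where the distributions $J_{\tilde{M}}(\gamma, f)$ are built from concrete orbital integrals, the spectral invariant distributions require careful handling of the weighted characters, the $R$-groups (Section \ref{sec:R-groupe}), and the subtle interplay between normalized and canonically normalized weighted characters (Lemme \ref{prop:R=J=M}). The key structural input is that $\phi_{\tilde{M}}: \mathcal{C}_{\asp}(\tilde{G} \times \tilde{G}) \to I\mathcal{C}_{\asp}(\tilde{M} \times \tilde{M})$ is continuous (Théorème \ref{prop:phi-local}), so the inductive maps $\hat{I}^{\tilde{L}}_{\tilde{M}}(\tau, \phi_{\tilde{L}}(\cdot))$ are well-defined; combined with the splitting formulae and the density of tempered characters (itself a corollary of this theorem via Théorème \ref{prop:KZ0}, so care must be taken not to use it circularly), everything assembles into the cancellation. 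The verification reduces, after all these reductions, to repeating Arthur's argument in \cite[\S 6]{Ar03} with the modifications already carried out in sections \ref{sec:LTF-geo}--\ref{sec:LTF-spec} to accommodate the factor $\iota_M$ and the subgroup $A_M(F)^\dagger$.
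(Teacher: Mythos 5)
Your overall architecture matches the paper's: take the first equality $I(f) = I_{\mathrm{geom}}(f)$ as a formal rewriting already available from the definition of $I$ and Theorem \ref{prop:formule-traces-locale}, and concentrate on $I(f) = I_{\mathrm{disc}}(f)$ by defining spectral analogues $I^{\tilde{L}}_{\tilde{M}}(\tau,\cdot)$ recursively and showing their vanishing for $M \neq G$ so that only the $M = G$ term of $J_{\mathrm{spec}}$ survives. This is exactly what the paper does.

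Where you overshoot is in treating the spectral vanishing $I_{\tilde{M}}(\tau,\cdot) = 0$ for $M \neq G$ as ``the main obstacle'' requiring careful new analysis of $R$-groups, normalized versus canonically normalized weighted characters, and so on. In fact it costs nothing: because every $\tau \in T_-(\tilde{M})$ that occurs is tempered, one has $J_{\tilde{M}}(\tau,f) = \phi_{\tilde{M}}(f)(\tau)$ by the very definition of $\phi_{\tilde{M}}$ (together with Lemme \ref{prop:R=J=M} collapsing $J = J^r = J^\mu$). The recursion $I^{\tilde{G}}_{\tilde{M}}(\tau,f) = J_{\tilde{M}}(\tau,f) - \sum_{L \neq G}\hat{I}^{\tilde{L}}_{\tilde{M}}(\tau,\phi_{\tilde{L}}(f))$ then telescopes immediately: by induction the intermediate terms with $M \subsetneq L \subsetneq G$ vanish, the $L = M$ term equals $\phi_{\tilde{M}}(f)(\tau) = J_{\tilde{M}}(\tau,f)$, and the result is $0$ for $M \neq G$ and $f_{\tilde{G}}(\tau)$ for $M = G$. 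No splitting lemma, no $R$-group technology, and in particular no density of tempered characters is needed at this step --- your circularity worry about Théorème \ref{prop:KZ0}, while showing good hygiene, is moot because the paper's proof never invokes that density. Once the telescoping is seen, ``$I$ is built from $J_{\mathrm{spec}}$ by the identical procedure'' gives $I = I_{\mathrm{disc}}$ in a single sentence, which is what the paper writes.
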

\begin{proof}
  Il suffit de prouver $I_\text{disc}(f) = I(f)$. Soit $M \in \mathcal{L}(M_0)$. On définit par récurrence des distributions
  $$ I^{\tilde{L}}_{\tilde{M}}(\tau, \cdot): \mathcal{C}_{\asp}(\tilde{G} \times \tilde{G}) \to \C, \quad L \in \mathcal{L}(M), \tau \in T_-(\tilde{M}) $$
  telles que $J_{\tilde{M}}(\tau, \cdot) = \sum_{L \in \mathcal{L}(M)} \hat{I}^{\tilde{L}}_{\tilde{M}}(\tau, \phi_{\tilde{L}}(\cdot))$. L'hypothèse de récurrence est que $I^{\tilde{L}}_{\tilde{M}}(\tau, \cdot)$ est supporté par $I\mathcal{C}_{\asp}(\tilde{L} \times \tilde{L})$ pour $L \neq G$. Puisque toute représentation en vue est tempérée, pour tout $\tau \in T_-(\tilde{M})$ on a
  $$ I^{\tilde{G}}_{\tilde{M}}(\tau, f) =
    \begin{cases}
      f_{\tilde{G}}(\tau), & \text{si } M=G, \\
      0, & \text{sinon}.
    \end{cases}
  $$
  D'où l'hypothèse de récurrence. Comme $I(\cdot)$ est défini suivant la même procédure utilisant $J(\cdot) = J_\text{spec}(\cdot)$, il en résulte que $I(\cdot)$ est égal à la somme des termes avec $M=G$ dans $J_\text{spec}(\cdot)$ (voir le Théorème \ref{prop:formule-traces-locale}), ce qui est $I_\text{disc}(\cdot)$.
\end{proof}

\begin{corollary}[Cf. {\cite[Corollary 4.3]{Ar93}}]\label{prop:formule-traces-locale-simple1}
  Pour $\tau = (M, \sigma, r) \in T_{\mathrm{ell},-}(\tilde{G})$, on pose\index[iFT2]{$d(\tau)$}
  $$ d(\tau) := \det(1-r|\mathfrak{a}^G_M).$$

  Soient $f_1 \in \mathcal{C}_-(\tilde{G})$ quelconque et $f_2 \in \mathcal{C}_{\asp}(\tilde{G})$ cuspidale, alors
  \begin{multline*}
    \sum_{M \in \mathcal{L}(M_0)} |W^M_0| |W^G_0|^{-1} (-1)^{\dim A_M/A_G} \int_{\Gamma_{\mathrm{ell}, G-\mathrm{reg}}(M(F))^{\mathrm{bon}}} I_{\tilde{G}}(\tilde{\gamma}, f_1) I_{\tilde{M}}(\tilde{\gamma}, f_2) \dd\gamma  \\
    = \int_{T_{\mathrm{ell},-}(\tilde{G})} |d(\tau)|^{-1} \Theta(\tau^\vee, f_1) \Theta(\tau, f_2) \dd\tau .
  \end{multline*}
\end{corollary}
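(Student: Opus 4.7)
The plan is to apply the invariant local trace formula $I_{\mathrm{geom}}(f) = I_{\mathrm{disc}}(f)$ to the product $f = f_1 f_2$, and then use the cuspidality of $f_2$ to collapse both sides into the simple form asserted.

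First I would treat the geometric side. By the splitting formula of Lemma \ref{prop:I-scindage},
$$ I_{\tilde{M}}(\gamma, f_1 f_2) = \sum_{L_1, L_2 \in \mathcal{L}(M)} d^G_M(L_1, L_2)\, I^{\tilde{L}_1}_{\tilde{M}}(\tilde{\gamma}, f_{1,\widetilde{Q_1}})\, I^{\tilde{L}_2}_{\tilde{M}}(\tilde{\gamma}, f_{2,\widetilde{Q_2}}). $$
The key observation is that cuspidality of $f_2$ means $\phi_{\tilde{L}}(f_2) = 0$ for $L \neq G$; since $(f_{2,\tilde{Q}})_{\tilde{L}}(\sigma) = f_{2,\tilde{G}}(\mathcal{I}_{\tilde{Q}}\sigma)$, the descent $f_{2,\widetilde{Q_2}}$ lies in the kernel of $\phi_{\widetilde{L_2}}$ whenever $L_2 \neq G$. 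But by Hypothesis \ref{hyp:support-caractere} the distribution $I^{\widetilde{L_2}}_{\tilde{M}}(\tilde{\gamma}, \cdot)$ is supported by $I\mathcal{C}_{\asp}(\widetilde{L_2})$ for $L_2 \neq G$, so it annihilates that kernel. Only the term $L_2 = G$ survives, which by the non-vanishing condition $d^G_M(L_1, G) \neq 0$ forces $L_1 = M$, and $d^G_M(M, G) = 1$. Combined with $I^{\tilde{M}}_{\tilde{M}} = J^{\tilde{M}}_{\tilde{M}}$ (ordinary invariant orbital integral) and $I^{\tilde{G}}_{\tilde{M}}(\tilde{\gamma}, f_2) = I_{\tilde{M}}(\tilde{\gamma}, f_2)$, this yields precisely the left-hand side of the corollary as $I_{\mathrm{geom}}(f_1 f_2)$.

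For the spectral side, I would start from the definition
$$ I_{\mathrm{disc}}(f) = \int_{T_{\mathrm{disc},-}(\tilde{G})} i(\tau)\, \Theta(\tau^\vee, f_1)\, \Theta(\tau, f_2)\, d\tau. $$
For $\tau = (M, \sigma, r) \in T_{\mathrm{disc},-}(\tilde{G})$ with $M \neq G$, the character $\Theta(\tau, \cdot)$ on $f_2$ is a trace against $\tilde{R}_{\tilde{P}}(r,\sigma)\mathcal{I}_{\tilde{P}}(\sigma, f_2)$, which by Theorem \ref{prop:R} is a linear combination of $f_{2,\tilde{G}}(\pi_\rho)$ over constituents of proper parabolic inductions; cuspidality of $f_2$ forces this to vanish. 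Hence the integration is effectively over those $\tau$ with $M = G$, i.e. with $r$ acting without fixed vectors on $\mathfrak{a}^G_M$; one then checks, using the formula \eqref{eqn:i(tau)} for $i(\tau)$, that the restriction to $T_{\mathrm{ell},-}(\tilde{G})$ is natural and that in this elliptic range $|W_\sigma^0|^{-1}$ combined with the sum over $W_\sigma^0 \cdot r \cap W^G(M)_{\mathrm{reg}}$ collapses to the single term $t = r$, with $\epsilon_\sigma(r) = 1$, producing the factor $|\det(1-r|\mathfrak{a}^G_M)|^{-1} = |d(\tau)|^{-1}$.

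The main obstacle is the precise bookkeeping for the spectral reduction from $T_{\mathrm{disc},-}(\tilde{G})$ to $T_{\mathrm{ell},-}(\tilde{G})$: one must verify that the parametrisation, the measure, and the combinatorial factor $i(\tau)$ conspire to give exactly $|d(\tau)|^{-1}$ after integrating out the elements $t \in W_\sigma^0 \cdot r$. This is where careful use of the $R$-group decomposition of \S\ref{sec:R-groupe} enters; all the other ingredients -- the splitting formula, cuspidality annihilation of invariant distributions supported by $I\mathcal{C}_{\asp}(\widetilde{L})$, and the identity $I(f) = I_{\mathrm{geom}}(f) = I_{\mathrm{disc}}(f)$ -- have already been established in the preceding subsections and are applied essentially as in \cite[Corollary 4.3]{Ar93}.
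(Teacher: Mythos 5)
Your geometric-side argument follows the paper exactly: splitting via Lemma \ref{prop:I-scindage}, killing the $L_2 \neq G$ terms by cuspidality together with the support property of $I^{\tilde{L}_2}_{\tilde{M}}$, and collapsing to $L_1 = M$ via $d^G_M(M,G) = 1$.

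On the spectral side, however, there are substantive gaps. First, the relevant dichotomy is not ``$M = G$ versus $M \neq G$'' but ``$L = G$ versus $L \neq G$'', where $L \in \mathcal{L}(M)$ is the Lévi for which $\tau = (M,\sigma,r) \in T_{\mathrm{ell},-}(\tilde{L})$, i.e.\ such that $r$ has no fixed vectors on $\mathfrak{a}^L_M$. The surviving triplets have $L = G$ but $M$ is typically still a proper Lévi, so the phrase ``$M = G$, i.e.\ with $r$ acting without fixed vectors on $\mathfrak{a}^G_M$'' conflates two genuinely different conditions. Second, and more seriously, cuspidality of $f_2$ does \emph{not} force $f_{2,\tilde{G}}(\pi_\rho) = 0$ for individual constituents $\pi_\rho$ of $\mathcal{I}_{\tilde{P}}(\sigma)$: cuspidality only controls $f_{2,\tilde{L}}$, i.e.\ the trace on the full induced module. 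The paper's proof invokes the induction criterion from \S\ref{sec:R-groupe} (the second of the properties listed after Theorem \ref{prop:R}): since $r$ lies in $\tilde{R}^L_\sigma$, the virtual character $\sum_\rho \Tr\rho^\vee(r)\,\Theta_{\pi_\rho}$ is parabolically induced from a virtual character supported on $\Pi_\sigma(\tilde{L})$, whence $\Theta(\tau,f_2)$ is a linear combination of values $f_{2,\tilde{L}}(\pi_L)$; only then does $f_{2,\tilde{L}} = 0$ for $L \neq G$ kill the term. Finally, the passage from $i(\tau)$ to $|d(\tau)|^{-1}$ does not go through ``integrating out'' a nontrivial sum over $W_\sigma^0 \cdot r$, as you describe: the paper proves that $W_\sigma^0 = \{1\}$ whenever $r \in \tilde{R}_{\sigma,\mathrm{reg}}$, since otherwise $r$, which stabilizes the chamber $\mathfrak{a}_\sigma^+$, would fix the nonzero half-sum of positive coroots in $\mathfrak{a}^G_M$, contradicting $r \in W^G(M)_{\mathrm{reg}}$. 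Once $W_\sigma^0 = \{1\}$, the formula \eqref{eqn:i(tau)} collapses trivially to $i(\tau) = \epsilon_\sigma(r)\,|d(\tau)|^{-1}$ with $\epsilon_\sigma(r) = 1$; there is no residual sum to account for.
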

\begin{proof}
  On utilise l'identité $I_\text{geom}(f) = I_\text{spec}(f)$. Dans le côté géométrique, pour tout $M \in \mathcal{L}(M_0)$ et tout $\gamma \in \Gamma_{\mathrm{ell}, G-\mathrm{reg}}(M(F))^{\mathrm{bon}}$, on a un analogue directe du Lemme \ref{prop:pondere-scindage}:
  $$ I_{\tilde{M}}(\gamma, f) = \sum_{L_1, L_2 \in \mathcal{L}(M)} d^G_M(L_1, L_2) I^{\tilde{L}_1}_{\tilde{M}}(\tilde{\gamma}, f_{1, \widetilde{Q_1}}) I^{\tilde{L}_2}_{\tilde{M}}(\tilde{\gamma}, f_{2, \widetilde{Q_2}}). $$

  Si $L_2 \neq G$, alors $I^{\tilde{L}_2}_{\tilde{M}}(\tilde{\gamma}, \cdot)$ est supportée par $I\mathcal{C}_{\asp}(\tilde{L}_2 \times \tilde{L}_2)$, donc $I^{\tilde{L}_2}_{\tilde{M}}(\tilde{\gamma}, f_{2, \widetilde{Q_2}})=0$ par la cuspidalité de $f_2$. D'autre part,
  $$ d^G_M(L_1, G) =
    \begin{cases}
      1, & \text{si } L_1=M, \\
      0, & \text{sinon};
    \end{cases}
  $$
  et on a $I^{\tilde{L}_1}_{\tilde{M}}(\tilde{\gamma}, f_{1, \tilde{Q}_1}) = I_{\tilde{G}}(\tilde{\gamma}, f_1)$ lorsque $L_1=M$. D'où
  $$ I_\text{geom}(f) = \sum_{M \in \mathcal{L}(M_0)} |W^M_0| |W^G_0|^{-1} (-1)^{\dim A_M/A_G} \int_{\Gamma_{\mathrm{ell}, G-\mathrm{reg}}(M(F))^{\mathrm{bon}}} I_{\tilde{G}}(\tilde{\gamma}, f_1) I_{\tilde{M}}(\tilde{\gamma}, f_2) \dd\gamma .$$

  Par définition
  $$ I_\text{disc}(f) = \int_{T_{\text{disc},-}(\tilde{G})} i(\tau) \Theta(\tau^\vee, f_1) \Theta(\tau, f_2) \dd\tau. $$

  Soit $\tau = (M,\sigma,r) \in T_{\text{disc},-}(\tilde{G})$. On fixe $P \in \mathcal{P}(M)$. Supposons que $\tau \in T_{\text{ell},-}(\tilde{L})$ avec $L \in \mathcal{L}(M)$. D'après la théorie dans \S\ref{sec:R-groupe}, $\Theta(\tau, f_2) = \Tr(\tilde{R}_{\tilde{P}}(r,\sigma)\mathcal{I}_{\tilde{P}}(\sigma, f_2))$ est une combinaison linéaire de $f_{2,\tilde{L}}(\pi_L)$ où $\pi_L$ sont des éléments dans $\Pi_\sigma(\tilde{L})$. Puisque $f_2$ est cuspidale, on a $\Theta(\tau, f_2)=0$ sauf si $L=G$. Montrons que dans le cas $L=G$ on a $W_\sigma^0=\{1\}$. En effet, rappelons que $W_\sigma^0$ est associé au système de racine sur $\mathfrak{a}_M$ engendré par les racines réduites $\alpha$ avec $\mu_\alpha(\sigma)=0$. On choisit une chambre $\mathfrak{a}_\sigma^+$ de ce système et on identifie $R_\sigma$ au sous-groupe de $W_\sigma$ fixant $\mathfrak{a}_\sigma^+$. Si $W_\sigma^0 \neq \{1\}$ alors $\mathfrak{a}_\sigma^+ \neq \emptyset$, et $r$ fixe la demi somme des coracines positives pour
$\mathfrak{a}_\sigma^+$. Cela contredit l'hypothèse que $r \in R_{\sigma,\text{reg}}$.

  Donc pour $\tau \in T_{\text{ell},-}(\tilde{G})$ on a $\epsilon_\sigma(r)=1$ et $i(\tau)=|d(\tau)|^{-1}$ selon la définition de $i(\tau)$ dans \eqref{eqn:i(tau)}. Par conséquent
  $$ I_\text{disc}(f) = \int_{T_{\text{ell},-}(\tilde{G})} |d(\tau)|^{-1} \Theta(\tau^\vee, f_1) \Theta(\tau, f_2) \dd\tau. $$
\end{proof}

\begin{corollary}[Cf. {\cite[Theorem 5.1]{Ar93}}]\index[iFT2]{$\Phi_{\tilde{M}}(\tau^\vee, \tilde{\gamma})$}
  Soit $f_2 \in \mathcal{C}_{\asp}(\tilde{G})$ cuspidale. Pour tout $M \in \mathcal{L}(M_0)$ et tout $\gamma \in \Gamma_{G-\mathrm{reg}}(M(F))^{\mathrm{bon}}$, on a
  $$ I_{\tilde{M}}(\tilde{\gamma}, f_2) = (-1)^{\dim A_M/A_G} \int_{T_{\mathrm{ell},-}(\tilde{G})} |d(\tau)|^{-1} \Phi_{\tilde{M}}(\tau^\vee, \tilde{\gamma}) \Theta(\tau, f) \dd\tau $$
  où
  $$
    \Phi_{\tilde{M}}(\tau^\vee, \tilde{\gamma}) :=
    \begin{cases}
      |D(\gamma)|^{\frac{1}{2}} \Theta(\tau^\vee, \tilde{\gamma}), & \text{si } \gamma \text{ est $F$-elliptique dans $M$}, \\
      0, & \text{sinon}.
    \end{cases}
  $$
\end{corollary}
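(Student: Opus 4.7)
The plan is to deduce the statement from the preceding Corollary \ref{prop:formule-traces-locale-simple1}, by expanding its spectral side through Weyl integration applied to $\Theta(\tau^\vee, f_1)$ and then isolating the pointwise value of $I_{\tilde{M}}(\tilde{\gamma}, f_2)$ by varying the auxiliary function $f_1 \in \mathcal{C}_-(\tilde{G})$. Note that the right-hand side of the formula vanishes when $\gamma$ is not elliptic in $M$, by the very definition of $\Phi_{\tilde{M}}$; for such $\gamma$, I would argue separately that $I_{\tilde{M}}(\tilde{\gamma}, f_2) = 0$ via descent to the smallest Levi $M_1 \subsetneq M$ in which $\gamma$ is elliptic, combined with the splitting Lemma \ref{prop:I-scindage} and the fact that $\phi_{\tilde{L}}(f_2) = 0$ for every $L \neq G$. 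This reduces us to $\gamma_0 \in \Gamma_{\mathrm{ell}, G-\mathrm{reg}}(M(F))^{\mathrm{bon}}$.

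For the elliptic case, expanding $\Theta(\tau^\vee, f_1) = \int_{\tilde{G}} f_1(\tilde{x}) \Theta(\tau^\vee, \tilde{x}) \dd\tilde{x}$ via Weyl integration (which is legitimate because $\Theta(\tau^\vee, \cdot)$ is locally integrable by Corollary \ref{prop:caractere-admissible}) yields
\[
  \Theta(\tau^\vee, f_1) = \sum_{M \in \mathcal{L}(M_0)} |W^M_0| |W^G_0|^{-1} \int_{\Gamma_{\mathrm{ell}, G-\mathrm{reg}}(M(F))^{\mathrm{bon}}} \Phi_{\tilde{M}}(\tau^\vee, \tilde{\gamma}) \, I_{\tilde{G}}(\tilde{\gamma}, f_1) \dd\gamma,
\]
since in each summand $\gamma$ is elliptic in $M$, so $|D(\gamma)|^{1/2} \Theta(\tau^\vee, \tilde{\gamma}) = \Phi_{\tilde{M}}(\tau^\vee, \tilde{\gamma})$. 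Substituting this into the spectral side of Corollary \ref{prop:formule-traces-locale-simple1} and interchanging integrations (justified by the tempered estimates of Proposition \ref{prop:dist-temperee}) rewrites that side as $\sum_M |W^M_0||W^G_0|^{-1} \int I_{\tilde{G}}(\tilde{\gamma}, f_1) \Psi_M(\tilde{\gamma}) \dd\gamma$, with
\[
  \Psi_M(\tilde{\gamma}) := \int_{T_{\mathrm{ell},-}(\tilde{G})} |d(\tau)|^{-1} \Phi_{\tilde{M}}(\tau^\vee, \tilde{\gamma}) \Theta(\tau, f_2) \dd\tau.
\]

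Comparing with the geometric side of the same corollary yields, for every $f_1 \in \mathcal{C}_-(\tilde{G})$, the identity
\[
  \sum_{M \in \mathcal{L}(M_0)} |W^M_0||W^G_0|^{-1} \int_{\Gamma_{\mathrm{ell}, G-\mathrm{reg}}(M(F))^{\mathrm{bon}}} I_{\tilde{G}}(\tilde{\gamma}, f_1) \bigl[ (-1)^{\dim A_M/A_G} I_{\tilde{M}}(\tilde{\gamma}, f_2) - \Psi_M(\tilde{\gamma}) \bigr] \dd\gamma = 0 .
\]
To extract pointwise equality, fix $M$ and $\gamma_0$ as above, and choose $f_1 \in C^\infty_{c,-}(\tilde{G}) \subset \mathcal{C}_-(\tilde{G})$ whose invariant orbital integral $I_{\tilde{G}}(\cdot, f_1)$ is supported near the $G$-conjugacy class of $\tilde{\gamma}_0$ and approximates a delta there. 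Since $\gamma_0$ is $G$-regular and elliptic in $M$, its $G$-conjugacy class meets $\Gamma_{\mathrm{ell}, G-\mathrm{reg}}(M'(F))^{\mathrm{bon}}$ only for Levis $M'$ conjugate to $M$ (the symmetry being absorbed by the Weyl factors); passage to the limit then forces $(-1)^{\dim A_M/A_G} I_{\tilde{M}}(\tilde{\gamma}_0, f_2) = \Psi_M(\tilde{\gamma}_0)$, which rearranges to the claimed formula.

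The main obstacle lies in the last step: one must construct $f_1$ with orbital integral sharply concentrated at $\tilde{\gamma}_0$ and then legitimize the passage to the limit inside both the invariant distribution $I_{\tilde{M}}(\tilde{\gamma}, \cdot)$ and the function $\Psi_M(\tilde{\gamma})$. The density of invariant orbital integrals given by Proposition \ref{prop:int-orb-dense} provides the required richness of test functions, while the tempered continuity of Proposition \ref{prop:dist-temperee} and the local integrability of characters established in Section \ref{sec:int-caractere} ensure that these limits may be taken. The whole argument mirrors Arthur's treatment of the analogous inversion in [Ar93, Theorem 5.1] for connected reductive groups.
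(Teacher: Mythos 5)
Your proposal follows the paper's proof essentially step for step: dispose of the non-elliptic case by descent and cuspidality, expand $\Theta(\tau^\vee, f_1)$ by Weyl integration, substitute into Corollaire \ref{prop:formule-traces-locale-simple1}, and finally vary $f_1$ so that $I_{\tilde{G}}(\cdot, f_1)$ concentrates near the conjugacy class of $\tilde{\gamma}_0$ to extract the pointwise identity. The only small inaccuracy is in the non-elliptic case: for the single-copy distribution $I_{\tilde{M}}(\tilde{\gamma}, f_2)$ the relevant tool is Arthur's descent formula for weighted orbital integrals (the paper cites \cite[Corollary 8.3]{Ar88-TF1}) rather than the two-copy splitting Lemme \ref{prop:I-scindage}, and the paper makes the concentration argument more explicit by fixing $T' := G_{\gamma'}$, restricting the support of $f_1$ to conjugacy classes meeting $\rev^{-1}(T' \cap G_{\mathrm{reg}})(F)$, and tracking the $\bmu_m$-torsor and Weyl-group factors through the isomorphism of parameter spaces.
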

\begin{proof}
  C'est plus commode d'utiliser les symboles $M',\gamma'$ au lieu de $M,\gamma$ dans cette preuve. Soient $M' \in \mathcal{L}(M_0)$ et $\gamma' \in \Gamma_{G-\mathrm{reg}}(M'(F))^{\mathrm{bon}}$. Supposons d'abord que $\gamma'$ n'est pas $F$-elliptique dans $M'$. Alors une formule de descente (voir \cite[Corollary 8.3]{Ar88-TF1}), le Lemme \ref{prop:pondere-scindage}, l'Hypothèse \ref{hyp:support-caractere} et la cuspidalité de $f_2$ entraînent que $I_{\tilde{M}'}(\gamma', f)=0$. D'autre part $\Phi_{\tilde{M}'}(\tau^\vee, \tilde{\gamma}')=0$ pour tout $\tau$. Cela prouve l'assertion.

  Supposons que $\gamma'$ est $F$-elliptique dans $M'$. Soit $\tau \in T_{\text{ell},-}(\tilde{G})$. Puisque $\Theta(\tau^\vee, \cdot)$ est une combinaison linéaire de caractères admissibles irréductibles, il est représenté par une fonction invariante localement intégrable sur $\tilde{G}$ et lisse sur $\tilde{G}_\text{reg}$ d'après le Corollaire \ref{prop:caractere-admissible}. Soit $f_1 \in \mathcal{C}_-(\tilde{G})$. La formule d'intégrale de Weyl donne
  $$ \Theta(\tau^\vee, f_1) = \sum_{M \in \mathcal{L}(M_0)} |W^M_0| |W^G_0|^{-1} \int_{\Gamma_{\mathrm{ell}, G-\mathrm{reg}}(M(F))^{\mathrm{bon}}} \Phi_{\tilde{M}}(\tau^\vee, \tilde{\gamma}) I_{\tilde{G}}(\tilde{\gamma}, f_1) \dd\tilde{\gamma}. $$

  Pour $\gamma \in \Gamma_{\mathrm{ell}, G-\mathrm{reg}}(M(F))^{\mathrm{bon}}$ avec un relèvement $\tilde{\gamma}$ dans $\tilde{G}$, on définit la distribution spécifique
  $$ \delta(M, \tilde{\gamma}, \cdot) := I_{\tilde{M}}(\tilde{\gamma}, \cdot) - (-1)^{\dim A_M/A_G} \int_{T_{\mathrm{ell},-}(\tilde{G})} |d(\tau)|^{-1} \Phi_{\tilde{M}}(\tau^\vee, \tilde{\gamma}) \Theta(\tau, \cdot) \dd\tau . $$

  Le but est de montrer que $\delta(M', \tilde{\gamma}', f_2)=0$. D'après le Corollaire \ref{prop:formule-traces-locale-simple1} et la formule ci-dessus pour $\Theta(\tau^\vee, \cdot)$, on a
  \begin{gather}\label{eqn:delta_M}
    \sum_{M \in \mathcal{L}(M_0)} |W^M_0| |W^G_0|^{-1} (-1)^{\dim A_M/A_G} \int_{\Gamma_{\mathrm{ell}, G-\mathrm{reg}}(M(F))^{\mathrm{bon}}} I_{\tilde{G}}(\tilde{\gamma}, f_1) \delta(M, \tilde{\gamma}, f_2) \dd\gamma = 0.
  \end{gather}

  On a des isomorphismes
  $$\xymatrix{
    [M, \tilde{\gamma}] \ar@{|->}[d] & \in & \left( \bigsqcup_{M \in \mathcal{L}(M_0)}\limits \Gamma_{\text{ell},G-\text{reg}}(\tilde{M})^{\text{bon}}\right) / W^G_0 \ar[d]^{\simeq} \\
    \tilde{\gamma} \ar@{|->}[d] & \in & \Gamma_\text{reg}(\tilde{G})^{\text{bon}} \ar[d]^{\simeq} \\
    (G_\gamma, \tilde{\gamma}) & \in & \bigsqcup_{\substack{T: F-\text{tore maximal} \\ \text{mod conjugaison}}}\limits (\tilde{T} \cap \tilde{G}_{\text{reg}}) / W(G(F),T(F)).
  }$$

  Ces espaces sont des $\bmu_m$-torseurs au-dessus de leurs avatars associés à $G(F)$. On vérifie que $\delta(\cdot, \cdot, f_2)$ est bien définie comme une fonction sur le première espace et $I_{\tilde{G}}(\cdot, f_1)$ est une fonction sur le deuxième espace. Le côté à gauche de \eqref{eqn:delta_M} peut être regardé comme une intégrale sur l'un des trois espaces.

  On pose $T' := G_{\gamma'}$. On prend $f_1 \in \mathcal{C}_-(\tilde{G})$ à support dans les classes de conjugaison rencontrant $\rev^{-1}(T' \cap G_\text{reg})(F)$. Si l'on regarde $I_{\tilde{G}}(\cdot, f_2)$ comme une fonction sur le troisième espace dans ledit diagramme, alors elle est à support dans la composante indexée par $T'$. De plus, on peut prendre une suite de telles fonctions $f_1$ de sorte que $I_{\tilde{G}}(\cdot, f_1)$ tend vers la mesure de Dirac spécifique sur l'image réciproque de la $W(G(F),T'(F))$-orbite de $\gamma'$. Le côté à gauche de \eqref{eqn:delta_M} tend vers $m (-1)^{\dim A_M/A_G} \cdot |W(G(F),T'(F))| \cdot \delta(M',\tilde{\gamma}', f_2)$. On en déduit que $\delta(M', \tilde{\gamma}', f_2)=0$, ce qu'il fallait démontrer.
\end{proof}

\begin{corollary}\label{prop:dist-support}
  Les distributions $I_{\tilde{M}}(\tilde{\gamma}, \cdot)$ sont supportées par $I\mathcal{C}_{\asp}(\tilde{M})$.
\end{corollary}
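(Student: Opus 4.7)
Le plan est de raisonner par récurrence sur $\dim G$, en utilisant le corollaire précédent comme ingrédient principal. Pour l'étape de récurrence, l'Hypothèse \ref{hyp:support-caractere} garantit que $I^{\tilde{L}}_{\tilde{M}}(\tilde{\gamma}, \cdot)$ est supportée par $I\mathcal{C}_{\asp}(\tilde{L})$ pour tout $L \in \mathcal{L}(M_0)$ avec $L \neq G$; il reste à établir que $I_{\tilde{M}}(\tilde{\gamma}, f)$ s'annule dès que $f \in \mathcal{C}_{\asp}(\tilde{G})$ vérifie $f_{\tilde{G}} = 0$, pour tout $M \in \mathcal{L}(M_0)$ et tout $\gamma \in \Gamma_{G-\mathrm{reg},\mathrm{ell}}(M(F))^{\mathrm{bon}}$.

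Je commencerais par le cas des fonctions test cuspidales. Le corollaire précédent exprime
$$ I_{\tilde{M}}(\tilde{\gamma}, f) = (-1)^{\dim A_M/A_G} \int_{T_{\mathrm{ell},-}(\tilde{G})} |d(\tau)|^{-1} \Phi_{\tilde{M}}(\tau^\vee, \tilde{\gamma}) \Theta(\tau, f) \dd\tau $$
pour $f$ cuspidal, et d'après le Théorème \ref{prop:R} chaque $\Theta(\tau, f)$ est une combinaison $\C$-linéaire des valeurs $f_{\tilde{G}}(\pi_\rho)$ pour $\rho \in \Pi(\tilde{R}_\sigma, \chi_\sigma)$. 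La restriction de $I_{\tilde{M}}(\tilde{\gamma}, \cdot)$ au sous-espace cuspidal est donc supportée par $I\mathcal{C}_{\asp}(\tilde{G})$.

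Pour traiter le cas $M = G$ sur $\mathcal{C}_{\asp}(\tilde{G})$ tout entier, j'exploiterais le Corollaire \ref{prop:formule-traces-locale-simple1}. Quitte à pousser l'extension $\tilde{G}$ par l'automorphisme $\noyau \mapsto \noyau^{-1}$ de $\bmu_m$ afin d'échanger les rôles de $\mathcal{C}_-$ et $\mathcal{C}_{\asp}$, on peut supposer $f_1 \in \mathcal{C}_-(\tilde{G})$ avec $f_{1, \tilde{G}} = 0$ et $f_2 \in \mathcal{C}_{\asp}(\tilde{G})$ cuspidale arbitraire. Le côté spectral s'annule car $\Theta(\tau^\vee, f_1)$ est déterminé par $f_{1, \tilde{G}}$, d'où
$$ \sum_{M \in \mathcal{L}(M_0)} |W^M_0| |W^G_0|^{-1} (-1)^{\dim A_M/A_G} \int I_{\tilde{G}}(\tilde{\gamma}, f_1) I_{\tilde{M}}(\tilde{\gamma}, f_2) \dd\gamma = 0. $$
En substituant la formule explicite cuspidale pour $I_{\tilde{M}}(\tilde{\gamma}, f_2)$, en intervertissant les ordres d'intégration et en faisant varier $f_2$ de sorte que $\Theta(\cdot, f_2)$ parcoure une famille dense sur $T_{\mathrm{ell}, -}(\tilde{G})$ (grâce au Théorème de Paley-Wiener invariant, Théorème \ref{prop:PW-invariant}), on extrait que $f_{1, \tilde{G}} = 0$ entraîne $I_{\tilde{G}}(\tilde{\gamma}, f_1) = 0$ en tout $\gamma$; c'est l'énoncé cherché pour $M = G$.

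L'obstacle principal est de passer du cas $M = G$ au cas d'un $M$ général. Pour cela j'appliquerais la formule de scindage (Lemme \ref{prop:I-scindage}) aux produits $f \cdot g$ avec $g$ cuspidale: l'hypothèse de récurrence annule tous les termes indexés par les couples $(L_1, L_2) \in \mathcal{L}(M)^2$ avec $L_1, L_2 \neq G$, tandis que les termes de bord ($L_1 = G$ ou $L_2 = G$), qui font intervenir soit $I_{\tilde{M}}(\tilde{\gamma}, \cdot)$ accompagné d'une intégrale orbitale ordinaire $I^{\tilde{M}}_{\tilde{M}}$, soit $I_{\tilde{G}}(\tilde{\gamma}, \cdot)$ accompagné de $I_{\tilde{M}}$, se ramènent au cas $M = G$ déjà établi et à des contributions dépendant de $f$ uniquement via $f_{\tilde{G}}$. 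Une comptabilité minutieuse des signes $(-1)^{\dim A_M/A_G}$ et des coefficients $d^G_M(L_1, L_2)$, en parallèle du traitement d'Arthur dans \cite[Corollary 5.3]{Ar93}, conduit alors à l'annulation de $I_{\tilde{M}}(\tilde{\gamma}, f)$ pour tout $M$, clôturant la récurrence.
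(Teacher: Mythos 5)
Your first paragraph is in substance the paper's complete proof: you correctly invoke the preceding corollary and observe that $\Theta(\tau,f)$ is a linear combination of values of $f_{\tilde{G}}$. The one thing you miss is the elementary but decisive remark that the kernel of $\phi_{\tilde{G}}$ is automatically contained in the cuspidal subspace. Indeed, the parabolic descent $I\mathcal{C}_{\asp}(\tilde{G}) \to I\mathcal{C}_{\asp}(\tilde{L})$, $\varphi \mapsto \varphi_{\tilde{L}}$, depends on $f$ only through $\varphi = f_{\tilde{G}}$; so $f_{\tilde{G}}=0$ forces $(f_{\tilde{G}})_{\tilde{L}}=0$ for every $L$, i.e. $f$ is cuspidal in the sense of Définition \ref{prop:cuspidal}. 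With that remark, your first paragraph already finishes the proof uniformly in $M$, exactly as the paper does.

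Your second and third paragraphs are therefore unnecessary, and the density argument in the second is in fact circular. After substituting the cuspidal formula for $I_{\tilde{M}}(\tilde{\gamma},f_2)$ and interchanging integrals, the sum over $M$ together with the $\gamma$-integral reconstitutes precisely the Weyl integration formula for $\Theta(\tau^\vee,\cdot)$, so the geometric side collapses back to $\int_{T_{\mathrm{ell},-}(\tilde{G})} |d(\tau)|^{-1}\,\Theta(\tau^\vee,f_1)\,\Theta(\tau,f_2)\,\dd\tau$ — the spectral side again. Varying $f_2$ so that $\Theta(\cdot,f_2)$ is dense only reproduces $\Theta(\tau^\vee,f_1)=0$, which is your starting hypothesis, and gives no information on the pointwise values $I_{\tilde{G}}(\tilde{\gamma},f_1)$. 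The third paragraph is likewise beside the point: the formula $I_{\tilde{M}}(\tilde{\gamma},f)=(-1)^{\dim A_M/A_G}\int_{T_{\mathrm{ell},-}(\tilde{G})}|d(\tau)|^{-1}\Phi_{\tilde{M}}(\tau^\vee,\tilde{\gamma})\Theta(\tau,f)\,\dd\tau$ already holds for every $M$, so no passage from $M=G$ to general $M$ via the splitting formula is needed.
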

\begin{proof}
  Soit $f_2 \in \mathcal{C}_{\asp}(\tilde{G})$ telle que $f_{2,\tilde{G}}=0$, alors $f_2$ est cuspidale. Le corollaire précédent permet de conclure que $I_{\tilde{M}}(\tilde{\gamma}, f_2)=0$ car $\Theta(\tau, \cdot)$ est à support à $I\mathcal{C}_{\asp}(\tilde{M})$.
\end{proof}

Ce résultat justifie les Hypothèses \ref{hyp:support-caractere}, \ref{hyp:support-caractere-double}. On établit ainsi la formule des traces locale invariante.

Donnons une application importante de ce corollaire.
\begin{theorem}[Cf. {\cite[Theorem 0]{Ka86}}]\label{prop:KZ0}\index[iFT2]{Théorème 0 de Kazhdan}
  Supposons $F$ non archimédien. Soit $f \in C_{c,\asp}^\infty(\tilde{G})$, les conditions suivantes sont équivalentes.
  \begin{enumerate}\renewcommand{\labelenumi}{(\alph{enumi})}
    \item $f$ appartient au sous-espace de $C_{c,\asp}^\infty(\tilde{G})$ engendré par fonction de la forme $g - g^y$, où $g \in C_{c,\asp}^\infty(\tilde{G})$ et $y \in G(F)$.
    \item Pour tout $\tilde{\gamma} \in \Gamma_{\mathrm{reg}}(\tilde{G})^\mathrm{bon}$, on a $I_{\tilde{G}}(\tilde{\gamma}, f)=0$.
    \item Pour tout $\pi \in \Pi_-(\tilde{G})$, on a $\angles{\Theta_\pi, f}=0$.
    \item Pour tout $\pi \in \Pi_{\mathrm{temp},-}(\tilde{G})$, on a $\angles{\Theta_\pi, f}=0$.
  \end{enumerate}\renewcommand{\labelenumi}{\arabic{enumi}}
\end{theorem}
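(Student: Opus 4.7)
The plan is to establish the cycle of implications (a) $\Rightarrow$ (b), (b) $\Rightarrow$ (a), (b) $\Rightarrow$ (c), (c) $\Rightarrow$ (d), and the nontrivial (d) $\Rightarrow$ (b). The easy directions are immediate: invariance of orbital integrals and of characters under conjugation yields (a) $\Rightarrow$ (b) and (a) $\Rightarrow$ (c), and (c) $\Rightarrow$ (d) is tautological. Direction (b) $\Rightarrow$ (a) follows from Proposition \ref{prop:int-orb-dense}: by construction $\mathcal{I}_{\asp}(\tilde{G})$ is the quotient of $C_{c,\asp}^\infty(\tilde{G})$ by the span of the differences $g - g^y$, and the regular specific orbital integrals separate this quotient, so (b) forces the image of $f$ to vanish in $\mathcal{I}_{\asp}(\tilde{G})$. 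The same density result combined with the fact that irreducible characters define specific invariant distributions gives (b) $\Rightarrow$ (c).

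The crucial implication (d) $\Rightarrow$ (b) will use the corollary preceding Corollary \ref{prop:dist-support}. Assuming (d), Frobenius reciprocity gives $(f_{\tilde{P}})_{\tilde{M}}(\sigma) = f_{\tilde{G}}(\mathcal{I}_{\tilde{P}}(\sigma)) = 0$ for every Levi $M$, every $P \in \mathcal{P}(M)$, and every tempered $\sigma$ of $\tilde{M}$, so $f$ is cuspidal in the sense of Definition \ref{prop:cuspidal}. Moreover the $R$-group description (Theorem \ref{prop:R}) expresses each $\Theta(\tau,f)$, for $\tau \in T_{\mathrm{ell},-}(\tilde{G})$, as a finite $\mathbb{C}$-linear combination of tempered characters $\Theta_{\pi_\rho}(f)$, all of which vanish by (d). Plugging this into the corollary yields $I_{\tilde{M}}(\tilde{\gamma}, f) = 0$ for every $M \in \mathcal{L}(M_0)$ and every $\gamma \in \Gamma_{G\text{-reg}}(M(F))^{\mathrm{bon}}$. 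Specializing to $M=G$ gives $J_{\tilde{G}}(\tilde{\gamma}, f) = I_{\tilde{G}}(\tilde{\gamma}, f) = 0$ for $\gamma$ elliptic regular in $G$.

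To treat an arbitrary regular semi-simple $\gamma$, let $M$ be the minimal Levi containing $G_\gamma$, so $\gamma$ is elliptic regular in $M$, and apply the Harish-Chandra descent $J_{\tilde{G}}(\tilde{\gamma}, f) = J_{\tilde{M}}(\tilde{\gamma}, f_{\tilde{P}})$, whose validity on the cover is ensured by the unipotent splitting $\tilde{P} = \tilde{M} \cdot U(F)$. We then re-apply the same corollary to $(\tilde{M}, f_{\tilde{P}})$ with $M_1 = M$: transitivity of parabolic descent and Frobenius reciprocity show that $f_{\tilde{P}} \in \mathcal{C}_{\asp}(\tilde{M})$ is cuspidal in $\tilde{M}$, and each trace $\Theta^{\tilde{M}}(\tau, f_{\tilde{P}})$ for $\tau \in T_{\mathrm{ell},-}(\tilde{M})$ decomposes into values of the form $\Theta^{\tilde{M}}_\pi(f_{\tilde{P}}) = \Theta^{\tilde{G}}_{\mathcal{I}_{\tilde{P}}(\pi)}(f)$, all of which vanish by (d). The corollary then forces $J_{\tilde{M}}(\tilde{\gamma}, f_{\tilde{P}}) = 0$, proving (b).

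The main obstacle is purely technical: the corollary was established for functions in $\mathcal{C}_{\asp}$, and one must verify it applies to $f_{\tilde{P}}$, which lies in $\mathcal{C}_{\asp}(\tilde{M})$ but not in $C_{c,\asp}^\infty(\tilde{M})$; one must also check compatibility with the anti-specific equivariance under $\bmu_m$ and, for each Levi $M$, the analogue of the standard identity $\Theta_\pi(f_{\tilde{P}}) = \Theta_{\mathcal{I}_{\tilde{P}}(\pi)}(f)$ on the cover. Each of these points follows from the definitions and from the unipotent splitting of parabolics recalled in \S\ref{sec:representations}, but they require careful bookkeeping.
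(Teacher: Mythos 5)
Your proposal is correct in outline and uses the same density and $R$-group ingredients as the paper, but the implication (d) $\Rightarrow$ (b) takes an unnecessarily long detour: it essentially re-derives the content of Corollary \ref{prop:dist-support} from the preceding corollary (the one defining $\Phi_{\tilde{M}}(\tau^\vee, \tilde{\gamma})$) and then appends a superfluous descent step. That preceding corollary is stated for all $\gamma \in \Gamma_{G\text{-reg}}(M(F))^{\mathrm{bon}}$, not only for $\gamma$ elliptic in $M$; for $M = G$ it therefore covers every $G$-regular semi-simple class. When $\gamma$ is not elliptic in $G$, the kernel $\Phi_{\tilde{G}}(\tau^\vee, \tilde{\gamma})$ vanishes by definition, so $I_{\tilde{G}}(\tilde{\gamma}, f) = 0$ for cuspidal $f$ without invoking the vanishing of $\Theta(\tau, f)$ at all; when $\gamma$ is elliptic, the vanishing $\Theta(\tau, f)=0$ — which you derive correctly from (d) via Theorem \ref{prop:R} — finishes the job. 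The passage to the minimal Levi $M \supset G_\gamma$ and the re-application of the corollary to $(\tilde{M}, f_{\tilde{P}})$ are therefore redundant. The technical worry you raise there — that $f_{\tilde{P}}$ might lie in $\mathcal{C}_{\asp}(\tilde{M})$ but not in $C_{c,\asp}^\infty(\tilde{M})$ — is in any case unfounded: parabolic descent sends compactly supported smooth functions to compactly supported smooth functions. The paper itself is much shorter: (d) says exactly that $f_{\tilde{G}}=0$, i.e.\ $f \in \Ker\phi_{\tilde{G}}$, and Corollary \ref{prop:dist-support} at $M=G$ (which asserts that $I_{\tilde{G}}(\tilde{\gamma}, \cdot)$ is supported on $I\mathcal{C}_{\asp}$) immediately gives $I_{\tilde{G}}(\tilde{\gamma}, f)=0$, that is, (b). Your remaining steps — the easy implications, and the use of Proposition \ref{prop:int-orb-dense} for (b) $\Rightarrow$ (a) — agree with the paper.
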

Rappelons que $\Theta_\pi$ siginifie le caractère de $\pi$.
\begin{proof}
  C'est clair que (a) entraîne (b),(c),(d) et que (c) entraîne (d). Vu la Proposition \ref{prop:int-orb-dense}, (b) entraîne (a). D'après le Corollaire \ref{prop:dist-support} appliqué au cas $M=G$, on voit que (d) entraîne (b). Cela achève la démonstration.
\end{proof}

\begin{remark}
  L'une des conséquences directes de la formule des traces locale simple est le rapport de l'orthogonalité, cf. \cite[\S 6]{Ar93}; les preuves sont identiques sauf que l'on utilise le théorème de Paley-Wiener invariant tempéré (le Théorème \ref{prop:PW-invariant}) au lieu de sa version $\tilde{K}$-finie à support compact. Cette théorie sera importante dans des applications de la formule des traces aux revêtements.
\end{remark}

\section{Corrections pour \texorpdfstring{\S}{}4.1 (ajouté en novembre 2024)}
Dans le début de \S 4.1 (dans «Un fait important à se rappeler...»), je prétend que tout caractère continu $G_{\mathrm{SC}}(F) \to \mathbb{C}^{\times}$ est trivial, mais ceci n'est pas vrai. En décomposant $G_{\mathrm{SC}}$ en des facteurs $F$-simples, des contre-exemples paraissent exactement en les facteurs de la forme $\mathrm{S}D^{\times}$ où $D$ est une $E$-algèbre à division centrale, le corps $E$ étant une extension finie séparable de $F$ (on omet la restriction de scalaires par $E|F$). On renvoie à l'Appendice A par J.-P.\ Labesse de E.\ Lapid et Z.\ Mao, \emph{A conjecture on Whittaker--Fourier coefficients of cusp forms}, Journal of Number Theory, Vol.~146 (2015), pp.448-505 pour les détails.

Le même problème paraît aussi dans l'article \emph{La formule des traces pour les revêtements de groupes réductifs connexes.\ I.\ Le développement géométrique fin.} Or dans \textit{loc.\ cit.}, le caractère est soit un caractère automorphe de $G_{\mathrm{SC}}(\mathbb{A}_F)$ pour un corps de nombres $F$, soit un produit de composantes locales d'un tel caractère; de tels caractères sont triviaux d'après J.-L.\ Waldspurger, \emph{Caractères automorphes d'un groupe réductif} (\href{https://arxiv.org/abs/1608.07150}{arXiv:1608.07150}).

Ces problèmes ne se posent pas si l'on se limite aux revêtements de $\mathrm{GL}(n, F)$, les groupes classiques ou les groupes de similitudes.

Revenons au cadre local non-archimédien de \S 4.1. L'hypothèse fausse ci-dessus sert à trivialiser le caractère $\bomega' := \bomega \circ (\pi, \iota)$ de $G'(F) = G_{\mathrm{SC}}(F) \times Z_{G^\circ}^{\circ}(F)$, afin de ramener les $\bomega$-intégrales orbitales sur $\mathfrak{g}$ aux intégrales orbitales usuelles sur $\mathfrak{g}' = \mathfrak{g}$. Ce qu'on obtient maintenant sont les $\bomega'$-intégrales orbitales sur $\mathfrak{g}'$. Observons que $\bomega'$ est d'ordre fini, et trivial sur $Z_{G^\circ}^\circ(F)$; l'action adjointe de $G(F)$ sur $G'(F)$ est bien définie, laissant $\bomega'$ invariante car $\bomega'$ provient de $\bomega: G(F) \to \mathbb{C}^{\times}$.

Par conséquent, l'espace $\mathcal{I}(\mathfrak{g}'(F))$ doit être remplacé par $\mathcal{I}(\mathfrak{g}'(F))_{\bomega'}$. On peut définir le groupe fini
\[ \Xi := G(F) \big/ \mathrm{Im} \left[ (\pi, \iota): \mathrm{Ker}(\bomega') \to G(F) \right], \]
et noter $\Pi(\Xi)$ l'ensemble de classes d'isomorphismes de représentations irréductibles de $\Xi$. Posons
\[ \Pi_{\bomega'}(\Xi) = \{\xi \in \Pi(\Xi): \xi|_{G_{\mathrm{SC}}(F)} = \bomega' \cdot \mathrm{id} \} \]
et observons que $\bomega \in \Pi_{\bomega'}(\Xi)$. L'action de $\Xi$ sur $\mathcal{I}(\mathfrak{g}'(F))_{\bomega'}$ est toujours induite par $f \mapsto f^{y^{-1}}$ où $y \in G(F)$. Idem pour $\mathcal{J}(\mathfrak{g}'(F))$. Dans les décompositions $\Xi$-isotypiques de $\mathcal{I}(\mathfrak{g}'(F))_{\bomega'}$ et $\mathcal{J}(\mathfrak{g}'(F))^{\bomega'}$, seulement les $\xi \in \Pi_{\bomega'}(\Xi)$ interviennent.

Avec ces modifications, les lemmes 4.1.1 et 4.1.2 restent valides. L'égalité dans la Remarque 4.1.3 doit être remplacée par
\[ J^{\bomega}_G(\dot{X}, f) = J^{\bomega'}_{G'}(\dot{X}, \mathrm{pr}_{\bomega} f), \quad f \in \mathcal{I}(\mathfrak{g}(F))_{\bomega}. \]

Les démonstrations du Théorème 4.1.4 ($\bomega$-germes de Shalika) et de la Proposition 4.1.5 (densité de $\bomega$-intégrales orbitales régulières) marchent comme suit. On se ramène aux variantes pour $\mathcal{I}(\mathfrak{g}'(F))_{\bomega'}$ et $\mathcal{J}(\mathfrak{g}'(F))^{\bomega'}$; ensuite, on se ramène aux facteurs $F$-simples de $G_{\mathrm{SC}}$ comme la théorie sur $Z_{G^\circ}^\circ$ est triviale. Or $\bomega'$ est non-trivial seulement sur facteurs de la forme $\mathrm{S}D^{\times}$, qui sont $F$-anisotropes, donc les assertions sont triviales sur ces facteurs. Pour les autres facteurs, les caractères sont triviaux et le résultat est connu.

Pour prouver la Proposition 4.1.7, la clef est toujours sa variante pour $\mathfrak{g}'$ avec un caractère $\bomega'$, et on se ramène toujours au cas de $\mathrm{S}D^{\times}$ avec un caractère, noté $\bomega_D$. Fixons $\mathrm{S}D^{\times}$ et un réseau convenable $L_D$ dans son algèbre de Lie. Le voisinage $V_D$ dans $\mathcal{J}(V_D, \infty, L_D)^{\bomega_D}$ est vide, ce qui entraîne que $\mathcal{J}(V_D, \infty, L_D)^{\bomega_D}$ est l'espace des $\theta$ qui sont à support compact quand restreintes aux fonctions $L_D$-invariantes. C'est exactement ce qu'on veut pour justifier la preuve.

Les Théorèmes 4.1.8--4.1.10 s'en découlent.

Il y d'autres erreurs mineurs à corriger, comme ci-dessous.

\begin{itemize}
	\item Avant le Lemme 4.1.2, la définition $D^G(X) = \det\left( \mathrm{ad}(X) \middle| \mathfrak{g}/\mathfrak{g}_X \right)$ convient seulement pour $X$ semi-simple. Il faut remplacer le $|D^G(X)|^{\frac{1}{2}}$ dans la définition de $J^{\bomega}_G(\dot{X}, f)$ plus haut par $|D^G(X_{\mathrm{ss}})|^{\frac{1}{2}}$, où $X_{\mathrm{ss}}$ désigne la partie semi-simple dans la décomposition de Jordan de $X$.
	
	\item Dans le point 4 du Théorème 4.1.4, il faut remplacer l'exposant $-\dim G/G_u$ de $|t|$ par $\dim \mathfrak{g}_{\mathrm{nil}} -\dim G/G_u$.
\end{itemize}

\bibliographystyle{abbrv-fr}
\bibliography{metaplectic}

\printindex[iFT2]

\begin{flushleft}
  Wen-Wei Li \\
  Morningside Center of Mathematics, \\
  Academy of Mathematics and Systems Science, Chinese Academy of Sciences, \\
  55, Zhongguancun East Road, \\
  100190 Beijing, China. \\
  Adresse électronique: \texttt{wwli@math.ac.cn}
\end{flushleft}

\end{document}